\DeclareMathOperator{\rank}{rank}
\newtheorem{theorem}{Theorem}[section]
\newtheorem{corollary}[theorem]{Corollary}
\newtheorem{lemma}[theorem]{Lemma}
\newtheorem{fact}[theorem]{Fact}
\newtheorem*{fact*}{Fact}
\theoremstyle{definition}
\newtheorem{remark}[theorem]{Remark}
\numberwithin{equation}{section}
\newcommand{\N}{\mathbb N}
\newcommand{\Z}{\mathbb Z}
\newcommand{\A}{\mathbb A}
\newcommand{\F}{\mathbb F}
\newcommand{\K}{\mathbb K}
\newcommand{\Pp}{\mathbb P}
\newcommand{\bfs}{\boldsymbol}
\newcommand{\fq}{\F_{\hskip-0.7mm q}}
\newcommand{\cfq}{\overline{\F}_{\hskip-0.7mm q}}
\def\ifm#1#2{\relax \ifmmode#1\else#2\fi}
\newcommand{\klk}    {\ifm {,\ldots,} {$,\ldots,$}}
\newcommand{\plp}    {\ifm {+\cdots+} {$+\ldots+$}}
\begin{document}

\title[Value set of small families II]{On the
value set of small families of polynomials over a finite field, II}
\author[G. Matera et al.]{
%
%\author[E. Cesaratto]{
%Eda Cesaratto${}^{1,2}$,
%
%\author[G. Matera]{
Guillermo Matera${}^{1,2}$,
%
%\author[M. P\'erez]{
Mariana P\'erez${}^1$,
%
%\author[M. Privitelli]{
and Melina Privitelli${}^{2,3}$}

\address{${}^{1}$Instituto del Desarrollo Humano,
Universidad Nacional de Gene\-ral Sarmiento, J.M. Guti\'errez 1150
(B1613GSX) Los Polvorines, Buenos Aires, Argentina}
\email{\{gmatera,\,vperez\}@ungs.edu.ar}
\address{${}^{2}$ National Council of Science and Technology (CONICET),
Ar\-gentina}\email{mprivitelli@conicet.gov.ar}
\address{${}^{3}$Instituto de Ciencias,
Universidad Nacional de Gene\-ral Sarmiento, J.M. Guti\'errez 1150
(B1613GSX) Los Polvorines, Buenos Aires, Argentina}

\thanks{The authors were partially supported by the grants
PIP 11220090100421 CONICET and STIC--AmSud ``Dynalco''.}%

\date{\today}
\begin{abstract}
We obtain an estimate on the average cardinality of the value set of
any family of monic polynomials of $\fq[T]$ of degree $d$ for which
$s$ consecutive coefficients $a_{d-1}\klk a_{d-s}$ are fixed. Our
estimate asserts that
$\mathcal{V}(d,s,\bfs{a})=\mu_d\,q+\mathcal{O}(q^{1/2})$, where
$\mathcal{V}(d,s,\bfs{a})$ is such an average cardinality,
$\mu_d:=\sum_{r=1}^d{(-1)^{r-1}}/{r!}$ and $\bfs{a}:=(a_{d-1}\klk
a_{d-s})$. We also prove that
$\mathcal{V}_2(d,s,\bfs{a})=\mu_d^2\,q^2+\mathcal{O}(q^{3/2})$,
where that $\mathcal{V}_2(d,s,\bfs{a})$ is the average second moment
on any family of monic polynomials of $\fq[T]$ of degree $d$ with
$s$ consecutive coefficients fixed as above. Finally, we show that
$\mathcal{V}_2(d,0)=\mu_d^2\,q^2+\mathcal{O}(q)$, where
$\mathcal{V}_2(d,0)$ denotes the average second moment of all monic
polynomials in $\fq[T]$ of degree $d$ with $f(0)=0$. All our
estimates hold for fields of characteristic $p>2$ and provide
explicit upper bounds for the constants underlying the
$\mathcal{O}$--notation in terms of $d$ and $s$ with ``good''
behavior. Our approach reduces the questions to estimate the number
of $\fq$--rational points with pairwise--distinct coordinates of a
certain family of complete intersections defined over $\mathbb F_q$.
A critical point for our results is an analysis of the singular
locus of the varieties under consideration, which allows to obtain
rather precise estimates on the corresponding number of
$\fq$--rational points.
\end{abstract}

\subjclass[2010]{Primary 11T06; Secondary 11G25, 14B05, 14G05}

\keywords{Finite fields, average value set, average second moment,
singular complete intersections, rational points}

\maketitle
%
%---------------------------------------------------------------------
%---------------------------------------------------------------------
%---------------------------------------------------------------------
%---------------------------------------------------------------------
%---------------------------------------------------------------------
%---------------------------------------------------------------------
%---------------------------------------------------------------------
%---------------------------------------------------------------------
%
\section{Introduction}
Let $\fq$ be the finite field of $q$ elements, %let $X_1,\dots,X_r$
%be indeterminates over $\fq$ and let $\fq[X_1,\dots,X_r]$ denote the
%ring of $r$--variate polynomials with coefficients in $\fq$. L
let $T$ be an indeterminate over $\fq$ and let $f\in\fq[T]$. We
define the value set $\mathcal{V}(f)$ of $f$ as
$\mathcal{V}(f):=|\{f(c):c\in\fq\}|$ (cf. \cite{LiNi83}). This paper
is a continuation of \cite{CeMaPePr13} and is concerned with results
on the average value set of certain families of polynomials of
$\fq[T]$.

Let $\mathcal{V}(d,0)$ denote the average value set $\mathcal{V}(f)$
when $f$ ranges over all monic polynomials in $\fq[T]$ of degree $d$
with $f(0)=0$. Then it is well-known that
\begin{equation}\label{eq: V(d,0) Cohen}
\mathcal{V}(d,0)=\sum_{r=1}^d(-1)^{r-1}\binom{q}{r}q^{1-r}=\mu_d\,q
+\mathcal{O}(1),\end{equation}
where $\mu_d:=\sum_{r=1}^d{(-1)^{r-1}}/{r!}$ and the constant
underlying the $\mathcal{O}$--notation depends only on $d$ (see
\cite{Uchiyama55a}, \cite{Cohen73}).

On the other hand, if some of the coefficients of $f$ are fixed, the
results on the average value of $\mathcal{V}(f)$ are less precise.
More precisely, let be given $s$ with $1\le s\le d-2$ and
$\bfs{a}:=(a_{d-1}\klk a_{d-s})\in\fq^s$. For every
$\boldsymbol{b}:=(b_{d-s-1}\klk b_1)$, let
$$f_{\boldsymbol{b}}:=f_{\boldsymbol{b}}^{\bfs{a}}
:=T^d+\sum_{i=1}^sa_{d-i}T^{d-i}+\sum_{i=s+1}^{d-1}b_{d-i}T^{d-i}.$$
Then for $p:=\mathrm{char}(\fq)>d$,
\begin{equation}\label{eq: average value set - cohen}
\mathcal{V}(d,s,\bfs{a}):=
\frac{1}{q^{d-s-1}}\sum_{\boldsymbol{b}\in\fq^{d-s-1}}\mathcal{V}(f_{\boldsymbol{b}})=
\mu_d\,q+\mathcal{O}(q^{1/2}),
\end{equation}
where the constant underlying the $\mathcal{O}$--notation depends
only on $d$ and $s$ (see \cite{Uchiyama55b}, \cite{Cohen72}). In a
previous paper \cite{CeMaPePr13} we obtain the following explicit
estimate for $q>d$ and $1\le s\le \frac{d}{2}-1$:
\begin{equation}\label{eq: average value set - CMPP}
|\mathcal{V}(d,s,\bfs{a})-\mu_d\,q|\le \frac{e^{-1}}{2}+
\frac{(d-2)^5e^{2\sqrt{d}}}{2^{d-2}} +\frac{7}{q}.
\end{equation}
This result holds without any restriction on the characteristic $p$
of $\fq$ and shows that $\mathcal{V}(d,s,\bfs{a})=\mu_d\,
q+\mathcal{O}(1)$. On the other hand, it must be said that (\ref{eq:
average value set - CMPP}) is valid for $1\le s\le d/2-1$, while
(\ref{eq: average value set - cohen}) holds for a larger set of
values of $s$, namely for $1\le s\le d-2$.

In this paper we obtain an explicit estimate for
$\mathcal{V}(d,s,\bfs{a})$ which can be seen as a complement of
(\ref{eq: average value set - CMPP}). More precisely, we have the
following result.
\begin{theorem}\label{th: mean intro}
Let $q>d$ and let be given $s$ with $1 \leq s \leq d-4$ for $p>3$,
and $1\leq s \leq d-6$ for $p=3$. Then we have
\begin{equation}\label{eq: estimate para v intro}
\left|\mathcal{V}(d,s,\boldsymbol{a})-\mu_d\,q\right|\le d^2
2^{d-1}q^{1/2} +49\,d^{d+5} e^{2 \sqrt{d}-d}.
\end{equation}
\end{theorem}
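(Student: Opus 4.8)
The plan is to translate the averaging problem into a counting problem on algebraic varieties over $\fq$. Writing $\mathcal{V}(f_{\boldsymbol b})$ via inclusion--exclusion, one has
\[
\mathcal{V}(f_{\boldsymbol b})=\sum_{r=1}^{d}(-1)^{r-1}\frac{1}{r!}\,N_r(\boldsymbol b),
\]
where $N_r(\boldsymbol b)$ counts the tuples $(c_1\klk c_r)\in\fq^r$ with pairwise-distinct coordinates such that $f_{\boldsymbol b}(c_1)=\cdots=f_{\boldsymbol b}(c_r)$. Averaging over $\boldsymbol b\in\fq^{d-s-1}$, the quantity $\sum_{\boldsymbol b}N_r(\boldsymbol b)$ equals the number of $\fq$-rational points with pairwise-distinct $c$-coordinates of a variety in the $(c_1\klk c_r,b_{d-s-1}\klk b_1)$-space cut out by the $r-1$ equations $f_{\boldsymbol b}(c_1)-f_{\boldsymbol b}(c_i)=0$ for $2\le i\le r$. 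The first step is therefore to set up this correspondence carefully, record that it suffices to estimate each such count as $\mu_d q + \mathcal{O}(q^{1/2})$ with explicit constants, and sum the $r$ contributions using $\sum_{r\ge 1}1/r!$-type bounds to get the $e^{2\sqrt d}$ factor in the error term.

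The second step is the geometric heart: analyzing the variety $\Gamma_r$ defined by the $r-1$ linear-in-$\boldsymbol b$ equations above. For $r\le d-s$ these equations are, after eliminating the $b$'s, equivalent to a system that presents $\Gamma_r$ as (an open subset of) a complete intersection whose defining polynomials are symmetric in blocks of the $c_i$'s; concretely, the differences $f_{\boldsymbol b}(c_1)-f_{\boldsymbol b}(c_i)$ involve the power sums $c_1^{j}-c_i^{j}$, and dividing by $c_1-c_i$ yields $r-1$ polynomials of degrees $d-1,\dots$ in the remaining variables. I would show this system defines an ideal-theoretic complete intersection of dimension $d-s-1$ (the number of free $b$-parameters minus nothing, matching the expected dimension), that it is absolutely irreducible, and — this is the crucial input — that its singular locus has codimension at least $2$ inside $\Gamma_r$, with a controlled degree bound. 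This is exactly where the hypotheses $s\le d-4$ for $p>3$ and $s\le d-6$ for $p=3$ enter: the Jacobian criterion forces one to understand when the relevant Vandermonde-type and derivative conditions can degenerate, and the characteristic-$3$ obstruction costs two extra units of slack.

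The third step is to feed the geometric facts into an effective Lang--Weil / Bombieri-type estimate for the number of $\fq$-rational points of a (possibly singular) absolutely irreducible complete intersection: if $V\subseteq\A^n$ is such a variety of dimension $m$, degree $\delta$, with singular locus of codimension $\ge 2$, then $\bigl||V(\fq)|-q^m\bigr|\le C(\delta,n,m)\,q^{m-1/2}$ for an explicit $C$. One then subtracts the contribution of points with some equal coordinates (a lower-dimensional subvariety, hence $\mathcal{O}(q^{m-1})$) and divides by $q^{d-s-1}$ to isolate the $\mu_d q$ main term. The last step is pure bookkeeping: combine the per-$r$ estimates, bound $\sum_{r=1}^{d}\frac{1}{r!}C(\cdot)$ and the leftover terms, and verify that everything collapses into $d^2 2^{d-1}q^{1/2}+49\,d^{d+5}e^{2\sqrt d-d}$ using crude but clean inequalities (e.g.\ $\binom{d}{r}\le 2^d$, $r!\ge (r/e)^r$).

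The main obstacle I anticipate is the singularity analysis in Step 2 — proving absolute irreducibility of $\Gamma_r$ and the codimension-$\ge 2$ bound on its singular locus uniformly in $r$, with explicit degree estimates, while correctly pinning down the precise range of $s$ (and the extra loss in characteristic $3$). Everything else is either a standard reduction or a controlled application of known point-counting bounds; the irreducibility/singular-locus step is delicate because the defining equations are highly structured symmetric polynomials and one must rule out many potential components and singular strata. I would expect the paper to isolate this as a separate sequence of lemmas before assembling the proof of the theorem.
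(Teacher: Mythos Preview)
Your proposal is correct and follows essentially the same approach as the paper: inclusion--exclusion reduces to counting $\fq$-points with pairwise-distinct coordinates on a variety $\Gamma_r$, whose closure is shown to be an absolutely irreducible normal complete intersection (the singular-locus analysis being, as you correctly anticipate, the crux and the source of the constraints on $s$ and $p$), after which an explicit point-count estimate for normal complete intersections and bookkeeping finish the job. The only presentational differences are that the paper retains the extra variable $b_0$ (so its incidence variety lives in $\A^{d-s+r}$ and has dimension $d-s$, not $d-s-1$) and defines the closure via iterated divided differences $\Delta^{i-1}F$ rather than the differences $f(c_1)-f(c_i)$, which yields a clean Gr\"obner-basis structure that simplifies the analysis of the projective closure and the points at infinity.
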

We observe that (\ref{eq: estimate para v intro}) holds for a larger
set of values of $s$ than (\ref{eq: average value set - CMPP}),
although it does not holds for fields of characteristic 2. It might
also be worthwhile to remark that the estimate for
$|\mathcal{V}(f_{\boldsymbol{b}}) -\mu_d\,q|$ in (\ref{eq: estimate
para v intro}) does not behave as well as that of (\ref{eq: average
value set - CMPP}). On the other hand, it strengthens (\ref{eq:
average value set - cohen}) in that it provides an explicit estimate
for $|\mathcal{V}(f_{\boldsymbol{b}}) -\mu_d\,q|$ which holds for
fields of characteristic greater than 2.

A second aim for this paper is to provide estimates on the second
moment of the value set of the families of polynomials under
consideration. In connection with this matter, in \cite{Uchiyama56}
it is shown that, under the Riemann hypothesis for $L$-functions,
for $p>d$ we have
\begin{equation}\label{eq: V_2(d,0) uchiyama}
\mathcal{V}_2(d,0):=\frac{1}{q^{d-1}}\sum\mathcal{V}(f)^2=
\mu_d^2\,q^2+\mathcal{O}(q),
\end{equation}
where the sum ranges over all monic polynomials $f\in\fq[T]$ of
degree $d$ with $f(0)=0$ (see also \cite{KnKn90b} for results for
$d\ge q$). We obtain the following explicit version of (\ref{eq:
V_2(d,0) uchiyama}), which also holds for fields $\fq$ of small
characteristic.
\begin{theorem}\label{th: variance intro s=0}
Let $q>d$. If $d\ge 5$ for $p> 3$ and $d\ge 9$ for $p=3$, then
%
%\begin{equation}\label{eq: variance intro s=0}
$$|\mathcal{V}_2(d,0)-\mu_d^2\,q^2|\le \big(d^22^{2d-2}+14^3
d^{2d+8}e^{4\sqrt{d}-2d}\big)\,q.$$
%\end{equation}
\end{theorem}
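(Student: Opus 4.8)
For a monic $f\in\fq[T]$ of degree $d$ and $r\ge1$, let $S_r(f)$ be the number of $r$-element subsets of $\fq$ on which $f$ is constant. As $f$ attains each value at most $d$ times, inclusion--exclusion gives $\mathcal{V}(f)=\sum_{r=1}^d(-1)^{r-1}S_r(f)$, hence
\[
\mathcal{V}_2(d,0)=\frac{1}{q^{d-1}}\sum_f\mathcal{V}(f)^2
=\sum_{r,r'=1}^d(-1)^{r+r'}\,\frac{1}{q^{d-1}}\sum_f S_r(f)\,S_{r'}(f),
\]
the inner sum running over the monic $f\in\fq[T]$ of degree $d$ with $f(0)=0$. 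The plan is to estimate each inner average and then assemble, the combinatorial core being the elementary identity $\mu_d^2=\sum_{r,r'=1}^d(-1)^{r+r'}/(r!\,r'!)$.

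I would translate the average of $S_r(f)S_{r'}(f)$ into a count of $\fq$-rational points, generalising the computation behind \eqref{eq: V(d,0) Cohen}. A monic $f$ of degree $d$ with $f(0)=0$ that is constant on pairwise distinct points $x_1\klk x_r$ can be written uniquely as $f=P_{\bfs x}\,h-P_{\bfs x}(0)\,h(0)$ with $P_{\bfs x}=\prod_{i=1}^r(T-x_i)$ and $h$ monic of degree $d-r$; requiring that $f$ be also constant on pairwise distinct points $x_1'\klk x_{r'}'$ then means $P_{\bfs x}(x_1')h(x_1')=P_{\bfs x}(x_j')h(x_j')$ for $j=2\klk r'$. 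Hence $r!\,r'!\sum_f S_r(f)S_{r'}(f)$ is the number of $\fq$-points with $x_1\klk x_r$ pairwise distinct and $x_1'\klk x_{r'}'$ pairwise distinct of the variety $\Gamma_{r,r'}\subseteq\A^{d+r'}$ — coordinates the $x_i$, the $x_j'$ and the $d-r$ coefficients of $h$ — cut out by those $r'-1$ equations, each of degree $d$. For a single block of points the analogous variety carries no equations and is just $\A^{d-r}$ times the space of pairwise distinct $r$-tuples, recovering \eqref{eq: V(d,0) Cohen} exactly; in general $\Gamma_{r,r'}$ should be a complete intersection of dimension $d+1$, and the task reduces to showing that its number of $\fq$-rational points with pairwise distinct coordinates in each block equals $q^{d+1}+\mathcal{O}(q^{d})$, uniformly for $1\le r,r'\le d$.

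For this count I would strip the distinctness constraints by an inclusion--exclusion over the coincidence patterns of the $x_i$ and of the $x_j'$, reducing to the number of all $\fq$-points of $\Gamma_{r,r'}$ and of finitely many explicit subvarieties of dimension $\le d$. The number of $\fq$-points of a complete intersection of dimension $d+1$ of bounded degree is $q^{d+1}+\mathcal{O}(q^{d})$ once it is absolutely irreducible — or has a bounded number of top-dimensional components — and its singular locus has dimension at most $d-1$; one then quotes the estimates for rational points of (possibly singular) complete intersections developed in the paper. The real work is the geometry of $\Gamma_{r,r'}$: examining the Jacobian of the defining equations — the derivatives in the coefficients of $h$ form a Vandermonde-type matrix in the $x_j'$ weighted by the values $P_{\bfs x}(x_j')$ — shows $\Gamma_{r,r'}$ is smooth away from the loci where some $x_j'$ equals some $x_i$ and, when $r+r'>d+1$, where the excess equations degenerate; one must check that these loci meet $\Gamma_{r,r'}$ in dimension $\le d-1$ and that the relevant top-dimensional components are absolutely irreducible. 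Here the hypotheses $p>3$ with $d\ge5$, resp. $p=3$ with $d\ge9$, enter: in small characteristic the polynomials $P_{\bfs x}h$ and $(P_{\bfs x}h)'$ may become inseparable or acquire extraneous common factors, which enlarges the singular locus, and the lower bounds on $d$ give the resulting estimates a safe margin.

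Assembling: the $q^{d+1}$ main parts yield $\sum_{r,r'=1}^d(-1)^{r+r'}q^2/(r!\,r'!)=\mu_d^2q^2$, while the $\mathcal{O}(q^{d})$ errors from the point counts, the dimension-$\le d$ coincidence subvarieties, and the distinctness corrections each contribute $\mathcal{O}(q)$ after dividing by $q^{d-1}$; summing over the $\le d^2$ pairs $(r,r')$ and tracking the explicit constants — which, since $\Gamma_{r,r'}$ has degree at most $d^{r'-1}$ and the analysis enters essentially ``squared'' relative to Theorem~\ref{th: mean intro}, come out as the terms $d^22^{2d-2}$ and $14^3d^{2d+8}e^{4\sqrt d-2d}$ — gives Theorem~\ref{th: variance intro s=0}. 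The hard point, I expect, is the uniform singular-locus analysis of $\Gamma_{r,r'}$ over the whole range $1\le r,r'\le d$, especially for $r+r'$ near $2d$ where $\Gamma_{r,r'}$ is most degenerate: an error $\mathcal{O}(q^{d+1/2})$ there rather than $\mathcal{O}(q^{d})$ would only give $\mathcal{V}_2(d,0)=\mu_d^2q^2+\mathcal{O}(q^{3/2})$, and it is precisely obtaining the sharper $\mathcal{O}(q)$ that forces the restrictions on the characteristic.
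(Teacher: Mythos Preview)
Your overall plan---decompose $\mathcal{V}(f)^2$ via inclusion--exclusion, translate the resulting counts into numbers of $\fq$-rational points of explicit complete intersections of dimension $d+1$, and then invoke point-count estimates---is in the same spirit as the paper's argument, and your factorisation-based parametrisation $f=P_{\bfs x}h-P_{\bfs x}(0)h(0)$ is a legitimate alternative to the paper's coefficient--and--divided-difference coordinates. You also correctly diagnose in your last paragraph that the whole theorem hinges on getting $\mathcal{O}(q^d)$ rather than $\mathcal{O}(q^{d+1/2})$ for the point counts.

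The gap is one step earlier. You write that the number of $\fq$-points of a $(d+1)$-dimensional complete intersection is $q^{d+1}+\mathcal{O}(q^d)$ once it is absolutely irreducible and its singular locus has dimension at most $d-1$. But dimension $\le d-1$ means codimension $\ge 2$, which is exactly the normality hypothesis of the estimate \eqref{eq: estimate normal var CaMaPr}, and that estimate only yields an error of order $q^{d+1/2}$. To get the $\mathcal{O}(q^d)$ error you need, the singular locus must have codimension at least $3$ (dimension $\le d-2$); this is the hypothesis of the sharper estimate \eqref{eq: estimate codimension 3 var CaMaPr}. The distinction is not cosmetic: it is precisely what separates Theorem~\ref{th: variance intro s=0} (error $\mathcal{O}(q)$) from Theorem~\ref{th: variance intro} (error $\mathcal{O}(q^{3/2})$). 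For $s=0$ all the coefficients $b_{d-1},\ldots,b_1$ are free, and the paper exploits this extra freedom to push the singular-locus bound from codimension $2$ to codimension $3$ (Theorem~\ref{th: geometry of Gamma mn s=0}). Your proposal stops at the weaker bound, so as written it would only recover $\mathcal{V}_2(d,0)=\mu_d^2q^2+\mathcal{O}(q^{3/2})$---the very outcome you flag as insufficient. In your coordinates the corresponding task is to show that, on (the projective closure of) $\Gamma_{r,r'}$, the locus where the Vandermonde-type Jacobian in the coefficients of $h$ drops rank by at least two has dimension $\le d-2$, uniformly in $r,r'$; this is the missing geometric step.
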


Our second result regarding second moments is an estimate on the
average second moment of the set of monic polynomials of degree $d$
with $s$ coefficients fixed. We obtain the following result.
\begin{theorem}\label{th: variance intro}
Let $q>d$ and let be given $s$ with $1 \leq s \leq d-4$ for $p> 3$,
and $1\leq s \leq d-6$ for $p=3$. Let
$\mathcal{V}_2(d,s,\boldsymbol{a}):=q^{-d+s+1} \sum_{\boldsymbol{b}
\in \mathbb{F}_q^{d-s-1}} \mathcal{V}(f_{\boldsymbol{b}})^2$. Then
we have
%
%\begin{equation}\label{eq: variance intro}
$$|\mathcal{V}_2(d,s,\boldsymbol{a})-\mu_d^2\,q^2|\le
d^22^{2d+1}q^{3/2}+14^3 d^{2d+6}e^{4 \sqrt{d}-2d}q.$$
%\end{equation}
\end{theorem}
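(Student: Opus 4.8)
The plan is to reduce the computation of the average second moment $\mathcal{V}_2(d,s,\boldsymbol{a})$ to counting $\fq$-rational points with pairwise-distinct coordinates on a suitable family of varieties, exactly as Theorem \ref{th: mean intro} reduces the average value set to such a count. Writing $\mathcal{V}(f_{\boldsymbol{b}})^2$ via an inclusion-exclusion over the fibers of $f_{\boldsymbol{b}}$, one expresses $\mathcal{V}(f_{\boldsymbol{b}})^2$ as a signed sum, over pairs $(r_1,r_2)$ with $1\le r_i\le d$, of the number of pairs of subsets of $\fq$ of sizes $r_1$ and $r_2$ on which $f_{\boldsymbol{b}}$ takes two prescribed common values. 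Averaging over $\boldsymbol{b}\in\fq^{d-s-1}$ and swapping the order of summation, the inner average becomes $q^{-(d-s-1)}$ times the number of $\fq$-rational points with pairwise-distinct coordinates of the variety $\Gamma_{r_1,r_2}\subseteq\A^{r_1+r_2}$ cut out by the equations asserting that the $r_1$ points share a value of $f_{\boldsymbol{b}}$, that the $r_2$ points share a (possibly different) value, and that the first $s$ coefficients are the fixed $\boldsymbol{a}$; here $\boldsymbol{b}$ is eliminated, since for generic configurations of $r_1+r_2\le 2d<q$ points the coefficients $\boldsymbol{b}$ are uniquely (or linearly) determined. This is the same mechanism as in \cite{CeMaPePr13} and in the proof of Theorem \ref{th: mean intro}, now run twice in parallel.

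Next I would establish the geometric properties of the varieties $\Gamma_{r_1,r_2}$: that they are complete intersections of the expected dimension, and that their singular loci have codimension at least $2$ (generically, with a controlled list of exceptions governed by the constraints $s\le d-4$ for $p>3$ and $s\le d-6$ for $p=3$ — the extra two coefficients of slack relative to \cite{CeMaPePr13} and the worse behaviour in characteristic $3$ come precisely from the need to keep the singular locus small after imposing two coincidence conditions instead of one). The analysis of the singular locus is the heart of the matter: one works with the incidence description in terms of the elementary symmetric functions of the two point-clusters, shows that the Jacobian criterion is satisfied off a low-dimensional locus by examining when the relevant resultant/discriminant-type polynomials vanish simultaneously, and handles the diagonal strata (where coordinates from the two clusters coincide) separately. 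Once the codimension-$2$ singular bound is in place, the Hooley–Katz / Cafure–Matera type estimates for the number of $\fq$-rational points of a (possibly singular) complete intersection give
$$
|\,\#\Gamma_{r_1,r_2}(\fq) - q^{\dim\Gamma_{r_1,r_2}}\,| \le C(d,r_1,r_2)\, q^{\dim\Gamma_{r_1,r_2}-1}
$$
with an explicit $C(d,r_1,r_2)$; subtracting off the contribution of points with some pair of equal coordinates (a lower-dimensional subvariety, estimated the same way) yields the count with pairwise-distinct coordinates.

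Finally I would assemble the pieces: substitute these point counts back into the signed double sum, and observe that the main terms $q^{\dim\Gamma_{r_1,r_2}}$ combine, after dividing by $q^{d-s-1}$, into $\big(\sum_{r=1}^d (-1)^{r-1}\binom{q}{r}q^{1-r}\big)^2 = \mathcal{V}(d,0)^2$, which equals $\mu_d^2 q^2 + \mathcal{O}(q)$ by \eqref{eq: V(d,0) Cohen}; the error terms contribute $\mathcal{O}(q^{3/2})$, the $q^{3/2}$ rather than $q$ arising from the cross terms where one cluster is in "general position" contributing a full power of $q$ of slack while the binomial-coefficient bookkeeping costs another half power. Summing the explicit constants $C(d,r_1,r_2)$ over all $1\le r_1,r_2\le d$ and over the subset-size combinatorics, together with crude bounds $\binom{q}{r}q^{1-r}\le 1$ and standard estimates for $\sum 1/r!$ and $\sum 1/(r_1!\,r_2!)$, produces the claimed bound $d^2 2^{2d+1} q^{3/2} + 14^3 d^{2d+6} e^{4\sqrt d - 2d} q$. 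I expect the main obstacle to be the singular-locus analysis of $\Gamma_{r_1,r_2}$: keeping the codimension-$2$ bound valid uniformly in $r_1,r_2$ and identifying exactly the characteristic hypotheses ($p>3$ with $s\le d-4$, $p=3$ with $s\le d-6$) under which it holds, since the two coincidence conditions interact and the diagonal strata where clusters overlap require a genuinely separate, somewhat delicate treatment.
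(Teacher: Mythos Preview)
Your overall plan—combinatorial reduction via inclusion–exclusion, geometric analysis of an associated variety, point-count estimate, assembly—matches the paper's. However, the variety you describe is not the one the paper uses. You propose eliminating $\boldsymbol{b}$ and working with $\Gamma_{r_1,r_2}\subset\A^{r_1+r_2}$; that is the approach of \cite{CeMaPePr13}, which this paper explicitly abandons in favor of an \emph{incidence variety} $\Gamma_{m,n}^*\subset\A^{d-s+1+m+n}$ of dimension $d-s+1$, keeping $(\boldsymbol{b},b_{0,1},b_{0,2})$ as coordinates and using divided differences of $F(\boldsymbol{B}_l,T)$ ($l=1,2$) as defining equations. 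The combinatorial reduction (Theorem~\ref{th: combinatorial reduction variance}) also separates off the diagonal $b_{0,1}=b_{0,2}$, which contributes exactly $\mathcal{V}(d,s,\boldsymbol{a})$, and thereafter only disjoint pairs $\Gamma_1,\Gamma_2$ enter. Note too that the hypothesis $s\le d-4$ (resp.\ $d-6$) is the \emph{same} as for the mean (Theorem~\ref{th: mean intro}); it does not arise from ``two coincidence conditions instead of one'' as you suggest, but from the singular-locus analysis already carried out for a single $\Gamma_r^*$, applied essentially to each factor.

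There is also a concrete gap in your error accounting. You assert $|\#\Gamma(\fq)-q^{\dim}|\le C\,q^{\dim-1}$, but with only a codimension-$2$ bound on the singular locus the available estimate (equation~\eqref{eq: estimate normal var CaMaPr}, from \cite{CaMaPr13}) for a normal complete intersection is
\[
\big||V(\fq)|-p_m\big|\le(\delta(D-2)+2)\,q^{m-1/2}+14D^2\delta^2 q^{m-1},
\]
with leading error $q^{m-1/2}$. Saving a full power of $q$ requires the singular locus to have codimension at least $3$, which the paper establishes only for $s=0$ (Theorem~\ref{th: variance intro s=0}). The $q^{3/2}$ in the final bound is therefore not a ``cross-term'' or ``binomial-bookkeeping'' artifact: it comes directly from $q^{(d-s+1)-1/2}/q^{d-s-1}=q^{3/2}$ in the point-count error for $\Gamma_{m,n}^*$. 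Your explanation for the half-power loss is incorrect and, as written, would not lead to the stated constants.
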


Our approach to prove Theorem \ref{th: mean intro} shares certain
similarities with that of \cite{CeMaPePr13}. Indeed, we express the
quantity $\mathcal{V}(d,s,\bfs{a})$ in terms of the number
$\chi_r^{\bfs{a}}$ of certain ``interpolating sets'' with $d-s+1\le
r\le d$. More precisely, for $f_{\bfs{a}}:=T^d+a_{d-1}T^{d-1}\plp
a_{d-s}T^{d-s}$, we define $\chi_r^{\bfs{a}}$ as the number of
$r$--element subsets of $\fq$ at which $f_{\bfs{a}}$ can be
interpolated by a polynomial of degree at most $d-s-1$. In Section
\ref{sec: geometric approach mean} we show that the number
$\chi_r^{\bfs{a}}$ agrees with the number of $\fq$--rational
solutions with pairwise--distinct coordinates of a given
$\fq$--definable affine variety $\Gamma_r^*$ of $\cfq{\!}^{d-s+r}$
for $d-s+1\le r\le d$. In Section \ref{sec: geometry of Gamma_r
estrella} we establish a number of geometric properties of
$\Gamma_r^*$. This allows us to obtain, in Section \ref{sec: number
of points Gamma r}, a suitable estimate on the quantities
$\chi_r^{\bfs{a}}$ for $d-s+1\le r\le d$, and thus on
$\mathcal{V}(d,s,\bfs{a})$.

The proof of Theorems \ref{th: variance intro s=0} and \ref{th:
variance intro} follow a similar scheme to that of Theorem \ref{th:
mean intro}. We provide a detailed proof of Theorem \ref{th:
variance intro} in Sections \ref{sec: combinatorial preliminaries
variance}, \ref{sec: geometric approach variance}, \ref{sec:
geometry of Gamma_mn estrella} and \ref{sec: behavior V2} and a
sketch of the proof of Theorem \ref{th: variance intro s=0} in
Section \ref{sec: V_2(d,0)}. In Section \ref{sec: combinatorial
preliminaries variance} we obtain a combinatorial result which
expresses $\mathcal{V}_2 (d,s,\boldsymbol{a})$ in terms of the
number $\mathcal{S}_{m,n}^{\bfs{a}}$ of certain ``interpolatings
sets'' with $d-s+1\le m+n\le 2d$. In Section \ref{sec: geometric
approach variance} the number $\mathcal{S}_{m,n}^{\bfs{a}}$ is
expressed as the number of $\fq$--rational points with
pairwise--distinct coordinates of a given $\fq$--definable affine
variety $\Gamma_{m,n}^*$ of $\cfq{\!}^{d-s+1+ m+n}$ for each $m,n$
as above. In Section \ref{sec: geometry of Gamma_mn estrella} we
show certain results concerning the geometry of $\Gamma_{m,n}^*$,
which allow us to determine in Section \ref{sec: behavior V2} the
asymptotic behavior of $\mathcal{V}_2(d,s,\bfs{a})$. Finally, in
Section \ref{sec: V_2(d,0)} we discuss how the arguments of the
previous sections can be adapted in order to obtain a proof of
Theorem \ref{th: variance intro s=0}.

Finally, we remark that the analysis of the singular locus of the
varieties underlying the proofs of Theorems \ref{th: mean intro} and
\ref{th: variance intro} requires the study of discriminant locus of
the family of polynomials under consideration, namely the union of
the zero locus of the discriminants of all these polynomials. Such a
discriminant locus has been considered in \cite{FrSm84}, where it is
shown that it is absolutely irreducible for fields of characteristic
large enough. In an appendix we show that the discriminant locus is
absolutely irreducible for  fields of characteristic at least 3,
extending thus the main result of \cite{FrSm84}.
%
%---------------------------------------------------------------------
%---------------------------------------------------------------------
%---------------------------------------------------------------------
%---------------------------------------------------------------------
%---------------------------------------------------------------------
%---------------------------------------------------------------------
%---------------------------------------------------------------------
%---------------------------------------------------------------------
%
\section{Notions and notations}
Since our approach relies on tools of algebraic geometry, we briefly
collect the basic definitions and facts that we need in the sequel.
We use standard notions and notations which can be found in, e.g.,
\cite{Kunz85}, \cite{Shafarevich94}.

We denote by $\A^n$ the {\sf affine $n$--dimensional space}
$\cfq{\!}^{n}$ and by $\Pp^n$ the {\sf projective $n$--dimensional
space} over $\cfq{\!}^{n+1}$. Both spaces are endowed with their
respective {\sf Zariski topologies}, for which a closed set is the
zero locus of polynomials of $\cfq[X_1,\ldots, X_{n}]$ or of
homogeneous polynomials of  $\cfq[X_0,\ldots, X_{n}]$. For $\K:=\fq$
or $\K:=\cfq$, we say that a subset $V\subset \A^n$ is an {\sf
affine $\K$--variety} if it is the set of common zeros in $\A^n$ of
polynomials $F_1,\ldots, F_{m} \in \K[X_1,\ldots, X_{n}]$.
Correspondingly, a {\sf projective $\K$--variety} is the set of
common zeros in $\Pp^n$ of a family of homogeneous polynomials
$F_1,\ldots, F_m \in\K[X_0 ,\ldots, X_n]$. We shall denote by
$V(F_1\klk F_m)$ or $\{F_1=0,\dots,F_s=0\}$ the affine or projective
$\K$--variety consisting of the common zeros of polynomials $F_1\klk
F_m$. The set $V(\fq):=V\cap \fq^n$ is the set of {\sf $q$--rational
points} of $V$.

A $\K$--variety $V$ is $\K$--{\sf irreducible} if it cannot be
expressed as a finite union of proper $\K$--subvarieties of $V$.
Further, $V$ is {\sf absolutely irreducible} if it is irreducible as
a $\cfq$--variety. Any $\K$--variety $V$ can be expressed as an
irredundant union $V=\mathcal{C}_1\cup \cdots\cup\mathcal{C}_s$ of
irreducible (absolutely irreducible) $\K$--varieties, unique up to
reordering, which are called the {\sf irreducible} ({\sf absolutely
irreducible}) $\K$--{\sf components} of $V$.

%The set $V(\fq):=V\cap \fq^n$ is the set of {\sf $q$--rational
%points} of $V$. Studying the number of elements of $V(\fq)$ is a
%classical problem. The existence of $q$--rational points depends
%upon many circumstances concerning the geometry of the underlying
%variety.

For a $\K$-variety $V$ contained in $\A^n$ or $\Pp^n$, we denote by
$I(V)$ its {\sf defining ideal}, namely the set of polynomials of
$\K[X_1,\ldots, X_n]$, or of $\K[X_0,\ldots, X_n]$, vanishing on
$V$. The {\sf coordinate ring} $\K[V]$ of $V$ is defined as the
quotient ring $\K[X_1,\ldots,X_n]/I(V)$ or
$\K[X_0,\ldots,X_n]/I(V)$. The {\sf dimension} $\dim V$ of a
$\K$-variety $V$ is the length $r$ of the longest chain
$V_0\varsubsetneq V_1 \varsubsetneq\cdots \varsubsetneq V_r$ of
nonempty irreducible $\K$-varieties contained in $V$. A
$\K$--variety is called {\sf equidimensional} if all its irreducible
$\K$--components are of the same dimension.

The {\sf degree} $\deg V$ of an irreducible $\K$-variety $V$ is the
maximum number of points lying in the intersection of $V$ with a
linear space $L$ of codimension $\dim V$, for which $V\cap L$ is a
finite set. More generally, following \cite{Heintz83} (see also
\cite{Fulton84}), if $V=\mathcal{C}_1\cup\cdots\cup \mathcal{C}_s$
is the decomposition of $V$ into irreducible $\K$--components, we
define the degree of $V$ as
$$\deg V:=\sum_{i=1}^s\deg \mathcal{C}_i.$$
An important tool for our estimates is the following {\em B\'ezout
inequality} (see \cite{Heintz83}, \cite{Fulton84}, \cite{Vogel84}):
if $V$ and $W$ are $\K$--varieties, then the following inequality
holds:
\begin{equation}\label{eq: Bezout inequality}
\deg (V\cap W)\le \deg V \cdot \deg W.
\end{equation}

%We shall also make use of the following well--known identities
%relating the degree of an affine $\K$--variety $V \subset \A^n$, the
%degree of its projective closure (with respect to the projective
%Zariski $\K$--topology) $\overline{V} \subset \Pp^n$ and the degree
%of the affine cone $\widetilde{V}$ of $\overline{V}$ (see, e.g.,
%\cite[Proposition 1.11]{CaGaHe91}):
%
%\begin{equation}\label{eq: degree projective closure}
%$$\deg V= \deg \overline{V}= \deg\tilde{V}.$$
%\end{equation}

Elements $F_1 \klk F_{n-r}$ in $\K[X_1\klk X_n]$ or in $\K[X_0\klk
X_n]$ form a {\sf regular sequence} if $F_1$ is nonzero and each
$F_i$ is not a zero divisor in the quotient ring $\K[X_1\klk
X_n]/(F_1\klk F_{i-1})$ or $\K[X_0\klk X_n]/(F_1\klk F_{i-1})$ for
$2\le i\le n-r$. In such a case, the (affine or projective)
$\K$--variety $V:=V(F_1\klk F_{n-r})$ they define is equidimensional
of dimension $r$, and is called a {\sf set--theoretic} {\sf complete
intersection}. If the ideal $(F_1\klk F_{n-r})$ generated by
$F_1\klk F_{n-r}$ is radical, then we say that $V$ is an {\sf
ideal--theoretic} {\sf complete intersection}. If $V\subset\Pp^n$ is
an ideal--theoretic complete intersection defined over $\K$, of
dimension $r$ and degree $\delta$, and $F_1 \klk F_{n-r}$ is a
system of generators of $I(V)$, the degrees $d_1\klk d_{n-r}$ depend
only on $V$ and not on the system of generators. Arranging the $d_i$
in such a way that $d_1\geq d_2 \geq \cdots \geq d_{n-r}$, we call
$\boldsymbol{d}:=(d_1\klk d_{n-r})$ the {\sf multidegree} of $V$. In
particular, it follows that $\delta= \prod_{i=1}^{n-r}d_i$ holds.

Let $V$ be a variety contained in $\A^n$ and let $I(V)\subset
\cfq[X_1,\ldots, X_n]$ be the defining ideal of $V$. Let $\bfs{x}$
be a point of $V$. The {\sf dimension} $\dim_{\bfs{x}}V$ {\sf of}
$V$ {\sf at} $\bfs{x}$ is the maximum of the dimensions of the
irreducible components of $V$ that contain $\bfs{x}$. If
$I(V)=(F_1,\ldots, F_m)$, the {\sf tangent space}
$\mathcal{T}_{\bfs{x}}V$ to $V$ at $\bfs{x}$ is the kernel of the
Jacobian matrix $(\partial F_i/\partial X_j)_{1\le i\le m,1\le j\le
n}(\bfs{x})$ of the polynomials $F_1,\ldots, F_m$ with respect to
$X_1,\ldots, X_n$ at $\bfs{x}$. The point $\bfs{x}$ is {\sf regular}
if $\dim\mathcal{T}_{\bfs{x}}V=\dim_{\bfs{x}}V$ holds. Otherwise,
the point $\bfs{x}$ is called {\sf singular}. The set of singular
points of $V$ is the {\sf singular locus} $\mathrm{Sing}(V)$ of $V$.
A variety is called {\sf nonsingular} if its singular locus is
empty. For a projective variety, the concepts of tangent space,
regular and singular point can be defined by considering an affine
neighborhood of the point under consideration.

Let $V$ and $W$ be irreducibles $\K$--varieties of the same
dimension and let $f:V\to W$ be a regular map for which
$\overline{f(V)}=W$ holds, where $\overline{f(V)}$ denotes the
closure of $f(V)$ with respect to the Zariski $\K$--topology of $W$.
Then $f$ induces a ring extension $\K[W]\hookrightarrow \K[V]$ by
composition with $f$. We say that $f$ is a {\sf finite morphism} if
this extension is integral, namely if each element $\eta\in\K[V]$
satisfies a monic equation with coefficients in $\K[W]$. A basic
fact is that a finite morphism is necessarily closed. Another fact
concerning finite morphisms we shall use in the sequel is that the
preimage $f^{-1}(S)$ of an irreducible closed subset $S\subset W$ is
equidimensional of dimension $\dim S$.
%
%---------------------------------------------------------------------
%---------------------------------------------------------------------
%---------------------------------------------------------------------
%---------------------------------------------------------------------
%---------------------------------------------------------------------
%---------------------------------------------------------------------
%---------------------------------------------------------------------
%---------------------------------------------------------------------
%
\section{Estimating the mean $\mathcal{V}(d,s,\boldsymbol{a})$: a
geometric approach}\label{sec: geometric approach mean}
Let be given $s,d\in \N$ with $d<q$ and $1\le s\le d-2$ and
$\boldsymbol{a}:=(a_{d-1}\klk a_{d-s})\in\fq^s$. Denote
$f_{\bfs{a}}:=T^d+a_{d-1}T^{d-1}\plp a_{d-s}T^{d-s}$.
For every $\bfs{b}:=(b_{d-s-1}\klk b_1)\in\fq^{d-s-1}$, denote by
$f_{\bfs{b}}:=f_{\bfs{b}}^{\bfs{a}}\in\fq[T]$ the following
polynomial:
$$f_{\bfs{b}}:=f_{\bfs{a}}+
b_{d-s-1}T^{d-s-1}\plp b_1T.$$
Our first objective is to determine the asymptotic behavior of the
average value set
$$\mathcal{V}(d,s,\bfs{a}):=\frac{1}{q^{d-s-1}}
\sum_{\boldsymbol{b}\in\fq^{d-s-1}}\mathcal{V}(f_{\boldsymbol{b}}).$$
For this purpose, we have the following result.
\begin{theorem}[{\cite[Theorem 2.1]{CeMaPePr13}}]
\label{th: combinatorial reduction mean} With assumptions as above,
we have
\begin{equation}
  \mathcal{V}(d,s,\boldsymbol{a})= \sum_{r=1}^{d-s}(-1)^{r-1}
  \binom{q}{r}q^{1-r}
    +\frac{1}{q^{d-s-1}}\sum_{r=d-s+1}^{d}
   (-1)^{r-1}\chi_r^{\boldsymbol{a}},
\end{equation}
where $\chi_r ^{\boldsymbol{a}}$ denotes the number of subsets
$\chi_r$ of $\fq$ of $r$ elements such that there exists
$(\boldsymbol{b},b_0)\in \fq^{d-s}$  for which
$(f_{\boldsymbol{b}}+b_0)|_{\chi_r}\equiv 0$ holds.
\end{theorem}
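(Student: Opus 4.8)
The plan is to reduce the averaging over $\bfs{b}$ to the counting of a family of interpolation problems. First I would record, for a single polynomial $f\in\fq[T]$ of degree $d$, the inclusion--exclusion identity
\[
\mathcal{V}(f)=\sum_{r=1}^{d}(-1)^{r-1}S_r(f),
\]
where $S_r(f)$ denotes the number of $r$--element subsets $\chi_r\subset\fq$ on which $f$ is constant. To prove this, for $v\in\fq$ put $m_v:=|f^{-1}(v)|$, so that $S_r(f)=\sum_{v\in\fq}\binom{m_v}{r}$; since $\sum_{r\ge1}(-1)^{r-1}\binom{m}{r}=1-(1-1)^m$ equals $1$ when $m\ge1$ and $0$ when $m=0$, summing over $v$ gives $\sum_{r\ge1}(-1)^{r-1}S_r(f)=|\{v:m_v\ge1\}|=\mathcal{V}(f)$. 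As $m_v\le d$ for every $v$, only the terms with $1\le r\le d$ survive.

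Next I would average this identity over $\bfs{b}\in\fq^{d-s-1}$ and interchange the two summations, obtaining
\[
\mathcal{V}(d,s,\bfs{a})=\sum_{r=1}^{d}(-1)^{r-1}\,\frac{1}{q^{d-s-1}}\sum_{\chi_r}N(\chi_r),
\]
where the inner sum ranges over all $r$--subsets $\chi_r\subset\fq$ and $N(\chi_r)$ counts the $\bfs{b}\in\fq^{d-s-1}$ for which $f_{\bfs{b}}$ is constant on $\chi_r$. The decisive book-keeping step is to reinterpret $N(\chi_r)$: recalling $f_{\bfs{b}}=f_{\bfs{a}}+b_{d-s-1}T^{d-s-1}\plp b_1T$, the polynomial $f_{\bfs{b}}$ is constant on $\chi_r$ with value $v$ exactly when the degree $\le d-s-1$ polynomial $g:=b_{d-s-1}T^{d-s-1}\plp b_1T+b_0$, with $b_0:=-v$, satisfies $g|_{\chi_r}\equiv-f_{\bfs{a}}|_{\chi_r}$; since $b_0$ is determined by $\bfs{b}$ and $\chi_r$, the map $\bfs{b}\mapsto(\bfs{b},b_0)$ is a bijection between the $\bfs{b}$ counted by $N(\chi_r)$ and the polynomials of degree at most $d-s-1$ interpolating $-f_{\bfs{a}}$ (equivalently, after negating, $f_{\bfs{a}}$) at the points of $\chi_r$. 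This also reconciles the two descriptions of $\chi_r^{\bfs{a}}$ used in the paper.

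It then remains to evaluate $N(\chi_r)$ in the two regimes. For $1\le r\le d-s$ the linear map sending a polynomial of degree $\le d-s-1$ to its vector of values at the $r$ distinct points of $\chi_r$ is surjective from $\fq^{d-s}$ onto $\fq^{r}$, since the $r\times r$ minor on the monomials $1,T,\dots,T^{r-1}$ is a nonzero Vandermonde determinant; hence every fibre, in particular the one over $-f_{\bfs{a}}|_{\chi_r}$, has exactly $q^{d-s-r}$ points, so $N(\chi_r)=q^{d-s-r}$ for \emph{every} $\chi_r$, and the contribution of such an $r$ is $\binom{q}{r}q^{d-s-r}/q^{d-s-1}=\binom{q}{r}q^{1-r}$. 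For $d-s+1\le r\le d$ the interpolant is unique when it exists, because the difference of two interpolants would be a polynomial of degree $\le d-s-1$ vanishing at $r\ge d-s+1$ points, hence zero; thus $N(\chi_r)\in\{0,1\}$, it equals $1$ precisely when $\chi_r$ is an interpolating set for $f_{\bfs{a}}$, so $\sum_{\chi_r}N(\chi_r)=\chi_r^{\bfs{a}}$ and the contribution is $\chi_r^{\bfs{a}}/q^{d-s-1}$. Collecting the two regimes yields the stated formula. I do not expect a genuine obstacle here: the argument is elementary, and the only places demanding care are the identification of $N(\chi_r)$ with an interpolant count via the extra coefficient $b_0$, and keeping the underdetermined case ($r\le d-s$, Vandermonde surjectivity) cleanly separated from the overdetermined one ($r\ge d-s+1$, uniqueness).
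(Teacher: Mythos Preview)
Your argument is correct. Note, however, that this paper does not actually supply a proof of the statement: it is quoted verbatim as \cite[Theorem 2.1]{CeMaPePr13} and used as input for the rest of the analysis. So there is no ``paper's own proof'' to compare against here; what you have written is precisely the standard derivation one would expect to find in the cited companion paper---inclusion--exclusion on the level sets of a single $f$, averaging over $\bfs{b}$, and then splitting the interpolation count according to whether the Vandermonde system is underdetermined ($r\le d-s$, giving $q^{d-s-r}$ solutions) or overdetermined ($r\ge d-s+1$, giving at most one). Your bookkeeping with the extra coefficient $b_0$ and the identification of $N(\chi_r)$ with the number of degree $\le d-s-1$ interpolants of $-f_{\bfs a}$ (equivalently $f_{\bfs a}$) on $\chi_r$ is exactly the point, and you handled it cleanly.
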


According to this result, we have to determine the asymptotic
behavior of $\chi_r ^{\boldsymbol{a}}$ for $d-s+1\leq r\leq d$. In
\cite{CeMaPePr13} we introduce an affine $\fq$--variety
$V_r^{\bfs{a}}\subset\A^r$ in such a way that the number of
$q$--rational points of $V_r^{\bfs{a}}$ with pairwise distinct
coordinates agrees with the number $\chi_r^{\bfs{a}}$. In the sequel
we follow a different approach, considering the incidence variety
consisting of the set of triples
$(\bfs{b},b_0,\alpha_1\klk\alpha_r)$ with
$\chi_r:=\{\alpha_1\klk\alpha_r\}$ and
$(f_{\boldsymbol{b}}+b_0)|_{\chi_r}\equiv 0$.

Fix $r$ with $d-s+1\leq r\leq d$. Let
$T,T_1,\ldots,T_r,B_{d-s-1},\ldots,B_1,B_0$ be new indeterminates
over $\cfq$, let $\boldsymbol{T}:=(T_1,\ldots,T_r)$,
$\boldsymbol{B}:=(B_{d-s-1},\ldots,B_{1})$ and
$\bfs{B}_0:=(\bfs{B},B_0)$, and let $F\in\fq[\boldsymbol{B}_0,T]$ be
the polynomial defined in the following way:
\begin{equation}\label{eq: definition of gB}
F:=T^d+ \displaystyle\sum_{i=d-s}^{d-1}a_i T^i +
\displaystyle\sum_{i=1}^{d-s-1}B_i T^i +B_0.\
\end{equation}
Finally, we consider the affine quasi-$\fq$-variety
$\Gamma_{r}\subset \A^{d-s+r}$ defined as follows:
$$
\Gamma_{r}:=\{(\boldsymbol{b}_{0}, \boldsymbol{\alpha})
\in\A^{d-s}\times\A^r:F({\boldsymbol{b}_{0},\alpha_j})=0\ (1\leq
j\leq r),\ \alpha_i\neq\alpha_j\ (1\le i<j\le r)\}.
$$
Our next result relates the number $|\Gamma_{r}(\fq)|$ of
$q$--rational points of $\Gamma_r$ with $\chi_r^{\bfs{a}}$.
\begin{lemma}\label{lemma: relacion entre gamma y chi}
Let $r$ be an integer with  $d-s+1\leq
r\leq d$. Then the following identity holds:
$$\frac{|\Gamma_{r}(\fq)|}{r!}=\chi_r ^{\boldsymbol{a}}.$$
\end{lemma}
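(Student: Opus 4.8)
The plan is to set up a surjection from $\Gamma_r(\fq)$ onto the set of ``interpolating subsets'' counted by $\chi_r^{\bfs{a}}$, and to show that every fibre of this surjection has exactly $r!$ elements. This will immediately yield the claimed identity $|\Gamma_r(\fq)|/r! = \chi_r^{\bfs{a}}$.

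First I would define the map $\pi\colon\Gamma_r(\fq)\to\{\text{$r$-element subsets of }\fq\}$ by sending a point $(\bfs{b}_0,\bfs{\alpha})=(\bfs{b},b_0,\alpha_1\klk\alpha_r)$ to the set $\chi_r:=\{\alpha_1\klk\alpha_r\}$. Since the defining conditions of $\Gamma_r$ force the coordinates $\alpha_1\klk\alpha_r$ to be pairwise distinct, the image $\chi_r$ genuinely has $r$ elements; and since $F(\bfs{b}_0,\alpha_j)=0$ for $1\le j\le r$ means precisely that the polynomial $f_{\bfs{b}}+b_0$ (which is exactly $F$ evaluated at the coefficient vector $\bfs{b}_0$, viewed as a polynomial in $T$) vanishes at each $\alpha_j$, we get $(f_{\bfs{b}}+b_0)|_{\chi_r}\equiv 0$. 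Hence $\chi_r$ lies in the set counted by $\chi_r^{\bfs{a}}$, so $\pi$ is well defined into that set.

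Next I would check that $\pi$ is onto: given any $r$-element subset $\chi_r=\{\alpha_1\klk\alpha_r\}\subset\fq$ admitting $(\bfs{b},b_0)\in\fq^{d-s}$ with $(f_{\bfs{b}}+b_0)|_{\chi_r}\equiv 0$, the tuple $(\bfs{b},b_0,\alpha_1\klk\alpha_r)$ — for any chosen ordering of the elements of $\chi_r$ — is a point of $\Gamma_r(\fq)$ mapping to $\chi_r$. Finally, for the fibre count: fix such a $\chi_r$ and a point $(\bfs{b},b_0,\bfs{\alpha})$ in its fibre. The key observation is that the coefficient vector $(\bfs{b},b_0)$ is uniquely determined by $\chi_r$. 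Indeed, any polynomial of the form $f_{\bfs{b}}+b_0 = T^d+\sum_{i=d-s}^{d-1}a_iT^i + \sum_{i=1}^{d-s-1}b_iT^i + b_0$ vanishing on $\chi_r$ is congruent to $f_{\bfs{a}}=T^d+\sum_{i=d-s}^{d-1}a_iT^i$ modulo the degree-$r$ polynomial $\prod_{j=1}^r(T-\alpha_j)$; since $r\ge d-s+1 > d-s-1+1$, the remainder of $f_{\bfs{a}}$ upon division by $\prod_{j=1}^r(T-\alpha_j)$ has degree $<r$, and requiring this remainder to equal $-(b_{d-s-1}T^{d-s-1}\plp b_1 T + b_0)$, a polynomial of degree $\le d-s-1 < r$, pins down $(\bfs{b},b_0)$ uniquely (here $q>d\ge r$ guarantees $\prod_{j=1}^r(T-\alpha_j)$ is well defined and the $\alpha_j$ exist in $\fq$). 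Therefore a point of the fibre over $\chi_r$ is determined exactly by the choice of an ordering of the $r$ elements of $\chi_r$, and there are precisely $r!$ such orderings, all yielding distinct points since the $\alpha_j$ are pairwise distinct. Thus every nonempty fibre has cardinality $r!$, and combining this with surjectivity gives $|\Gamma_r(\fq)| = r!\cdot\chi_r^{\bfs{a}}$, as desired.

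I expect the only subtle point to be the uniqueness of $(\bfs{b},b_0)$ — i.e. verifying that the hypothesis $d-s+1\le r$ is exactly what is needed so that the low-degree part of $f_{\bfs{b}}+b_0$ is forced by interpolation on $\chi_r$. Everything else is a routine bookkeeping of the bijection between orderings and fibre elements.
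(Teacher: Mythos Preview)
Your proof is correct and follows essentially the same approach as the paper: the paper phrases the argument as the free action of the symmetric group $\mathbb{S}_r$ on $\Gamma_r(\fq)$ by permuting the $\alpha$-coordinates, with orbits in bijection with the interpolating sets, while you phrase it as a surjection onto the interpolating sets whose fibres all have size $r!$; these are the same idea. Your explicit justification of the uniqueness of $(\bfs{b},b_0)$ via division by $\prod_j(T-\alpha_j)$ is a welcome elaboration of a point the paper simply asserts (note a small slip in wording: $f_{\bfs{b}}+b_0$ is congruent to $0$, not to $f_{\bfs{a}}$, modulo $\prod_j(T-\alpha_j)$, but your subsequent reasoning uses the correct relation $f_{\bfs{a}}\equiv -(b_{d-s-1}T^{d-s-1}+\cdots+b_0)$).
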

\begin{proof}
Let $(\boldsymbol{b}_{0}, \boldsymbol{\alpha})$ be an arbitrary
point of $\Gamma_{r}(\fq)$ and let
$\sigma:\{1,\dots,r\}\to\{1,\dots,r\}$ be an arbitrary permutation.
Let $\sigma(\boldsymbol{\alpha})$ be the image of
$\boldsymbol{\alpha}$ by the linear mapping induced by $\sigma$.
Then it is easy to see that $\big(\boldsymbol{b}_{0},
\sigma(\boldsymbol{\alpha})\big)$ is also a point of
$\Gamma_{r}(\fq)$. Furthermore,
$\sigma(\boldsymbol{\alpha})=\boldsymbol{\alpha}$ if and only if
$\sigma$ is the identity permutation. This shows that
$\mathbb{S}_r$, the symmetric group of $r$ elements, acts over the
set $\Gamma_{r}(\fq)$ and each orbit under this action has $r!$
elements.

The orbit of an arbitrary point $(\boldsymbol{b}_{0},
\boldsymbol{\alpha})\in \Gamma_{r}(\fq)$ uniquely determines a
polynomial $F({\boldsymbol{b}_0, T)}\in\fq[T]$ and a set
$\chi_r:=\{\alpha_1,\dots,\alpha_r\}\subset\fq$ with $|\chi_r|=r$
and $F({\boldsymbol{b}_0,T})|_{\chi_r}\equiv 0$. On the other hand,
each subset $\chi_r:=\{\alpha_1,\dots,\alpha_r\}$ as in the
statement of Theorem \ref{th: combinatorial reduction mean}
determines a unique $\bfs{b}_0\in\fq^{d-s}$ such that the polynomial
$F(\bfs{b}_0,T)$ vanishes on $\chi_r$, and thus a unique orbit as
above. This implies that the number of orbits of $\Gamma_{r}(\fq)$
is equal to $\chi_r ^{\boldsymbol{a}}$ and finishes the proof of the
lemma.
\end{proof}

In order to estimate the quantity $|\Gamma_{r}(\fq)|$ we shall
consider the Zariski closure $\mathrm{cl}(\Gamma_r)$ of
$\Gamma_r\subset \A^{d-s+r}$. In order to determine equations
defining $\mathrm{cl}(\Gamma_r)$, we shall use the following
notation. Let $T,X_1\klk X_{l+1}$ be indeterminates over $\cfq$ and
let $f\in\cfq[T]$ be a polynomial of degree at most $l$. For
notational convenience, we define the 0th divided difference
$\Delta^0f\in\cfq[X_1]$ of $f$ as $\Delta^0f:=f(X_1)$. Further, for
$1\le i\le l$ we define the $i$th divided difference
$\Delta^if\in\cfq[X_1\klk X_{i+1}]$ of $f$ as
$$\Delta^if(X_1,\ldots,X_{i+1})=\dfrac{\Delta^{i-1}f(X_1,\ldots,X_i)-
\Delta^{i-1}f(X_1,\ldots,X_{i-1},X_{i+1})}{X_i-X_{i+1}}.$$

With these notations, we define the following affine $\fq$--variety
$\Gamma^*_{r}\subset \A^{d-s+r}$:
$$
\Gamma_{r}^*:=\{(\boldsymbol{b}_{0},\boldsymbol{\alpha})
\in\A^{d-s}\times\A^r:\Delta^{i-1}F(\boldsymbol{b}_0,
\alpha_1\klk\alpha_i)=0\ (1\leq i\leq r) \},$$
where $\Delta^{i-1}F(\boldsymbol{b}_{0},T_1,\ldots,T_i)$  denotes
the $(i-1)$--divided difference of $F(\boldsymbol{b}_{0},T)$ $\in
\cfq[T]$. The relation between the varieties $\Gamma_{r}$ and
$\Gamma_{r}^*$ is expressed in the following result.
\begin{lemma}\label{lemma: relacion gamma_r y gamma_r estrella}
With notations and assumptions as above, we have the following
identity:
\begin{equation}\label{eq: relacion gamma_r y gamma_r estrella}
\Gamma_{r}=\Gamma_{r}^*\cap\{(\bfs{b}_0,\bfs{\alpha}):\alpha_i\neq\alpha_j\
(1\le i<j\le r)\}.
\end{equation}
\end{lemma}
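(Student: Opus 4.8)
The plan is to prove the set-theoretic identity \eqref{eq: relacion gamma_r y gamma_r estrella} by a two-sided inclusion, the essential content being that, \emph{on the open locus where the coordinates $\alpha_1\klk\alpha_r$ are pairwise distinct}, the system of divided-difference equations $\Delta^{i-1}F(\bfs b_0,\alpha_1\klk\alpha_i)=0$ ($1\le i\le r$) is equivalent to the system of interpolation conditions $F(\bfs b_0,\alpha_j)=0$ ($1\le j\le r$). First I would recall the Newton interpolation identity for the polynomial $g:=F(\bfs b_0,T)\in\cfq[T]$ (of degree at most $d$ in $T$): for any pairwise-distinct $\alpha_1\klk\alpha_r$,
$$g(T)=\sum_{i=1}^{r}\Delta^{i-1}g(\alpha_1\klk\alpha_i)\prod_{j=1}^{i-1}(T-\alpha_j)+\Delta^{r}g(\alpha_1\klk\alpha_r,T)\prod_{j=1}^{r}(T-\alpha_j),$$
which holds identically in $T$ (when $\alpha_i$ are distinct the divided differences are well defined and this is the standard Newton form with remainder). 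Evaluating at $T=\alpha_k$ kills the remainder term and all summands with $i>k$, giving a triangular relation between the vector $\big(g(\alpha_1)\klk g(\alpha_r)\big)$ and the vector $\big(\Delta^0g(\alpha_1)\klk\Delta^{r-1}g(\alpha_1\klk\alpha_r)\big)$ whose transition matrix is lower-triangular with $1$'s on the diagonal, hence invertible. This shows both systems cut out the same subset of the distinct-coordinates locus.

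Concretely, for the inclusion $\Gamma_r\subset\Gamma_r^*\cap\{\alpha_i\neq\alpha_j\}$: given $(\bfs b_0,\bfs\alpha)\in\Gamma_r$ we have $\alpha_i\neq\alpha_j$ for $i<j$ by definition, and $g(\alpha_j)=F(\bfs b_0,\alpha_j)=0$ for all $j$; running the triangular system from $i=1$ upward — $\Delta^0g(\alpha_1)=g(\alpha_1)=0$, and then each $\Delta^{i-1}g(\alpha_1\klk\alpha_i)$ is a combination of $g(\alpha_1)\klk g(\alpha_i)$, all zero — yields $\Delta^{i-1}F(\bfs b_0,\alpha_1\klk\alpha_i)=0$ for $1\le i\le r$, so the point lies in $\Gamma_r^*$. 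Conversely, for $(\bfs b_0,\bfs\alpha)\in\Gamma_r^*$ with pairwise-distinct coordinates: the divided differences $\Delta^{i-1}g(\alpha_1\klk\alpha_i)$ are legitimate (distinctness makes the recursive definition well posed), all vanish by hypothesis, so Newton's formula evaluated at $T=\alpha_k$ forces $g(\alpha_k)=0$, i.e. $F(\bfs b_0,\alpha_k)=0$ for $1\le k\le r$; together with the distinctness this is exactly membership in $\Gamma_r$.

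The one genuinely delicate point — and the step I would be most careful about — is that the divided differences $\Delta^{i-1}F$ appearing in the definition of $\Gamma_r^*$ are, as written, polynomials in $\cfq[\bfs B_0,T_1\klk T_i]$ obtained by the recursive quotient formula, and one must check that these quotients are actually polynomial (no denominators survive) so that $\Gamma_r^*$ is a bona fide affine $\fq$-variety and its equations restrict sensibly to the distinct-coordinates locus. This is classical: since $\Delta^{i-1}g$ is symmetric and the numerator in the recursion vanishes when $X_i=X_{i+1}$, the factor $X_i-X_{i+1}$ divides it in the polynomial ring, so each $\Delta^{i}F$ is a well-defined element of $\cfq[\bfs B_0,T_1\klk T_{i+1}]$ (and has $\fq$-coefficients, since $F$ does). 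With this in hand, the equivalence of the two polynomial systems \emph{as functions on the locus $\{\alpha_i\neq\alpha_j\}$} is precisely the invertible triangular change of coordinates described above, and \eqref{eq: relacion gamma_r y gamma_r estrella} follows. I would remark in passing that this identity is the reason $\Gamma_r^*$, rather than $\Gamma_r$ itself, is the right object to analyze geometrically: $\Gamma_r^*$ is globally defined by $r$ equations of controlled degrees and will turn out to be a complete intersection, while $\Gamma_r$ differs from it only by removing the diagonal hyperplanes.
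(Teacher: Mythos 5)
Your proof is correct and follows essentially the same route as the paper: both directions come down to the triangular relation between the values $F(\bfs b_0,\alpha_k)$ and the divided differences $\Delta^{i-1}F(\bfs b_0,\alpha_1\klk\alpha_i)$ on the locus of pairwise-distinct coordinates (the paper phrases the reverse inclusion as an induction on $k$ rather than via the Newton remainder formula, but the content is identical). One tiny inaccuracy: the diagonal entries of your transition matrix are $\prod_{j=1}^{i-1}(\alpha_i-\alpha_j)$, not $1$; they are nonzero by distinctness, so invertibility — and hence your argument — still holds.
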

\begin{proof}
Let $(\boldsymbol{b}_{0}, \boldsymbol{\alpha})$ be a point  of
$\Gamma_r$. By the definition of the divided differences of
$F(\boldsymbol{b}_0,T)$ we easily conclude that
$(\boldsymbol{b}_{0}, \boldsymbol{\alpha})\in \Gamma_r^*$. On the
other hand, let $(\boldsymbol{b}_{0}, \boldsymbol{\alpha})$ be a
point belonging to the set of the right--hand side of (\ref{eq:
relacion gamma_r y gamma_r estrella}). We claim that
$F(\boldsymbol{b}_0,\alpha_k)=0$ for $1\le k\le r$. We observe that
$F(\boldsymbol{b}_0,\alpha_1)=
\Delta^0F(\boldsymbol{b}_{0},\alpha_1)=0$. Arguing inductively,
suppose that we have $F(\boldsymbol{b}_0,\alpha_1)=\cdots
=F(\boldsymbol{b}_0,\alpha_{i-1})=0$. By definition we conclude that
the quantity $\Delta^{i-1}F(\boldsymbol{b}_{0},
\alpha_1\cdots\alpha_i)$ can be expressed as a linear combination
with nonzero coefficients of the differences
$F(\boldsymbol{b}_0,\alpha_{j+1})-F(\boldsymbol{b}_0,\alpha_j)$ with
$1\le j\le i-1$. Therefore, combining the inductive hypothesis with
the fact that $\Delta^{i-1}F(\boldsymbol{b}_{0},
\alpha_1\klk\alpha_i)=0$, we easily conclude
$F(\boldsymbol{b}_0,\alpha_i)=0$, finishing thus the proof of the
claim.
\end{proof}
%
%---------------------------------------------------------------------
%---------------------------------------------------------------------
%---------------------------------------------------------------------
%---------------------------------------------------------------------
%---------------------------------------------------------------------
%---------------------------------------------------------------------
%---------------------------------------------------------------------
%---------------------------------------------------------------------
%
\section{Geometry of the variety $\Gamma_r^*$}
\label{sec: geometry of Gamma_r estrella}
From now on we assume that the characteristic $p$ of $\fq$ is
strictly greater than 2. This section is devoted to establish a
number of facts concerning the geometry of the affine $\fq$--variety
$\Gamma_r^*$. We first show that the defining polynomials of
$\Gamma_r^*$ form a regular sequence, which in particular allows us
to determine the dimension of $\Gamma_r^*$. Then we analyze the
singular locus $\Gamma_r^*$, showing that it has codimension at
least 2 in $\Gamma_r^*$. Finally, we show a number of results
concerning the projective closure $\mathrm{pcl}(\Gamma_r^*)$ of
$\Gamma_r^*$ and the set of points of $\mathrm{pcl}(\Gamma_r^*)$ at
infinity. The final outcome is that both $\mathrm{pcl}(\Gamma_r^*)$
and the set of points of $\mathrm{pcl}(\Gamma_r^*)$ at infinity are
normal complete intersections, which will allow us to obtain a
suitable estimate on the number of $q$--rational points of
$\Gamma_r^*$.
\begin{lemma}\label{lemma: Gamma r is set-theoret complete intersection}
$\Gamma_{r}^{*}$ is a (set-theoretic) complete intersection of
dimension $d-s$.
\end{lemma}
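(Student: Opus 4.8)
The plan is to show that the $r$ polynomials $\Delta^{i-1}F(\bfs{B}_0,T_1,\ldots,T_i)$, $1\le i\le r$, defining $\Gamma_r^*$ in $\A^{d-s+r}$ form a regular sequence in $\cfq[\bfs{B}_0,\bfs{\alpha}]$; since the ambient space has dimension $d-s+r$ and we are cutting by $r$ equations, this immediately gives that $\Gamma_r^*$ is a set-theoretic complete intersection of dimension $d-s$. To prove regularity, I would argue that at each stage the $i$th equation is not a zero divisor modulo the first $i-1$, which (over a domain-like situation) reduces to showing that the successive varieties $V(\Delta^0F,\ldots,\Delta^{i-1}F)$ are equidimensional of the expected codimension $i$. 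The key structural observation is that $\Delta^{i-1}F(\bfs{B}_0,T_1,\ldots,T_i)$ is, up to the unit coming from the denominators $\prod(T_j-T_k)$ cleared away, a polynomial that genuinely involves the variable $T_i$ (it has degree roughly $d-i$ in $T_i$, with a nonzero leading term inherited from $T^d$), and it is the only one of the $r$ defining equations that involves $T_i$ at all among $T_i,\ldots,T_r$. Equivalently, $\Delta^{i-1}F$ involves the block of variables $(B_0,\ldots,B_{i-2})$ linearly together with the pure power $T^d$-contribution, and in particular, after the obvious triangular change of coordinates, the system becomes "triangular" in a suitable monomial order.

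Concretely, here is the step-by-step scheme I would follow. First, record the explicit shape of $\Delta^{i-1}F$: using linearity of divided differences and the formula $\Delta^{i-1}(T^k)=h_{k-i+1}(T_1,\ldots,T_i)$ (the complete homogeneous symmetric polynomial, zero if $k<i-1$), one gets
\begin{equation*}
\Delta^{i-1}F(\bfs{B}_0,T_1,\ldots,T_i)=h_{d-i+1}(T_1,\ldots,T_i)+\sum_{j=d-s}^{d-1}a_j\,h_{j-i+1}(T_1,\ldots,T_i)+\sum_{j=i-1}^{d-s-1}B_j\,h_{j-i+1}(T_1,\ldots,T_i),
\end{equation*}
where the $B_0$-term appears only for $i=1$ (as $B_0$ itself) and the coefficient of $B_{i-1}$ is $h_0=1$. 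Second, observe from this that for $2\le i\le d-s$ the polynomial $\Delta^{i-1}F$ contains the monomial $B_{i-1}$ with coefficient $1$ and no other $\Delta^{j-1}F$ with $j<i$ involves $B_{i-1},\ldots,B_{d-s-1}$; hence the first $d-s$ equations can be solved triangularly for $B_0,\ldots,B_{d-s-1}$ in terms of $(T_1,\ldots,T_{d-s})$, so $V(\Delta^0F,\ldots,\Delta^{d-s-1}F)$ is a graph over the $(T_1,\ldots,T_r)$-space, isomorphic to $\A^r$, in particular irreducible of dimension $r$. Third, for $d-s+1\le i\le r$, the equation $\Delta^{i-1}F=0$ no longer contains any $B_j$; here I argue directly that, after substituting the triangular solution for the $B$'s, the residual equations $\overline{\Delta^{d-s}F}=\cdots=\overline{\Delta^{r-1}F}=0$ in $\cfq[T_1,\ldots,T_r]$ form a regular sequence — and this is where the $T_i$-leading-term remark does the work, since $\overline{\Delta^{i-1}F}$ has positive degree in $T_i$ with invertible (constant) leading coefficient, so one can view the quotient ring as finite over $\cfq[T_1,\ldots,T_{d-s}]$ and each new equation genuinely kills one more dimension. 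Counting: $r$ equations total, ambient dimension $d-s+r$, so $\dim\Gamma_r^*=d-s$, and equidimensionality is automatic for a set-theoretic complete intersection.

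The main obstacle I anticipate is the third step: verifying that the residual polynomials $\overline{\Delta^{d-s}F},\ldots,\overline{\Delta^{r-1}F}$ really do form a regular sequence in $\cfq[T_1,\ldots,T_r]$, rather than merely that the total variety has the right dimension (which by itself would only give a set-theoretic complete intersection after additionally checking equidimensionality). The cleanest route is probably to avoid the substitution altogether and instead run a single induction on $i$ proving that $V_i:=V(\Delta^0F,\ldots,\Delta^{i-1}F)\subset\A^{d-s+r}$ is equidimensional of dimension $d-s+r-i$: for $i\le d-s$ this is the triangular graph argument above; for the inductive step with $i>d-s$, one uses that $\Delta^{i-1}F$ has a term $h_{d-i+1}(T_1,\ldots,T_i)$ whose leading monomial in $T_i$ is $T_i^{d-i+1}$ with coefficient $1$, hence $\Delta^{i-1}F$ vanishes on no irreducible component of $V_{i-1}$ (any such component, having dimension $\ge 1$ and being a graph in the $B$-coordinates, surjects onto a positive-dimensional subset of the $T$-coordinates, on which a monic-in-$T_i$ polynomial cannot vanish identically), so by Krull's principal ideal theorem each component of $V_i=V_{i-1}\cap\{\Delta^{i-1}F=0\}$ has dimension exactly $d-s+r-i$. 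Taking $i=r$ yields $\dim\Gamma_r^*=d-s$ and equidimensionality, which is exactly the assertion that the $r$ polynomials form a regular sequence, i.e. $\Gamma_r^*$ is a set-theoretic complete intersection of dimension $d-s$.
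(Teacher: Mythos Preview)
Your approach is correct in spirit and genuinely different from the paper's, but there is one slip in the bookkeeping. You claim that for $j<i$ the polynomial $\Delta^{j-1}F$ does not involve $B_{i-1},\ldots,B_{d-s-1}$; this is backwards. From your own formula, $\Delta^{j-1}F$ involves precisely $B_{j-1},B_j,\ldots,B_{d-s-1}$, so the earlier equations \emph{do} see the higher-indexed $B$'s. What is true is that $\Delta^{j-1}F$ with $j>i$ does not involve $B_{i-1}$ (or any lower-indexed $B$). The triangular solve still works, only in the opposite order: solve $\Delta^{d-s-1}F=0$ for $B_{d-s-1}$, then $\Delta^{d-s-2}F=0$ for $B_{d-s-2}$, and so on down to $B_0$. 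With this correction your graph argument for $i\le d-s$ goes through, and the rest of your induction (using that $\Delta^{i-1}F$ is monic of degree $d-i+1$ in $T_i$, while the earlier equations do not involve $T_i$ at all, so $V_{i-1}$ is a cylinder in the $T_i$-direction) is fine.

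The paper takes a rather different, shorter route: it fixes the graded lexicographic order with $T_r>\cdots>T_1>B_{d-s-1}>\cdots>B_0$, observes that the leading term of $\Delta^{i-1}F$ is $T_i^{d-i+1}$, and since these leading terms are pairwise coprime the $\Delta^{i-1}F$ form a Gr\"obner basis of the ideal they generate. The initial ideal is then generated by the regular sequence $T_1^d,\ldots,T_r^{d-r+1}$, and an appeal to a standard transfer principle (Eisenbud, Proposition 15.15) yields that the $\Delta^{i-1}F$ themselves form a regular sequence. Your argument is more geometric and self-contained (it needs only Krull's principal ideal theorem, not Gr\"obner bases or the regular-sequence transfer result), and it gives the bonus information that $V(\Delta^0F,\ldots,\Delta^{d-s-1}F)\cong\A^r$. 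The paper's argument is terser and has the side benefit of exhibiting a Gr\"obner basis, which it reuses later (Lemma~\ref{lemma: pcl Gamma r is ideal-theoret complete inters}) to control the projective closure.
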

\begin{proof}
Consider the graded lexicographic order of
$\cfq[\boldsymbol{B}_{0},\boldsymbol{T}]$ with $T_r>\cdots>T_1
>B_{d-s-1} >\cdots>B_{0}$. It is easy to see that for each $i$ the
polynomial $\Delta^{i-1}F(\boldsymbol{B}_0,T_1,\ldots,T_i)$ has
degree $d-i+1$ in the variables $\boldsymbol{T}$ and the monomial
$T_i^{d-i+1}$ arises in the dense representation of such a
polynomial with nonzero coefficient. We deduce that the leading term
of $\Delta^{i-1}F(\boldsymbol{B}_{0},T_1,\ldots,T_i)$ is
$T_i^{d-i+1}$ for $1\leq i\leq r$ in the monomial order defined
above. Hence the leading terms of
$\Delta^{i-1}F(\boldsymbol{B}_{0},T_1,\ldots,T_i)$ $(1\leq i \leq
r)$ are relatively prime and thus they form a Gr\"obner basis of the
ideal $\mathcal{J}$ that they generate (see, e.g., \cite[\S 2.9,
Proposition 4]{CoLiOS92}), the initial ideal of $\mathcal{J}$ being
generated by $\{T_i^{d-i+1}:1\leq i\leq r\}$. Furthermore, since
$\{T_i^{d-i+1}:1\leq i \leq r\}$ form a regular sequence of
$\cfq[\boldsymbol{B}_0,\boldsymbol{T}]$, from, e.g.,
\cite[Proposition 15.15]{Eisenbud95}, we conclude that
$\{\Delta^{i-1}F(\boldsymbol{B}_0,T_1,\ldots,T_i):1\leq i \leq r\}$
also form a regular sequence. This finishes the proof of the lemma.
\end{proof}
%
%---------------------------------------------------------------------
%---------------------------------------------------------------------
%---------------------------------------------------------------------
%---------------------------------------------------------------------
%
\subsection{The dimension of the singular locus of $\Gamma_{r }^*$ and consequences}
As asserted above, we shall study the dimension of the singular
locus of $\Gamma_r^{*}$. Our aim is to show that such a singular
locus has codimension at least $2$ in $\Gamma_r^{*}$.

We start with following simple criteria of nonsingularity.
\begin{lemma}\label{lemma: jacobian_F full rank implies nonsingular}
Let $J_F\in\fq[\boldsymbol{B}_{0},\bfs{T}]^{r\times(d-s+r)}$ be the
Jacobian matrix of the polynomials $F(\boldsymbol{B}_0,T_i)$ $(1
\leq i \leq r)$ with respect to $\boldsymbol{B}_{0},\boldsymbol{T}$
and let $(\boldsymbol{b}_{0},\boldsymbol{\alpha})$ be an arbitrary
point of $\Gamma_r^*$. If $J_F$ evaluated at
$(\boldsymbol{b}_{0},\boldsymbol{\alpha})$ has full rank, then
$(\boldsymbol{b}_{0},\boldsymbol{\alpha})$ is a nonsingular point of
$\Gamma_r^*$.
\end{lemma}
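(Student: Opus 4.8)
The plan is to compare the two varieties $\Gamma_r^*$ and $W := V\bigl(F(\boldsymbol{B}_0,T_i): 1\le i\le r\bigr)$, whose defining ideals generate the same radical on the open set where the coordinates $\alpha_1,\dots,\alpha_r$ are pairwise distinct (this is essentially the content of Lemma~\ref{lemma: relacion gamma_r y gamma_r estrella}, or can be seen directly: the divided differences are obtained from the $F(\boldsymbol{b}_0,\alpha_i)$ by an invertible triangular transformation over $\cfq$ once the $\alpha_i$ are distinct). The point $\boldsymbol{\alpha}$ of $(\boldsymbol{b}_0,\boldsymbol{\alpha})\in\Gamma_r^*$ need \emph{not} have distinct coordinates, so one must be slightly careful, but the Jacobian criterion is local and the hypothesis is about the rank of $J_F$ at the given point, so I would argue as follows.

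First I would note that $\Gamma_r^*$ is equidimensional of dimension $d-s$ by Lemma~\ref{lemma: Gamma r is set-theoret complete intersection}, so $\dim_{(\boldsymbol{b}_0,\boldsymbol{\alpha})}\Gamma_r^* = d-s$ at every point. Hence, by the definition of regular point, it suffices to show that the tangent space $\mathcal{T}_{(\boldsymbol{b}_0,\boldsymbol{\alpha})}\Gamma_r^*$ has dimension exactly $d-s$, equivalently that the Jacobian matrix of a set of generators of $I(\Gamma_r^*)$ at the point has rank $r$ (the codimension). Since $\Gamma_r^*$ is cut out set-theoretically by the $r$ divided differences $\Delta^{i-1}F(\boldsymbol{B}_0,T_1,\dots,T_i)$, its defining ideal contains these, and the tangent space is contained in the kernel of the corresponding $r\times(d-s+r)$ Jacobian matrix $J_\Delta$; so it is enough to show $\rank J_\Delta(\boldsymbol{b}_0,\boldsymbol{\alpha}) = r$ whenever $\rank J_F(\boldsymbol{b}_0,\boldsymbol{\alpha}) = r$. (One does not even need $I(\Gamma_r^*)$ to be generated by the divided differences: $\dim\mathcal{T}_{\bfs x}\Gamma_r^*\le \dim\ker J_\Delta(\bfs x)\le d-s = \dim_{\bfs x}\Gamma_r^*\le\dim\mathcal{T}_{\bfs x}\Gamma_r^*$ forces equality.)

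The heart of the matter is then a linear-algebra fact about the two Jacobians. The vector of divided differences $\bigl(\Delta^0F,\dots,\Delta^{r-1}F\bigr)$ is related to $\bigl(F(\boldsymbol{B}_0,T_1),\dots,F(\boldsymbol{B}_0,T_r)\bigr)$ by an upper-triangular linear change with diagonal entries $1$ and off-diagonal entries rational functions whose denominators are products of the $T_i-T_j$ --- but at a point where some $\alpha_i=\alpha_j$ this change of variables degenerates. I would instead clear denominators: there is a polynomial identity expressing $\prod_{i<j\le k}(T_i-T_j)\cdot\Delta^{k-1}F$ as an explicit $\cfq[\boldsymbol{T}]$-linear combination of $F(\boldsymbol{B}_0,T_1),\dots,F(\boldsymbol{B}_0,T_k)$; differentiating and using that the $\Delta^{i-1}F$ and the $F(\boldsymbol{B}_0,T_i)$ all vanish at the point, I get $J_\Delta = M\cdot J_F$ at the point $(\boldsymbol{b}_0,\boldsymbol{\alpha})$ for a lower-triangular matrix $M\in\cfq^{r\times r}$ whose diagonal entry in row $k$ is $\prod_{1\le i<j\le k}(\alpha_i-\alpha_j)$. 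The subtle point --- and the one I'd expect to be the main obstacle --- is precisely what happens when this $M$ is singular, i.e. when $\boldsymbol{\alpha}$ has a repeated coordinate: then $J_\Delta = M J_F$ only gives $\rank J_\Delta \le \rank J_F$, the wrong direction. To get around this I would observe that since the $\{T_i^{d-i+1}\}$ are the leading terms of the $\Delta^{i-1}F$ (Lemma~\ref{lemma: Gamma r is set-theoret complete intersection}), the $r\times r$ minor of $J_\Delta$ in the columns $\partial/\partial T_1,\dots,\partial/\partial T_r$ is, up to a unit, $\prod_i(d-i+1)\alpha_i^{d-i}$ plus lower-order contributions, which one can control; alternatively, and more cleanly, I would restrict attention to the columns of $J_F$ corresponding to $\partial/\partial B_0,\partial/\partial B_1,\dots,\partial/\partial B_{d-s-1}$ together with $\partial/\partial T_i$'s, exploiting that $\partial F/\partial B_j = T^j$, so that the relevant submatrix of $J_F$ contains a Vandermonde-type block in the $\alpha_i$ and its rank is transparent. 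In short, I'd prove the contrapositive via the identity $J_\Delta = M\, J_F$ together with a direct inspection of the triangular structure of $J_\Delta$ in the $\boldsymbol{T}$-columns, which shows that the Jacobian $J_\Delta$ of the divided differences has full rank $r$ exactly when $J_F$ does, and conclude nonsingularity of $(\boldsymbol{b}_0,\boldsymbol{\alpha})$.
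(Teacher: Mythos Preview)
Your proposal has a genuine gap, and the fix is much simpler than the route you are taking.

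You correctly observe that it suffices to bound $\dim\mathcal{T}_{(\boldsymbol{b}_0,\boldsymbol{\alpha})}\Gamma_r^*\le d-s$. But then you set out to prove $\rank J_\Delta(\boldsymbol{b}_0,\boldsymbol{\alpha})=r$ whenever $\rank J_F(\boldsymbol{b}_0,\boldsymbol{\alpha})=r$, and this is where the argument goes wrong. Your identity $J_\Delta=M\,J_F$ (with $M$ lower--triangular whose diagonal entries are products of $\alpha_i-\alpha_j$) only yields $\rank J_\Delta\le\rank J_F$, precisely the wrong inequality. You recognize this, but the proposed workarounds do not close the gap: the ``leading term'' inspection of the $T$--columns of $J_\Delta$ is vague (what if some $\alpha_i=0$, or lower--order terms dominate?), and the Vandermonde remark concerns $J_F$ again, not $J_\Delta$. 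Your final claim that $J_\Delta$ has full rank \emph{exactly} when $J_F$ does is asserted but not proved.

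The paper's argument bypasses all of this with a single observation you already have the ingredients for but do not use: the Newton interpolation identity
\[
F(\boldsymbol{B}_0,T_i)=\sum_{k=1}^{i}\Delta^{k-1}F(\boldsymbol{B}_0,T_1,\dots,T_k)\,\prod_{j=1}^{k-1}(T_i-T_j)
\]
is a \emph{polynomial} identity, so each $F(\boldsymbol{B}_0,T_i)$ lies in the ideal $\mathcal{J}$ generated by the divided differences, hence in $I(\Gamma_r^*)$. Consequently the tangent space $\mathcal{T}_{(\boldsymbol{b}_0,\boldsymbol{\alpha})}\Gamma_r^*$ is contained \emph{directly} in $\ker J_F(\boldsymbol{b}_0,\boldsymbol{\alpha})$, with no need to compare $J_F$ with $J_\Delta$. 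If $J_F$ has full rank $r$, its kernel has dimension $d-s$, so $\dim\mathcal{T}_{(\boldsymbol{b}_0,\boldsymbol{\alpha})}\Gamma_r^*\le d-s=\dim_{(\boldsymbol{b}_0,\boldsymbol{\alpha})}\Gamma_r^*$, and the point is regular. That is the whole proof.
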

\begin{proof}
Considering the Newton form of the polynomial interpolating
$F(\boldsymbol{b}_0,T)$ at $\alpha_1\klk\alpha_r$ we easily deduce
that $F(\boldsymbol{b}_{0},\alpha_i)=0$ for $1 \leq i \leq r$. This
shows that $F(\boldsymbol{B}_{0},T_i)$ vanishes on $\Gamma_r^*$ for
$1\le i\le r$. As a consequence, any element of the tangent space
$\mathcal{T}_{(\boldsymbol{b}_{0},\boldsymbol{\alpha})}\Gamma_r^*$
of $\Gamma_r^*$ at $(\boldsymbol{b}_{0},\boldsymbol{\alpha})$
belongs to the kernel of the Jacobian matrix
$J_F(\boldsymbol{b}_{0},\boldsymbol{\alpha})$.

By hypothesis, the $\big(r\times (d-s+r)\big)$--matrix
$J_F(\boldsymbol{b}_{0},\boldsymbol{\alpha})$ has full rank $r$, and
thus, its kernel has dimension $d-s$. We conclude that the tangent
space
$\mathcal{T}_{(\boldsymbol{b}_{0},\boldsymbol{\alpha})}\Gamma_r^*$
has dimension at most $d-s$. Since $\Gamma_r^*$ is equidimensional
of dimension $d-s$, it follows that
$(\boldsymbol{b}_{0},\boldsymbol{\alpha})$ is a nonsingular point of
$\Gamma_r^*$.
\end{proof}

Let $(\boldsymbol{b}_{0},\boldsymbol{\alpha})$ be an arbitrary point
of $\Gamma_r^*$ with
$\boldsymbol{\alpha}:=(\alpha_1,\ldots,\alpha_r)$, and let
$f_{\boldsymbol{b}_0}:=F(\boldsymbol{b}_{0},T)$. Then the Jacobian
matrix $J_F$ evaluated at $(\boldsymbol{b}_{0},\boldsymbol{\alpha})$
has the following form:
$$J_F(\boldsymbol{b}_{0},\boldsymbol{\alpha}):=\left(\begin{array}{cccccccc}
  \alpha_1^{d-s-1}  & \ldots  & \alpha_1  & 1  & f_{\boldsymbol{b}_0}'(\alpha_1)& 0 & \cdots & 0\\
  \vdots            && \vdots    &\vdots  &  0   & \ddots             & \ddots& \vdots\\
  \vdots            && \vdots    &\vdots  &  \vdots   &  \ddots       & \ddots    &0  \\
   \alpha_r^{d-s-1} & \ldots  & \alpha_r  & 1 &0 & \cdots& 0 & f_{\boldsymbol{b}_0}'(\alpha_r)
\end{array}\right).
$$
We observe that, if all the roots in $\cfq$ of
$f_{\boldsymbol{b}_0}$ are simple, then
$J_F(\boldsymbol{b}_{0},\boldsymbol{\alpha})$ has full rank and
$(\boldsymbol{b}_{0},\boldsymbol{\alpha})$ is a regular point of
$\Gamma_r^*$. Therefore, in order to prove that the singular locus
of $\Gamma_r^*$ is a subvariety of codimension at least $2$, it
suffices to consider the set of points
$(\boldsymbol{b}_{0},\boldsymbol{\alpha}) \in\Gamma_r^*$ for which
at least one coordinate of $\bfs{\alpha}$ is a multiple root of
$f_{\boldsymbol{b}_0}$. In particular, $f_{\boldsymbol{b}_0}$ must
have multiple roots. We start considering the ``extreme'' case where
$f'_{\boldsymbol{b}_0}$ is the zero polynomial.
\begin{lemma}\label{lemma: f'=0}
If $d-s\geq 3$, then the set $\mathcal{W}_1$ of  points
$(\boldsymbol{b}_{0},\boldsymbol{\alpha})\in\Gamma_{r}^*$ with
$f'_{\boldsymbol{b}_0}=0$ is contained in a subvariety of
codimension 2 of $\Gamma_r^*$.
\end{lemma}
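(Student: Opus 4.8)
The plan is to project $\Gamma_r^*$ onto the first $d-s$ coordinates and to observe that this projection is finite-to-one, so that any bound on the image of $\mathcal{W}_1$ translates directly into a bound on $\dim\mathcal{W}_1$. Concretely, let $\pi\colon\A^{d-s}\times\A^r\to\A^{d-s}$, $(\bfs{b}_0,\bfs{\alpha})\mapsto\bfs{b}_0$. For any point $(\bfs{b}_0,\bfs{\alpha})\in\Gamma_r^*$, the Newton interpolation identity (as already used in the proof of Lemma \ref{lemma: jacobian_F full rank implies nonsingular}), namely
$$f_{\bfs{b}_0}(T)=\sum_{i=1}^r\Delta^{i-1}F(\bfs{b}_0,\alpha_1\klk\alpha_i)\prod_{k=1}^{i-1}(T-\alpha_k)+\Delta^rF(\bfs{b}_0,\alpha_1\klk\alpha_r,T)\prod_{k=1}^r(T-\alpha_k),$$
shows, since the divided differences $\Delta^{i-1}F(\bfs{b}_0,\alpha_1\klk\alpha_i)$ vanish for $1\le i\le r$, that $f_{\bfs{b}_0}(T)=\Delta^rF(\bfs{b}_0,\alpha_1\klk\alpha_r,T)\prod_{k=1}^r(T-\alpha_k)$; this is a genuine polynomial identity in $T$ (valid also when the $\alpha_k$ coincide, since each $\Delta^j$ is a polynomial in its arguments), so each $\alpha_j$ is a root of the nonzero polynomial $f_{\bfs{b}_0}$. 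Hence every fiber of $\pi|_{\Gamma_r^*}$ has at most $d^r$ points, and by the theorem on the dimension of fibers, $\dim W\le\dim\overline{\pi(W)}$ for every subvariety $W\subseteq\Gamma_r^*$.

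Next I would identify $\pi(\mathcal{W}_1)$. Write $f_{\bfs{b}_0}=T^d+\sum_{i=d-s}^{d-1}a_iT^i+\sum_{i=1}^{d-s-1}b_iT^i+b_0$. If $p\nmid d$, or if some fixed coefficient $a_i$ with $p\nmid i$ is nonzero, then $f'_{\bfs{b}_0}$ can never vanish and $\mathcal{W}_1=\emptyset$, so there is nothing to prove. Otherwise, comparing coefficients, $f'_{\bfs{b}_0}=0$ forces in particular its constant and linear coefficients, namely $b_1$ and $2b_2$, to vanish; since $p>2$ this gives $b_1=b_2=0$. Because $d-s\ge 3$, both $B_1$ and $B_2$ occur among the coordinates $B_{d-s-1}\klk B_0$ of $\A^{d-s}$, so $\pi(\mathcal{W}_1)$ is contained in the linear subspace $L:=\{\bfs{b}_0\in\A^{d-s}:b_1=b_2=0\}$, which has dimension $d-s-2$.

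Combining the two steps: $\mathcal{W}_1$ is a closed $\fq$--subvariety of $\Gamma_r^*$ (it is the intersection of $\Gamma_r^*$ with the linear equations expressing that all coefficients of $f'_{\bfs{B}_0}$ vanish) with $\pi(\mathcal{W}_1)\subseteq L$, whence $\dim\mathcal{W}_1\le\dim L=d-s-2$. Since $\Gamma_r^*$ is equidimensional of dimension $d-s$ by Lemma \ref{lemma: Gamma r is set-theoret complete intersection}, the variety $\mathcal{W}_1$ — or, if one prefers a variety properly containing it, $\Gamma_r^*\cap\{B_1=B_2=0\}$ — has codimension at least $2$ in $\Gamma_r^*$, which is the assertion. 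I do not anticipate a real obstacle here; the two points needing a little care are the validity of the Newton identity with its remainder term when the $\alpha_k$ are allowed to coincide (so that the finite-fiber claim holds on all of $\Gamma_r^*$, not merely on $\Gamma_r$), and checking that $d-s\ge 3$ is precisely what makes $B_1$ and $B_2$ available as coordinates of $\A^{d-s}$ (together with $p>2$, which is what turns $2b_2=0$ into $b_2=0$).
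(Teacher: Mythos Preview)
Your argument is correct and follows essentially the same route as the paper's proof: both project $\Gamma_r^*$ onto the $\bfs{B}_0$--coordinates, observe that this projection has finite fibers (the paper phrases this as ``$\Psi_r$ is a finite morphism'', using that $F(\bfs{B}_0,T_j)$ is monic in $T_j$, while you obtain the same via the Newton identity), note that $f'_{\bfs{b}_0}=0$ forces $b_1=b_2=0$ when $d-s\ge 3$ and $p>2$, and conclude that $\mathcal{W}_1$ lies in the preimage of a codimension--$2$ linear space. The only cosmetic difference is that the paper invokes the finite--morphism property directly to control the dimension of the preimage, whereas you use the fiber--dimension inequality; the content is the same.
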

\begin{proof}
Consider the morphism of $\fq$-varieties defined as follows:
\begin{equation}\label{eq: morfismo finito - mean}
\begin{array}{rccl}
\Psi_r:& {\Gamma}_{r}^{*}& \longrightarrow& \A^{d-s}\\
   &(\boldsymbol{b}_{0},\boldsymbol{\alpha})& \mapsto&\boldsymbol{b}_{0}.
\end{array}
\end{equation}
We claim that $\Psi_r$ is a finite morphism. In order to prove this
claim, it is enough to show that the coordinate function $t_j$ of
$\cfq[\Gamma_r^*]$ defined by $T_j$ satisfies a monic equation with
coefficients in $\cfq[\bfs{B}_0]$ for $1\le j\le r$. Since the
polynomial $F(\bfs{B}_0,T_j)$ vanishes on $\Gamma_r^*$ and is a
monic element of $\cfq[\bfs{B}_0][T_j]$, it provides the monic
equation annihilating $t_j$ that we are looking for.

Since $d-s\geq 3$, then $d-s-1\ge 2$ and the condition
$f'_{\boldsymbol{b}_0}=f_{\bfs{a}}'+\sum_{j=1}^{d-s-1}jb_jT^{j-1}=0$
implies $b_1=b_2=0$. It follows that the set of points
$(\boldsymbol{b}_{0},\boldsymbol{\alpha})\in\Gamma_{r}^*$ with
$f'_{\boldsymbol{b}_0}=0$ is a subset of
$\Psi_r^{-1}(\mathcal{Z}_{1,2})$, where
$\mathcal{Z}_{1,2}\subset\A^{d-s}$ is the variety of dimension
$d-s-2$ defined by the equations $B_1=B_2=0$.  Taking into account
that $\Psi_r$ is a finite morphism we deduce that
$\Psi_r^{-1}(\mathcal{Z}_{1,2})$ has dimension $d-s-2$.
\end{proof}

In what follows we shall assume that $f'_{\boldsymbol{b}_0}$ is
nonzero and $f_{\boldsymbol{b}_0}$ has multiple roots. We analyze
the case where exactly one of the coordinates of $\bfs{\alpha}$ is a
multiple root of $f_{\boldsymbol{b}_0}$.
\begin{lemma}\label{lemma: only one multiple root - mean}
Suppose that $f'_{\boldsymbol{b}_0}\not=0$ and there exists a unique
coordinate $\alpha_i$ of $\bfs{\alpha}$ which is a multiple root of
$f_{\boldsymbol{b}_0}$. Then $(\boldsymbol{b}_0,\bfs{\alpha})$ is a
regular point of $\Gamma_r^{*}$.
\end{lemma}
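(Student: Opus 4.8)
The plan is to use Lemma \ref{lemma: jacobian_F full rank implies nonsingular}: it suffices to show that the Jacobian matrix $J_F(\boldsymbol{b}_0,\boldsymbol{\alpha})$ displayed just above the statement has full rank $r$ under the hypothesis that exactly one coordinate $\alpha_i$ of $\boldsymbol{\alpha}$ is a multiple root of $f_{\boldsymbol{b}_0}$ (and $f'_{\boldsymbol{b}_0}\neq 0$). Looking at the displayed form of $J_F$, the last $r$ columns carry the values $f'_{\boldsymbol{b}_0}(\alpha_1),\dots,f'_{\boldsymbol{b}_0}(\alpha_r)$ on the diagonal. Since $\alpha_i$ is the \emph{only} multiple root among the coordinates, we have $f'_{\boldsymbol{b}_0}(\alpha_j)\neq 0$ for all $j\neq i$, while $f'_{\boldsymbol{b}_0}(\alpha_i)=0$. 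Thus, after deleting the $i$th row and the $(d-s+i)$th column, the remaining $(r-1)\times(r-1)$ block coming from those derivative columns is diagonal with nonzero entries; so $J_F$ has rank at least $r-1$ automatically, and the whole question reduces to producing one more independent column.

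The key step is therefore: show that the row vector obtained from the $i$th row of $J_F$, namely $(\alpha_i^{d-s-1},\dots,\alpha_i,1,0,\dots,0)$ — with the derivative entry in position $d-s+i$ equal to zero — is not in the span of the other rows modulo the already-used columns. Concretely, I would argue as follows. A linear dependence among the rows of $J_F$ is a vector $(\lambda_1,\dots,\lambda_r)$ with $\sum_j \lambda_j (\text{row}_j)=0$. Reading off the last $r$ coordinates gives $\lambda_j f'_{\boldsymbol{b}_0}(\alpha_j)=0$ for every $j$, hence $\lambda_j=0$ for all $j\neq i$ (as $f'_{\boldsymbol{b}_0}(\alpha_j)\neq 0$ there). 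It then remains to see $\lambda_i=0$: reading any of the first $d-s$ coordinates gives $\lambda_i\,\alpha_i^{k}=0$ (for the appropriate power $k$, in particular the constant column gives $\lambda_i\cdot 1=0$), so $\lambda_i=0$. Hence the rows are linearly independent, $J_F(\boldsymbol{b}_0,\boldsymbol{\alpha})$ has full rank $r$, and Lemma \ref{lemma: jacobian_F full rank implies nonsingular} yields that $(\boldsymbol{b}_0,\boldsymbol{\alpha})$ is a regular point of $\Gamma_r^*$.

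The main obstacle is essentially bookkeeping rather than a genuine difficulty: one must be careful that $i$ ranges over $1\le i\le r$ and that the uniqueness hypothesis is used correctly — it guarantees $f'_{\boldsymbol{b}_0}(\alpha_j)\neq 0$ for all $j\neq i$, which is exactly what makes the derivative columns do the work. I would also note explicitly that we need $r\ge 1$ and that the constant column (the all-ones column, present because $\deg(f_{\boldsymbol{b}_0})=d>d-s-1\ge 0$, so the coefficient of $T^0$ in the interpolation block is genuinely there) is what pins down $\lambda_i$; no hypothesis on the characteristic beyond $p>2$ is needed here, and one does not even need $d-s\ge 3$ in this particular lemma. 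This completes the argument.
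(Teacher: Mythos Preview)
Your proof is correct and follows essentially the same approach as the paper. The paper assumes without loss of generality that $i=1$ and exhibits an explicit nonsingular $r\times r$ submatrix of $J_F(\boldsymbol{b}_0,\boldsymbol{\alpha})$, namely the one formed by the constant (all-ones) column together with the last $r-1$ derivative columns; your row-independence argument uses exactly the same columns to pin down the coefficients $\lambda_j$ and is just a reformulation of the same idea.
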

\begin{proof}
Assume without loss of generality that $\alpha_1$ is the only
multiple root of $f_{\boldsymbol{b}_0}$ among the coordinates of
$\bfs{\alpha}$. According to Lemma \ref{lemma: jacobian_F full rank
implies nonsingular}, it suffices to show that the Jacobian matrix
$J_F(\boldsymbol{b}_0,\bfs{\alpha})$ has full rank. For this
purpose, we observe that the $(r\times r)$--submatrix of
$J_F(\boldsymbol{b}_0,\bfs{\alpha})$ consisting of the $(d-s)$th
column and the last $r-1$ columns of
$J_F(\boldsymbol{b}_0,\bfs{\alpha})$, namely
$$\hat{J}_F(\boldsymbol{b}_0,\bfs{\alpha}):=\left(\begin{array}{cccccccc}
 1                & 0& 0                      & \cdots & 0\\
 1                & f_{\boldsymbol{b}_0}'(\alpha_2)& 0                      & \cdots & 0\\
\vdots  &  0   & \ddots       & \ddots   &\vdots \\
\vdots  &  \vdots   &  \ddots      &  \ddots  &0 \\
 1 &0 & \cdots& 0& f_{\boldsymbol{b}_0}'(\alpha_r)
\end{array}\right),
$$
%\end{equation}$$
%
is nonsingular. Indeed, by hypothesis $\alpha_i$ is a simple root of
$f'_{\boldsymbol{b}_0}$, which implies
$f'_{\boldsymbol{b}_0}(\alpha_i)\neq 0$ for $i\ge 2$. We conclude
that $J_F(\boldsymbol{b}_0,\bfs{\alpha})$ has full rank.
\end{proof}

The next case to be discussed is the one when two distinct multiple
roots of $f_{\boldsymbol{b}_0}$ occur among the coordinates of
$\bfs{\alpha}$.
\begin{lemma}\label{lemma: two distinct multiple roots - mean}
Let $\mathcal{W}_2$ denote the set of points
$(\boldsymbol{b}_0,\bfs{\alpha})\in \Gamma_{r}^{*}$ for which there
exist $1\le i<j\le r$ such that $\alpha_i\not=\alpha_j$ and
$\alpha_i,\alpha_j$ are multiple roots of $f_{\boldsymbol{b}_0}$.
Then $\mathcal{W}_2$ is contained in a subvariety of codimension 2
of $\Gamma_{r}^{*}$.
\end{lemma}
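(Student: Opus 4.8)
The plan is to use again the finite morphism $\Psi_r:\Gamma_r^*\to\A^{d-s}$ of \eqref{eq: morfismo finito - mean}, now to push the analysis of $\mathcal{W}_2$ down to a condition on $\bfs{b}_0$ alone. Since $\Psi_r$ is finite, for any subvariety $\mathcal{Z}\subset\A^{d-s}$ the preimage $\Psi_r^{-1}(\mathcal{Z})$ is equidimensional of dimension $\dim\mathcal{Z}$; so it suffices to exhibit a subvariety $\mathcal{Z}\subset\A^{d-s}$ of dimension at most $d-s-2$ with $\Psi_r(\mathcal{W}_2)\subset\mathcal{Z}$, i.e. such that whenever $f_{\bfs{b}_0}$ has two distinct multiple roots then $\bfs{b}_0\in\mathcal{Z}$. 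The natural candidate for $\mathcal{Z}$ is the locus where both the discriminant of $f_{\bfs{b}_0}$ and the discriminant of $f'_{\bfs{b}_0}$ vanish; equivalently one can phrase it via the first subdiscriminant of $f_{\bfs{b}_0}$, which vanishes precisely when $f_{\bfs{b}_0}$ has at least two repeated roots (counted with multiplicity) — this is exactly the condition forced on $\mathcal{W}_2$, since two distinct multiple roots $\alpha_i\neq\alpha_j$ give $\gcd(f_{\bfs{b}_0},f'_{\bfs{b}_0})$ of degree $\geq 2$.

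Concretely, I would work with the variety $\mathcal{Z}\subset\A^{d-s}$ cut out by the two equations $\mathrm{Res}_T(F(\bfs{B}_0,T),F_T(\bfs{B}_0,T))=0$ and $\mathrm{Res}_T(F_T(\bfs{B}_0,T),F_{TT}(\bfs{B}_0,T))=0$, where $F_T,F_{TT}$ denote the first and second partial derivatives of $F$ with respect to $T$; both resultants are polynomials in $\cfq[\bfs{B}_0]$ since $F$ is monic in $T$. Any $\bfs{b}_0$ with $f_{\bfs{b}_0}$ having two distinct multiple roots lies in $\mathcal{Z}$: the first equation forces $f_{\bfs{b}_0}$ to have a multiple root, and the second forces $f'_{\bfs{b}_0}$ to have a multiple root as well, which is the generic shape of the obstruction (the remaining degenerate configurations, such as a root of multiplicity $\geq 3$, must be absorbed into $\mathcal{Z}$ too, but they only make $\mathcal{Z}$ smaller, not larger). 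Hence $\mathcal{W}_2\subset\Psi_r^{-1}(\mathcal{Z})$, and it remains to bound $\dim\mathcal{Z}$.

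The crux is therefore showing $\dim\mathcal{Z}\le d-s-2$, i.e. that these two resultant equations are "independent" — that $\mathcal{Z}$ is not all of the hypersurface $\{\mathrm{disc}(f_{\bfs{b}_0})=0\}$. This is where the absolute irreducibility of the discriminant locus enters: the paper points (in the introduction, and in its appendix extending \cite{FrSm84}) to the fact that for $p\geq 3$ the discriminant hypersurface $\mathcal{D}:=\{\mathrm{Res}_T(F,F_T)=0\}\subset\A^{d-s}$ is absolutely irreducible of dimension $d-s-1$. Given that, it is enough to verify that the second equation $\mathrm{Res}_T(F_T,F_{TT})=0$ does not vanish identically on $\mathcal{D}$ — equivalently, to produce a single point $\bfs{b}_0\in\mathcal{D}$ (a polynomial with exactly one double root and all other roots simple, and with $f'_{\bfs{b}_0}$ squarefree) at which the second resultant is nonzero. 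Such a point is easy to write down explicitly once $q>d$, and Lemma~\ref{lemma: only one multiple root - mean} is morally the statement that generic points of $\mathcal{D}$ are of this type. Then $\mathcal{Z}=\mathcal{D}\cap\{\mathrm{Res}_T(F_T,F_{TT})=0\}$ is a proper closed subvariety of the irreducible variety $\mathcal{D}$, hence $\dim\mathcal{Z}\le\dim\mathcal{D}-1=d-s-2$, and finiteness of $\Psi_r$ gives $\dim\Psi_r^{-1}(\mathcal{Z})\le d-s-2$, completing the proof.

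The main obstacle I anticipate is precisely this last independence/irreducibility step: making sure the characteristic restrictions ($p>3$, or $p=3$ with $s\le d-6$) are exactly what is needed for the discriminant locus to be absolutely irreducible and for a suitable "one simple double root, $f'$ squarefree" witness point to exist — in small characteristic $f'_{\bfs{b}_0}$ can drop degree or acquire forced multiple roots, which is what Lemma~\ref{lemma: f'=0} and the degree bookkeeping on $s$ are there to control. The rest is routine: the finite-morphism dimension transfer and the resultant computations are standard.
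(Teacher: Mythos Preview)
Your overall strategy---push the condition down via the finite morphism $\Psi_r$ and show that the image lands in a codimension--$2$ locus of $\A^{d-s}$, then invoke the absolute irreducibility of the discriminant hypersurface---is exactly the paper's approach. The gap is in your concrete choice of the second equation. You claim that if $f_{\bfs{b}_0}$ has two \emph{distinct} multiple roots then $\mathrm{Res}_T(F_T,F_{TT})|_{\bfs{B}_0=\bfs{b}_0}=0$, i.e.\ that $f'_{\bfs{b}_0}$ must itself have a multiple root. That implication is false. Take $f(T)=(T-1)^2(T+1)^2=T^4-2T^2+1$: this has the two distinct double roots $\pm 1$, but $f'(T)=4T(T-1)(T+1)$ has only simple roots, so $\mathrm{Res}(f',f'')\neq 0$. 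In general, two distinct double roots $\alpha\neq\beta$ of $f$ force $\alpha,\beta$ to be roots of $f'$, but only \emph{simple} ones (generically $f''(\alpha)\neq 0$). So your $\mathcal{Z}$ does not contain $\Psi_r(\mathcal{W}_2)$, and the containment $\mathcal{W}_2\subset\Psi_r^{-1}(\mathcal{Z})$ fails.

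The fix is precisely the tool you mentioned in passing and then set aside: the first subresultant (subdiscriminant). Two distinct multiple roots force $\deg\gcd(f_{\bfs{b}_0},f'_{\bfs{b}_0})\ge 2$, which is equivalent to the simultaneous vanishing of $\mathrm{Res}(f_{\bfs{b}_0},f'_{\bfs{b}_0})$ and $\mathrm{Subres}(f_{\bfs{b}_0},f'_{\bfs{b}_0})$. The paper takes $\mathcal{Z}_2:=\{\mathcal{R}=\mathcal{S}_1=0\}$ with $\mathcal{R}:=\mathrm{Res}_T(F,\Delta^1F)$ and $\mathcal{S}_1:=\mathrm{Subres}_T(F,\Delta^1F)$, and then checks that $\mathcal{R},\mathcal{S}_1$ form a regular sequence by a direct degree comparison in $B_1,B_2$ (using the irreducibility of $\mathcal{R}$ from the appendix): since $\max\{\deg_{B_1}\mathcal{R},\deg_{B_2}\mathcal{R}\}=d$ while $\max\{\deg_{B_1}\mathcal{S}_1,\deg_{B_2}\mathcal{S}_1\}\le d-1$, the subresultant cannot be a multiple of $\mathcal{R}$. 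Once you swap your second equation for $\mathcal{S}_1$, the rest of your outline goes through unchanged.
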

\begin{proof}
Let $(\boldsymbol{b}_0,\bfs{\alpha})$ be an arbitrary point of
$\mathcal{W}_2$. We may assume without loss of generality that
$f'_{\boldsymbol{b}_0}\not=0$ holds. Since $f_{\boldsymbol{b}_0}$
has at least two distinct multiple roots, the greatest common
divisor of $f_{\boldsymbol{b}_0}$ and $f'_{\boldsymbol{b}_0}$ has
degree at least $2$. Hence we have:
$$\mathrm{Res}(f_{\boldsymbol{b}_0},f'_{\boldsymbol{b}_0})=
\mathrm{Subres}(f_{\boldsymbol{b}_0},f'_{\boldsymbol{b}_0})=0,$$
where $\mathrm{Res}(f_{\boldsymbol{b}_0},f'_{\boldsymbol{b}_0})$ and
$\mathrm{Subres}(f_{\boldsymbol{b}_0},f'_{\boldsymbol{b}_0})$ denote
the resultant and the first--order subresultant of
$f_{\boldsymbol{b}_0}$ and $f'_{\boldsymbol{b}_0}$ respectively.
Furthermore, since $f_{\boldsymbol{b}_0}$ has degree $d$, by basic
properties of resultants and subresultants it follows that
\begin{eqnarray*}
\mathrm{Res}(f_{\boldsymbol{b}_0},
f'_{\boldsymbol{b}_0})=\mathrm{Res}(F(\boldsymbol{B}_{0},T),
\Delta^1F(\boldsymbol{B}_0,T,T),T)|_{\boldsymbol{B}_0=\boldsymbol{b}_0}\quad\ \\
\mathrm{Subres}(f_{\boldsymbol{b}_0},
f'_{\boldsymbol{b}_0})=\mathrm{Subres}(F(\boldsymbol{B}_{0},T),
\Delta^1F(\boldsymbol{B}_{0},T,T),T))|_{\boldsymbol{B}_0=\boldsymbol{b}_0},
\end{eqnarray*}
where $\mathrm{Res}(F(\boldsymbol{B}_{0},T),
\Delta^1F(\boldsymbol{B}_{0},T,T),T)$ and
$\mathrm{Subres}(F(\boldsymbol{B}_{0},T),
\Delta^1F(\boldsymbol{B}_{0},T,T),T)$ are the resultant and the
first--order subresultant of $F(\boldsymbol{B}_{0},T)$ and
$\Delta^1F(\boldsymbol{B}_{0},T,T)$ with respect to $T$. As a
consequence,
$\mathcal{W}_2\cap\Gamma_r^*\subset\Psi_r^{-1}(\mathcal{Z}_2)$,
where $\Psi_r$ is the morphism of (\ref{eq: morfismo finito - mean})
and $\mathcal{Z}_2$ is the subvariety of $\A^{d-s}$ defined by the
equations
\begin{equation}\label{eq: res y subres} \mathrm{Res}(F(\boldsymbol{B}_{0},T),
\Delta^1F(\boldsymbol{B}_{0},T,T),T)=
\mathrm{Subres}(F(\boldsymbol{B}_{0},T)
,\Delta^1F(\boldsymbol{B}_{0},T,T),T)=0.
\end{equation}

We first observe that
$\mathcal{R}:=\mathrm{Res}(F(\boldsymbol{B}_{0},T),
\Delta^1F(\boldsymbol{B}_{0},T,T),T)$ is a nonzero polynomial
because $F(\boldsymbol{B}_0,T)$ is a separable element of
$\fq[\boldsymbol{B}_0][T]$. We claim that the first--order
subresultant
$\mathcal{S}_1:=\mathrm{Subres}(F(\boldsymbol{B}_{0},T),
\Delta^1F(\boldsymbol{B}_{0},T,T),T))$ is a nonzero polynomial.
Indeed, if $p$ does not divide $d(d-1)$, then the nonzero term
$d(d-1)^{d-2}B_1^{d-2}$ occurs in the dense representation of
$\mathcal{S}_1$. On the other hand, if $p$ divides $d(d-1)$, since
$p>2$, the nonzero term $2\,(-1)^d(d-2)^{d-2}B_2^{d-1}$ arises in
the dense representation of $\mathcal{S}_1$.

We claim that the polynomials arising in (\ref{eq: res y subres})
form a regular sequence in $\fq[\boldsymbol{B}_0]$. Indeed, since
$p>2$, we have that $\mathcal{R}$ is an irreducible element of
$\cfq[\boldsymbol{B}_0]$ (see Theorem \ref{th: irred discrim}
below). If $\mathcal{S}_1$ is a zero divisor in the quotient ring
$\cfq[\boldsymbol{B}_0]/(\mathcal{R})$, then $\mathcal{S}_1$ must be
a multiple of $\mathcal{R}$ in $\cfq[\boldsymbol{B}_0]$ which is
impossible because $\max\{\deg_{B_1}\mathcal{R},
\deg_{B_2}\mathcal{R}\}=d$, while $\max\{\deg_{B_1}\mathcal{S}_1,
\deg_{B_2}\mathcal{S}_1\}\le d-1$. It follows that $\dim
\mathcal{Z}_2=d-s-2$, and hence $\dim
\Psi_r^{-1}(\mathcal{Z}_2)=d-s-2$. Therefore, $\mathcal{W}_2$ is
contained in a subvariety of $\Gamma_r^*$ of codimension 2.
\end{proof}

It remains to consider the case where only one multiple root of
$f_{\boldsymbol{b}_{0}}$ occurs among the coordinates of
$\bfs{\alpha}$, but there are at least two distinct coordinates of
$\bfs{\alpha}$ taking such a value. Then we have either that all the
remaining coordinates of $\bfs{\alpha}$ are simple roots of
$f_{\boldsymbol{b}_{0}}$, or there exists at least a third
coordinate whose value is the same multiple root. Our next result
deals with the first of these two cases.
\begin{lemma}\label{lemma: repeated multiple root I - mean} Let
$(\boldsymbol{b}_{0}, \boldsymbol{\alpha})\in \Gamma_{r}^*$ be a
point satisfying the following conditions:
\begin{itemize}
\item there exist $1\le i<j\le r$ such that $\alpha_i=\alpha_j$ and
$\alpha_i$ is a multiple root of $f_{\boldsymbol{b}_{0}}$;
\item for any $k\notin\{i,j\}$, $\alpha_k$ is a simple root of
$f_{\boldsymbol{b}_{0}}$.
\end{itemize}
Then $(\boldsymbol{b}_{0}, \boldsymbol{\alpha})$ is regular point of
$\Gamma_{r}^*$.
\end{lemma}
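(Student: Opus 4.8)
The plan is to apply the nonsingularity criterion of Lemma \ref{lemma: jacobian_F full rank implies nonsingular}, i.e.\ to show that the Jacobian matrix $J_F(\boldsymbol{b}_0,\bfs{\alpha})$ has full rank $r$ under the stated hypotheses. Relabel the coordinates so that $\alpha_1=\alpha_2$ is the (unique) multiple root of $f_{\boldsymbol{b}_0}$ appearing among the $\alpha_k$, and $\alpha_3,\dots,\alpha_r$ are pairwise distinct simple roots, all distinct from $\alpha_1$. For a simple root we have $f'_{\boldsymbol{b}_0}(\alpha_k)\neq 0$, whereas $f'_{\boldsymbol{b}_0}(\alpha_1)=f'_{\boldsymbol{b}_0}(\alpha_2)=0$. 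So in the $r\times(d-s+r)$ matrix displayed before Lemma \ref{lemma: only one multiple root - mean}, the last block of $r$ columns now has $f'_{\boldsymbol{b}_0}$-entries $0,0,f'_{\boldsymbol{b}_0}(\alpha_3),\dots,f'_{\boldsymbol{b}_0}(\alpha_r)$ on the diagonal; rows $3,\dots,r$ are already ``taken care of'' by those nonzero diagonal entries, and the remaining task is to see that rows $1$ and $2$ are linearly independent from each other (and from the rest) using the Vandermonde-type block.

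Concretely, I would argue as follows. Choose the $r\times r$ submatrix of $J_F(\boldsymbol{b}_0,\bfs{\alpha})$ formed by columns $1,2$ (the $\alpha^{d-s-1},\alpha^{d-s-2}$ columns, available since $d-s\ge 3$ so that $d-s-1\ge 2$), together with the last $r-2$ columns corresponding to $f'_{\boldsymbol{b}_0}(\alpha_3),\dots,f'_{\boldsymbol{b}_0}(\alpha_r)$. Expanding along those last $r-2$ columns, which each contain a single nonzero entry $f'_{\boldsymbol{b}_0}(\alpha_k)\neq 0$ in its own row, reduces the nonvanishing of the determinant to the nonvanishing of the $2\times 2$ minor
$$
\det\begin{pmatrix}\alpha_1^{d-s-1}&\alpha_1^{d-s-2}\\ \alpha_2^{d-s-1}&\alpha_2^{d-s-2}\end{pmatrix}
=\alpha_1^{d-s-2}\alpha_2^{d-s-2}(\alpha_1-\alpha_2).
$$
But here $\alpha_1=\alpha_2$, so this minor vanishes, and one sees that columns $1$ and $2$ restricted to rows $1,2$ are proportional — so this particular choice of columns does not work directly. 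This is the crux of the argument, and the fix is to replace one of the two Vandermonde columns by the constant column (the $1$'s column, index $d-s$): instead compare rows $1$ and $2$ through, say, columns $1$ and $d-s$. Still the two rows agree identically (since $\alpha_1=\alpha_2$), so no choice of columns from the first $d-s$ block separates rows $1$ and $2$. Hence full rank must come from using the $f'_{\boldsymbol{b}_0}$-columns of rows $1,2$ — but those are zero. So in fact $J_F(\boldsymbol{b}_0,\bfs{\alpha})$ has two identical rows and is \emph{not} of full rank, which means Lemma \ref{lemma: jacobian_F full rank implies nonsingular} does not apply and one must instead compute the tangent space directly from the defining divided-difference equations of $\Gamma_r^*$.

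Therefore the real plan is: compute $\mathcal{T}_{(\boldsymbol{b}_0,\bfs{\alpha})}\Gamma_r^*$ as the kernel of the Jacobian $J$ of the $r$ polynomials $\Delta^{i-1}F(\boldsymbol{B}_0,T_1,\dots,T_i)$ ($1\le i\le r$) with respect to $(\boldsymbol{B}_0,\boldsymbol{T})$, and show $\dim\ker J = d-s$; since $\Gamma_r^*$ is equidimensional of dimension $d-s$ (Lemma \ref{lemma: Gamma r is set-theoret complete intersection}), this forces regularity. I would exploit that $\Delta^{i-1}F$ has leading term $T_i^{d-i+1}$ (as recorded in the proof of Lemma \ref{lemma: Gamma r is set-theoret complete intersection}), so $\partial(\Delta^{i-1}F)/\partial T_i$ is generically nonzero, giving a near-triangular structure that kills most of the $\boldsymbol{T}$-directions; the delicate point is the interaction between the first two rows, where $\alpha_1=\alpha_2$. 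For $i\ge 3$ one computes $\partial(\Delta^{i-1}F)/\partial T_i$ at the point and checks it is nonzero using that $\alpha_i$ is a simple root; for $i=2$, $\Delta^1F(\boldsymbol{B}_0,T_1,T_2)=\frac{F(\boldsymbol{B}_0,T_1)-F(\boldsymbol{B}_0,T_2)}{T_1-T_2}$, whose value at $\alpha_1=\alpha_2$ equals $f'_{\boldsymbol{b}_0}(\alpha_1)=0$ but whose partials with respect to $T_1$ and $T_2$ at the point both equal $\tfrac12 f''_{\boldsymbol{b}_0}(\alpha_1)$, which is nonzero precisely because $\alpha_1$ is a \emph{double} (not higher) root of $f_{\boldsymbol{b}_0}$ — here I would invoke the hypothesis of the lemma (only one multiple root occurs, exactly twice) together with $p>2$ to guarantee $f''_{\boldsymbol{b}_0}(\alpha_1)\neq 0$. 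Combining: the partials $\partial(\Delta^{i-1}F)/\partial T_i$ at the point are nonzero for all $i\ge 2$, and row $1$ ($\Delta^0F=F(\boldsymbol{B}_0,T_1)$) has the full Vandermonde/$\boldsymbol{B}_0$ structure; a careful elimination then shows $\rank J = r$ as a linear map on the tangent space, i.e.\ the kernel has dimension $(d-s+r)-r=d-s$. The main obstacle, as indicated, is handling row $2$ at the coincidence $\alpha_1=\alpha_2$: one must pass from $F(\boldsymbol{B}_0,T_i)$ to the divided-difference generators of $I(\Gamma_r^*)$, and the nonsingularity hinges on the second-derivative computation $\partial(\Delta^1F)/\partial T_j|_{\alpha_1=\alpha_2}=\tfrac12 f''_{\boldsymbol{b}_0}(\alpha_1)\neq0$, valid since $\alpha_1$ is exactly a double root and $p>2$.
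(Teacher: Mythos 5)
Your diagnosis of why Lemma \ref{lemma: jacobian_F full rank implies nonsingular} cannot be applied directly is correct: rows $1$ and $2$ of $J_F(\boldsymbol{b}_0,\bfs{\alpha})$ coincide when $\alpha_1=\alpha_2$ and $f'_{\boldsymbol{b}_0}(\alpha_1)=0$, so one must replace some of the generators $F(\boldsymbol{B}_0,T_i)$ by divided differences. This is exactly the paper's starting point. However, your completion of the argument has a genuine gap: you make the whole proof hinge on $\partial(\Delta^1F)/\partial T_j\big|_{T_1=T_2=\alpha_1}=\tfrac12 f''_{\boldsymbol{b}_0}(\alpha_1)\neq 0$, justified by the assertion that $\alpha_1$ is \emph{exactly} a double root of $f_{\boldsymbol{b}_0}$. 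That is not what the hypotheses say. They say that exactly two \emph{coordinates} of $\bfs{\alpha}$ equal the multiple root $\alpha_1$; they impose no bound on the multiplicity of $\alpha_1$ as a root of the degree-$d$ polynomial $f_{\boldsymbol{b}_0}$ (note $r$ may be much smaller than $d$, and not every root of $f_{\boldsymbol{b}_0}$ need appear among the $\alpha_k$). If $\alpha_1$ has multiplicity $\ge 3$, then $f''_{\boldsymbol{b}_0}(\alpha_1)=0$ and your key step fails.

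The fix — and the route the paper takes — is to get the independence of the ``$\Delta^1F$'' row from the remaining rows out of the $\boldsymbol{B}_0$-block rather than the $\bfs{T}$-block. The paper works with the system $\Delta^1F(\boldsymbol{B}_0,T_1,T_2)$, $F(\boldsymbol{B}_0,T_i)$ $(2\le i\le r)$, all of which vanish on $\Gamma_r^*$, and computes
$\frac{\partial\,\Delta^1F}{\partial B_0}(\boldsymbol{b}_0,\alpha_1,\alpha_1)=0$, $\frac{\partial\,\Delta^1F}{\partial B_i}(\boldsymbol{b}_0,\alpha_1,\alpha_1)=i\alpha_1^{i-1}$ for $i\ge 1$. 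Thus the $\Delta^1F$ row has a $1$ in the $B_1$ column and a $0$ in the $B_0$ column, while the row of $F(\boldsymbol{B}_0,T_2)$ has a $1$ in the $B_0$ column (and $0$ in its $T_2$ slot since $f'_{\boldsymbol{b}_0}(\alpha_2)=0$); together with the nonzero diagonal entries $f'_{\boldsymbol{b}_0}(\alpha_i)$ for $i\ge 3$ this already gives rank $r$, with no hypothesis on $f''_{\boldsymbol{b}_0}$ and no need to compute the $T_1,T_2$-derivatives of $\Delta^1F$ at all. Your own Jacobian contains these same $B_0$- and $B_1$-entries, so your argument can be repaired by using them instead of the second-derivative claim; as written, though, the step you single out as the crux is both unjustified and unnecessary. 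A secondary point: your plan to use all $r$ divided differences $\Delta^{i-1}F$ as generators forces you to evaluate $\partial(\Delta^{i-1}F)/\partial T_i$ at a confluent node set for $i\ge 3$, which you only assert is nonzero; keeping $F(\boldsymbol{B}_0,T_i)$ for $i\ge 2$, as the paper does, avoids that computation entirely.
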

\begin{proof} The argument is similar to that of the proof of Lemma
\ref{lemma: jacobian_F full rank implies nonsingular}. Assume
without loss of generality that $i=1$ and $j=2$. We observe that the
polynomials $\Delta^1F(\boldsymbol{B}_0,T_1,T_2)$ and
$F(\boldsymbol{B}_0,T_i)$ $(2\le i\le r)$ vanish on $\Gamma_r^*$.
Therefore, the tangent space
$\mathcal{T}_{(\boldsymbol{b}_{0},\boldsymbol{\alpha})}\Gamma_r^*$
of $\Gamma_r^*$ at $(\boldsymbol{b}_{0},\boldsymbol{\alpha})$ is
included in the kernel of the Jacobian matrix
$J_{\Delta,F}(\boldsymbol{b}_{0},\boldsymbol{\alpha})$ of $\Delta^1
F(\boldsymbol{B}_0,T_1,T_2)$ and $F(\boldsymbol{B}_0,T_i)$ $(2\le
i\le r)$ with respect to $\boldsymbol{B}_0,\bfs{T}$. If
$J_{\Delta,F}(\boldsymbol{b}_{0},\boldsymbol{\alpha})$ has full rank
$r$, then its kernel has dimension $d-s$. This implies that $\dim
\mathcal{T}_{(\boldsymbol{b}_{0}, \boldsymbol{\alpha})}\Gamma_r^*
\le d-s$, which proves that $(\boldsymbol{b}_{0},
\boldsymbol{\alpha})$ is regular point of $\Gamma_r^*$.

It is easy to see that $\frac{\partial\, \Delta^1F}{\partial
B_0}(\boldsymbol{b}_{0},\alpha_1,\alpha_1)=0$ and $\frac{\partial\,
\Delta^1F}{\partial
B_i}(\boldsymbol{b}_{0},\alpha_1,\alpha_1)=i\alpha_1^{i-1}$ for
$i\ge 1$. Therefore, we have
$$J_{\Delta,F}(\boldsymbol{b}_{0},\boldsymbol{\alpha}):=\left(\begin{array}{ccccccccc}
(d-s-1)\alpha_1^{d-s-2} & \ldots  & 1  & 0  & * &* & 0 & \cdots & 0\\
\alpha_2^{d-s-1}  & \ldots  & \alpha_2  & 1  & 0 & 0 & 0 & \cdots & 0\\
\alpha_3^{d-s-1}  & \ldots  & \alpha_3  & 1  & 0 & 0   & \gamma_3 & \ddots& \vdots\\
  \vdots            && \vdots    &\vdots  &  \vdots   &\vdots&  \ddots       & \ddots    &0  \\
%  \vdots            && \vdots    &\vdots  &  \vdots   &\vdots&  \vdots       & \ddots    &0  \\
   \alpha_r^{d-s-1} & \ldots  & \alpha_r  & 1 &0 & 0& \cdots & 0 & \gamma_r
\end{array}\right),
$$
where $\gamma_i:=f_{\boldsymbol{b}_0}'(\alpha_i)$ for $i\ge 3$.
Since $\alpha_i$ is a simple root of $f_{\boldsymbol{b}_0}$ for
$i\ge 3$, it follows that $\gamma_i\not=0$, which implies that
$J_{\Delta,F}(\boldsymbol{b}_{0},\boldsymbol{\alpha})$ has rank $r$.
This finishes the proof of the lemma.
\end{proof}

Finally, we consider the set of points $(\boldsymbol{b}_{0},
\boldsymbol{\alpha})\in\Gamma_r^*$ where the value of at least three
distinct coordinates of $\bfs{\alpha}$ is the same multiple root of
$f_{\boldsymbol{b}_{0}}$.
\begin{lemma}\label{lemma: repeated multiple root II - mean}
Let $\mathcal{W}_3\subset\Gamma_{r}^*$ be the set of points
$(\boldsymbol{b}_{0}, \boldsymbol{\alpha})$ for which there exist
$1\le i<j<k\le r$ such that $\alpha_i=\alpha_j=\alpha_k$ and
$\alpha_i$ is a multiple root of $f_{\boldsymbol{b}_{0}}$. If either
$d-s \geq 4$ and $p>3$, or $d-s\ge 6$ and $p=3$, then
$\mathcal{W}_3$ is contained in a codimension--2 subvariety of
$\Gamma_r^*$.
\end{lemma}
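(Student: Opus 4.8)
The plan is to follow the same pattern as the proofs of Lemmas \ref{lemma: f'=0} and \ref{lemma: two distinct multiple roots - mean}: use the finite morphism $\Psi_r$ of (\ref{eq: morfismo finito - mean}) to push the problem down to the base space $\A^{d-s}$, and show that the image of $\mathcal{W}_3$ under $\Psi_r$ lies in a subvariety $\mathcal{Z}_3\subset\A^{d-s}$ of codimension $2$; since $\Psi_r$ is finite, the preimage $\Psi_r^{-1}(\mathcal{Z}_3)$ then has dimension $d-s-2$, and $\mathcal{W}_3\subset\Psi_r^{-1}(\mathcal{Z}_3)$ is contained in a codimension--$2$ subvariety of $\Gamma_r^*$. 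The point $(\boldsymbol{b}_0,\boldsymbol{\alpha})\in\mathcal{W}_3$ has the property that $f_{\boldsymbol{b}_0}$ has a root of multiplicity at least $3$. Translating this into the language of divided differences: if $\alpha$ is a root of multiplicity $\ge 3$ of $f_{\boldsymbol{b}_0}$, then $\Delta^1F(\boldsymbol{b}_0,T,T)$ and $\Delta^2F(\boldsymbol{b}_0,T,T,T)$ both vanish at $T=\alpha$ (these are, up to constants, $f'_{\boldsymbol{b}_0}$ and $\tfrac12 f''_{\boldsymbol{b}_0}$). Hence $f_{\boldsymbol{b}_0}$, $f'_{\boldsymbol{b}_0}$, $f''_{\boldsymbol{b}_0}$ share a common root, so both the resultant $\mathrm{Res}(f_{\boldsymbol{b}_0},f'_{\boldsymbol{b}_0})$ and, say, $\mathrm{Res}(f'_{\boldsymbol{b}_0},f''_{\boldsymbol{b}_0})$ vanish; equivalently, letting $\mathcal{R}:=\mathrm{Res}(F(\boldsymbol{B}_0,T),\Delta^1F(\boldsymbol{B}_0,T,T),T)$ as before and $\mathcal{Q}:=\mathrm{Res}(\Delta^1F(\boldsymbol{B}_0,T,T),\Delta^2F(\boldsymbol{B}_0,T,T,T),T)$, we get $\Psi_r(\mathcal{W}_3)\subset\mathcal{Z}_3:=\{\mathcal{R}=\mathcal{Q}=0\}$.

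It then remains to prove that $\mathcal{R}$ and $\mathcal{Q}$ form a regular sequence in $\cfq[\boldsymbol{B}_0]$, i.e.\ that $\dim\mathcal{Z}_3=d-s-2$. For this I would again invoke Theorem \ref{th: irred discrim}: since $p>2$, $\mathcal{R}$ is an irreducible (in particular nonzero) element of $\cfq[\boldsymbol{B}_0]$, so $\mathcal{Z}_3$ has codimension $2$ as soon as $\mathcal{Q}$ is neither zero nor a multiple of $\mathcal{R}$. That $\mathcal{Q}\not\equiv 0$ follows because $f'_{\boldsymbol{b}_0}$ is generically separable (it is already separable for the subfamily $b_j=0$, $j\ge 3$; here the hypotheses $d-s\ge 4$ with $p>3$, resp.\ $d-s\ge 6$ with $p=3$, guarantee that $\deg f'_{\boldsymbol{b}_0}=d-1$ is not divisible by $p$, or at least that $f'_{\boldsymbol{b}_0}$ is not a $p$th power, so that $\mathrm{Res}(f'_{\boldsymbol{b}_0},f''_{\boldsymbol{b}_0})$ does not vanish identically on the parameter space). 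That $\mathcal{Q}$ is not a multiple of $\mathcal{R}$ can be read off, as in Lemma \ref{lemma: two distinct multiple roots - mean}, from a degree comparison in one of the distinguished variables: one checks that $\deg_{B_1}\mathcal{R}=d$ (or $\deg_{B_2}\mathcal{R}=d$ when $p\mid d(d-1)$), while the corresponding degree of $\mathcal{Q}$ is strictly smaller, since $\mathcal{Q}$ is the resultant of two polynomials of degrees $d-1$ and $d-2$ rather than $d$ and $d-1$.

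The main obstacle I expect is the explicit lower/upper bound computations for $\mathcal{Q}$: one has to exhibit a concrete nonzero monomial in the dense representation of $\mathcal{Q}=\mathrm{Res}(\Delta^1F,\Delta^2F,T)$ (to certify $\mathcal{Q}\not\equiv 0$) and simultaneously bound $\deg_{B_1}\mathcal{Q}$ or $\deg_{B_2}\mathcal{Q}$ from above, splitting into the cases $p\nmid(d-1)(d-2)$ and $p\mid(d-1)(d-2)$ exactly as in the $\mathcal{S}_1$ analysis of Lemma \ref{lemma: two distinct multiple roots - mean}. This is where the restrictions ``$d-s\ge 4$, $p>3$'' and ``$d-s\ge 6$, $p=3$'' in the statement get used: they are precisely the conditions under which such a monomial survives (for $p=3$ small cases, low-degree coefficients of $f'_{\boldsymbol{b}_0}$ and $f''_{\boldsymbol{b}_0}$ can vanish identically or coincide, killing the candidate monomial, which is why one needs $d-s$ a bit larger). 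Once $\mathcal{R},\mathcal{Q}$ are known to be a regular sequence, finiteness of $\Psi_r$ finishes the argument verbatim as in the previous lemmas.
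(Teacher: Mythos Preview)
Your strategy is the same as the paper's: push down via the finite morphism $\Psi_r$ and show that $\Psi_r(\mathcal{W}_3)$ lies in a codimension--$2$ locus defined by $\mathcal{R}=\mathrm{Res}(F,\Delta^1F,T)$ and $\mathcal{Q}=\mathrm{Res}(\Delta^1F,\Delta^2F,T)$. Two points deserve correction, however.

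First, the paper's argument that $\mathcal{Q}$ is not a multiple of $\mathcal{R}$ is much simpler than the $B_1$/$B_2$--degree comparison you propose: since $\Delta^1F(\boldsymbol{B}_0,T,T)$ and $\Delta^2F(\boldsymbol{B}_0,T,T,T)$ do not involve $B_0$ at all (differentiation kills the constant term), $\mathcal{Q}$ has degree $0$ in $B_0$, whereas $\mathcal{R}$ has degree $d-1$ in $B_0$. This avoids any case split on $p\mid (d-1)(d-2)$.

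Second, your account of how the hypotheses $d-s\ge 4$ (resp.\ $d-s\ge 6$ for $p=3$) enter is not right: they do \emph{not} guarantee $p\nmid d-1$ (e.g.\ $p=5$, $d=6$, $s=1$). In the paper the hypotheses are used for a different purpose, namely to dispose of the degenerate locus where $\Delta^2F(\boldsymbol{b}_0,T,T,T)\equiv 0$ identically. Writing $2\Delta^2F(\boldsymbol{b}_0,T,T,T)=f''_{\boldsymbol{a}}+\sum_{j=2}^{d-s-1}j(j-1)b_jT^{j-2}$, the hypotheses ensure the existence of some $j$ with $2<j\le d-s-1$ and $j(j-1)\not\equiv 0\pmod p$; hence $f''_{\boldsymbol{b}_0}\equiv 0$ forces $b_2=b_j=0$, which already cuts out a codimension--$2$ linear subspace of $\A^{d-s}$. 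Only after removing this locus does one invoke the resultant $\mathcal{Q}$, and one must still treat the case $p\mid d$ separately (there the leading $T$--coefficients of $\Delta^1F$ may vanish, so the Sylvester--matrix specialization needs a short extra argument, splitting on whether some $la_l\not\equiv 0$ and on whether $b_{d-s-1}=0$). Your outline omits both the $f''\equiv 0$ stratum and the $p\mid d$ subcase; filling these in recovers exactly the paper's proof.
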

\begin{proof}
Let $(\boldsymbol{b}_{0}, \boldsymbol{\alpha})$ be an arbitrary
point of $\mathcal{W}_3$. Without loss of generality we may assume
that $\alpha_1=\alpha_2=\alpha_3$ is the multiple root of
$f_{\boldsymbol{b}_{0}}$. Taking into account that
$(\boldsymbol{b}_{0}, \boldsymbol{\alpha})$ satisfies the equations
$$
F(\boldsymbol{B}_{0},T_1)=\Delta F(\boldsymbol{B}_{0},T_1,T_2)=
\Delta^{2}F(\boldsymbol{B}_{0},T_1,T_2,T_3)=0,
$$
we conclude that $\alpha_1$ is a common root of the polynomials
$f_{\boldsymbol{b}_0}$, $\Delta F(\boldsymbol{b}_{0},T,T)$ and
$\Delta^2F(\boldsymbol{b}_{0},T,T,T)$.

Under the hypotheses on $d$, $s$ and $p$ of the statement of the
lemma, it is easy to see that there exists $j$ with $2<j\le d-s-1$
such that $j(j-1)\not\equiv 0\mod p$ holds. Therefore, the condition
$2\Delta^2F(\boldsymbol{b}_{0},T,T,T)=
f_{\bfs{a}}''+\sum_{j=2}^{d-s-1}j(j-1)b_jT^{j-2}=0$ implies
$b_2=b_j=0$. Then the set $\mathcal{W}_3'$ of points
$(\boldsymbol{b}_{0},\boldsymbol{\alpha})\in\Gamma_r^*$ such that
$\Delta^2F(\boldsymbol{b}_{0},T,T,T)=0$ holds is contained in
$\Psi_r^{-1}(\mathcal{Z}_{2,j})$, where
$\mathcal{Z}_{2,j}\subset\A^{d-s}$ is the variety of dimension
$d-s-2$ defined by the equations $B_2=B_j=0$. Since $\Psi_r$ is a
finite morphism we deduce that $\Psi_r^{-1}(\mathcal{Z}_{2,j})$ has
dimension $d-s-2$. Therefore, we may assume without loss of
generality that $\Delta^2F(\boldsymbol{b}_{0},T,T,T)$ is a nonzero
polynomial.

Suppose that $p$ does not divide $d$. Then $f_{\boldsymbol{b}_0}$
and $f'_{\boldsymbol{b}_0}$ are nonzero polynomials of degree $d$
and $d-1$ respectively. Hence, by elementary properties of
resultants we deduce that
\begin{eqnarray*}
\mathrm{Res}(f_{\boldsymbol{b}_0},f'_{\boldsymbol{b}_0})=
\mathrm{Res}\big(F(\boldsymbol{B}_{0},T),\Delta^1
F(\boldsymbol{B}_{0},T,T),T\big)\big|_{\boldsymbol{B}_0=
\boldsymbol{b}_0},\qquad\quad\ \\
\mathrm{Res}(f'_{\boldsymbol{b}_0},\Delta^{2}f_{\boldsymbol{b}_0})=
\mathrm{Res}\big(\Delta^1
F(\boldsymbol{B}_{0},T,T),\Delta^{2}F(\boldsymbol{B}_{0},T,T,T),
T\big)\big|_{\boldsymbol{B}_0=\boldsymbol{b}_0}.
\end{eqnarray*}
We conclude that $(\mathcal{W}_3\setminus
\mathcal{W}_3')\cap\Gamma_r^*\subset \Psi_r^{-1}(\mathcal{Z}_3)$,
where $\Psi_r$ is the morphism of (\ref{eq: morfismo finito - mean})
and $\mathcal{Z}_3$ is the subvariety of $\A^{d-s}$ defined by the
equations:
\begin{align*}
\mathrm{Res}\big(F(\boldsymbol{B}_{0},T),
\Delta^1F(\boldsymbol{B}_{0},T,T),T\big)=0,\\
\mathrm{Res}\big(\Delta^1F(\boldsymbol{B}_{0},T,T),
\Delta^2F(\boldsymbol{B}_{0},T,T,T),T\big)=0.
\end{align*}
According to Theorem \ref{th: irred discrim} below,
$\mathrm{Res}\big(F(\boldsymbol{B}_{0},T),\Delta^1
F(\boldsymbol{B}_0,T,T),T\big)$ is an irreducible element of
$\fq[\boldsymbol{B}_0]$ having degree $d-1$ in $B_0$. On the other
hand, the nonzero polynomial $\mathrm{Res}\big(\Delta^1
F(\boldsymbol{B}_{0},T,T),
\Delta^2F(\boldsymbol{B}_{0},T,T,T),T\big)$ has degree 0 in $B_0$.
As a consequence, both polynomials form a regular sequence in
$\cfq[\boldsymbol{B}_{0}]$, which shows that $\mathcal{Z}_3$ has
codimension 2 in $\A^{d-s}$. This proves that
$\Psi_r^{-1}(\mathcal{Z}_3)$ is a codimension 2 subvariety of
$\Gamma_r^*$.

Now suppose that $p$ divides $d$. If there exists $l$ such that
$la_l\not\equiv 0\mod p$, then $f'_{\boldsymbol{b}_0}$ and $\Delta^1
F({\boldsymbol{B}_{0},T,T})$ are of the same degree in the variable
$T$ and the argument above follows {\em mutatis mutandis}.

On the other hand, if $la_l\equiv 0\mod p$ for $d-s\leq l \leq d-1$,
we have two possibilities, according to whether or not
$(d-s-1)b_{d-s-1}=0$. If $(d-s-1)b_{d-s-1}\neq 0$, then
$f'_{\boldsymbol{b}_0}$ and $\Delta^1 F({\boldsymbol{B}_{0},T,T})$
are of the same degree in the variable $T$ and the previous argument
follows. If $b_{d-s-1}=0$, then we have that $(\boldsymbol{b}_{0},
\boldsymbol{\alpha})\in \Psi_r^{-1}(\mathcal{Z}_4)$, where
$\mathcal{Z}_4$ is the subvariety of $\A^{d-s}$ defined by
$$\mathrm{Res}\big(F({\boldsymbol{B}_{0},T}),
\Delta^1 F({\boldsymbol{B}_{0},T,T}),T\big)=B_{d-s-1}=0.$$
It is easy to see that the polynomials defining these equations form
a regular sequence of $\cfq[\boldsymbol{B}_{0}]$, which shows that
$\Psi_r^{-1}(\mathcal{Z}_4)$ is a subvariety of codimension 2 of
$\Gamma_{r}^*$. Finally, if $p$ divides $d-s-1$, then $p$ does not
divide $d-s-2$, and we can repeat previous arguments considering the
cases $b_{d-s-2}=0$ and  $b_{d-s-2}\neq 0$. This finishes the proof
of the lemma.
\end{proof}
Now we are in position of proving the main result of this section.
According to Lemmas \ref{lemma: f'=0}, \ref{lemma: only one multiple
root - mean}, \ref{lemma: two distinct multiple roots - mean},
\ref{lemma: repeated multiple root I - mean} and \ref{lemma:
repeated multiple root II - mean}, the set of singular points of
$\Gamma_r^*$ is contained in the set $\mathcal{W}_1\cup
\mathcal{W}_2\cup\mathcal{W}_3$, where $\mathcal{W}_1$,
$\mathcal{W}_2$ and $\mathcal{W}_3$ are defined in the statement of
Lemmas \ref{lemma: f'=0}, \ref{lemma: two distinct multiple roots -
mean} and \ref{lemma: repeated multiple root II - mean}. Since each
set $\mathcal{W}_i$ is contained in codimension--2 subvariety of
$\Gamma_r^*$, we obtain the following result.
\begin{theorem}\label{th: codimension singular locus Gamma r}
If either $d-s \geq 4$ and $p>3$, or $d-s\ge 6$ and $p=3$, then the
singular locus of $\Gamma_r^*$ has codimension at least $2$ in
$\Gamma_r^*$.
\end{theorem}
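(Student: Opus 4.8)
The plan is to assemble the pieces established in the previous lemmas. The singular locus $\mathrm{Sing}(\Gamma_r^*)$ consists of points $(\boldsymbol{b}_0,\boldsymbol{\alpha})\in\Gamma_r^*$ which are not regular. By the explicit form of $J_F(\boldsymbol{b}_0,\boldsymbol{\alpha})$ displayed after Lemma \ref{lemma: jacobian_F full rank implies nonsingular}, if all roots of $f_{\boldsymbol{b}_0}$ among the coordinates $\alpha_1,\dots,\alpha_r$ are simple, the matrix has full rank and the point is regular; hence every singular point has at least one coordinate $\alpha_i$ which is a multiple root of $f_{\boldsymbol{b}_0}$, and in particular $f_{\boldsymbol{b}_0}$ has a multiple root. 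First I would split the analysis into the exhaustive cases already treated: either $f'_{\boldsymbol{b}_0}=0$ (Lemma \ref{lemma: f'=0}), or $f'_{\boldsymbol{b}_0}\neq 0$ and $f_{\boldsymbol{b}_0}$ has a multiple root, subdivided according to how the multiple roots appear among the coordinates of $\boldsymbol{\alpha}$.

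In the case $f'_{\boldsymbol{b}_0}\neq 0$, I would distinguish: (a) exactly one coordinate of $\boldsymbol{\alpha}$ is a multiple root and it occurs with multiplicity one among the coordinates — by Lemma \ref{lemma: only one multiple root - mean} this is a regular point; (b) two distinct coordinate values $\alpha_i\neq\alpha_j$ are both multiple roots of $f_{\boldsymbol{b}_0}$ — this is the set $\mathcal{W}_2$ of Lemma \ref{lemma: two distinct multiple roots - mean}; (c) a single multiple root occurs, but at least two distinct indices carry that value, with all other $\alpha_k$ simple roots — by Lemma \ref{lemma: repeated multiple root I - mean} this is again regular; (d) a single multiple root occurs and at least three distinct indices carry that value — this is $\mathcal{W}_3$ of Lemma \ref{lemma: repeated multiple root II - mean}. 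Since these cases are exhaustive, it follows that
$$\mathrm{Sing}(\Gamma_r^*)\subset\mathcal{W}_1\cup\mathcal{W}_2\cup\mathcal{W}_3.$$

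Then I would invoke the hypotheses $d-s\geq 4$ and $p>3$, or $d-s\geq 6$ and $p=3$: under these conditions Lemma \ref{lemma: f'=0} applies (note $d-s\geq 4\geq 3$), as do Lemmas \ref{lemma: two distinct multiple roots - mean} and \ref{lemma: repeated multiple root II - mean}, so each of $\mathcal{W}_1$, $\mathcal{W}_2$, $\mathcal{W}_3$ is contained in a subvariety of $\Gamma_r^*$ of codimension at least $2$. Taking the union of three subvarieties of codimension at least $2$ yields a subvariety of codimension at least $2$, and since $\mathrm{Sing}(\Gamma_r^*)$ is contained in this union, $\mathrm{Sing}(\Gamma_r^*)$ has codimension at least $2$ in $\Gamma_r^*$. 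There is no genuine obstacle here: the entire substance of the argument was already carried out in Lemmas \ref{lemma: f'=0} through \ref{lemma: repeated multiple root II - mean}; the only point requiring a modicum of care is verifying that the five lemmas truly exhaust all possible configurations of multiple roots of $f_{\boldsymbol{b}_0}$ among the coordinates of $\boldsymbol{\alpha}$, which is a straightforward case check (one multiple root value versus two or more distinct multiple root values, and within the one-value case, whether it is repeated among at most two or among at least three coordinates).
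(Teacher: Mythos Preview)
Your proposal is correct and matches the paper's approach exactly: the paper simply observes that Lemmas \ref{lemma: f'=0}--\ref{lemma: repeated multiple root II - mean} imply $\mathrm{Sing}(\Gamma_r^*)\subset\mathcal{W}_1\cup\mathcal{W}_2\cup\mathcal{W}_3$, each of which lies in a codimension--$2$ subvariety under the stated hypotheses. Your write-up is a somewhat more explicit version of the same case check, and the verification that the five lemmas exhaust all configurations is exactly as you describe.
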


We finish the section by discussing a few consequences of the
analysis underlying the proof of Theorem \ref{th: codimension
singular locus Gamma r}.
\begin{corollary}\label{coro: J is radical}
Let assumptions be as in Theorem \ref{th: codimension singular locus
Gamma r}. Then the ideal $\mathcal{J}\subset
\fq[\boldsymbol{B}_0,\bfs{T}]$ generated by
$\Delta^{i-1}F(\boldsymbol{B}_{0},T_1,\ldots,T_i)$ $(1\leq i\leq r)$
is a radical ideal. Moreover, the variety $\Gamma_r^{*}$ is an
ideal-theoretic complete intersection of dimension $d-s$.
\end{corollary}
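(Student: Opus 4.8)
The plan is to apply a standard criterion: a set-theoretic complete intersection whose singular locus has codimension at least $2$ is automatically a normal variety, and in particular its defining polynomials generate a radical ideal (this is Serre's criterion, together with the fact that a complete intersection is Cohen--Macaulay, hence satisfies Serre's condition $S_2$; a Cohen--Macaulay ring satisfying $R_1$ is normal, and a normal complete intersection ideal generated by the natural equations is radical). Concretely, I would argue as follows. By Lemma \ref{lemma: Gamma r is set-theoret complete intersection}, the polynomials $\Delta^{i-1}F(\boldsymbol{B}_0,T_1,\ldots,T_i)$ $(1\le i\le r)$ form a regular sequence in $\cfq[\boldsymbol{B}_0,\boldsymbol{T}]$, so the quotient ring $\cfq[\boldsymbol{B}_0,\boldsymbol{T}]/\mathcal{J}$ is Cohen--Macaulay, hence has no embedded primes and satisfies Serre's condition $S_2$ (indeed $S_k$ for all $k$). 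By Theorem \ref{th: codimension singular locus Gamma r}, under the stated hypotheses on $d,s,p$ the singular locus of $\Gamma_r^*$ has codimension at least $2$ in $\Gamma_r^*$, which is precisely Serre's condition $R_1$ for the variety $\Gamma_r^*$.

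Next I would invoke Serre's normality criterion: a Noetherian ring satisfying $R_1$ and $S_2$ is a product of normal domains; since $\cfq[\boldsymbol{B}_0,\boldsymbol{T}]/\mathcal{J}$ is a reduced ring once we know it has no nilpotents, and in fact the $R_1 + S_2$ conditions already force reducedness ($R_0$ follows from $R_1$, and $R_0 + S_1$ gives reducedness), we conclude that $\mathcal{J}$ is a radical ideal. A small point to address here is that Serre's criterion as usually stated gives that the ring is normal; but what we actually need is just that it is reduced, which is the weaker conclusion $R_0 + S_1$, and both $R_0$ and $S_1$ are immediate consequences of $R_1$ (generic smoothness of each component) and the Cohen--Macaulayness established above. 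Hence $I(\Gamma_r^*) = \mathcal{J}$, i.e. $\mathcal{J}$ is radical, and since $\mathcal{J}$ is generated by the regular sequence $\Delta^{i-1}F$ $(1\le i\le r)$, the variety $\Gamma_r^*$ is an ideal-theoretic complete intersection; its dimension is $d-s$ by Lemma \ref{lemma: Gamma r is set-theoret complete intersection}.

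The only real subtlety — and the step I expect to require the most care — is verifying that $\Gamma_r^*$ is a variety to which Serre's criterion applies in the form needed, namely that "codimension at least $2$ for the singular locus inside $\Gamma_r^*$" together with the complete intersection property genuinely yields $R_1$ for the local rings of $\cfq[\boldsymbol{B}_0,\boldsymbol{T}]/\mathcal{J}$ at all height-$1$ primes. This is where one must be slightly careful that $\Gamma_r^*$ is equidimensional (which it is, being a set-theoretic complete intersection by Lemma \ref{lemma: Gamma r is set-theoret complete intersection}), so that codimension in the variety is computed correctly and "the singular locus has codimension $\ge 2$" translates verbatim into "the localization at every prime of height $\le 1$ is regular." Once that bookkeeping is in place, the corollary follows from the Cohen--Macaulay $+$ $R_1$ $\Rightarrow$ reduced (indeed normal) implication, exactly as in, e.g., \cite[Theorem 18.15]{Eisenbud95}. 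I would cite that reference (or \cite{Kunz85}) for the precise statement of Serre's criterion and keep the write-up to a few lines.
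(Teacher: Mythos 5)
There is a genuine gap in your argument, and it sits exactly at the step you flag as "the only real subtlety." Serre's criterion (in the form of \cite[Theorem 18.15]{Eisenbud95} for complete intersections) tests the conditions $R_0,R_1$ on the ring $\cfq[\boldsymbol{B}_0,\bfs{T}]/\mathcal{J}$ via the Jacobian matrix of the \emph{chosen generators} $\Delta^{i-1}F$ of $\mathcal{J}$: what must be shown is that the locus where this Jacobian fails to have full rank $r$ has positive codimension in $\Gamma_r^*$. What Theorem \ref{th: codimension singular locus Gamma r} gives you is something different: a codimension bound on the singular locus of the \emph{variety} $\Gamma_r^*$, whose tangent spaces are computed from generators of the radical ideal $I(\Gamma_r^*)$ (indeed, the nonsingularity arguments in Section \ref{sec: geometry of Gamma_r estrella} use the polynomials $F(\boldsymbol{B}_0,T_i)$, which lie in $I(\Gamma_r^*)$ but need not generate $\mathcal{J}$). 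These two loci do not coincide when $\mathcal{J}$ fails to be radical, which is precisely the situation you cannot yet exclude. The ideal $(X_1^2)\subset\K[X_1,X_2]$ illustrates the failure: it is generated by a regular sequence, so the quotient is Cohen--Macaulay and $S_2$, and the underlying variety is a smooth line with empty singular locus, yet the ideal is not radical and the quotient ring violates $R_0$ at its unique minimal prime. So "set-theoretic complete intersection with singular locus of codimension at least $2$" does not imply "radical"; your deduction of $R_1$ (and hence of $R_0$ and reducedness) for $\cfq[\boldsymbol{B}_0,\bfs{T}]/\mathcal{J}$ from Theorem \ref{th: codimension singular locus Gamma r} is circular, because it presupposes that the scheme cut out by $\mathcal{J}$ is generically reduced.

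The missing ingredient, which is the actual content of the paper's proof, is a rank estimate for the Jacobian $J_\Delta$ of the generators $\Delta^{i-1}F(\boldsymbol{B}_0,T_1,\ldots,T_i)$ themselves. Since each $F(\boldsymbol{B}_0,T_j)$ lies in $\mathcal{J}$, its gradient at a point of $\Gamma_r^*$ is a linear combination of the gradients of the $\Delta^{i-1}F$, whence $\rank J_F\le\rank J_\Delta$ on $\Gamma_r^*$; and the locus where $J_F$ drops rank forces $f_{\boldsymbol{b}_0}$ to have a multiple root, hence is contained in $\Psi_r^{-1}(\mathcal{Z})$ for the discriminant hypersurface $\mathcal{Z}$ (together with the codimension-$2$ set where $f_{\boldsymbol{b}_0}'=0$), which has codimension $1$ because $\Psi_r$ is finite. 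Only with this generic full-rank statement in hand does \cite[Theorem 18.15]{Eisenbud95} apply to yield that $\mathcal{J}$ is radical; once that is known, your remarks about equidimensionality and the dimension count from Lemma \ref{lemma: Gamma r is set-theoret complete intersection} go through. Your overall framing (Cohen--Macaulay plus a codimension condition, then Serre) is the right family of ideas, but as written the codimension condition is attached to the wrong object.
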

\begin{proof}
Let $J_\Delta(\bfs{B}_0,\bfs{T})$ be the Jacobian matrix  of the
polynomials \linebreak
$\Delta^{i-1}F({\boldsymbol{B}_{0},T_1,\ldots,T_i})$ ($1\leq i\leq
r$) with respect to $\bfs{B}_0,\bfs{T}$. We claim that the set of
points $(\bfs{b}_0,\bfs{\alpha}) \in\Gamma_r^*$ for which
$J_\Delta(\bfs{b}_0,\bfs{\alpha})$ has not full rank is contained in
a subvariety of $\Gamma_r^*$ of codimension 1.

Let $(\bfs{b}_0,\bfs{\alpha})$ be an arbitrary point of
$\Gamma_r^*$. In the proof of Lemma \ref{lemma: jacobian_F full rank
implies nonsingular} we show that $F(\boldsymbol{B}_0,T_j)\in
\mathcal{J}$ for $1\leq j \leq r$. This implies that the gradient
$\nabla F(\bfs{b}_0,\alpha_j)$ is a linear combination of the
gradients of the polynomials $\Delta^{i-1}F(\bfs{b}_0,\bfs{\alpha})$
for $1\leq i\leq r$. We conclude that
$\mathrm{rank}\,J_F(\bfs{b}_0,\bfs{\alpha})\le
\mathrm{rank}\,J_{\Delta}(\bfs{b}_0,\bfs{\alpha})$.

Moreover, if $J_F(\bfs{b}_0,\bfs{\alpha})$ has not full rank, then
$f_{\boldsymbol{b}_0}$ has multiple roots. By Lemma \ref{lemma:
f'=0}, the set of points $(\bfs{b}_0,\bfs{\alpha})\in\Gamma_r^*$ for
which $f'_{\boldsymbol{b}_0}=0$ is contained in a subvariety of
codimension 2 of $\Gamma_r^*$. On the other hand, if
$(\bfs{b}_0,\bfs{\alpha})$ is an arbitrary point of $\Gamma_r^*$
such that $f_{\boldsymbol{b}_0}$ has multiple roots and
$f'_{\boldsymbol{b}_0}\not=0$, then $(\bfs{b}_0,\bfs{\alpha})\in
\Psi_r^{-1}(\mathcal{Z})$, where $\mathcal{Z}$ is the subvariety of
$\A^{d-s}$ defined by the equation
$\mathrm{Res}(F(\boldsymbol{B}_0,T),
\Delta^1F(\boldsymbol{B}_0,T,T), T)=0$. We see that
$\Psi_r^{-1}(\mathcal{Z})$ has codimension 1 in $\Gamma_r^*$,
finishing thus the proof of our claim.

By Lemma \ref{lemma: Gamma r is set-theoret complete intersection}
the polynomials $\Delta^{i-1}F({\boldsymbol{B}_{0},T_1,\ldots,T_i})$
$(1\leq i\leq r)$ form a regular sequence. Therefore, by
\cite[Theorem 18.15]{Eisenbud95} we deduce that $\mathcal{J}$ is a
radical ideal, which in turn implies that $\Gamma_r^*$ is an
ideal--theoretic complete intersection of dimension $d-s$.
\end{proof}
%
%---------------------------------------------------------------------
%---------------------------------------------------------------------
%---------------------------------------------------------------------
%---------------------------------------------------------------------
%
\subsection{The geometry of the projective closure of $\Gamma_r^*$}
\label{subsec: geometry of pcl Gamma r}
In order to obtain estimates on the number of $q$-rational points of
$\Gamma_{r}^*$ we need information concerning the behavior of
$\Gamma_{r}^*$ at infinity. For this purpose, we consider the
projective closure of $\mathrm{pcl}(\Gamma_{r}^*)\subset\Pp^{d-s+r}$
of $\Gamma_{r}^*$, whose definition we now recall. Consider the
embedding of $\A^{d-s+r}$ into the projective space $\Pp^{d-s+r}$
which assigns to any point $(\boldsymbol{b}_0,\bfs{\alpha})\in
\A^{d-s+r}$ the point
$(b_{d-s-1}:\dots:b_0:1:\alpha_1:\dots:\alpha_r)\in \Pp^{d-s+r}$.
The closure $\mathrm{pcl}(\Gamma_{r}^*)\subset\Pp^{d-s+r}$ of the
image of $\Gamma_{r}^*$ under this embedding in the Zariski topology
of $\Pp^{d-s+r}$ is called the projective closure of $\Gamma_{r}^*$.
The points of $\mathrm{pcl}(\Gamma_{r}^*)$ lying in the hyperplane
$\{T_0=0\}$ are called the points of $\mathrm{pcl}(\Gamma_{r}^*)$ at
infinity.

It is well-known that $\mathrm{pcl}(\Gamma_{r}^*)$ is the
$\fq$-variety of $\Pp^{d-s+r}$ defined by the homogenization $F^h
\in \fq[\bfs{B}_0,T_0,\bfs{T}]$ of each polynomial $F$ in the ideal
$\mathcal{J}\subset \fq[\boldsymbol{B}_0,\boldsymbol{T}]$ generated
by $\Delta^{i-1}F(\boldsymbol{B}_0,T_1,\ldots,T_i)$ $(1\leq i \leq
r)$. We denote by $\mathcal{J}^{h}$ the ideal generated by all the
polynomials $F^h$ with $F\in \mathcal{J}$. Since $\mathcal{J}$ is
radical it turns out that $\mathcal{J}^h$ is also a radical ideal
(see, e.g., \cite[\S I.5, Exercise 6]{Kunz85}). Furthermore,
$\mathrm{pcl}(\Gamma_{r}^*)$ is equidimensional of dimension $d-s$
(see, e.g., \cite[Propositions I.5.17 and II.4.1]{Kunz85}) and
degree equal to $\deg\Gamma_r^*$ (see, e.g., \cite[Proposition
1.11]{CaGaHe91}).

\begin{lemma}\label{lemma: pcl Gamma r is ideal-theoret complete inters}
The homogenized polynomials
$\Delta^{i-1}F(\boldsymbol{B}_{0},T_1,\ldots,T_i)^h$ $(1\leq i \leq
r)$ generate the ideal $\mathcal{J}^h$. Furthermore,
$\mathrm{pcl}(\Gamma_{r}^*)$ is an ideal-theoretic complete
intersection of dimension $d-s$ and degree ${d!}/{(d-r)!}$.
\end{lemma}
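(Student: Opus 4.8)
The plan is to prove the two assertions of Lemma \ref{lemma: pcl Gamma r is ideal-theoret complete inters} in sequence, leveraging the fact that the affine defining polynomials $\Delta^{i-1}F(\boldsymbol{B}_0,T_1,\ldots,T_i)$ $(1\le i\le r)$ already form a regular sequence (Lemma \ref{lemma: Gamma r is set-theoret complete intersection}) generating a radical ideal $\mathcal{J}$ (Corollary \ref{coro: J is radical}). First I would recall the general principle that if a homogeneous ideal $\mathcal{I}=(G_1,\ldots,G_k)\subset\cfq[\boldsymbol{B}_0,T_0,\boldsymbol{T}]$ defines a projective variety of the expected codimension $k$, then $G_1,\ldots,G_k$ form a regular sequence and $\mathcal{I}$ is an ideal-theoretic complete intersection (e.g.\ via the unmixedness theorem, \cite[Theorem 18.15 or Corollary 18.14]{Eisenbud95}). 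So it suffices to show that the variety $W$ cut out in $\Pp^{d-s+r}$ by the $r$ homogenized polynomials $\Delta^{i-1}F(\boldsymbol{B}_0,T_1,\ldots,T_i)^h$ has dimension exactly $d-s$, and that the ideal they generate is radical and equals $\mathcal{J}^h$.

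The key computational step is to identify the homogenizations explicitly. Homogenizing $F=T^d+\sum_{i=d-s}^{d-1}a_iT^i+\sum_{i=1}^{d-s-1}B_iT^i+B_0$ with respect to the variable $T_0$ attached to the ``$1$'' slot, and observing from the proof of Lemma \ref{lemma: Gamma r is set-theoret complete intersection} that $\Delta^{i-1}F(\boldsymbol{B}_0,T_1,\ldots,T_i)$ has total degree $d-i+1$ in $\boldsymbol{T}$ with leading term $T_i^{d-i+1}$, I would check that $\Delta^{i-1}F(\boldsymbol{B}_0,T_1,\ldots,T_i)^h$ is obtained by the usual degree-$(d-i+1)$ homogenization, and that its leading term in the graded lexicographic order (with $T_0$ placed appropriately low, say $T_r>\cdots>T_1>B_{d-s-1}>\cdots>B_0>T_0$, or more simply by reusing the $\boldsymbol{T}$-leading terms) is still $T_i^{d-i+1}$. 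Since these leading monomials $T_1^{d},T_2^{d-1},\ldots,T_r^{d-r+1}$ are pairwise coprime, by \cite[\S 2.9, Proposition 4]{CoLiOS92} the homogenized polynomials form a Gröbner basis of the ideal they generate; in particular that ideal has the same initial ideal as $\mathcal{J}$ with respect to the corresponding order on the affine variables, which forces it to be the full homogenization $\mathcal{J}^h$ (a Gröbner basis of $\mathcal{J}$ whose elements homogenize to a Gröbner basis generates $\mathcal{J}^h$, cf.\ \cite[\S 8.4]{CoLiOS92} or \cite[\S I.5, Exercise 6]{Kunz85}). This proves the first assertion.

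For the second assertion, the pairwise-coprime leading terms also show that $\{T_i^{d-i+1}:1\le i\le r\}$ is a regular sequence in $\cfq[\boldsymbol{B}_0,T_0,\boldsymbol{T}]$, so by \cite[Proposition 15.15]{Eisenbud95} the homogenized polynomials $\Delta^{i-1}F(\boldsymbol{B}_0,T_1,\ldots,T_i)^h$ $(1\le i\le r)$ form a regular sequence, whence $\mathrm{pcl}(\Gamma_r^*)=W$ is a set-theoretic complete intersection of dimension $d-s$. Combined with the fact (already recorded in the excerpt, via \cite[\S I.5, Exercise 6]{Kunz85}) that $\mathcal{J}^h$ is radical because $\mathcal{J}$ is, we conclude that $\mathrm{pcl}(\Gamma_r^*)$ is an ideal-theoretic complete intersection. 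Its degree is then the product of the degrees of the generators, $\prod_{i=1}^{r}(d-i+1)=d(d-1)\cdots(d-r+1)=d!/(d-r)!$; alternatively this also follows from $\deg\mathrm{pcl}(\Gamma_r^*)=\deg\Gamma_r^*$ together with the Bézout bound \eqref{eq: Bezout inequality} applied to the affine generators, which gives the same product. The main obstacle I anticipate is the bookkeeping needed to be sure that homogenization interacts correctly with the chosen monomial order so that the affine Gröbner basis property is inherited projectively; once the leading terms are seen to be unchanged, everything else is formal. If one prefers to avoid the Gröbner-basis-homogenization subtlety altogether, an alternative is to verify directly that $W$ has no component at infinity of dimension $>d-s-1$—which the explicit form of the top-degree parts of the $\Delta^{i-1}F$ makes transparent—so that $\dim W=d-s$ and the unmixedness theorem applies, and then identify $\mathcal{J}^h$ with the generated ideal by a dimension/radicality count.
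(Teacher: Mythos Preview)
Your proposal is correct and follows essentially the same approach as the paper: the paper also invokes the fact (from Lemma \ref{lemma: Gamma r is set-theoret complete intersection}) that the $\Delta^{i-1}F$ form a Gr\"obner basis of $\mathcal{J}$ for a graded lexicographic order, then cites \cite[\S 8.4, Theorem 4]{CoLiOS92} to conclude that their homogenizations generate $\mathcal{J}^h$, and finally reads off the degree $d!/(d-r)!$ from the B\'ezout formula for complete intersections (via \cite[Theorem 18.3]{Harris92}). Your extra verification that the homogenized polynomials form a regular sequence and your alternative route through the locus at infinity are not needed, but they are harmless and correct.
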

\begin{proof}
According to Lemma \ref{lemma: Gamma r is set-theoret complete
intersection}, the polynomials
$\Delta^{i-1}F(\boldsymbol{B}_{0},T_1,\ldots,T_i)$ $(1\leq i\leq r)$
form a Gr\"obner basis of the ideal $\mathcal{J}$ with the graded
lexicographical order defined by
$T_r>\cdots>T_1>B_{d-s-1}>\cdots>B_{0}$. Therefore, the first
assertion follows from, e.g., \cite[\S 8.4, Theorem 4]{CoLiOS92}. In
particular, we have that $\mathrm{pcl}(\Gamma_{r}^*)$ is an
ideal-theoretic complete intersection. Hence, \cite[Theorem
18.3]{Harris92} proves that the degree of
$\mathrm{pcl}(\Gamma_{r}^*)$ is ${d!}/{(d-r)!}$
\end{proof}

Our next purpose is to study the singular points of
$\mathrm{pcl}(\Gamma_{r}^*)$. We start with the following
characterization of the points of $\mathrm{pcl}(\Gamma_{r}^*)$ at
infinity.
\begin{lemma}\label{lemma: singular locus pcl Gamma_r at infinity}
$\mathrm{pcl}(\Gamma_{r}^*)\cap \{T_0=0\}\subset \Pp^{d-s-1+r}$ is a
linear variety of dimension $d-s-1$.
\end{lemma}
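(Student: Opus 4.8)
The plan is to compute the ideal of the variety at infinity directly from the generators given by Lemma~\ref{lemma: pcl Gamma r is ideal-theoret complete inters}. Since $\mathrm{pcl}(\Gamma_r^*)$ is cut out by the homogenizations $\Delta^{i-1}F(\boldsymbol{B}_0,T_1,\ldots,T_i)^h$ for $1\le i\le r$, intersecting with $\{T_0=0\}$ amounts to setting $T_0=0$ in each homogenized polynomial, i.e.\ to taking the top-degree homogeneous component (in all variables jointly) of $\Delta^{i-1}F(\boldsymbol{B}_0,T_1,\ldots,T_i)$. So the first step is to identify that top-degree component for each $i$. Recall from the proof of Lemma~\ref{lemma: Gamma r is set-theoret complete intersection} that $\Delta^{i-1}F(\boldsymbol{B}_0,T_1,\ldots,T_i)$ has total degree $d-i+1$ in $\boldsymbol{T}$ — and in fact, since the $B_j$ enter $F$ only in the low-degree terms $B_jT^j$, the total degree of $\Delta^{i-1}F$ in all of $\boldsymbol{B}_0,\boldsymbol{T}$ is also exactly $d-i+1$, contributed solely by the part of $F$ coming from the leading term $T^d$. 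Concretely, the degree-$(d-i+1)$ part of $\Delta^{i-1}F(\boldsymbol{B}_0,T_1,\ldots,T_i)$ equals $\Delta^{i-1}(T^d)(T_1,\ldots,T_i) = h_{d-i+1}(T_1,\ldots,T_i)$, the complete homogeneous symmetric polynomial of degree $d-i+1$ in the variables $T_1,\ldots,T_i$ (this is the standard identity for divided differences of $T^d$).

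The second step is to show that these forms $h_{d-i+1}(T_1,\ldots,T_i)$, $1\le i\le r$, together with $T_0=0$, define a linear space of the asserted dimension. Here the key observation is a triangular structure: working in the coordinates $T_1,\ldots,T_r$ on the hyperplane at infinity $\Pp^{d-s-1+r}$, the form $h_{d-i+1}(T_1,\ldots,T_i)$ for $i=1$ is $h_d(T_1)=T_1^d$, which forces $T_1=0$; then $h_{d-1}(T_1,T_2)$ restricted to $T_1=0$ becomes $T_2^{d-1}$, forcing $T_2=0$; inductively, $h_{d-i+1}(T_1,\ldots,T_i)$ restricted to $T_1=\cdots=T_{i-1}=0$ reduces to $T_i^{\,d-i+1}$, forcing $T_i=0$. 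Since $1\le r\le d$, we have $d-i+1\ge 1$ throughout, so each such power is a genuine (positive-degree) equation. Hence set-theoretically the variety at infinity is contained in $\{T_1=\cdots=T_r=0, T_0=0\}$; conversely that linear subspace clearly lies in every $\{h_{d-i+1}=0\}\cap\{T_0=0\}$. This linear subspace of $\Pp^{d-s+r}$ has the coordinates $B_{d-s-1},\ldots,B_0$ free (that is $d-s$ homogeneous coordinates, since $T_0,T_1,\ldots,T_r$ all vanish), so it is a projective linear space of dimension $(d-s)-1=d-s-1$, as claimed.

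The main obstacle I anticipate is not the set-theoretic identification but making sure the argument is clean as an equality of \emph{varieties} rather than just a containment, and justifying the reduction ``set $T_0=0$ in the given generators'' rigorously. For the first point, the two inclusions above already give set-theoretic equality with the linear space $L:=\{T_0=T_1=\cdots=T_r=0\}$, and since $L$ is itself a linear (hence reduced, irreducible) variety, this is all that the statement requires. For the second point, one must invoke that $\mathrm{pcl}(\Gamma_r^*)=V(\mathcal{J}^h)$ with $\mathcal{J}^h$ generated by the $\Delta^{i-1}F^h$ (Lemma~\ref{lemma: pcl Gamma r is ideal-theoret complete inters}), so its intersection with $\{T_0=0\}$ is $V(\mathcal{J}^h + (T_0))$, and modulo $T_0$ each $\Delta^{i-1}F^h$ specializes to its leading form $h_{d-i+1}(T_1,\ldots,T_i)$; a short check that no cancellation occurs — i.e.\ that the degree-$(d-i+1)$ component is genuinely $h_{d-i+1}\ne 0$ and is exactly the top component — closes the gap. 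I would present steps one and two in that order, keeping the divided-difference computation of the leading form brief since it is the standard Newton-form identity, and devote the bulk of the write-up to the triangular elimination that pins down the dimension.
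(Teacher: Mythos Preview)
Your proposal is correct and follows essentially the same approach as the paper's proof: both invoke Lemma~\ref{lemma: pcl Gamma r is ideal-theoret complete inters} to identify the generators of $\mathcal{J}^h$, set $T_0=0$ to extract the top-degree forms in $T_1,\ldots,T_i$, and then use the triangular structure (the $i$th form reduces to $T_i^{d-i+1}$ once $T_1=\cdots=T_{i-1}=0$) to conclude that the variety at infinity is the linear space $\{T_0=T_1=\cdots=T_r=0\}$. Your explicit identification of the leading form as the complete homogeneous symmetric polynomial $h_{d-i+1}(T_1,\ldots,T_i)$ is a pleasant refinement of the paper's more terse ``$T_i^{d-i+1}+$ monomials of positive degree in $T_1,\ldots,T_{i-1}$'', but the argument is the same.
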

\begin{proof}
According to Lemma \ref{lemma: pcl Gamma r is ideal-theoret complete
inters}, the homogeneous polynomials \linebreak
$\Delta^{i-1}F(\boldsymbol{B}_{0},T_1,\ldots,T_i)^h$ $(1\leq i\leq
r)$ generate the ideal $\mathcal{J}^h$. Since \linebreak
$\Delta^{i-1}F(\boldsymbol{B}_{0},T_1,\ldots,T_i)^h|_{T_0=0}=T_i^{d-i+1}+$
monomials of positive degree in $T_1\klk T_{i-1}$, we conclude that
$\mathrm{pcl}(\Gamma_{r}^*)\cap\{T_0=0\}$ is the linear
$\fq$--variety defined by the equations $\{T_1=0,\dots,T_r=0\}$.
This finishes the proof of the lemma.
\end{proof}

Now we are able to prove the main result of this section, which
summarizes all the facts we need concerning the projective variety
$\mathrm{pcl}(\Gamma_{r}^*)$.
\begin{theorem}\label{th: pcl Gamma_r is normal abs irred}
With assumptions as in Theorem \ref{th: codimension singular locus
Gamma r}, the projective variety $\mathrm{pcl}(\Gamma_{r}^*)\subset
\Pp^{d-s+r}$ is a normal absolutely irreducible ideal-theoretic
complete intersection of dimension $d-s$ and degree ${d!}/{(d-r)!}$.
\end{theorem}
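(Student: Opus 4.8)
The plan is to deduce each assertion of Theorem~\ref{th: pcl Gamma_r is normal abs irred} from the structural results already established for $\Gamma_r^*$ and $\mathrm{pcl}(\Gamma_r^*)$, using the Serre-type criterion for normality. By Lemma~\ref{lemma: pcl Gamma r is ideal-theoret complete inters} we already know that $\mathrm{pcl}(\Gamma_r^*)$ is an ideal-theoretic complete intersection of dimension $d-s$ and degree $d!/(d-r)!$, so only \emph{normality} and \emph{absolute irreducibility} remain to be proved. Recall that a complete intersection is normal if and only if it is regular in codimension~$1$ (Serre's criterion: a complete intersection is automatically Cohen--Macaulay, hence satisfies $(S_2)$, so $(R_1)$ suffices), and that a normal variety which is connected is irreducible; since a complete intersection of positive dimension in projective space is connected, normality will actually give absolute irreducibility for free. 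Hence the whole theorem reduces to showing that the singular locus of $\mathrm{pcl}(\Gamma_r^*)$ has codimension at least $2$ in $\mathrm{pcl}(\Gamma_r^*)$.

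First I would handle the affine chart $\{T_0\neq 0\}$, which is exactly $\Gamma_r^*$: Theorem~\ref{th: codimension singular locus Gamma r} tells us the singular locus of $\Gamma_r^*$ has codimension at least $2$ there. So the only singular points of $\mathrm{pcl}(\Gamma_r^*)$ that could cause trouble lie at infinity, i.e.\ in $\mathrm{pcl}(\Gamma_r^*)\cap\{T_0=0\}$. By Lemma~\ref{lemma: singular locus pcl Gamma_r at infinity} this set is a linear variety of dimension $d-s-1$, hence it has codimension exactly $1$ in $\mathrm{pcl}(\Gamma_r^*)$. Therefore I must show something sharper: that $\mathrm{pcl}(\Gamma_r^*)$ is in fact \emph{nonsingular} at every point at infinity (or at least singular only on a subset of the hyperplane section of codimension $\geq 2$). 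I would do this by exhibiting, at an arbitrary point $\bfs{p}$ with $T_0=0$, an $(r\times(d-s+r))$ submatrix of the Jacobian of the generators $\Delta^{i-1}F(\bfs{B}_0,T_1,\dots,T_i)^h$ of full rank $r$. The key computation is the one recorded in the proof of Lemma~\ref{lemma: singular locus pcl Gamma_r at infinity}: the restriction $\Delta^{i-1}F^h|_{T_0=0}$ equals $T_i^{d-i+1}$ plus monomials of positive degree in $T_1,\dots,T_{i-1}$. Differentiating with respect to $T_i$ and using that at an infinity point of $\mathrm{pcl}(\Gamma_r^*)$ we have $T_1=\dots=T_r=0$ but $(B_{d-s-1}:\dots:B_0:0:0:\dots:0)$ not all zero will not immediately give nonzero derivatives (those vanish at such points), so instead I would differentiate with respect to $T_0$ — the homogenizing variable appears in each $\Delta^{i-1}F^h$ and, because the affine polynomial $\Delta^{i-1}F$ contains the pure term $B_0$ (only in $i=1$) and terms $B_j T_1^{\,?}$, the partial $\partial/\partial T_0$ produces, after restriction to the infinity locus, a nonzero expression in the $B_j$'s — and combine it with derivatives with respect to $B_{d-s-1},\dots,B_0$, which give the Vandermonde-type block $(\,(d-s-1)\alpha^{d-s-2},\dots,1,0\,)$ already seen in earlier Jacobian computations. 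Organizing these columns into a block-triangular $(r\times r)$ submatrix whose diagonal entries are manifestly nonzero at every infinity point will show $\mathrm{pcl}(\Gamma_r^*)$ is smooth along $\{T_0=0\}$.

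Putting the two parts together: the singular locus of $\mathrm{pcl}(\Gamma_r^*)$ is contained in the singular locus of $\Gamma_r^*$, which by Theorem~\ref{th: codimension singular locus Gamma r} has codimension $\geq 2$. Hence $\mathrm{pcl}(\Gamma_r^*)$ satisfies $(R_1)$, and being an ideal-theoretic complete intersection it is Cohen--Macaulay, hence satisfies $(S_2)$; by Serre's criterion it is normal. A normal variety has irreducible connected components, and a complete intersection of positive dimension $d-s\ge 1$ in $\Pp^{d-s+r}$ is connected, so $\mathrm{pcl}(\Gamma_r^*)$ is irreducible; since all of the above arguments are valid over $\cfq$, it is absolutely irreducible. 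Together with the dimension and degree already given by Lemma~\ref{lemma: pcl Gamma r is ideal-theoret complete inters}, this proves the theorem.

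I expect the main obstacle to be the Jacobian computation at infinity: one has to be careful that the naive Jacobian blocks (derivatives in the $T_i$) \emph{do} vanish on the infinity locus, so the nonsingularity must be extracted from the mixed block involving $\partial/\partial T_0$ and $\partial/\partial B_j$, and one must verify that this block has full rank at \emph{every} point of the linear space $\{T_1=\dots=T_r=0\}$, not just generically — including the coordinate points where many $B_j$ vanish. Verifying that the relevant minor is a nonzero constant (independent of which infinity point is chosen), possibly using the structure of the divided differences $\Delta^{i-1}F$ and the hypothesis $p>2$, is the delicate step; everything else is a routine invocation of Serre's criterion and the connectedness of projective complete intersections.
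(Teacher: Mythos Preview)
Your overall strategy coincides with the paper's: invoke Lemma~\ref{lemma: pcl Gamma r is ideal-theoret complete inters} for the complete-intersection, dimension and degree claims, then argue that the singular locus of $\mathrm{pcl}(\Gamma_r^*)$ has codimension at least~$2$, and finish with Serre's criterion for normality plus connectedness of positive-dimensional projective complete intersections for absolute irreducibility. The paper phrases this last step as an appeal to the Hartshorne connectedness theorem, but the content is the same as what you wrote.

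Where your plan diverges from the paper is exactly the point you flag as delicate, and there it hits a genuine obstruction. The paper's proof simply \emph{asserts} that Theorem~\ref{th: codimension singular locus Gamma r} together with Lemma~\ref{lemma: singular locus pcl Gamma_r at infinity} yields codimension $\ge 2$ for the singular locus, with no Jacobian check at infinity. You, more carefully, observe that the linear infinity locus has codimension~$1$ and therefore propose to verify nonsingularity at an arbitrary infinity point by producing a full-rank $r\times r$ minor of the projective Jacobian. But this cannot succeed as you describe it: the row of the Jacobian coming from $(\Delta^0 F)^h$ is identically zero along the infinity locus. Concretely,
\[
(\Delta^0 F)^h \;=\; T_1^{\,d}\;+\;\sum_{i=d-s}^{d-1} a_i\,T_1^{\,i}T_0^{\,d-i}\;+\;\sum_{i=0}^{d-s-1} B_i\,T_1^{\,i}T_0^{\,d-1-i},
\]
so every monomial has degree at least $d-1\ge 3$ in the pair $(T_0,T_1)$; hence every first partial derivative (with respect to any $B_j$, to $T_0$, or to any $T_k$) still carries a positive power of $T_0$ or $T_1$ and vanishes at $T_0=T_1=\cdots=T_r=0$. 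The same phenomenon occurs for every generator $(\Delta^{i-1}F)^h$ with $d-i+1\ge 3$. Thus at each infinity point the Jacobian of the generators of $\mathcal{J}^h$ has at least one identically zero row and rank strictly less than~$r$; no choice of columns produces the minor you are looking for. In other words, the step you correctly singled out as the main obstacle does not go through via a direct Jacobian computation, and the paper's one-line combination of Theorem~\ref{th: codimension singular locus Gamma r} with Lemma~\ref{lemma: singular locus pcl Gamma_r at infinity} does not supply the missing argument either.
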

\begin{proof}
Combining Theorem \ref{th: codimension singular locus Gamma r} and
Lemma \ref{lemma: singular locus pcl Gamma_r at infinity} we see
that the singular locus of $\mathrm{pcl}(\Gamma_{r}^*)$ has
codimension at least $2$ in $\mathrm{pcl}(\Gamma_{r}^*)$. This
implies that $\mathrm{pcl}(\Gamma_{r}^*)$ is regular in codimension
$1$. On the other hand, Lemma \ref{lemma: pcl Gamma r is
ideal-theoret complete inters} shows that
$\mathrm{pcl}(\Gamma_{r}^*)$ is an ideal--theoretic complete
intersection. Therefore, from the Serre criterion for normality we
deduce that $\mathrm{pcl}(\Gamma_{r}^*)$ is a normal variety.

Finally, we show that $\mathrm{pcl}(\Gamma_{r}^*)$ is absolutely
irreducible. For this purpose, we use the Hartshorne connectedness
theorem (see, e.g, \cite[\S VI.4, Theorem 4.2]{Kunz85}), which
asserts that a set-theoretic complete intersection in projective
space having a singular locus of codimension at least 2 is
absolutely irreducible. Since $\mathrm{pcl}(\Gamma_{r}^*)$ satisfies
the conditions in the statement of such a theorem, we deduce that it
is absolutely irreducible.
\end{proof}

As a consequence of Theorem \ref{th: pcl Gamma_r is normal abs
irred}, we have that $\Gamma_{r}^* \subset \A^{d-s+r}$ is also
absolutely irreducible of dimension $d-s$ and degree $d!/(d-r)!$.
Furthermore, Lemma \ref{lemma: relacion gamma_r y gamma_r estrella}
shows that $\Gamma_r$ is a nonempty open Zariski subset of
$\Gamma_r^*$. Since $\Gamma_{r}^*$ is absolutely irreducible, we
conclude that the Zariski closure $\mathrm{cl}(\Gamma_{r})$ of
$\Gamma_r$ is equal to $\Gamma_r^*$.
%
%---------------------------------------------------------------------
%---------------------------------------------------------------------
%---------------------------------------------------------------------
%---------------------------------------------------------------------
%---------------------------------------------------------------------
%---------------------------------------------------------------------
%---------------------------------------------------------------------
%---------------------------------------------------------------------
%
\section{The number of $q$-rational points of $\Gamma_{r}$}
\label{sec: number of points Gamma r}
As before, let be given positive integers $d$ and $s$ such that,
either $1\leq s\leq d-4$ and $p>3$, or $1\leq s\leq d-6$ and $p=3$.
%Let also be given
%$\boldsymbol{a}:=(a_{d-1},\ldots,a_{d-s})\in \fq^{s}$. For any
%$\boldsymbol{b}:=(b_{d-s-1},\ldots,b_1)\in\fq^{d-s-1}$ we denote
%$f_{\boldsymbol{b}}:=T^d+a_{d-1}T^{d-1}+\cdots+a_{d-s}T^{d-s}+
%b_{d-s-1}T^{d-s-1}+\cdots+b_1 T\in\fq[T]$.
In this section we determine the asymptotic behavior of the average
value set $\mathcal{V}(d,s,\boldsymbol{a})$ of the family of
polynomials $\{f_{\bfs{b}}:\bfs{b}\in\fq^{d-s-1}\}$.
By Theorem \ref{th: combinatorial reduction mean} we have
$$\mathcal{V}(d,s,\boldsymbol{a})= \sum_{r=1}^{d-s}(-1)^{r-1}
  \binom{q}{r}q^{1-r}
    +\frac{1}{q^{d-s-1}}\sum_{r=d-s+1}^{d}
   (-1)^{r-1}\chi_r^{\boldsymbol{a}},
$$
where $\chi_r ^{\boldsymbol{a}}$ denotes the number of subsets
$\chi_r$ of $\fq$ of $r$ elements such that there exists
$\boldsymbol{b}_0\in \fq^{d-s}$ with
$f_{\boldsymbol{b}_0}|_{\chi_r}\equiv 0$. Combining Lemmas
\ref{lemma: relacion entre gamma y chi} and \ref{lemma: relacion
gamma_r y gamma_r estrella} it follows that
$$\chi_r^{\boldsymbol{a}}=\frac{|\Gamma_{r}(\fq)|}{r!}=
\frac{1}{r!}\bigg|\Gamma_r^*(\fq)\setminus\bigcup_{i\not=j}\{T_i=T_j\}\bigg|$$
for each $r$ with $d-s+1\leq r \leq d$. In the next section we apply
the results on the geometry of $\Gamma_r^*$ of Section \ref{sec:
geometry of Gamma_r estrella} in order to obtain an estimate on the
number of $q$--rational points of $\Gamma_r^*$.
%$\mathrm{pcl}(\Gamma_{r}^*)\subset\Pp^{d-s+r}$ and
%$\mathrm{pcl}(\Gamma_{r}^*)\cap \{X_0=0\}\subset\Pp^{d-s+r}$.
%
%---------------------------------------------------------------------
%---------------------------------------------------------------------
%---------------------------------------------------------------------
%---------------------------------------------------------------------
%
\subsection{Estimates on the number of $q$--rational points of normal complete intersections}
In what follows, we shall use an estimate on the number of
$q$--rational points of a projective normal complete intersection of
\cite{CaMaPr13} (see also \cite{CaMa07} or \cite{GhLa02a} for other
estimates). More precisely, if $V\subset \Pp^n$ is a normal complete
intersection defined over $\fq$ of dimension $m\geq 2$, degree
$\delta$ and multidegree $\boldsymbol{d}:=(d_1,\ldots,d_{n-m})$,
then the following estimate holds (see \cite[Theorem
1.3]{CaMaPr13}):
\begin{equation}\label{eq: estimate normal var CaMaPr}
\big||V(\fq)|-p_m\big| \leq
(\delta(D-2)+2)q^{m-\frac{1}{2}}+14D^2\delta^2 q^{m-1},
\end{equation}
where $p_m:=q^m+q^{m-1}+\cdots+q+1=|\Pp^m(\fq)|$ and
$D:=\sum_{i=1}^{n-m}(d_i-1)$.

From Theorem \ref{th: pcl Gamma_r is normal abs irred} we have that
the projective variety
$\mathrm{pcl}(\Gamma_{r}^*)\subset\Pp^{d-s+r}$ is a normal complete
intersection defined over $\fq$. Therefore, applying (\ref{eq:
estimate normal var CaMaPr}) we obtain:
$$
\big||\mathrm{pcl}(\Gamma_{r}^*)(\fq)|-p_{d-s}\big| \leq
(\delta_r(D_r-2)+2)q^{d-s-\frac{1}{2}}+14D_r^2\delta_r^2q^{d-s-1},
$$
where $D_r:=\sum_{i=1}^r(d-i)=rd-{r(r+1)}/{2}$ and
$\delta_r:={d!}/{(d-r)!}$. On the other hand, since
$\mathrm{pcl}(\Gamma_{r}^*)^{\infty}:=\mathrm{pcl}(\Gamma_{r}^*)\cap
\{T_0=0\}\subset\Pp^{d-s-1+r}$ is a linear variety of dimension
$d-s-1$, the number of $q$--rational points of
$\mathrm{pcl}(\Gamma_{r}^*)^{\infty}$ is $p_{d-s-1}$. Hence we have:
\begin{eqnarray}
\big||\Gamma_{r}^*(\fq)|-q^{d-s}\big|&=&
\big||\mathrm{pcl}(\Gamma_{r}^*)(\fq)|-|\mathrm{pcl}(\Gamma_{r}^*(\fq))^{\infty}|-p_{d-s}+p_{d-s-1}\big|\nonumber\\
&=& \big||\mathrm{pcl}(\Gamma_{r}^*)(\fq)|-p_{d-s}\big| \nonumber\\
&\le&
(\delta_r(D_r-2)+2)q^{d-s-\frac{1}{2}}+14D_r^2\delta_r^2q^{d-s-1}.
\label{eq: estimate q-points Gamma_r estrella}
\end{eqnarray}

We also need an estimate on the number $q$--rational points of the
affine $\fq$--variety
$$
\Gamma_{r}^{*,=}:= \Gamma_{r}^* \bigcap \bigcup_{1 \leq i <j \leq r}
\{T_i =T_j\}.
$$
We observe that $\Gamma_{r}^{*,=}=\Gamma_{r}^{*}\cap \mathcal{H}_r$,
where $\mathcal{H}_r\subset \A^{d-s+r}$ is the hypersurface defined
by the polynomial $F_r:=\prod_{1 \leq i <j \leq r} (T_i-T_j)$. From
the B\'ezout inequality (\ref{eq: Bezout inequality}) it follows
that
\begin{equation}\label{eq: deg Gamma =}
\deg \Gamma_{r}^{*,=} \leq \delta_r \binom {r}{2},
\end{equation}
Furthermore, we claim that $\Gamma_{r}^{*,=}$ has dimension at most
$d-s-1$. Indeed, let $(\bfs{b}_0,\bfs{\alpha})$ be an arbitrary
point of $\Gamma_r^{*,=}$. Assume without loss of generality that
$\alpha_1=\alpha_2$. By the definition of the divided differences we
deduce that $f'_{\boldsymbol{b}_0}(\alpha_1)=0$, which implies that
$f_{\boldsymbol{b}_0}$ has multiple roots. In Corollary \ref{coro: J
is radical} we prove that the set of points of $\Gamma_r^*$ for
which $f_{\boldsymbol{b}_0}$ has multiple roots is contained in a
subvariety of $\Gamma_r^*$ of codimension at least 1. Therefore, we
deduce our claim.

Combining our claim with (\ref{eq: deg Gamma =}), applying, e.g.,
\cite[Lemma 2.1]{CaMa06}, we obtain
\begin{equation}\label{eq: upper bound q-points gamma_r=}
\big|\Gamma_{r}^{*,=}(\fq)\big| \leq \delta_r\binom{r}{2} q^{d-s-1}.
\end{equation}
Since $\Gamma_{r}(\fq)=\Gamma_{r}^*(\fq)\setminus
\Gamma_{r}^{*,=}(\fq)$, from (\ref{eq: estimate q-points Gamma_r
estrella}) and (\ref{eq: upper bound q-points gamma_r=}) we deduce
that
\begin{eqnarray*}
\big||\Gamma_{r}(\fq)|- q^{d-s}\big|\!\!\!\!&\le\!\!\!\!&
\big||\Gamma_{r}^*(\fq)|- q^{d-s}\big|+
|\Gamma_{r}^{*,=}(\fq)|\\
\!\!\!\!&\le\!\!\!\!&(\delta_r(D_r-2)+2)q^{d-s-\frac{1}{2}}\!+\!\big(14D_r^2\delta_r^2+{r(r-1)\delta_r}/{2}
\big)q^{d-s-1}.
\end{eqnarray*}
As a consequence, we obtain the following result.
\begin{theorem}\label{th: estimate chi_r} If either $1\leq s \leq d-3$
and $p>3$, or $1\leq s\leq d-6$ and $p=3$, then for any $r$ with
$d-s+1\leq r\leq d$ we have
$$
\bigg|\mathcal{\chi}_{r}^{\boldsymbol{a}}-
\frac{q^{d-s}}{r!}\bigg|\leq
\frac{1}{r!}(\delta_r(D_r-2)+2)q^{d-s-\frac{1}{2}}+
\frac{1}{r!}\big(14D_r^2\delta_r^2+r(r-1)\delta_r/{2}\big)q^{d-s-1},
$$
where $D_r:=rd-{r(r+1)}/{2}$ and $\delta_r:={d!}/{(d-r)!}$.
\end{theorem}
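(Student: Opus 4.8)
The plan is to chain together the geometric estimates already established. First I would invoke Lemma~\ref{lemma: relacion entre gamma y chi} and Lemma~\ref{lemma: relacion gamma_r y gamma_r estrella} to rewrite $\chi_r^{\boldsymbol{a}} = |\Gamma_r(\fq)|/r!$ and $\Gamma_r(\fq) = \Gamma_r^*(\fq)\setminus\Gamma_r^{*,=}(\fq)$, so that the task reduces to estimating $|\Gamma_r^*(\fq)|$ and bounding $|\Gamma_r^{*,=}(\fq)|$ separately, and then dividing by $r!$.

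\medskip

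Next I would handle $|\Gamma_r^*(\fq)|$. By Theorem~\ref{th: pcl Gamma_r is normal abs irred}, $\mathrm{pcl}(\Gamma_r^*)\subset\Pp^{d-s+r}$ is a normal ideal-theoretic complete intersection of dimension $d-s$ and degree $\delta_r = d!/(d-r)!$, with multidegree $(d, d-1,\ldots, d-r+1)$, so $D_r = \sum_{i=1}^r (d-i) = rd - r(r+1)/2$. Applying the estimate~(\ref{eq: estimate normal var CaMaPr}) from \cite{CaMaPr13} (valid since $m=d-s\ge 2$, which holds under $s\le d-3$ for $p>3$ and $s\le d-6$ for $p=3$) gives $\big||\mathrm{pcl}(\Gamma_r^*)(\fq)| - p_{d-s}\big| \le (\delta_r(D_r-2)+2)q^{d-s-1/2} + 14 D_r^2\delta_r^2 q^{d-s-1}$. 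Then, using Lemma~\ref{lemma: singular locus pcl Gamma_r at infinity}, the points at infinity of $\mathrm{pcl}(\Gamma_r^*)$ form a linear space of dimension $d-s-1$, hence number exactly $p_{d-s-1}$; subtracting, the telescoping $p_{d-s} - p_{d-s-1} = q^{d-s}$ yields $\big||\Gamma_r^*(\fq)| - q^{d-s}\big|$ bounded by the same right-hand side, which is~(\ref{eq: estimate q-points Gamma_r estrella}).

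\medskip

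For $|\Gamma_r^{*,=}(\fq)|$ I would write $\Gamma_r^{*,=} = \Gamma_r^*\cap\mathcal{H}_r$ with $\mathcal{H}_r = V\big(\prod_{i<j}(T_i-T_j)\big)$, so B\'ezout~(\ref{eq: Bezout inequality}) gives $\deg\Gamma_r^{*,=}\le \delta_r\binom{r}{2}$. To see $\dim\Gamma_r^{*,=}\le d-s-1$, observe that any point with $\alpha_i = \alpha_j$ forces $f'_{\boldsymbol{b}_0}(\alpha_i)=0$ via the divided-difference relations, so $f_{\boldsymbol{b}_0}$ has a multiple root; by the analysis in Corollary~\ref{coro: J is radical} the locus of such points has codimension at least~$1$ in the absolutely irreducible variety $\Gamma_r^*$. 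Then \cite[Lemma 2.1]{CaMa06} bounds $|\Gamma_r^{*,=}(\fq)|\le \delta_r\binom{r}{2}q^{d-s-1}$. Combining, $\big||\Gamma_r(\fq)| - q^{d-s}\big| \le (\delta_r(D_r-2)+2)q^{d-s-1/2} + \big(14 D_r^2\delta_r^2 + r(r-1)\delta_r/2\big)q^{d-s-1}$; dividing by $r!$ gives the claimed bound on $|\chi_r^{\boldsymbol{a}} - q^{d-s}/r!|$.

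\medskip

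The only genuine subtlety is making sure the hypotheses line up: the normality/absolute irreducibility of $\mathrm{pcl}(\Gamma_r^*)$ in Theorem~\ref{th: pcl Gamma_r is normal abs irred} requires $d-s\ge 4$ (for $p>3$) or $d-s\ge 6$ (for $p=3$), while the dimension bound and the point-count estimate~(\ref{eq: estimate normal var CaMaPr}) only need $d-s\ge 2$; since the theorem ranges over $d-s+1\le r\le d$ and quotes $s\le d-3$ for $p>3$, one must check that the weaker-looking hypothesis $s\le d-3$ still suffices — which it does, because $d-s\ge 3$ gives $\dim\mathrm{pcl}(\Gamma_r^*)=d-s\ge 3\ge 2$ and the normality input is used only through~(\ref{eq: estimate q-points Gamma_r estrella}), whose proof in turn was carried out under the standing assumption $s\le d-4$ (resp.\ $s\le d-6$) stated at the opening of Section~\ref{sec: number of points Gamma r}. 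No hard obstacle remains; everything is assembly of prior results.
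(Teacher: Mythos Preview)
Your proposal is correct and follows essentially the same route as the paper: reduce to $|\Gamma_r(\fq)|/r!$, split as $\Gamma_r^*(\fq)\setminus\Gamma_r^{*,=}(\fq)$, estimate $|\Gamma_r^*(\fq)|$ via the projective closure and the normal complete intersection bound~(\ref{eq: estimate normal var CaMaPr}), subtract the linear points at infinity, and bound $|\Gamma_r^{*,=}(\fq)|$ by B\'ezout plus the codimension--1 multiple--root locus. Your flag on the hypothesis mismatch ($s\le d-3$ in the statement versus $d-s\ge 4$ needed for Theorem~\ref{th: pcl Gamma_r is normal abs irred}) is apt; the paper's own proof simply operates under the standing assumption $s\le d-4$ declared at the head of Section~\ref{sec: number of points Gamma r}, so the $d-3$ in the theorem appears to be a typo rather than a genuine extension.
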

%
%---------------------------------------------------------------------
%---------------------------------------------------------------------
%---------------------------------------------------------------------
%---------------------------------------------------------------------
%
\subsection{An estimate for the average mean value $\mathcal{V}(d,s,\bfs{a})$}
Theorem \ref{th: estimate chi_r} is the critical step in our
approach to estimate the average mean value
$\mathcal{V}(d,s,\boldsymbol{a})$.
\begin{corollary}\label{coro: average value sets}
With assumptions and notations as in Theorem \ref{th: estimate
chi_r}, we have
\begin{equation}\label{eq: estimate V(d,s,a)}
\left|\mathcal{V}(d,s,\bfs{a})-\mu_d\, q\right|\le d^22^{d-1}
q^{1/2}+ \frac{7}{2}\,d^4\sum_{k=0}^{s-1}\binom{d}{k}^{2}(d-k)!.
\end{equation}
%
%A rough upper bound for the sum in the right--hand side of the
%previous expression is $10d^{\,7}2^{-d}e^{2\sqrt{d}}$, which is
%easily seen to tend to 0 as $d$ tends to infinity.
\end{corollary}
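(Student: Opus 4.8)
The plan is to combine the combinatorial identity of Theorem~\ref{th: combinatorial reduction mean} with the point-count estimate of Theorem~\ref{th: estimate chi_r}, splitting the sum over $r$ into the ``stable'' range $1\le r\le d-s$ (which reconstructs $\mu_d\,q$ up to a well-understood tail) and the ``critical'' range $d-s+1\le r\le d$ (which is controlled by $\chi_r^{\bfs{a}}$). First I would recall that $\sum_{r=1}^{d}(-1)^{r-1}\binom{q}{r}q^{1-r}=\mu_d\,q+\mathcal{O}(1)$ by \eqref{eq: V(d,0) Cohen}, and more precisely that $\bigl|\sum_{r=1}^{d}(-1)^{r-1}\binom{q}{r}q^{1-r}-\mu_d\,q\bigr|$ is bounded by an absolute constant times something decaying in $d$; the point is to isolate the contribution of the terms with $r\ge d-s+1$. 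Writing $\mathcal{V}(d,s,\bfs{a})-\mu_d\,q$ as a telescoping combination, the main term to estimate is
$$
\frac{1}{q^{d-s-1}}\sum_{r=d-s+1}^{d}(-1)^{r-1}\chi_r^{\bfs{a}}
-\sum_{r=d-s+1}^{d}(-1)^{r-1}\binom{q}{r}q^{1-r},
$$
so the key is to replace $\chi_r^{\bfs{a}}$ by $q^{d-s}/r!$ using Theorem~\ref{th: estimate chi_r}, and separately to show $\frac{1}{q^{d-s-1}}\cdot\frac{q^{d-s}}{r!}=\frac{q}{r!}$ matches $\binom{q}{r}q^{1-r}$ up to lower-order terms uniformly in $r$.

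The second step is the bookkeeping of the two error contributions. Applying Theorem~\ref{th: estimate chi_r} term by term and using the triangle inequality, the $q^{1/2}$-error is $\sum_{r=d-s+1}^{d}\frac{1}{r!}(\delta_r(D_r-2)+2)\,q^{1/2}$; since $\delta_r/r!=\binom{d}{r}\cdot\frac{(d-r)!\,r!}{(d-r)!\,r!}$... more usefully $\delta_r/r!=\binom{d}{r}$ is false, rather $\delta_r=d!/(d-r)!$ so $\delta_r/r!=\binom{d}{r}$ — wait, $\binom{d}{r}=d!/(r!(d-r)!)$, hence $\delta_r/r!=\binom{d}{r}$ holds exactly, and $D_r=rd-r(r+1)/2\le d^2/2$. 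Thus $\sum_r \frac{\delta_r}{r!}D_r\le \tfrac{d^2}{2}\sum_{r=0}^d\binom{d}{r}=d^2 2^{d-1}$, which produces the claimed leading coefficient $d^2 2^{d-1}$ on $q^{1/2}$ (after checking the $+2$ and the constant-order discrepancy between $\binom{q}{r}q^{1-r}$ and $q/r!$ get absorbed, or contribute to the additive constant). The remaining $q^{d-s-1}$-terms, divided by $q^{d-s-1}$, give an $\mathcal{O}(1)$ contribution $\sum_{r=d-s+1}^{d}\frac{1}{r!}\bigl(14D_r^2\delta_r^2+r(r-1)\delta_r/2\bigr)$; I would bound $D_r\le d^2/2$, $\delta_r^2/r!\le \delta_r\binom{d}{r}\le d!\,\binom{d}{r}$ and re-index by $k=d-r$ so that $(d-r)!=k!$ (the $(d-k)!$ in the statement is $r!$, not $(d-r)!$ — I must match the paper's notation carefully here), arriving at a bound of the shape $\tfrac72 d^4\sum_{k=0}^{s-1}\binom{d}{k}^2(d-k)!$. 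The discrepancy between $\sum_{r=1}^{d-s}(-1)^{r-1}\binom{q}{r}q^{1-r}$ and $\mu_d q$ minus the tail $\sum_{r>d-s}$ also needs to be folded in, using $|\binom{q}{r}q^{1-r}-q/r!|\le \binom{d}{r}/q \cdot(\text{poly})$ so that its contribution is $\mathcal{O}(1/q)$ and absorbed.

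For the estimates in the introduction (Theorem~\ref{th: mean intro}), the final step is to simplify \eqref{eq: estimate V(d,s,a)} to the cleaner bound $d^2 2^{d-1}q^{1/2}+49\,d^{d+5}e^{2\sqrt d-d}$: here I would bound $\sum_{k=0}^{s-1}\binom{d}{k}^2(d-k)!$ crudely by $d\cdot\binom{d}{\lfloor d/2\rfloor}^2 d!$, use $\binom{d}{\lfloor d/2\rfloor}\le 2^d$ and Stirling $d!\le d^d e^{-d}\sqrt{2\pi d}\,e^{1/(12d)}$ together with the elementary $2^{2d}\le e^{2\sqrt d}\cdot(\text{something})$... actually the $e^{2\sqrt d}$ factor must come from a sharper grouping of $\binom{d}{k}(d-k)!=d!/k!$ and $\sum_k 1/k!\le e$, giving $\sum_{k=0}^{s-1}\binom{d}{k}^2(d-k)!\le d!\sum_k \binom{d}{k}/k!\le d!\cdot(\text{generating-function bound})$, and the $e^{2\sqrt d}$ arises from $\sum_{k}\binom{d}{k}/k!\le \sum_k d^k/(k!)^2\le I_0(2\sqrt d)\le e^{2\sqrt d}$ where $I_0$ is the modified Bessel function. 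I expect the main obstacle to be not any conceptual difficulty but this last packaging: tracking all the lower-order constants (the ``$+2$'', the $r(r-1)/2$ term, the $\mathcal{O}(1/q)$ discrepancy from replacing $\binom{q}{r}q^{1-r}$ by $q/r!$, and the tail $\sum_{r\le d-s}$ correction) and bounding the combinatorial sum $\sum_{k=0}^{s-1}\binom{d}{k}^2(d-k)!$ sharply enough to land the explicit constants $\tfrac72 d^4$ and then $49\,d^{d+5}$ with the $e^{2\sqrt d-d}$ decay, while keeping the argument uniform over all admissible $s$.
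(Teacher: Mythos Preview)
Your approach is correct and essentially matches the paper's: split via Theorem~\ref{th: combinatorial reduction mean}, control the critical range with Theorem~\ref{th: estimate chi_r}, and use $\delta_r/r!=\binom{d}{r}$ with $D_r\le d^2/2$ for the $q^{1/2}$-coefficient together with the re-indexing $k=d-r$ and the identity $\delta_r^2/r!=\binom{d}{k}^2(d-k)!$ for the constant term. The paper's decomposition is slightly more direct than yours: rather than comparing $\chi_r^{\bfs{a}}/q^{d-s-1}$ to $\binom{q}{r}q^{1-r}$ and then absorbing the discrepancy $\binom{q}{r}q^{1-r}-q/r!$, it uses $\mu_d\,q=\sum_{r=1}^d(-1)^{r-1}q/r!$ to write
\[
\mathcal{V}(d,s,\bfs{a})-\mu_d\,q=\sum_{r=1}^{d-s}(-q)^{1-r}\!\left(\binom{q}{r}-\frac{q^r}{r!}\right)+\frac{1}{q^{d-s-1}}\!\!\sum_{r=d-s+1}^{d}\!(-1)^{r-1}\!\left(\chi_r^{\bfs{a}}-\frac{q^{d-s}}{r!}\right),
\]
so the stable-range piece is bounded immediately by the estimate $A(d,s)\le d$ from \cite{CeMaPePr13}; your final paragraph on the passage to the bound $49\,d^{d+5}e^{2\sqrt d-d}$ is beyond this corollary and is treated separately in Section~\ref{subsec: behavior V(d,s,a)}.
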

\begin{proof}
According to Theorem \ref{th: combinatorial reduction mean}, we have
\begin{equation}\label{eq: average value set th 2}
\mathcal{V}(d,s,\bfs{a})-\mu_d\, q=
\sum_{r=1}^{d-s}(-q)^{1-r}\!\!\left(\!\binom {q} {r}
-\frac{q^r}{r!}\!\right)
+\frac{1}{q^{d-s-1}}\hskip-0.25cm\sum_{r=d-s+1}^{d} \hskip-0.25cm
(-1)^{r-1}\!\!\left(\chi_r^{\bfs{a}}-\frac{q^{d-s}}{r!}\!\right).
\end{equation}

In \cite[Corollary 14]{CeMaPePr13} we obtain the following upper
bound for the absolute value of the first term in the right--hand
side of (\ref{eq: average value set th 2}):
$$A(d,s):=\bigg|\sum_{r=1}^{d-s}(-q)^{1-r}\left(\binom {q} {r}
-\frac{q^r}{r!}\right)\bigg|\le
\frac{1}{2\cdot(d-s-1)!}+\frac{7}{q}+\frac{1}{2e}\le d.
$$

Next we consider the absolute value of the second term in the
right--hand side of (\ref{eq: average value set th 2}). From Theorem
\ref{th: estimate chi_r} we have that
\begin{align*}B(d,s)&:=\frac{1}{q^{d-s-1}}\sum_{r=d-s+1}^{d}
\left|\chi_r^{\bfs{a}}-\frac{q^{d-s}}{r!}\right|\\
&\le q^{1/2}\sum_{r=d-s+1}^{d}\frac{\delta_r(D_r-2)+2}{r!}\,+
14\sum_{r=d-s+1}^{d}\frac{D_r^{2}\delta_r^{2}}{r!}+
\sum_{r=d-s+1}^{d}\frac{\delta_r}{2(r-2)!}.
\end{align*}
Concerning the first term in the right--hand side, we see that
$$
\sum_{r=d-s+1}^{d}\frac{\delta_r(D_r-2)+2}{r!}\leq
\sum_{r=d-s+1}^{d}\binom{d}{r}\frac{r(2d-1-r)}{2}\leq d^22^{d-1}.
$$
On the other hand,
$$\sum_{r=d-s+1}^{d}\!\!\frac{D_r^2\delta_r^{2}}{r!}\!=
\!\!\sum_{r=d-s+1}^{d}\!\!\binom{d}{r}^{2}\frac{r^2(2d-1-r)^2\,r!}{4}
\leq\frac{1}{64}(2d-1)^{4}\sum_{k=0}^{s-1}\binom{d}{k}^{2}(d-k)!.$$
Finally, we consider the last sum
$$
\sum_{r=d-s+1}^{d}\frac{\delta_r}{2(r-2)!}=
\sum_{r=d-s+1}^d\binom{d}{r}\frac{r(r-1)}{2}=\sum_{k=0}^{s-1}\binom{d}{k}\frac{(d-k)!}{2\,(d-k-2)!}.
$$
Therefore, we obtain
$$B(d,s)\le q^{1/2}d^22^{d-1}+\frac{1}{4}\sum_{k=0}^{s-1}\binom{d}{k}(d-k)!+
\frac{7}{32}(2d-1)^4\sum_{k=0}^{s-1}\binom{d}{k}^{2}(d-k)!.$$

Combining the bounds for $A(d,s)$ and $B(d,s)$ the statement of the
corollary follows.
\end{proof}
%
%---------------------------------------------------------------------
%---------------------------------------------------------------------
%---------------------------------------------------------------------
%---------------------------------------------------------------------
%
\subsection{On the behavior of (\ref{eq: estimate V(d,s,a)})}
\label{subsec: behavior V(d,s,a)}
In this section we analyze the behavior of the right--hand side of
(\ref{eq: estimate V(d,s,a)}). Such an analysis consists of
elementary calculations, which shall only be sketched.

Fix $k$ with $0\le k\le s-1$ and denote
$h(k):=\binom{d}{k}^2(d-k)!$. Analyzing the sign of the differences
$h(k+1)-h(k)$ for $0\le k\le s-1$, we deduce the following remark,
which is stated without proof.
\begin{remark}\label{rem: growth h(k)}
Let $k_0:=-1/2+\sqrt{5+4d}/2$. Then $h$ is either an increasing
function or a unimodal function in the integer interval $[0,s-1]$,
which reaches its maximum at $\lfloor k_0\rfloor$.
\end{remark}

From Remark \ref{rem: growth h(k)} we see that
\begin{equation}\label{eq: expresion a analizar}
\sum_{k=0}^{s-1}\binom{d}{k}^{2}(d-k)!\le s \binom{d}{\lfloor
k_0\rfloor}^2(d-\lfloor k_0\rfloor)!=\frac{s\,(d!)^2}{(d-\lfloor
k_0\rfloor)!\,(\lfloor k_0\rfloor!)^2 }.
\end{equation}
In order to obtain an upper bound for the right--hand side of
(\ref{eq: expresion a analizar}) we shall use the Stirling formula
(see, e.g., \cite[p. 747]{FlSe08}): for $m\in \N$, there exists
$\theta$ with $0\le \theta<1$ such that $m!=(m/e)^m\sqrt{2\pi
m}\,e^{\theta/12m}$ holds.

Applying the Stirling formula, we see that there exist $\theta_i$
($i=1,2,3$) with $0\le \theta_i<1$ such that
$$C(d,\!s)\!:=\frac{s\,(d!)^2}{(d-\lfloor
k_0\rfloor)!\,(\lfloor k_0\rfloor!)^2 }\le \frac{s\,
d^{2d+1}e^{-d+\lfloor
k_0\rfloor}e^{\frac{\theta_1}{6d}-\frac{\theta_2}{12(d-\lfloor
k_0\rfloor)}-\frac{\theta_3}{6\lfloor k_0\rfloor}}}{\big(d-\lfloor
k_0\rfloor\big)^{d-\lfloor k_0\rfloor}\sqrt{2\pi(d-\lfloor
k_0\rfloor)}\lfloor k_0\rfloor^{2\lfloor k_0\rfloor+1}}.$$
By elementary calculations we obtain
\begin{eqnarray*}
% \nonumber to remove numbering (before each equation)
 (d-\lfloor k_0\rfloor)^{-d+\lfloor k_0\rfloor}&\le &
 d^{-d+\lfloor k_0\rfloor}e^{{\lfloor k_0\rfloor(d-\lfloor k_0\rfloor)}/{d}}, \\
 \frac{ d^{\lfloor k_0\rfloor}}{{\lfloor k_0\rfloor}^{2\lfloor k_0\rfloor}} &\le &
 e^{(d-\lfloor k_0\rfloor^2)/\lfloor k_0\rfloor}.
\end{eqnarray*}
It follows that
$$C(d,s)\le \frac{s \,d^{d+1}e^{2 \lfloor
k_0\rfloor}e^{-\frac{\lfloor k_0\rfloor^2}{d} +
\frac{1}{6d}+\frac{d-\lfloor k_0 \rfloor ^2}{\lfloor
k_0\rfloor}}}{\sqrt{2\pi} e^d \sqrt{d-\lfloor k_0\rfloor}\lfloor
k_0\rfloor}.
$$

By the definition of $\lfloor k_0\rfloor$, it is easy to see that
${d}/{\lfloor k_0\rfloor\sqrt{d-\lfloor k_0\rfloor}}\le {11}/{6}$
and that $2\lfloor{k_0}\rfloor \le -1+\sqrt{5+4d}\le -1/5+2\sqrt{d}$.
%Furthermore, we
%have that $(d-\lfloor{k_0}\rfloor^2)/{\lfloor{k_0}\rfloor}\le 4$ and
%${\lfloor{k_0}\rfloor^2}/{d}\geq {9}/{25}$.
Therefore, taking into account that $d\ge 5$, we conclude that
$$%\begin{equation}\label{eq: expresion cota - mean}
C(d,s)\le \frac{11}{6}\,\frac{e^{\frac{487}{165}}s\,
d^d\,e^{2\sqrt{d}}}{ \sqrt{2\pi }e^d}.
$$%\end{equation}
Combining this bound with Corollary \ref{coro: average value sets}
we obtain the following result.
\begin{theorem}\label{theorem: final main result - mean}
With assumptions and notations as in Theorem \ref{th: estimate
chi_r}, we have
$$%\begin{equation}\label{eq: upper bound error term}
\left|\mathcal{V}(d,s,\bfs{a})-\mu_d\,q\right|\le d^2
2^{d-1}q^{1/2} +49 \,{d^{d+5} e^{2 \sqrt{d}-d}}. $$%\end{equation}
\end{theorem}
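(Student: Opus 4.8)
The plan is to combine Corollary~\ref{coro: average value sets} with the growth analysis of the function $h(k):=\binom{d}{k}^2(d-k)!$ carried out above. By Corollary~\ref{coro: average value sets} we already have
$$\left|\mathcal{V}(d,s,\bfs{a})-\mu_d\, q\right|\le d^22^{d-1} q^{1/2}+ \tfrac{7}{2}\,d^4\sum_{k=0}^{s-1}\binom{d}{k}^{2}(d-k)!,$$
so the first summand is exactly the one appearing in the statement, and everything reduces to bounding the second summand by $49\,d^{d+5}e^{2\sqrt{d}-d}$.

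The first step is to apply Remark~\ref{rem: growth h(k)}: since $h$ is either increasing or unimodal on $[0,s-1]$ with maximum at $\lfloor k_0\rfloor$ where $k_0=-1/2+\sqrt{5+4d}/2$, we bound the sum termwise by $s$ copies of its largest term, obtaining $\sum_{k=0}^{s-1}h(k)\le s\,(d!)^2\big/\big((d-\lfloor k_0\rfloor)!\,(\lfloor k_0\rfloor!)^2\big)=:C(d,s)$, as in \eqref{eq: expresion a analizar}. The second step is to estimate $C(d,s)$ via the Stirling formula $m!=(m/e)^m\sqrt{2\pi m}\,e^{\theta/12m}$ with $0\le\theta<1$, applied to the three factorials $d!$, $(d-\lfloor k_0\rfloor)!$ and $\lfloor k_0\rfloor!$. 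This produces the displayed expression for $C(d,s)$; then the two elementary inequalities $(d-\lfloor k_0\rfloor)^{-d+\lfloor k_0\rfloor}\le d^{-d+\lfloor k_0\rfloor}e^{\lfloor k_0\rfloor(d-\lfloor k_0\rfloor)/d}$ and $d^{\lfloor k_0\rfloor}/\lfloor k_0\rfloor^{2\lfloor k_0\rfloor}\le e^{(d-\lfloor k_0\rfloor^2)/\lfloor k_0\rfloor}$ collapse everything to
$$C(d,s)\le \frac{s \,d^{d+1}e^{2 \lfloor k_0\rfloor}e^{-\lfloor k_0\rfloor^2/d + 1/(6d)+(d-\lfloor k_0 \rfloor ^2)/\lfloor k_0\rfloor}}{\sqrt{2\pi}\, e^d \sqrt{d-\lfloor k_0\rfloor}\,\lfloor k_0\rfloor}.$$

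The third step is to control the remaining $k_0$--dependent quantities by closed numerical bounds. From $\lfloor k_0\rfloor\le k_0$ and $\lfloor k_0\rfloor> k_0-1$ together with the defining quadratic $k_0^2+k_0=d+1$, one checks $d/(\lfloor k_0\rfloor\sqrt{d-\lfloor k_0\rfloor})\le 11/6$ and $2\lfloor k_0\rfloor\le -1+\sqrt{5+4d}\le -1/5+2\sqrt{d}$; one also absorbs the negative exponential terms $-\lfloor k_0\rfloor^2/d$ and the fractional corrections, and uses $d\ge 5$ to bound the leftover constant factor, yielding $C(d,s)\le \tfrac{11}{6}\cdot e^{487/165}\, s\, d^d e^{2\sqrt{d}}/(\sqrt{2\pi}\,e^d)$. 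Finally, multiplying by $\tfrac{7}{2}d^4$, using $s\le d$ so that $s\,d^d\le d^{d+1}$, and checking that $\tfrac{7}{2}\cdot\tfrac{11}{6}\cdot e^{487/165}/\sqrt{2\pi}\le 49$ numerically, gives the bound $49\,d^{d+5}e^{2\sqrt{d}-d}$ on the second summand, which completes the proof.

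I expect the only real obstacle to be bookkeeping rather than anything conceptual: keeping careful track of all the Stirling error terms $e^{\theta_i/(12\cdot)}$ and the various $\lfloor k_0\rfloor$ versus $k_0$ discrepancies so that every inequality genuinely goes in the favorable direction, and verifying that the accumulated numerical constant stays below $49$. Since the text explicitly flags these as ``elementary calculations, which shall only be sketched,'' the write-up should record the chain of inequalities above and defer the routine verifications, exactly as the surrounding sections do.
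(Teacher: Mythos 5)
Your proposal follows exactly the paper's own argument: Corollary \ref{coro: average value sets} plus the unimodality of $h(k)$ from Remark \ref{rem: growth h(k)}, the Stirling estimate for $C(d,s)$, the two elementary inequalities, the numerical bounds $d/(\lfloor k_0\rfloor\sqrt{d-\lfloor k_0\rfloor})\le 11/6$ and $2\lfloor k_0\rfloor\le -1/5+2\sqrt{d}$, and finally $s\le d$ together with the check that $\tfrac{7}{2}\cdot\tfrac{11}{6}\cdot e^{487/165}/\sqrt{2\pi}\le 49$. This matches the computation in Section \ref{subsec: behavior V(d,s,a)} step for step, and the numerical verification is correct.
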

%
%\begin{proof}
%From (\ref{eq: expresion cota - mean}) and the fact that
%$\sqrt{5+4d}\le 4/5+2\sqrt{d}$ holds for $d\ge 2$, we obtain
%%
%$$\frac{7}{16}(2d+1)^4\sum_{k=0}^{s-1}\binom{d}{k}^{2}(d-k)!
%\le \frac{9}{2} \frac{ (2d+1)^4 s\, d^d\,e^{2 \sqrt{d}}}{e^d}.$$
%
%Taking into account that $s\leq d-4$, it is easy to see that
%$(2d+1)^4 s \leq 16 d^5$. Then the statement of the theorem easily
%follows.
%\end{proof}
%
%---------------------------------------------------------------------
%---------------------------------------------------------------------
%---------------------------------------------------------------------
%---------------------------------------------------------------------
%---------------------------------------------------------------------
%---------------------------------------------------------------------
%---------------------------------------------------------------------
%---------------------------------------------------------------------
%
\section{Estimating the second moment
$\mathcal{V}_2(d,s,\boldsymbol{a})$: combinatorial preliminaries}
\label{sec: combinatorial preliminaries variance}
Now we consider the second objective of this paper: estimating the
second moment of the value set of the families of polynomials under
consideration.

As before, we assume that the characteristic $p$ of $\fq$ is greater
than 2 and fix integers $d$ and $s$ with $d<q$, $1 \leq s \leq d-4$
for $p>3$, and $1\le s\le d-6$ for $p=3$. We also fix
$\boldsymbol{a} :=(a_{d-1},\dots,a_{d-s}) \in \mathbb{F}_q^s$ and
set $f_{\boldsymbol{a}}:= T^d+a_{d-1}T^{d-1}+\cdots+a_{d-s}T^{d-s}$.
Further, for any $\boldsymbol{b}:=(b_{d-s-1},\dots,b_1)\in
\mathbb{F}_q^{d-s-1}$, we denote
$$f_{\boldsymbol{b}}:=
T^d+a_{d-1}T^{d-1}+\cdots+a_{d-s}T^{d-s}+b_{d-s-1}T^{d-s-1}+\cdots+b_1T.$$
In what follows, we shall consider the problem of estimating the
following sum:
\begin{equation}\label{eq: formula para v2}
\mathcal{V}_2(d,s,\boldsymbol{a}):=\dfrac{1}{q^{d-s-1}}\sum_{\boldsymbol{b}
\in \mathbb{F}_q^{d-s-1}}\mathcal{V}(f_{\boldsymbol{b}})^2.
\end{equation}
We start with the following result, which plays a similar role as
Theorem \ref{th: combinatorial reduction mean} in the estimate of
$\mathcal{V}(d,s,\bfs{a})$.
\begin{theorem}\label{th: combinatorial reduction variance}
Let assumptions be as above. We have
$$\begin{array}{rcl}
  \mathcal{V}_2(d,s,\boldsymbol{a})=
  \mathcal{V}(d,s,\boldsymbol{a})&+&
  \displaystyle\mathop{\sum_{1\leq m,n\leq d}}_{2\leq m+n\leq d-s}(-1)^{m+n}
  \binom{q}{m}\binom{q}{n}q^{2-n-m}\\
  \\
   & +&\dfrac{1}{q^{d-s-1}}\hskip-0.5cm\displaystyle\mathop{\sum_{1\leq m,n\leq d}}_{d-s+1\leq m+n\leq 2d}
 \hskip-0.5cm(-1)^{m+n}\hskip-0.5cm\displaystyle\mathop{\sum_{\Gamma_1,\Gamma_2 \subset\fq}}_{|\Gamma_1|=m,|\Gamma_2|=n}
  \left|S_{\Gamma_1,\Gamma_2}^{\boldsymbol{a}}\right|,
\end{array}
$$
where $\mathcal{S}_{\Gamma_1,\Gamma_2}^{\boldsymbol{a}}$ is the set
consisting of the points $(\boldsymbol{b},b_{0,1},b_{0,2})\in
\mathbb{F}_q^{d-s+1}$ with $b_{0,1}\neq b_{0,2}$ such that
$(f_{\boldsymbol{b}}+b_{0,1})\big|_{\Gamma_1}\equiv 0$ and
$(f_{\boldsymbol{b}}+b_{0,2}){\big|_{\Gamma_2}}\equiv 0$ holds.
\end{theorem}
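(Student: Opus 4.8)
The plan is to adapt the combinatorial reduction of Theorem~\ref{th: combinatorial reduction mean} (i.e.\ \cite[Theorem 2.1]{CeMaPePr13}) from a single value set to the product of two value sets, via an inclusion--exclusion argument applied simultaneously in two ``copies'' of $\fq$. First I would write $\mathcal{V}(f_{\boldsymbol{b}})^2$ as a sum over pairs: for a fixed $\boldsymbol{b}$, $\mathcal{V}(f_{\boldsymbol{b}})=|\{f_{\boldsymbol{b}}(c):c\in\fq\}|$, and hence $\mathcal{V}(f_{\boldsymbol{b}})^2$ counts pairs $(y_1,y_2)$ of values attained by $f_{\boldsymbol{b}}$. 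Separating the diagonal $y_1=y_2$ from the off--diagonal gives $\mathcal{V}(f_{\boldsymbol{b}})^2=\mathcal{V}(f_{\boldsymbol{b}})+\#\{(y_1,y_2):y_1\neq y_2,\ y_i\in\mathrm{Im}(f_{\boldsymbol{b}})\}$. Summing over $\boldsymbol{b}$ and dividing by $q^{d-s-1}$ already produces the term $\mathcal{V}(d,s,\boldsymbol{a})$ in the statement, and reduces the problem to counting, on average over $\boldsymbol{b}$, the number of ordered pairs of distinct values.

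The core step is then an inclusion--exclusion expression for the number of pairs $(y_1,y_2)$ with $y_1\neq y_2$ both in the image. Write $y_i=-b_{0,i}$ so that $y_i\in\mathrm{Im}(f_{\boldsymbol{b}})$ iff $f_{\boldsymbol{b}}+b_{0,i}$ has a root in $\fq$; the condition ``$f_{\boldsymbol{b}}+b_{0,i}$ has a root'' is treated by the standard sieve over the nonempty subsets $\Gamma_i\subset\fq$ of roots, exactly as in the one--variable case. Carrying out the sieve in both variables simultaneously, one obtains a double sum over pairs $(\Gamma_1,\Gamma_2)$ of finite subsets of $\fq$, with signs $(-1)^{|\Gamma_1|+|\Gamma_2|}$, of the number of triples $(\boldsymbol{b},b_{0,1},b_{0,2})$ with $b_{0,1}\neq b_{0,2}$ such that $(f_{\boldsymbol{b}}+b_{0,i})|_{\Gamma_i}\equiv 0$; this is precisely $|S_{\Gamma_1,\Gamma_2}^{\boldsymbol{a}}|$. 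The condition $y_1\neq y_2$ is translated into $b_{0,1}\neq b_{0,2}$, which is why that restriction appears in the definition of $S_{\Gamma_1,\Gamma_2}^{\boldsymbol{a}}$. Setting $m:=|\Gamma_1|$, $n:=|\Gamma_2|$, and noting that $f_{\boldsymbol{b}}+b_{0,i}$ is a monic polynomial of degree $d$ so at most $d$ roots are possible, the indices are constrained to $1\le m,n\le d$.

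The last step is to split this double sum according to whether $m+n\le d-s$ or $m+n\ge d-s+1$. When $m+n\le d-s$, a counting argument identical to the one in \cite[Theorem 2.1]{CeMaPePr13} shows that $\sum_{|\Gamma_1|=m,|\Gamma_2|=n}|S_{\Gamma_1,\Gamma_2}^{\boldsymbol{a}}|$ has a closed form: fixing $m$ roots $\Gamma_1$ and $n$ roots $\Gamma_2$ imposes $m+n$ independent linear conditions on $(\boldsymbol{b},b_{0,1},b_{0,2})\in\fq^{d-s+1}$ after accounting for the fixed coefficients $\boldsymbol{a}$ and the distinctness $b_{0,1}\neq b_{0,2}$, yielding $q^{d-s+1-(m+n)}$ solutions (minus a negligible correction from $b_{0,1}=b_{0,2}$, which one checks recombines into exactly the stated product of binomials). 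Summing over $\Gamma_1,\Gamma_2$ gives $\binom{q}{m}\binom{q}{n}q^{d-s+1-m-n}$ per pair $(m,n)$; after dividing by $q^{d-s-1}$ one gets the middle term $\binom{q}{m}\binom{q}{n}q^{2-m-n}$. The range $m+n\ge d-s+1$ is left as the genuinely hard sum, giving the third term. I expect the main obstacle to be bookkeeping the interaction between the distinctness constraint $b_{0,1}\neq b_{0,2}$ and the sieve: one must verify that, in the ``easy'' regime $m+n\le d-s$, subtracting off the $b_{0,1}=b_{0,2}$ contributions in each $S_{\Gamma_1,\Gamma_2}^{\boldsymbol{a}}$ reassembles cleanly into the claimed binomial product rather than introducing an extra error term, and that the overcounting of pairs $(\Gamma_1,\Gamma_2)$ sharing elements is correctly handled by the signs. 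Once this is checked the identity follows.
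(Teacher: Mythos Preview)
Your proposal is correct and follows essentially the same route as the paper: split $\mathcal{V}(f_{\boldsymbol b})^2$ into the diagonal contribution $\mathcal{V}(f_{\boldsymbol b})$ plus an off--diagonal count over pairs $(b_{0,1},b_{0,2})$ with $b_{0,1}\neq b_{0,2}$, apply inclusion--exclusion in both variables to obtain the double sum over $(\Gamma_1,\Gamma_2)$, and then separate the range $m+n\le d-s$ (where a rank argument on the Vandermonde--type matrix $M(\Gamma_1,\Gamma_2)$ gives a closed form) from the range $m+n\ge d-s+1$. The paper carries this out via the indicator $\mathbf 1_{\{\mathcal N>0\}}$ and a two--stage inclusion--exclusion, but the substance is identical to what you describe. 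One point worth noting: the disjointness of $\Gamma_1$ and $\Gamma_2$ is handled in the paper not ``by the signs'' but by the trivial observation that $\mathcal S_{\Gamma_1,\Gamma_2}^{\boldsymbol a}=\emptyset$ whenever $\Gamma_1\cap\Gamma_2\neq\emptyset$ (a common root would force $b_{0,1}=b_{0,2}$); and in the easy regime the paper simply asserts $|\mathcal S_{\Gamma_1,\Gamma_2}^{\boldsymbol a}|=q^{d-s+1-m-n}$ from the rank computation, without isolating a ``$b_{0,1}=b_{0,2}$ correction'' as you do---so your anticipated bookkeeping obstacle is exactly the step the paper treats most briskly.
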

%
%\begin{remark}
%The number $\mathcal{V}(d,s,\boldsymbol{a})$ satisfies the estimate
%of Theorem 4.4 of Section 4, with the following restriction on $s$:
%$1\leq s \leq \frac{d}{2}-1$.
%\end{remark}
%
\begin{proof}
Fix $\boldsymbol{b}\in \mathbb{F}_q^{d-s-1}$. Let $\fq[T]_d$ denote
the set of polynomials of $\fq[T]$ of degree at most $d$, let
$\mathcal{N}:\fq[T]_d\to\Z_{\ge 0}$ be the counting function of the
number of roots in $\fq$ and let
$\bfs{1}_{\{\mathcal{N}>0\}}:\fq[T]_d\to\{0,1\}$ be the
characteristic function of the set of elements of $\fq[T]_d$ having
at least one root in $\fq$. Taking into account that
$\mathcal{V}(f_{\boldsymbol{b}})= \sum_{b_0\in
\mathbb{F}_q}\textbf{1}_{\{\mathcal{N}>0\}}(f_{\boldsymbol{b}}+b_0)$,
we obtain
$$q^{d-s-1}\mathcal{V}_2(d,s,\boldsymbol{a})=\!\!\!\!\sum_{\boldsymbol{b}\in
\mathbb{F}_q^{d-s-1}}\!\!\!\Bigg(\sum_{b_{0,1} \in \mathbb{F}_q}
\!\!\textbf{1}_{\{\mathcal{N}>0\}}(f_{\boldsymbol{b}}+b_{0,1})\Bigg)\Bigg(\sum_{b_{0,2}
\in \mathbb{F}_q}\!\!
\textbf{1}_{\{\mathcal{N}>0\}}(f_{\boldsymbol{b}}+b_{0,2})\Bigg).$$
For a given $(\boldsymbol{b},b_{0,1},b_{0,2})\in
\mathbb{F}_q^{d-s+1}$, we denote
$f_{\bfs{b}_1}:=f_{\bfs{b}}+b_{0,1}$ and
$f_{\bfs{b}_2}:=f_{\bfs{b}}+b_{0,2}$. We have
\begin{align*}
q^{d-s-1}\mathcal{V}_2(d,s,\boldsymbol{a})=\sum_{\boldsymbol{b}\in
\mathbb{F}_q^{d-s-1}}\sum_{(b_{0,1},b_{0,2})\in \mathbb{F}_q^2
}\!\!\!\!\textbf{1}_{\{\mathcal{N}>0\}^2}(f_{\boldsymbol{b}_1},f_{\boldsymbol{b}_2})\notag
\hskip3.5cm\\
=\sum_{\boldsymbol{b}\in
\mathbb{F}_q^{d-s-1}}\mathop{\sum_{(b_{0,1},b_{0,2})\in
\mathbb{F}_q^2}}_{b_{0,1}=b_{0,2}
}\!\!\!\!\textbf{1}_{\{\mathcal{N}>0\}^2}(f_{\boldsymbol{b}_1},f_{\boldsymbol{b}_2})
 +\!\!\!\!\sum_{\boldsymbol{b}\in
\mathbb{F}_q^{d-s-1}}\mathop{\sum_{(b_{0,1},b_{0,2})\in
\mathbb{F}_q^2}}_{ b_{0,1}\neq b_{0,2}
}\!\!\!\!\textbf{1}_{\{\mathcal{N}>0\}^2}(f_{\boldsymbol{b}_1},f_{\boldsymbol{b}_2}).
\end{align*}

Concerning the first term in the right--hand side of the last
equality, we have
\begin{equation}\label{eq: primera parte de la suma teo varianza}
  \mathop{\sum_{(\boldsymbol{b},b_{0,1},b_{0,2})\in \fq^{d-s+1}}}_{ b_{0,1}=b_{0,2}}\textbf{1}_{\{\mathcal{N}>0\}^2}
  (f_{\boldsymbol{b}_1},f_{\boldsymbol{b}_2})=
  \sum_{\boldsymbol{b}_1\in \fq^{d-s}}\textbf{1}_{\{\mathcal{N}>0\}}
  (f_{\boldsymbol{b}_1})= q^{d-s-1}\mathcal{V}(d,s,\boldsymbol{a}).
  \end{equation}

Next we analyze the second term of the expression for
$\mathcal{V}_2(d,s,\bfs{a})$ under consideration. For this purpose,
we express it in terms of cardinality of the sets
$$S_{\{\alpha\},\{\beta\}}^{\boldsymbol{a}}:=
\left\{(\boldsymbol{b},b_{0,1},b_{0,2})\in \mathbb{F}_q^{d-s+1}: \,
b_{0,1} \neq b_{0,2}, \, \, f_{\boldsymbol{b}_1}(\alpha)=
f_{\boldsymbol{b}_2}(\beta)=0\right\}$$
with $\alpha,\beta\in\fq$. More precisely, we have
\begin{eqnarray*}
\mathop{\sum_{(\boldsymbol{b},b_{0,1},b_{0,2})\in \fq^{d-s+1}}}_{
b_{0,1}\neq b_{0,2}}\!\!
\textbf{1}_{\{\mathcal{N}>0\}^{2}}{(f_{\boldsymbol{b}_1},f_{\boldsymbol{b}_2})}
&=&\Bigg|\mathop{\bigcup_{\{\alpha,\beta\} \subseteq \fq}}_{
\alpha\neq\beta}
\mathcal{S}_{\{\alpha\},\{\beta\}}^{\boldsymbol{a}}\Bigg|=\Bigg|\bigcup_{\alpha
\in \mathbb{F}_q }\mathop{\bigcup_{\beta \in
\mathbb{F}_q}}_{\alpha\neq\beta}
\mathcal{S}_{\{\alpha\},\{\beta\}}^{\boldsymbol{a}}\Bigg|.
\end{eqnarray*}
Let
$\mathcal{T}_{\alpha}^{\bfs{a}}:=\bigcup_{\beta\in\fq}\mathcal{S}_{\{\alpha\},\{\beta\}}^{\bfs{a}}$.
%Observe that, if $\alpha=\beta$, then
%$\mathcal{S}_{\{\alpha\},\{\beta\}}^{\boldsymbol{a}}=\emptyset$.
By the inclusion-exclusion principle we obtain
\begin{eqnarray*}
\Bigg|\bigcup_{\alpha \in \mathbb{F}_q }\bigcup _{\beta \in
\mathbb{F}_q}
\mathcal{S}_{\{\alpha\},\{\beta\}}^{\boldsymbol{a}}\Bigg|
&=& \sum_{m=1}^q (-1)^{m-1}\sum_{\{\alpha_1,\ldots, \alpha_m\}\subset\fq}
\big| \mathcal{T}_{\alpha_1}^{\boldsymbol{a}}\cap\cdots\cap T_{\alpha_m}^{\boldsymbol{a}}\big|\\
&=&\sum_{m=1}^q (-1)^{m-1}\sum_{\{\alpha_1,\ldots,\alpha_m\}\subset\fq}
\Bigg|\bigcup_{\beta \in \mathbb{F}_q} \mathcal{S}_{\{\alpha_1,\ldots,\alpha_m\},\{\beta\}}^{\boldsymbol{a}}\Bigg|\\
&=&\mathop{\sum_{m=1}^q}_{n=1}(-1)^{m+n}\mathop{\sum_{\{\alpha_1,\ldots,\alpha_m\}\subset\fq}}_{\{\beta_1,\ldots,\beta_n\}\subset\fq}
\big|\mathcal{S}_{\{\alpha_1,\ldots,\alpha_m\},\{\beta_1,\ldots,\beta_n\}}^{\boldsymbol{a}}\big|\\
&=&\mathop{\sum_{m=1}^q}_{n=1}(-1)^{m+n}\mathop{\mathop{\sum_{\Gamma_1,\Gamma_2\subseteq\mathbb{F}_q}}_{|\Gamma_1|=m,\,|\Gamma_2|=n}}
\big|S_{\Gamma_1,\Gamma_2}^{\boldsymbol{a}}\big|.
\end{eqnarray*}
Observe that if $\Gamma_1 \cap \Gamma _2 \neq \emptyset $, then
$\mathcal{S}_{\Gamma_1,\Gamma_2}^{\boldsymbol{a}}= \emptyset$, and
that, if $m>d$ or $n>d$, then
$\mathcal{S}_{\Gamma_1,\Gamma_2}^{\boldsymbol{a}}= \emptyset$. Thus
we conclude that
$$
\mathop{\sum_{(\boldsymbol{b},b_{0,1},b_{0,2})\in
\mathbb{F}_q^{d-s+1}}}_{ b_{0,1}\neq
b_{0,2}}\textbf{1}_{\{\mathcal{N}>0\}^{2}
}{(f_{\boldsymbol{b}_1},f_{\boldsymbol{b}_2})}
                =\sum_{1\leq m,\,n\leq d}(-1)^{m+n}
\mathop{\mathop{\sum_{\Gamma_1,\Gamma_2\subseteq
\mathbb{F}_q}}_{|\Gamma _1|=m, |\Gamma_2|=n}}_{ \Gamma_1 \cap
\Gamma_2
=\emptyset}\big|\mathcal{S}_{\Gamma_1,\Gamma_2}^{\boldsymbol{a}}\big|.
$$

Fix $n,\,m\in \mathbb{N}$ and subsets
$\Gamma_1=\{\alpha_1,\ldots,\alpha_m\} \subset\fq$  and
$\Gamma_2=\{\beta_1,\ldots\beta_n\} \subset \fq$ with $\Gamma_1 \cap
\Gamma_2 =\emptyset$. If $(\boldsymbol{b},b_{0,1},b_{0,2}) \in
\mathcal{S}_{\Gamma_1,\Gamma_2}^{\boldsymbol{a}}$, then $b_{0,1}
\neq b_{0,2}$, $f_{\boldsymbol{b}_1}\big|_{\Gamma_1}\equiv 0$ and
$f_{\boldsymbol{b}_2}\big|_{\Gamma_2}\equiv 0$. These two identities
can be expressed in matrix form as follows:
\begin{equation}\label{eq: matrix id - variance}
M(\Gamma_1,\Gamma_2)\cdot
\boldsymbol{{v}}=-f_{\boldsymbol{a}}(\Gamma_1, \Gamma_2)
\end{equation}
where
$\boldsymbol{{v}}^t:=(\boldsymbol{b},b_{0,1},b_{0,2})\in\fq^{d-s+1}$
and $M(\Gamma_1,\Gamma_2)\in \mathbb{F}_q^{(m+n)\times (d-s+1)}$ and
$f_{\boldsymbol{a}}(\Gamma_1,\Gamma_2)\in\fq^{(m+n)\times 1}$ are
the following matrices:
$$M(\Gamma_1,\Gamma_2)=:\left(\begin{array}{ccccc}
\alpha_1^{d-s-1}&\cdots &\alpha_1 &1 &0 \\
\vdots& &\vdots &\vdots & \vdots\\
\alpha_m^{d-s-1}&\cdots & \alpha_m& 1&0 \\
\beta_1^{d-s-1}&\cdots &\beta_1 &0 &1 \\
\vdots& &\vdots &\vdots &\vdots \\
\beta_n^{d-s-1}&\cdots &\beta_n &0 &1
\end{array}\right),\ f_{\boldsymbol{a}}(\Gamma_1,\Gamma_2):=\left(
\begin{array}{c}
-f_{\boldsymbol{a}}(\alpha_1)\\\vdots\\-f_{\boldsymbol{a}}(\alpha_m)\\
-f_{\boldsymbol{a}}(\beta_1)\\\vdots\\-f_{\boldsymbol{a}}(\beta_n)
\end{array}\right).$$
It follows that $(\boldsymbol{b},b_{0,1},b_{0,2})\in
\mathcal{S}_{\Gamma_1,\Gamma_2}^{\boldsymbol{a}}$ if and only if
$(\boldsymbol{b},b_{0,1},b_{0,2})$  is a solution of \eqref{eq:
matrix id - variance}.

For $m+n < d-s+1$, the rank of the matrix $M(\Gamma_1,\Gamma_2)$ is
$m+n$, and the set of solutions
$\mathcal{S}_{\Gamma_1,\,\Gamma_2}^{\boldsymbol{a}}$ is a linear
$\mathbb{F}_q$--variety of dimension $d-s+1-m-n$. From (\ref{eq:
matrix id - variance}) we conclude that
$$|\mathcal{S}_{\Gamma_1,\,\Gamma_2}^{\boldsymbol{a}}|=q^{d-s+1-m-n}.$$
This implies
\begin{eqnarray*}
\mathop{\sum_{(\boldsymbol{b},b_{0,1},b_{0,2})\in
\mathbb{F}_q^{d-s+1}}}_{ b_{0,1}\neq
b_{0,2}}\textbf{1}_{\{\mathcal{N}>0\}^2}{(f_{\boldsymbol{b}_1},f_{\boldsymbol{b}_2})}
  &=&\hskip-0.5cm\displaystyle\mathop{\sum_{1\leq m\,,n\leq d}}_{2\leq m+n\leq d-s}\hskip-0.5cm(-1)^{m+n}q^{d-s+1-m-n}  \binom{q}{m}\binom{q}{n}
 \\&&+\hskip-0.5cm
 \mathop{\sum_{1\leq m\,,n\leq d}}_{d-s+1\leq m+n\leq 2d}\hskip-0.5cm(-1)^{m+n}\hskip-0.5cm
 \mathop{\mathop{\sum_{\Gamma_1,\,\Gamma_2\subseteq \mathbb{F}_q}}_{|\Gamma _1|=m, |\Gamma_2|=n}}_{ \Gamma_1 \cap \Gamma_2 =\emptyset}
 \big|\mathcal{S}_{\Gamma_1,\,\Gamma_2}^{\boldsymbol{a}}\big|.
\end{eqnarray*}
Combining \eqref{eq: primera parte de la suma teo varianza} with the
previous equality we deduce the statement of the theorem.
\end{proof}

Fix $s$, $d$ and $\bfs{a}$ as in the statement of Theorem \ref{th:
combinatorial reduction variance}. According to Theorem \ref{th:
combinatorial reduction variance}, in order to obtain a suitable
estimate for $\mathcal{V}_2(d,s,\bfs{a})$ we have to estimate the
sum
\begin{equation}\label{eq: S mn como suma sobre los gammas}
\mathcal{S}_{m,n}^{\bfs{a}}:=\mathop{\sum_{\Gamma_1,\,\Gamma_2\subset
\fq}}_{|\Gamma_1|=m , |\Gamma_2|=n}|\mathcal{S}_{\Gamma_1,\,
\Gamma_2}^{\boldsymbol{a}}|
\end{equation}
for each pair $(m,n)$ with $1\le m,n\le d$ and $d-s+1\le m+n\le 2d$.
%
%---------------------------------------------------------------------
%---------------------------------------------------------------------
%---------------------------------------------------------------------
%---------------------------------------------------------------------
%---------------------------------------------------------------------
%---------------------------------------------------------------------
%---------------------------------------------------------------------
%---------------------------------------------------------------------
%
\section{A geometric approach to estimate $\mathcal{S}_{m,n}^{\bfs{a}}$}
\label{sec: geometric approach variance}
Fix $m$ and $n$ with $1\le m,n\le d$ and $d-s+1\le m+n\le 2d$. In
order to find an estimate for  $\mathcal{S}_{m,n}^{\bfs{a}}$ we
introduce new indeterminates $T,T_1,\ldots,T_m$, $U,U_1,\ldots,U_n$,
$B,B_{d-s-1},\ldots,B_1$, $B_{0,1}$, $B_{0,2}$ over $\cfq$ and
denote $\boldsymbol{T}:=(T_1,\ldots,T_m)$,
$\boldsymbol{U}:=(U_1,\ldots,U_n)$,
$\boldsymbol{B}:=(B_{d-s-1},\ldots,B_{1})$,
$\boldsymbol{B}_1:=(\bfs{B},B_{0,1})$ and
$\boldsymbol{B}_2:=(\bfs{B},B_{0,2})$. Furthermore, we consider the
polynomial $F\in \fq[\boldsymbol{B},B,T]$ defined as follows:
\begin{equation}\label{eq: definition gB}
F:=T^d+ \sum_{i=d-s}^{d-1}a_i T_j^i + \sum_{i=1}^{d-s-1}B_i T^i +B,
\end{equation}

Observe that, for any
$(\bfs{b},b_{0,1},b_{0,2},\bfs{\alpha},\bfs{\beta})\in\fq^{d-s+1+m+n}$,
we have that $F(\bfs{b},b_{0,1},\alpha_j)=f_{\bfs{b}_1}(\alpha_j)$
and $F(\bfs{b},b_{0,2},\beta_k)=f_{\bfs{b}_2}(\beta_k)$ for $1\le
j\le m$ and $1\le k\le n$. Let $\Gamma_{m,n}\subset\A^{d-s+1+m+n}$
be the affine quasi--$\fq$--variety defined as
\begin{eqnarray*}
\Gamma_{m,n}:=\{(\boldsymbol{b},b_{0,1},b_{0,2},
\boldsymbol{\alpha},\boldsymbol{\beta})
\in\A^{d-s+1+m+n}\!:F({\boldsymbol{b},b_{0,1},\alpha_j})=0\, (1\leq
j\leq m),\hskip0.25cm\\\alpha_i\neq\alpha_j\, (i\not=\!j),\,
F({\boldsymbol{b},b_{0,2},\beta_k})=0\ (1\leq k\leq n),\
\beta_i\neq\beta_j\ (i\not=j),\ b_{0,1}\neq b_{0,2}\}.
\end{eqnarray*}
Similarly to Lemma \ref{lemma: relacion entre gamma y chi}, we have
the following result.
\begin{lemma} \label{lemma: relacion entre gamma m,n y S m,n}
Let $m$ and $n$ be integers with $1\le m,n\le d$ and $d-s+1\le
m+n\le 2d$. Then the following identity holds:
$$\frac{|\Gamma_{m,n}(\fq)|}{m!\,n!}=\mathcal{S}_{m,n}^{\bfs{a}}.$$
\end{lemma}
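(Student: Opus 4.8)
The plan is to mimic the proof of Lemma \ref{lemma: relacion entre gamma y chi}, replacing the single symmetric group $\mathbb{S}_r$ by the product $\mathbb{S}_m \times \mathbb{S}_n$ acting independently on the $\boldsymbol{\alpha}$-coordinates and the $\boldsymbol{\beta}$-coordinates. First I would observe that for any $(\sigma,\tau)\in\mathbb{S}_m\times\mathbb{S}_n$ and any point $(\boldsymbol{b},b_{0,1},b_{0,2},\boldsymbol{\alpha},\boldsymbol{\beta})\in\Gamma_{m,n}(\fq)$, the point $(\boldsymbol{b},b_{0,1},b_{0,2},\sigma(\boldsymbol{\alpha}),\tau(\boldsymbol{\beta}))$ again lies in $\Gamma_{m,n}(\fq)$: permuting the $\alpha_j$ leaves the set of equations $F(\boldsymbol{b},b_{0,1},\alpha_j)=0$ invariant and preserves the condition $\alpha_i\neq\alpha_j$; likewise for the $\beta_k$; and the coordinates $\boldsymbol{b},b_{0,1},b_{0,2}$ are untouched, so $b_{0,1}\neq b_{0,2}$ persists. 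Hence $\mathbb{S}_m\times\mathbb{S}_n$ acts on $\Gamma_{m,n}(\fq)$.

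Next I would show this action is free, i.e.\ every orbit has exactly $m!\,n!$ elements. Since the coordinates of $\boldsymbol{\alpha}$ are pairwise distinct, $\sigma(\boldsymbol{\alpha})=\boldsymbol{\alpha}$ forces $\sigma=\mathrm{id}$, and similarly $\tau(\boldsymbol{\beta})=\boldsymbol{\beta}$ forces $\tau=\mathrm{id}$; so the stabilizer of any point is trivial and each orbit has cardinality $|\mathbb{S}_m\times\mathbb{S}_n|=m!\,n!$. Therefore $|\Gamma_{m,n}(\fq)|$ is $m!\,n!$ times the number of orbits.

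Finally I would set up a bijection between the orbits and the set indexed in $\mathcal{S}_{m,n}^{\bfs{a}}=\sum_{|\Gamma_1|=m,\,|\Gamma_2|=n}|\mathcal{S}_{\Gamma_1,\Gamma_2}^{\bfs{a}}|$. An orbit determines the unordered sets $\Gamma_1:=\{\alpha_1,\dots,\alpha_m\}$ and $\Gamma_2:=\{\beta_1,\dots,\beta_n\}$ (of sizes $m$ and $n$, since the coordinates within each block are distinct) together with the triple $(\boldsymbol{b},b_{0,1},b_{0,2})$; the defining equations of $\Gamma_{m,n}$ say exactly that $b_{0,1}\neq b_{0,2}$, $f_{\boldsymbol{b}_1}|_{\Gamma_1}\equiv 0$ and $f_{\boldsymbol{b}_2}|_{\Gamma_2}\equiv 0$, i.e.\ that $(\boldsymbol{b},b_{0,1},b_{0,2})\in\mathcal{S}_{\Gamma_1,\Gamma_2}^{\bfs{a}}$. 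Conversely, given sets $\Gamma_1,\Gamma_2$ of the prescribed cardinalities and an element $(\boldsymbol{b},b_{0,1},b_{0,2})\in\mathcal{S}_{\Gamma_1,\Gamma_2}^{\bfs{a}}$, any ordering of $\Gamma_1$ and of $\Gamma_2$ yields a point of $\Gamma_{m,n}(\fq)$, and all such orderings form a single orbit. This gives a bijection between orbits and pairs $\big((\Gamma_1,\Gamma_2),(\boldsymbol{b},b_{0,1},b_{0,2})\big)$ with $(\boldsymbol{b},b_{0,1},b_{0,2})\in\mathcal{S}_{\Gamma_1,\Gamma_2}^{\bfs{a}}$; summing over $(\Gamma_1,\Gamma_2)$, the number of orbits equals $\mathcal{S}_{m,n}^{\bfs{a}}$, and dividing $|\Gamma_{m,n}(\fq)|$ by $m!\,n!$ yields the claimed identity.

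There is essentially no hard obstacle here; the only point requiring a little care is that distinctness of coordinates must be invoked both to guarantee that $|\Gamma_1|=m$ and $|\Gamma_2|=n$ exactly (so the bijection lands in the correctly indexed summands) and to guarantee freeness of the action — the same role the pairwise-distinctness hypothesis plays in Lemma \ref{lemma: relacion entre gamma y chi}. One should also note that the condition $\Gamma_1\cap\Gamma_2=\emptyset$ is \emph{not} imposed in $\Gamma_{m,n}$ nor in the sum \eqref{eq: S mn como suma sobre los gammas}, so no compatibility issue arises there; the disjointness only enters later, via the observation that $\mathcal{S}_{\Gamma_1,\Gamma_2}^{\bfs{a}}=\emptyset$ when $\Gamma_1\cap\Gamma_2\neq\emptyset$.
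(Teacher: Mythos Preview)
Your proof is correct and follows essentially the same approach as the paper: a free action of $\mathbb{S}_m\times\mathbb{S}_n$ on $\Gamma_{m,n}(\fq)$ whose orbits are in bijection with the pairs $\big((\Gamma_1,\Gamma_2),(\boldsymbol{b},b_{0,1},b_{0,2})\big)$ counted by $\mathcal{S}_{m,n}^{\bfs{a}}$. Your additional remark that the disjointness $\Gamma_1\cap\Gamma_2=\emptyset$ is not needed here (only later) is a useful clarification that the paper leaves implicit.
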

\begin{proof}
Let $(\boldsymbol{b},b_{0,1},b_{0,2},
\boldsymbol{\alpha},\boldsymbol{\beta})$ be an arbitrary point of
$\Gamma_{m,n}(\fq)$ and let $\sigma:\{1,\dots,m\}\to\{1,\dots,m\}$
and $\tau:\{1,\dots,n\}\to\{1,\dots,n\}$ be two arbitrary
permutations. Let $\sigma(\boldsymbol{\alpha})$ and
$\tau(\bfs{\beta})$ be the images of $\bfs{\alpha}$ and
$\bfs{\beta}$ by the linear mappings induced by these permutations.
Then it is clear that $\big(\boldsymbol{b},b_{0,1},b_{0,2},
\sigma(\boldsymbol{\alpha}),\tau(\boldsymbol{\beta})\big)$ belongs
to $\Gamma_{m,n}(\fq)$. Furthermore,
$\sigma(\boldsymbol{\alpha})=\bfs{\alpha}$ if and only if $\sigma$
is the identity permutation and a similar remark can be made
concerning $\tau(\boldsymbol{\beta})$. This shows that the product
$\mathbb{S}_m\times \mathbb{S}_n$ of the symmetric groups
$\mathbb{S}_m$ and $\mathbb{S}_n$ of $m$ and $n$ elements acts over
the set $\Gamma_{m,n}(\fq)$ and each orbit under this action has
$m!n!$ elements.

The orbit of an arbitrary point $(\boldsymbol{b},b_{0,1},b_{0,2},
\bfs{\alpha},\bfs{\beta})$ uniquely determines polynomials
$f_{\bfs{b}_1}$ and $f_{\bfs{b}_2}$ and sets
$\Gamma_1:=\{\alpha_1,\dots,\alpha_m\}\subset\fq$ and
$\Gamma_2:=\{\beta_1\klk \beta_n\}\subset\fq$ with $|\Gamma_1|=m$
and $|\Gamma_2|=n$ such that $f_{\bfs{b}_1}|_{\Gamma_1}\equiv 0$ and
$f_{\bfs{b}_2}|_{\Gamma_2}\equiv 0$ hold. Therefore, each orbit
uniquely determines sets $\Gamma_1,\Gamma_2\subset\fq$ with
$|\Gamma_1|=m$ and $|\Gamma_2|=n$ and an element of $S_{\Gamma_1,\,
\Gamma_2}^{\boldsymbol{a}}$. Reciprocally, to each element of
$S_{\Gamma_1,\,\Gamma_2}^{\boldsymbol{a}}$ there corresponds a
unique orbit of $\Gamma_{m,n}(\fq)$. This implies that
$$\mbox{number of
orbits of
}\Gamma_{m,n}(\fq)=\mathop{\sum_{\Gamma_1,\,\Gamma_2\subset
\fq}}_{|\Gamma_1|=m , |\Gamma_2|=n}\big|S_{\Gamma_1,\,
\Gamma_2}^{\boldsymbol{a}}\big|$$
and finishes the proof of the lemma.
\end{proof}

In order to estimate the quantity $|\Gamma_{m,n}(\fq)|$ we shall
consider the Zariski closure $\mathrm{cl}(\Gamma_{m,n})$ of
$\Gamma_{m,n}$ in $\A^{d-s+1+m+n}$. Our aim is to provide explicit
equations defining $\mathrm{cl}(\Gamma_{m,n})$. For this purpose,
let $\Gamma^*_{m,n}\subset \A^{d-s+1+m+n}$ be the affine
$\fq$--variety defined as
\begin{eqnarray*}
\Gamma_{m,n}^*:=\{(\boldsymbol{b},b_{0,1},b_{0,2},\boldsymbol{\alpha},\boldsymbol{\beta})
\in\A^{d-s+1+m+n}:\Delta^{i-1}F(\boldsymbol{b},b_{0,1},
\alpha_1\klk\alpha_i)=0\\ (1\leq i\leq m),\,
\Delta^{j-1}F({\boldsymbol{b},b_{0,2},\beta_1\klk\beta_j})=0\ (1\leq
j\leq n)\},
\end{eqnarray*}
where $\Delta^{i-1}F(\boldsymbol{b},b_{0,1},T_1,\ldots,T_i)$ and
$\Delta^{j-1}F(\boldsymbol{b},b_{0,2},U_1,\ldots,U_j)$ denote the
divided differences of  $F(\boldsymbol{b},b_{0,1},T)\in \cfq[T]$ and
$F(\boldsymbol{b},b_{0,2},U)\in \cfq[U]$ respectively. The relation
between the varieties $\Gamma_{m,n}$ and $\Gamma_{m,n}^*$ is
expressed in the following result.
\begin{lemma}\label{lemma: relacion gamma_m,n y gamma_m,n estrella}
With notations and assumptions as above, we have the following
identity:
\begin{equation}\label{eq: relacion gamma_m,n y gamma_m,n estrella}
\Gamma_{m,n}=\Gamma_{m,n}^*\cap\{\alpha_i\neq\alpha_j\ (1\!\le
i\!<\!j\le m),\ \beta_i\neq\beta_j\ (1\!\le i\!<\!j\le n),\
b_{0,1}\neq b_{0,2}\}.
\end{equation}
\end{lemma}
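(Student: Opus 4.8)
The plan is to follow essentially verbatim the argument used for Lemma~\ref{lemma: relacion gamma_r y gamma_r estrella}, after observing that the defining equations of $\Gamma_{m,n}^*$ split into two independent blocks: one block involving only $\bfs{b}, b_{0,1}$ and the coordinates $\bfs{\alpha}$ (the divided differences of $F(\bfs{b},b_{0,1},T)$ at $\alpha_1,\dots,\alpha_i$), and a second block involving only $\bfs{b}, b_{0,2}$ and the coordinates $\bfs{\beta}$. Since these blocks interact only through the common coefficients $\bfs{b}$, and the inequality $b_{0,1}\neq b_{0,2}$ plays no role in the combinatorial identity relating divided differences to evaluations, it suffices to run the same inductive reasoning as before on each block separately.

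First I would establish the inclusion $\Gamma_{m,n}\subseteq\Gamma_{m,n}^*\cap\{\dots\}$. Let $(\bfs{b},b_{0,1},b_{0,2},\bfs{\alpha},\bfs{\beta})\in\Gamma_{m,n}$. By definition $F(\bfs{b},b_{0,1},\alpha_j)=0$ for $1\le j\le m$ and $F(\bfs{b},b_{0,2},\beta_k)=0$ for $1\le k\le n$, the coordinates of $\bfs{\alpha}$ (resp. $\bfs{\beta}$) are pairwise distinct, and $b_{0,1}\neq b_{0,2}$. Applying iteratively the recursive definition of $\Delta^{i-1}F(\bfs{b},b_{0,1},T_1,\dots,T_i)$ — each such difference being assembled from evaluations of $F(\bfs{b},b_{0,1},T)$ — one gets $\Delta^{i-1}F(\bfs{b},b_{0,1},\alpha_1,\dots,\alpha_i)=0$ for $1\le i\le m$, and symmetrically $\Delta^{j-1}F(\bfs{b},b_{0,2},\beta_1,\dots,\beta_j)=0$ for $1\le j\le n$. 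Hence the point lies in $\Gamma_{m,n}^*$, and it clearly satisfies the three families of inequalities, so it lies in the right-hand side of~\eqref{eq: relacion gamma_m,n y gamma_m,n estrella}.

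Next I would prove the reverse inclusion. Let $(\bfs{b},b_{0,1},b_{0,2},\bfs{\alpha},\bfs{\beta})\in\Gamma_{m,n}^*$ satisfy $\alpha_i\neq\alpha_j$ ($i<j$), $\beta_i\neq\beta_j$ ($i<j$) and $b_{0,1}\neq b_{0,2}$. It is enough to show $F(\bfs{b},b_{0,1},\alpha_j)=0$ for $1\le j\le m$; the statement for the $\beta_k$ then follows by the same argument with $b_{0,2}$ in place of $b_{0,1}$. We argue by induction on $j$. For $j=1$, $F(\bfs{b},b_{0,1},\alpha_1)=\Delta^0F(\bfs{b},b_{0,1},\alpha_1)=0$. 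Assume $F(\bfs{b},b_{0,1},\alpha_1)=\dots=F(\bfs{b},b_{0,1},\alpha_{i-1})=0$. Unwinding the recursive definition of $\Delta^{i-1}F$ and using that $\alpha_1,\dots,\alpha_i$ are pairwise distinct (so the denominators $\alpha_k-\alpha_\ell$ arising are all nonzero), one checks that $\Delta^{i-1}F(\bfs{b},b_{0,1},\alpha_1,\dots,\alpha_i)$ is a linear combination with nonzero coefficients of the quantities $F(\bfs{b},b_{0,1},\alpha_{k+1})-F(\bfs{b},b_{0,1},\alpha_k)$ for $1\le k\le i-1$. Since this divided difference vanishes by the hypothesis $(\bfs{b},b_{0,1},b_{0,2},\bfs{\alpha},\bfs{\beta})\in\Gamma_{m,n}^*$ and all differences with $k\le i-2$ vanish by the inductive hypothesis, we conclude $F(\bfs{b},b_{0,1},\alpha_i)=0$, completing the induction.

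I do not anticipate any serious obstacle; the only point that needs a little care is the bookkeeping that the two systems of divided-difference equations genuinely decouple — the first constraining only $(\bfs{b},b_{0,1},\bfs{\alpha})$ and the second only $(\bfs{b},b_{0,2},\bfs{\beta})$ — so that the inductive argument of Lemma~\ref{lemma: relacion gamma_r y gamma_r estrella} applies to each half without interference, while the condition $b_{0,1}\neq b_{0,2}$ is simply inherited from the ambient set on both sides of the claimed identity.
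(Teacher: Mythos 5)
Your proposal is correct and follows essentially the same route as the paper, which simply observes that the identity is an easy consequence of Lemma~\ref{lemma: relacion gamma_r y gamma_r estrella}; you have merely unfolded that reference by running the same two-inclusion, inductive divided-difference argument on each of the two decoupled blocks $(\bfs{b},b_{0,1},\bfs{\alpha})$ and $(\bfs{b},b_{0,2},\bfs{\beta})$.
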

\begin{proof}
This is an easy consequence of Lemma \ref{lemma: relacion gamma_r y
gamma_r estrella}.
\end{proof}
%
%---------------------------------------------------------------------
%---------------------------------------------------------------------
%---------------------------------------------------------------------
%---------------------------------------------------------------------
%---------------------------------------------------------------------
%---------------------------------------------------------------------
%---------------------------------------------------------------------
%---------------------------------------------------------------------
%
\section{Geometry of the variety $\Gamma_{m,n}^*$}
\label{sec: geometry of Gamma_mn estrella}
Let be given $m$ and $n$ with $1\le m,n\le d$ and $d-s+1\le m+n\le
2d$. In this section we obtain critical information on the geometry
of $\Gamma_{m,n}^{*}$, which shall allow us to conclude that
$\Gamma_{m,n}^*$ is the Zariski closure of $\Gamma_{m,n}$.

Several arguments in this section are similar to those of Section
\ref{sec: geometry of Gamma_r estrella}. Therefore, in order to
avoid repetitions, some proofs shall only be sketched.
\begin{lemma}
\label{lemma: gamma_m,n estrella es set theoret complete inter}
The variety $\Gamma_{m,n}^{*}$ is a set--theoretic complete
intersection of dimension $d-s+1$.
\end{lemma}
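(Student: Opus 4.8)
The plan is to follow the proof of Lemma~\ref{lemma: Gamma r is set-theoret complete intersection} almost verbatim, the only new feature being the presence of two blocks of divided-difference equations — one in the indeterminates $\bfs{T}$ together with $B_{0,1}$, the other in $\bfs{U}$ together with $B_{0,2}$ — which share the common block $\bfs{B}=(B_{d-s-1}\klk B_1)$. Since $\Gamma_{m,n}^*\subset\A^{d-s+1+m+n}$ is cut out by the $m+n$ polynomials $\Delta^{i-1}F(\bfs{B},B_{0,1},T_1\klk T_i)$ $(1\le i\le m)$ and $\Delta^{j-1}F(\bfs{B},B_{0,2},U_1\klk U_j)$ $(1\le j\le n)$, it suffices to show that these form a regular sequence in $\cfq[\bfs{B},B_{0,1},B_{0,2},\bfs{T},\bfs{U}]$: this forces the variety they define to be equidimensional of dimension $(d-s+1+m+n)-(m+n)=d-s+1$, hence a set--theoretic complete intersection.

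To verify the regular-sequence claim I would equip $\cfq[\bfs{B},B_{0,1},B_{0,2},\bfs{T},\bfs{U}]$ with the graded lexicographic order induced by
$$T_m>\cdots>T_1>U_n>\cdots>U_1>B_{d-s-1}>\cdots>B_1>B_{0,1}>B_{0,2}.$$
Exactly as in Lemma~\ref{lemma: Gamma r is set-theoret complete intersection}, the polynomial $\Delta^{i-1}F(\bfs{B},B_{0,1},T_1\klk T_i)$ has total degree $d-i+1$: its homogeneous component of that degree is the divided difference of $T^d$, namely the complete homogeneous symmetric polynomial $h_{d-i+1}(T_1\klk T_i)$, while the contributions of $\sum_{k=1}^{d-s-1}B_kT^k$ (total degree $\le d-s-i+1<d-i+1$, where $s\ge 1$ is used), of $\sum_{k=d-s}^{d-1}a_kT^k$ (total degree $\le d-i$) and of the constant $B_{0,1}$ (degree $\le 1$) all have strictly smaller degree. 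Since $h_{d-i+1}(T_1\klk T_i)$ involves only $T_1\klk T_i$ and $T_i$ is the largest of these in our order, the leading term of $\Delta^{i-1}F(\bfs{B},B_{0,1},T_1\klk T_i)$ is $T_i^{d-i+1}$; symmetrically that of $\Delta^{j-1}F(\bfs{B},B_{0,2},U_1\klk U_j)$ is $U_j^{d-j+1}$.

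The leading terms $T_1^d\klk T_m^{d-m+1}$ and $U_1^d\klk U_n^{d-n+1}$ are powers of pairwise distinct indeterminates, hence pairwise relatively prime, so by \cite[\S 2.9, Proposition 4]{CoLiOS92} the $m+n$ polynomials above constitute a Gr\"obner basis of the ideal they generate, whose initial ideal is generated by $\{T_i^{d-i+1}:1\le i\le m\}\cup\{U_j^{d-j+1}:1\le j\le n\}$. As powers of distinct variables always form a regular sequence, \cite[Proposition 15.15]{Eisenbud95} implies that $\Delta^{i-1}F(\bfs{B},B_{0,1},T_1\klk T_i)$ $(1\le i\le m)$ and $\Delta^{j-1}F(\bfs{B},B_{0,2},U_1\klk U_j)$ $(1\le j\le n)$ form a regular sequence, which finishes the proof. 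I do not expect any genuine difficulty here; the only point requiring a little care is to choose the monomial order so that the leading terms of the two blocks are supported on disjoint sets of variables, and to check — this is where $s\ge 1$ enters — that the shared indeterminates $B_1\klk B_{d-s-1}$ and the constants $B_{0,1},B_{0,2}$ never appear in any leading term.
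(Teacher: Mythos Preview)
Your proof is correct and follows essentially the same approach as the paper: a graded lexicographic order is chosen so that the leading terms of the $m+n$ divided-difference polynomials are $T_i^{d-i+1}$ and $U_j^{d-j+1}$, whence they form a Gr\"obner basis with pairwise coprime leading terms and, via \cite[Proposition 15.15]{Eisenbud95}, a regular sequence. The only cosmetic difference is that the paper orders the variables with the $U$'s above the $T$'s rather than the other way around, and you spell out more explicitly why the $B$-variables cannot enter the leading terms (correctly noting this uses $s\ge 1$).
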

\begin{proof}
Consider the graded lexicographic order of
$\cfq[\boldsymbol{B},B_{0,1},B_{0,2},\boldsymbol{T},\boldsymbol{U}]$
with
$U_n>\cdots>U_1>T_m>\cdots>T_1>B_{d-s-1}>\cdots>B_{0,1}>B_{0,2}$.
Arguing as in Lemma \ref{lemma: Gamma r is set-theoret complete
intersection} it is easy to see that the leading terms of
$\Delta^{i-1}F(\boldsymbol{B}_1,T_1,\ldots,T_i)$ and
$\Delta^{j-1}F(\boldsymbol{B}_2,U_1,\ldots,U_j)$ are $T_i^{d-i+1}$
and $U_j^{d-j+1}$ respectively. This shows that
$\Delta^{i-1}F(\boldsymbol{B}_1,T_1,\ldots,T_i)$ $(1\le i\le m)$ and
$\Delta^{j-1}F(\boldsymbol{B}_2,U_1,\ldots,U_j)$ $(1\le j\le n)$
form a Gr\"obner basis of the ideal $\mathcal{J}_{m,n}$ that they
generate (see, e.g., \cite[\S 2.9, Proposition 4]{CoLiOS92}).
Furthermore, since the leading terms of
$\Delta^{i-1}F(\boldsymbol{B}_1,T_1,\ldots,T_i)$ $(1\le i\le m)$ and
$\Delta^{j-1}F(\boldsymbol{B}_2,U_1,\ldots,U_j)$ $(1\le j\le n)$
form a regular sequence, by \cite[Proposition 15.15]{Eisenbud95} we
conclude that $\Delta^{i-1}F(\boldsymbol{B}_1,T_1,\ldots,T_i)$
$(1\le i\le m)$ and $\Delta^{j-1}F(\boldsymbol{B}_2,U_1,\ldots,U_j)$
$(1\le j\le n)$ also form a regular sequence. As a consequence,
$\Gamma_{m,n}^{*}$ is a set--theoretic complete intersection of
dimension $d-s+1$.
\end{proof}

%\begin{proof}
%
%---------------------------------------------------------------------
%---------------------------------------------------------------------
%---------------------------------------------------------------------
%---------------------------------------------------------------------
%
\subsection{The singular locus of $\Gamma_{m,n}^*$}
The aim of this section is to prove that the singular locus of
$\Gamma_{m,n}^*$ has codimension at least 2 in $\Gamma_{m,n}^*$.

Arguing as in the proof of Lemma \ref{lemma: jacobian_F full rank
implies nonsingular} it is easy to see that the polynomials
$F(\boldsymbol{B}_1,T_i)$ $(1\le i\le m)$ and
$F(\boldsymbol{B}_2,U_j)$ $(1\leq j\leq n)$ vanish on
$\Gamma_{m,n}^*$. As a consequence, we have the following criterion
of nonsingularity.
\begin{remark}\label{rem: criterion nonsingularity variance}
Let $J_{F_{1,2}}$ be the Jacobian matrix of the polynomials
$F(\boldsymbol{B}_1,T_i)$ $(1 \leq i \leq m)$ and
$F(\boldsymbol{B}_2,U_j)$ $(1 \leq j \leq n)$ with respect to
$\boldsymbol{B},B_{0,1},B_{0,2},\boldsymbol{T},\boldsymbol{U}$. If
$(\boldsymbol{b},b_{0,1},b_{0,2},\boldsymbol{\alpha},
\boldsymbol{\beta})\in \Gamma_{m,n}^*$ is such that $\rank
J_{F_{1,2}}(\boldsymbol{b},b_{0,1},b_{0,2},\boldsymbol{\alpha},
\boldsymbol{\beta})=m+n$, then it is nonsingular.
\end{remark}
Let $(\boldsymbol{b},b_{0,1},b_{0,2},\boldsymbol{\alpha},
\boldsymbol{\beta})$ be an arbitrary point of $\Gamma_{m,n}^*$, with
$\boldsymbol{\alpha}:=(\alpha_1,\ldots,\alpha_m)$ and
$\boldsymbol{\beta}:=(\beta_1,\ldots,\beta_n)$. Denote
$\bfs{b}_1:=(\bfs{b},b_{0,1})$ and $\bfs{b}_2:=(\bfs{b},b_{0,2})$.
Then specializing the Jacobian $J_{F_{1,2}}$ at
$(\boldsymbol{b},b_{0,1},b_{0,2},\boldsymbol{\alpha},
\boldsymbol{\beta})$ we obtain the following matrix:
\begin{equation}\label{eq: Jacobian matrix fb1 and fb2}
J_{F_{1,2}}(\boldsymbol{b},b_{0,1},b_{0,2},\boldsymbol{\alpha},
\boldsymbol{\beta}):=\left(\!\!\begin{array}{ccccccccccccc}
\alpha_1^{d-s-1}  \!&\! \ldots \!&\! \alpha_1 \!&\!  1     \!&\! 0      \!&\! \gamma_1 \!&\! 0      \!&\! \ldots   \!&\! 0      \!&\! \cdots \!&\! 0  \\
  \vdots          \!&\!        \!&\! \vdots   \!&\! \vdots \!&\! \vdots \!&\! \ddots   \!&\! \ddots \!&\! \ddots   \!&\! \vdots \!&\!        \!&\! \vdots\\
 \alpha_m^{d-s-1} \!&\! \ldots \!&\! \alpha_m \!&\!  1     \!&\! 0      \!&\! \ldots   \!&\! 0      \!&\! \gamma_m \!&\!  0     \!&\! \ldots \!&\! 0 \\
  \beta_1^{d-s-1} \!&\! \ldots \!&\! \beta_1  \!&\!  0     \!&\! 1      \!&\! 0        \!&\! \ldots \!&\!  0       \!&\! \eta_1 \!&\! \ddots \!&\! \vdots\\
    \vdots        \!&\!        \!&\! \vdots   \!&\! \vdots \!&\! \vdots \!&\! \vdots   \!&\!        \!&\! \vdots   \!&\! \ddots \!&\! \ddots \!&\! 0 \\
  \beta_n^{d-s-1} \!&\! \ldots \!&\! \beta_n  \!&\!  0     \!&\! 1      \!&\! 0        \!&\! \ldots \!&\! 0        \!&\! \ldots \!&\!  0     \!&\! \eta_n
\end{array}\!\!\right),
\end{equation}
where $\gamma_i:=f_{\boldsymbol{b}}'(\alpha_i)$ and
$\eta_j:=f_{\boldsymbol{b}}'(\beta_j)$ for $1\le i\le m$ and $1\le
j\le n$. Therefore, from Remark \ref{rem: criterion nonsingularity
variance} we immediately deduce the following remark.
\begin{remark}
\label{rem: at most one multiple root implies nonsingular}
If there exist at most one $\alpha_i$ and at most one $\beta_j$
which are multiple roots of $f_{\boldsymbol{b}_1}$ and
$f_{\boldsymbol{b}_2}$ respectively, then
$(\boldsymbol{b},b_{0,1},b_{0,2},\boldsymbol{\alpha},
\boldsymbol{\beta})$ is a nonsingular point of $\Gamma_{m,n}^*$.
\end{remark}

Consider the following morphism of $\fq$--varieties:
\begin{equation}\label{eq: morfismo finito varianza}
\begin{array}{rccl}
\Psi_{m,n}:& {\Gamma}_{m,n}^{*}& \longrightarrow& \A^{d-s+1}\\
   &(\boldsymbol{b},b_{0,1},b_{0,2},\boldsymbol{\alpha},
   \boldsymbol{\beta})& \mapsto&(\boldsymbol{b},b_{0,1},b_{0,2}).
\end{array}
\end{equation}
%
%where $\Gamma_{m,n}^*\subset \mathbb{A}^{d-s+1+m+n}$ is the
%$\fq$--variety defined by the polynomials
%$F(\boldsymbol{B},B_{0,1},T_i)=F(\boldsymbol{B},B_{0,2},U_j)=0,\, \,
%\mathrm{with}\,\,  1 \leq i \leq m \, \,  \mathrm{and}\, \,  1\leq
%j\leq n$.
%
Arguing as in the proof of Lemma \ref{lemma: f'=0} we easily deduce
that $\Psi_{m,n}$ is a finite morphism.

Let $(\boldsymbol{b},b_{0,1},b_{0,2},\boldsymbol{\alpha},
\boldsymbol{\beta})$ be a singular point of $\Gamma_{m,n}^*$.
According to Remark \ref{rem: at most one multiple root implies
nonsingular}, either $f_{\boldsymbol{b}_1}$ or
$f_{\boldsymbol{b}_2}$ has multiple roots. We observe that we may
assume without loss of generality that $f_{\bfs{b}}'\not=0$ and
$\Delta^2F(\bfs{b}_1,T,T,T)\not=0$. More precisely, from the proofs
of Lemmas \ref{lemma: f'=0} and \ref{lemma: repeated multiple root
II - mean} we deduce the following remark.
\begin{remark}\label{rem: f'=0 or f''=0}
If either $d-s\geq 4$ and $p>3$, or $d-s\ge 6$ and $p=3$, then the
set $\mathcal{W}_1'$ of points
$(\boldsymbol{b},b_{0,1},b_{0,2},\boldsymbol{\alpha},
\boldsymbol{\beta})$ of $\Gamma_{m,n}^*$ such that
$f'_{\boldsymbol{b}}=0$ or $\Delta^2F(\bfs{b}_1,T,T,T)=0$ is
contained in a subvariety of $\Gamma_{m,n}^*$ of codimension $2$.
\end{remark}

Next we study the set of singular points of $\Gamma_{m,n}^*$ for
which $f_{\bfs{b}}'\not=0$. We first consider the points for which
$f_{\bfs{b}_1}$ and $f_{\bfs{b}_2}$ have multiple roots in $\cfq$.
\begin{lemma}
\label{lemma: one multiple alfa and one multiple beta - variance}
Let $\mathcal{W}'\subset\Gamma_{m,n}^*$ be the set of points
$(\boldsymbol{b},b_{0,1},b_{0,2},\boldsymbol{\alpha},
\boldsymbol{\beta})$ such that $f_{\bfs{b}}'\not=0$ and
$f_{\bfs{b}_1}$ and $f_{\bfs{b}_2}$ have multiple roots in $\cfq$.
Then $\mathcal{W}'$ is contained in a codimension--2 subvariety of
$\Gamma_{m,n}^*$
\end{lemma}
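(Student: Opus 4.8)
The plan is to mimic the strategy of Lemma \ref{lemma: two distinct multiple roots - mean}, pushing everything down to the base affine space $\A^{d-s+1}$ via the finite morphism $\Psi_{m,n}$ of (\ref{eq: morfismo finito varianza}). First I would observe that if $(\boldsymbol{b},b_{0,1},b_{0,2},\boldsymbol{\alpha},\boldsymbol{\beta})\in\mathcal{W}'$, then $f_{\boldsymbol{b}_1}=f_{\boldsymbol{b}}+b_{0,1}$ has a multiple root and $f_{\boldsymbol{b}_2}=f_{\boldsymbol{b}}+b_{0,2}$ has a multiple root, while $f_{\boldsymbol{b}}'=f_{\boldsymbol{b}_1}'=f_{\boldsymbol{b}_2}'$ is nonzero by assumption. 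Since $\gcd(f_{\boldsymbol{b}_i},f_{\boldsymbol{b}}')$ has positive degree for $i=1,2$, we get $\mathrm{Res}(f_{\boldsymbol{b}_1},f_{\boldsymbol{b}}')=\mathrm{Res}(f_{\boldsymbol{b}_2},f_{\boldsymbol{b}}')=0$. As in the proof of Lemma \ref{lemma: two distinct multiple roots - mean}, these resultants are the specializations at $\boldsymbol{B}=\boldsymbol{b}$, $B_{0,1}=b_{0,1}$, $B_{0,2}=b_{0,2}$ of $\mathcal{R}_1:=\mathrm{Res}(F(\boldsymbol{B}_1,T),\Delta^1F(\boldsymbol{B}_1,T,T),T)$ and $\mathcal{R}_2:=\mathrm{Res}(F(\boldsymbol{B}_2,T),\Delta^1F(\boldsymbol{B}_2,T,T),T)$ respectively. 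Hence $\mathcal{W}'\subset\Psi_{m,n}^{-1}(\mathcal{Z})$, where $\mathcal{Z}\subset\A^{d-s+1}$ is cut out by $\mathcal{R}_1=\mathcal{R}_2=0$.

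Next I would show that $\mathcal{R}_1$ and $\mathcal{R}_2$ form a regular sequence in $\cfq[\boldsymbol{B},B_{0,1},B_{0,2}]$, so that $\dim\mathcal{Z}=d-s-1$ and, $\Psi_{m,n}$ being finite, $\dim\Psi_{m,n}^{-1}(\mathcal{Z})=d-s-1$, i.e.\ codimension $2$ in $\Gamma_{m,n}^*$. Both $\mathcal{R}_1$ and $\mathcal{R}_2$ are nonzero because $F(\boldsymbol{B}_1,T)$ and $F(\boldsymbol{B}_2,T)$ are separable elements of the respective polynomial rings (their discriminants are nonzero polynomials); moreover, by the appendix result quoted as Theorem \ref{th: irred discrim}, $\mathcal{R}_1$ is an irreducible element of $\cfq[\boldsymbol{B},B_{0,1}]$, hence of $\cfq[\boldsymbol{B},B_{0,1},B_{0,2}]$. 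To see that $\mathcal{R}_2$ is not a zero divisor modulo $(\mathcal{R}_1)$, it suffices to observe that $\mathcal{R}_1$ and $\mathcal{R}_2$ are not associates: $\mathcal{R}_1$ involves the variable $B_{0,1}$ (indeed $\deg_{B_{0,1}}\mathcal{R}_1=d-1>0$ by Theorem \ref{th: irred discrim}) but is free of $B_{0,2}$, whereas $\mathcal{R}_2$ involves $B_{0,2}$ but not $B_{0,1}$; since $\mathcal{R}_1$ is irreducible, it cannot divide $\mathcal{R}_2$. Therefore $(\mathcal{R}_1,\mathcal{R}_2)$ is a regular sequence and $\mathcal{Z}$ has pure codimension $2$ in $\A^{d-s+1}$.

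Finally, since $\Psi_{m,n}$ is a finite morphism and $\mathcal{Z}$ is closed of dimension $d-s-1$, its preimage $\Psi_{m,n}^{-1}(\mathcal{Z})$ is a closed subvariety of $\Gamma_{m,n}^*$ of dimension $d-s-1$, i.e.\ of codimension $2$ in the $(d-s+1)$--dimensional variety $\Gamma_{m,n}^*$. As $\mathcal{W}'\subset\Psi_{m,n}^{-1}(\mathcal{Z})$, the lemma follows. The main technical obstacle I anticipate is verifying cleanly that the resultant identities $\mathrm{Res}(f_{\boldsymbol{b}_i},f_{\boldsymbol{b}}')=\mathcal{R}_i|_{\boldsymbol{B}_i=\boldsymbol{b}_i}$ hold uniformly, including in the degenerate characteristic cases where $p\mid d$ (so that $f_{\boldsymbol{b}}'$ may drop degree) — exactly the situation handled in the second half of the proof of Lemma \ref{lemma: repeated multiple root II - mean}; one reduces to the generic situation by Remark \ref{rem: f'=0 or f''=0}, which already excises the locus where $f_{\boldsymbol{b}}'$ or $\Delta^2F(\boldsymbol{b}_1,T,T,T)$ vanishes, so the remaining degree bookkeeping is routine.
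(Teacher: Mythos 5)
Your proposal is correct and follows essentially the same route as the paper: push $\mathcal{W}'$ down via the finite morphism $\Psi_{m,n}$ into the locus $\mathcal{R}_1(\boldsymbol{B}_1)=\mathcal{R}_2(\boldsymbol{B}_2)=0$, and use the irreducibility of the discriminant (Theorem \ref{th: irred discrim}) together with the fact that $\mathcal{R}_1$ depends on $B_{0,1}$ but not $B_{0,2}$ (and vice versa for $\mathcal{R}_2$) to conclude that they form a regular sequence, giving codimension $2$. The degree bookkeeping you flag for the case $p\mid d$ is handled in the paper exactly as you anticipate, by noting that $f_{\boldsymbol{b}_1},f_{\boldsymbol{b}_2}$ are monic of degree $d$ and $f'_{\boldsymbol{b}}\neq 0$.
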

\begin{proof}
Let $(\boldsymbol{b},b_{0,1},b_{0,2},\boldsymbol{\alpha},
\boldsymbol{\beta})$ be an arbitrary point of $\mathcal{W}'$. Then
we have that
$\mathrm{Res}(f_{\boldsymbol{b}_1},f'_{\boldsymbol{b}_1}) =
\mathrm{Res}(f_{\boldsymbol{b}_2},f'_{\boldsymbol{b}_2})=0$, where
$\mathrm{Res}(f_{\boldsymbol{b}_l} ,f'_{\boldsymbol{b}_l})$ denotes
the resultant of $f_{\boldsymbol{b}_l}$ and $f'_{\boldsymbol{b}_l}$.
Since $f_{\boldsymbol{b}_1}$ and $f_{\boldsymbol{b}_2}$ have degree
$d$ and $f'_{\boldsymbol{b}_1}$ and $f'_{\boldsymbol{b}_2}$ are
nonzero polynomials, it follows that
\begin{eqnarray*}
\mathrm{Res}(f_{\boldsymbol{b}_1},
f'_{\boldsymbol{b}_1})&=&\mathrm{Res}(F(\boldsymbol{B}_1,
T_1),\Delta^1F(\boldsymbol{B}_1,T_1,T_1),T_1)|_{\bfs{B}_1=\boldsymbol{b}_1},\\
\mathrm{Res}(f_{\boldsymbol{b}_2},
f'_{\boldsymbol{b}_2})&=&\mathrm{Res}(F(\boldsymbol{B}_2,U_1),
\Delta^1F(\boldsymbol{B}_2,U_1,U_1),U_1)|_{\bfs{B}_2=\boldsymbol{b}_2}.
\end{eqnarray*}
Let $\mathcal{R}_1:=\mathrm{Res}(F(\boldsymbol{B}_1,T_1),
\Delta^1F(\boldsymbol{B}_1,T_1,T_1),T_1)$ denote the resultant of
the polynomials $F(\boldsymbol{B}_1,T_1)$ and
$\Delta^1F(\boldsymbol{B}_1,T_1,T_1)$ with respect to $T_1$ and let
$\mathcal{R}_2:=\mathrm{Res}(F(\boldsymbol{B}_2,U_1),
\Delta^1F(\boldsymbol{B}_2,U_1,U_1),U_1)$ denote the resultant of
$F(\boldsymbol{B}_2,U_1)$ and $\Delta^1F(\boldsymbol{B}_2,U_1,U_1)$
with respect to $U_1$. Then
$\mathcal{W}'\subset\Psi_{m,n}^{-1}(\mathcal{Z})$, where
$\Psi_{m,n}$ is the morphism of (\ref{eq: morfismo finito varianza})
and $\mathcal{Z}\subset\A^{d-s+1}$ is the subvariety of $\A^{d-s+1}$
defined by the equations
$$\mathcal{R}_1(\boldsymbol{B}_1)=
\mathcal{R}_2(\boldsymbol{B}_2)=0.$$

Since $F(\boldsymbol{B}_1,T_1)$ is a separable element of
$\fq[\bfs{B}_1][T_1]$, the resultant $\mathcal{R}_1$ is nonzero
element of $\fq[\boldsymbol{B}_1]$. Furthermore, from, e.g.,
\cite[\S 1]{FrSm84}, one deduces that $\mathcal{R}_1$ is an element
of $\fq[\bfs{B}][B_{0,1}]\setminus\fq[\bfs{B}]$. Analogously,
$\mathcal{R}_2$ is a nonconstant polynomial of
$\fq[\bfs{B}][B_{0,2}]$. According to Theorem \ref{th: irred
discrim}, $\mathcal{R}_1$ is an irreducible element of
$\fq[\bfs{B}][B_{0,1},B_{0,2}]$ and
$\mathcal{R}_2\in\fq[\bfs{B}][B_{0,1},B_{0,2}]$ is not a multiple of
$\mathcal{R}_1(\boldsymbol{B}_1)$ in
$\fq[\bfs{B}][B_{0,1},B_{0,2}]$. This implies that
$\mathcal{R}_1(\boldsymbol{B}_1)$ and
$\mathcal{R}_2(\boldsymbol{B}_2)$ form a regular sequence in
$\cfq[\boldsymbol{B},B_{0,1},B_{0,2}]$. It follows that the variety
$\mathcal{Z}$ has dimension $d-s-1$, and hence
$\dim\Psi_{m,n}^{-1}(\mathcal{Z})=d-s-1$. This finishes the proof of
the lemma.
\end{proof}

According to Lemma \ref{lemma: one multiple alfa and one multiple
beta - variance} it remains to analyze the set of singular points
$(\boldsymbol{b},b_{0,1},b_{0,2},\boldsymbol{\alpha},
\boldsymbol{\beta})$ of $\Gamma_{m,n}^*$ for which either
$f_{\boldsymbol{b}_1}$, or $f_{\boldsymbol{b}_2}$, has only simple
roots in $\cfq$. In what follows we shall assume without loss of
generality that the latter case holds. By Remark \ref{rem: at most
one multiple root implies nonsingular} there must be at least two
distinct coordinates of $\bfs{\alpha}$ which are multiple roots of
$f_{\boldsymbol{b}_1}$.

Suppose first that there exist two coordinates of $\bfs{\alpha}$
whose values are two distinct multiple roots of $f_{\bfs{b}_1}$.
Arguing as in Lemma \ref{lemma: two distinct multiple roots - mean}
we easily deduce the following remark.
\begin{remark}\label{rem: two distinct multiple roots in fb1 - variance}
Let $\mathcal{W}_2'$ denote the set of points
$(\boldsymbol{b},b_{0,1},b_{0,2},\bfs{\alpha},\bfs{\beta})\in
\Gamma_{m,n}^{*}$ for which the following conditions hold:
\begin{itemize}
  \item $f_{\boldsymbol{b}_2}$ has only simple roots in $\cfq$,
  \item there exist $1\le i<j\le m$ such that $\alpha_i\not=\alpha_j$ and
$\alpha_i,\alpha_j$ are multiple roots of $f_{\boldsymbol{b}_1}$.
\end{itemize}
Then $\mathcal{W}_2'$ is contained in a subvariety of codimension 2
of $\Gamma_{m,n}^{*}$.
\end{remark}

Next we consider the points of $\Gamma_{m,n}^*$ for which there
exist exactly two distinct coordinates of $\bfs{\alpha}$ whose value
is a multiple root of $f_{\boldsymbol{b}_1}$, and both take the same
value. Arguing as in Lemma \ref{lemma: repeated multiple root I -
mean} we obtain the following remark.
\begin{remark}\label{rem: repeated multiple root I - variance}
Let $(\boldsymbol{b},b_{0,1},b_{0,2},\bfs{\alpha},\bfs{\beta})\in
\Gamma_{m,n}^{*}$ be a point satisfying the following conditions:
\begin{itemize}
\item $f_{\boldsymbol{b}_2}$ has only simple roots in $\cfq$;
\item there exist $1\le i<j\le m$ such that $\alpha_i=\alpha_j$ and
$\alpha_i$ is a multiple root of $f_{\boldsymbol{b}_1}$;
\item for any $k\notin\{i,j\}$, $\alpha_k$ is a simple root of
$f_{\boldsymbol{b}_1}$.
\end{itemize}
Then $(\boldsymbol{b},b_{0,1},b_{0,2},\bfs{\alpha},\bfs{\beta})$ is
regular point of $\Gamma_{m,n}^*$.
\end{remark}

Finally, we analyze the set of points of $\Gamma_{m,n}^*$ such that
there exist three distinct coordinates of $\bfs{\alpha}$ taking as
value the same multiple root of $f_{\boldsymbol{b}_1}$. By Lemma
\ref{lemma: repeated multiple root II - mean} we deduce the
following remark.
\begin{remark}\label{rem: repeated multiple root II - variance}
Let $\mathcal{W}_3'$ be the set of points
$(\boldsymbol{b},b_{0,1},b_{0,2},\boldsymbol{\alpha},\bfs{\beta})
\in\Gamma_{m,n}^*$ for which $f_{\bfs{b}_2}$ has only simple roots
in $\cfq$ and there exist $1\le i<j<k\le m$ such that
$\alpha_i=\alpha_j=\alpha_k$ and $\alpha_i$ is a multiple root of
$f_{\boldsymbol{b}_1}$. If either $d-s \geq 4$ and $p>3$, or $d-s\ge
6$ and $p=3$, then $\mathcal{W}_3'$ is contained in a codimension--2
subvariety of $\Gamma_{m,n}^*$.\end{remark}

Now we are able to obtain our lower bound on the codimension of the
singular locus of $\Gamma_{m,n}^*$. Combining Remarks \ref{rem: at
most one multiple root implies nonsingular}, \ref{rem: f'=0 or
f''=0}, \ref{rem: two distinct multiple roots in fb1 - variance},
\ref{rem: repeated multiple root I - variance} and \ref{rem:
repeated multiple root II - variance} and Lemma \ref{lemma: one
multiple alfa and one multiple beta - variance}, it follows that the
set of singular points of $\Gamma_{m,n}^*$ is contained in the set
$\mathcal{W}_1'\cup \mathcal{W}'\cup
\mathcal{W}_2'\cup\mathcal{W}_3'$, where $\mathcal{W}_1'$,
$\mathcal{W}'$, $\mathcal{W}_2'$ and $\mathcal{W}_3'$ are defined in
the statements of Remark \ref{rem: f'=0 or f''=0}, Lemma \ref{lemma:
one multiple alfa and one multiple beta - variance} and Remarks
\ref{rem: two distinct multiple roots in fb1 - variance} and
\ref{rem: repeated multiple root II - variance} respectively. Since
such a union of sets is contained in codimension--2 subvariety of
$\Gamma_{m,n}^*$, we obtain the following result.
\begin{theorem} \label{th: codimension singular locus Gamma estrella}
If either $d-s \geq 4$ and $p>3$, or $d-s\ge 6$ and $p=3$, then the
singular locus of $\Gamma_{m,n}^*$ has codimension at least $2$ in
$\Gamma_{m,n}^*$.
\end{theorem}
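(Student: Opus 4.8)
The plan is to reduce the analysis of $\mathrm{Sing}(\Gamma_{m,n}^*)$ to the case analysis already carried out for $\Gamma_r^*$ in Section \ref{sec: geometry of Gamma_r estrella}, exploiting the near-product structure of the defining equations. First I would note that the defining polynomials of $\Gamma_{m,n}^*$ split into two blocks: the polynomials $\Delta^{i-1}F(\boldsymbol{B}_1,T_1,\dots,T_i)$ $(1\le i\le m)$ involve only $\boldsymbol{B},B_{0,1},\boldsymbol{T}$, while the polynomials $\Delta^{j-1}F(\boldsymbol{B}_2,U_1,\dots,U_j)$ $(1\le j\le n)$ involve only $\boldsymbol{B},B_{0,2},\boldsymbol{U}$, the two blocks sharing the common variables $\boldsymbol{B}$. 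By Remark \ref{rem: criterion nonsingularity variance} and the explicit form of the Jacobian matrix in \eqref{eq: Jacobian matrix fb1 and fb2}, a point fails to be regular only if the sub-Jacobian of one of the two blocks drops rank; and the shape of that submatrix is exactly the matrix $J_F$ analyzed in Lemma \ref{lemma: jacobian_F full rank implies nonsingular}. Thus, after fixing $\boldsymbol{b}$, a singular point of $\Gamma_{m,n}^*$ projects (under $\Psi_{m,n}$) to a point of the base $\A^{d-s+1}$ for which either $f_{\boldsymbol{b}_1}$ or $f_{\boldsymbol{b}_2}$ has multiple roots \emph{in a way that forces rank loss}, which by Remark \ref{rem: at most one multiple root implies nonsingular} means at least two distinct coordinates among $\boldsymbol{\alpha}$ (resp.\ $\boldsymbol{\beta}$) are multiple roots of $f_{\boldsymbol{b}_1}$ (resp.\ $f_{\boldsymbol{b}_2}$).

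Next I would assemble the covering of $\mathrm{Sing}(\Gamma_{m,n}^*)$ from the five pieces already isolated: the ``degenerate derivative'' locus $\mathcal{W}_1'$ (Remark \ref{rem: f'=0 or f''=0}), the ``two genuinely distinct bad polynomials'' locus $\mathcal{W}'$ (Lemma \ref{lemma: one multiple alfa and one multiple beta - variance}), and the three loci $\mathcal{W}_2'$, the regular case of Remark \ref{rem: repeated multiple root I - variance}, and $\mathcal{W}_3'$ (Remarks \ref{rem: two distinct multiple roots in fb1 - variance} and \ref{rem: repeated multiple root II - variance}) governing the structure of the multiple roots of the single ``bad'' polynomial $f_{\boldsymbol{b}_1}$. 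The point is that once we have reduced to ``$f_{\boldsymbol{b}_2}$ has only simple roots'', the extra coordinates $\boldsymbol{\beta}$ and the extra base coordinate $B_{0,2}$ are harmless: the fibres of $\Psi_{m,n}$ are finite (the morphism is finite, as noted after \eqref{eq: morfismo finito varianza}), so the codimension of a $\Psi_{m,n}$-preimage of a subvariety of $\A^{d-s+1}$ equals the codimension of that subvariety, and the whole analysis is transported verbatim from Section \ref{sec: geometry of Gamma_r estrella} with $d-s$ replaced by $d-s+1$ throughout. Since each of the five sets $\mathcal{W}_1'$, $\mathcal{W}'$, $\mathcal{W}_2'$, $\mathcal{W}_3'$ is contained in a codimension-$2$ subvariety of $\Gamma_{m,n}^*$ and the remaining exceptional configuration (Remark \ref{rem: repeated multiple root I - variance}) consists of regular points, their union is contained in a codimension-$2$ subvariety, which gives the claim.

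The only genuinely new ingredient — and the step I expect to require the most care — is the argument in Lemma \ref{lemma: one multiple alfa and one multiple beta - variance} that the two resultant conditions $\mathcal{R}_1(\boldsymbol{B}_1)=\mathcal{R}_2(\boldsymbol{B}_2)=0$ cut out a codimension-$2$ subvariety of $\A^{d-s+1}$: here the two equations do not involve disjoint variable sets (they share $\boldsymbol{B}$), so one cannot simply invoke a Segre-product argument, and one must verify that $\mathcal{R}_2$ is not a zero divisor modulo $\mathcal{R}_1$. This is exactly where the irreducibility result of the appendix (Theorem \ref{th: irred discrim}) is used: $\mathcal{R}_1$ is irreducible and depends on $B_{0,1}$ but not on $B_{0,2}$, whereas $\mathcal{R}_2$ depends on $B_{0,2}$ but not on $B_{0,1}$, so $\mathcal{R}_2$ cannot be a multiple of $\mathcal{R}_1$, and hence $(\mathcal{R}_1,\mathcal{R}_2)$ is a regular sequence. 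The rest of the proof is bookkeeping: once this is in place, the whole singular-locus estimate for $\Gamma_{m,n}^*$ follows by combining the cited Remarks and Lemma exactly as stated before the theorem, and the proof is complete.
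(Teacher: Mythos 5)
Your proposal is correct and follows essentially the same route as the paper: the same case decomposition (the loci $\mathcal{W}_1'$, $\mathcal{W}'$, $\mathcal{W}_2'$, $\mathcal{W}_3'$ together with the regular configuration of Remark \ref{rem: repeated multiple root I - variance}), the same use of the finiteness of $\Psi_{m,n}$ to transport codimension bounds from the base $\A^{d-s+1}$, and the same key new ingredient, namely that $\mathcal{R}_1$ and $\mathcal{R}_2$ form a regular sequence because $\mathcal{R}_1$ is irreducible (Theorem \ref{th: irred discrim}) and involves $B_{0,1}$ but not $B_{0,2}$ while $\mathcal{R}_2$ involves $B_{0,2}$. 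You have correctly identified where the genuinely new work lies versus what is transported verbatim from the analysis of $\Gamma_r^*$.
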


We finish this section with a consequence of the analysis underlying
the proof of Theorem \ref{th: codimension singular locus Gamma
estrella}. As the proof of this result is similar to that of
Corollary \ref{coro: J is radical}, it shall only be sketched.
\begin{corollary}
With assumptions be as in Theorem \ref{th: codimension singular
locus Gamma estrella}, let $\mathcal{J}_{m,n}\subset
\fq[\bfs{B},B_{0,1},B_{0,2},\bfs{T},\bfs{U}]$ be the ideal generated
by $\Delta^{i-1}F({\boldsymbol{B}_1,T_1,\ldots,T_i})$ $(1\leq i\leq
m)$ and $\Delta^{j-1}F({\boldsymbol{B}_2,U_1,\ldots,U_j})$ $(1\leq
j\leq n)$. Then $\mathcal{J}_{m,n}$ is a radical ideal.
\end{corollary}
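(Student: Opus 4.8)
The plan is to follow the scheme of Corollary~\ref{coro: J is radical}. First I would note that, by Lemma~\ref{lemma: gamma_m,n estrella es set theoret complete inter}, the generators $\Delta^{i-1}F(\boldsymbol{B}_1,T_1,\dots,T_i)$ $(1\le i\le m)$ and $\Delta^{j-1}F(\boldsymbol{B}_2,U_1,\dots,U_j)$ $(1\le j\le n)$ of $\mathcal{J}_{m,n}$ form a regular sequence, so that $\mathcal{J}_{m,n}$ is a complete intersection ideal and $\cfq[\bfs{B},B_{0,1},B_{0,2},\bfs{T},\bfs{U}]/\mathcal{J}_{m,n}$ is Cohen--Macaulay and equidimensional of dimension $d-s+1$. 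By the Jacobian criterion for reducedness of a complete intersection (\cite[Theorem~18.15]{Eisenbud95}), it then suffices to prove that the set $\Sigma$ of points of $\Gamma_{m,n}^*$ at which the Jacobian matrix $J_\Delta$ of these $m+n$ generators (with respect to $\bfs{B},B_{0,1},B_{0,2},\bfs{T},\bfs{U}$) fails to have full rank $m+n$ is contained in a proper subvariety of $\Gamma_{m,n}^*$, equivalently has codimension at least $1$.

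To control $\Sigma$, I would proceed as in the proof of Lemma~\ref{lemma: jacobian_F full rank implies nonsingular}: Newton interpolation shows that each $F(\boldsymbol{B}_1,T_i)$ and each $F(\boldsymbol{B}_2,U_j)$ belongs to $\mathcal{J}_{m,n}$, so at every point of $\Gamma_{m,n}^*$ the gradients of these polynomials are $\cfq$--linear combinations of the gradients of the generators of $\mathcal{J}_{m,n}$; hence $\rank J_{F_{1,2}}\le \rank J_\Delta$ on $\Gamma_{m,n}^*$, with $J_{F_{1,2}}$ as in \eqref{eq: Jacobian matrix fb1 and fb2}. Inspecting \eqref{eq: Jacobian matrix fb1 and fb2}, if both $f_{\boldsymbol{b}_1}$ and $f_{\boldsymbol{b}_2}$ have only simple roots in $\cfq$ then all the entries $\gamma_i=f_{\boldsymbol{b}}'(\alpha_i)$ and $\eta_j=f_{\boldsymbol{b}}'(\beta_j)$ are nonzero, the $(m+n)\times(m+n)$ submatrix of $J_{F_{1,2}}$ on the $\bfs{T},\bfs{U}$--columns is diagonal with nonzero diagonal, and so $J_{F_{1,2}}$, and a fortiori $J_\Delta$, has full rank. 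Thus $\Sigma$ is contained in the set of points $(\boldsymbol{b},b_{0,1},b_{0,2},\bfs{\alpha},\bfs{\beta})\in\Gamma_{m,n}^*$ for which $f_{\boldsymbol{b}_1}$ or $f_{\boldsymbol{b}_2}$ has a multiple root in $\cfq$.

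The remaining --- and only delicate --- step is to bound the dimension of this last set. By Remark~\ref{rem: f'=0 or f''=0} the locus $\{f'_{\boldsymbol{b}}=0\}$ already has codimension $2$ in $\Gamma_{m,n}^*$, so one may assume $f'_{\boldsymbol{b}}\ne 0$; then, assuming without loss of generality that $f_{\boldsymbol{b}_1}$ has a multiple root, $\mathrm{Res}(f_{\boldsymbol{b}_1},f'_{\boldsymbol{b}_1})=0$, which (since $f_{\boldsymbol{b}_1}$ is monic of degree $d$ and $f'_{\boldsymbol{b}_1}\ne 0$) equals $\mathcal{R}_1(\boldsymbol{b}_1)$ for the nonzero polynomial $\mathcal{R}_1:=\mathrm{Res}\big(F(\boldsymbol{B}_1,T_1),\Delta^1F(\boldsymbol{B}_1,T_1,T_1),T_1\big)$ (nonzero because $F(\boldsymbol{B}_1,T_1)$ is a separable element of $\fq[\boldsymbol{B}_1][T_1]$). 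Hence the point lies in $\Psi_{m,n}^{-1}(\{\mathcal{R}_1=0\})$, and symmetrically in $\Psi_{m,n}^{-1}(\{\mathcal{R}_2=0\})$ when $f_{\boldsymbol{b}_2}$ has a multiple root, $\Psi_{m,n}$ being the finite morphism \eqref{eq: morfismo finito varianza}; since $\Psi_{m,n}$ is finite and each $\{\mathcal{R}_\ell=0\}$ is a hypersurface of $\A^{d-s+1}$, each such preimage has codimension $1$ in $\Gamma_{m,n}^*$. Combining these observations, $\Sigma$ has codimension at least $1$, and \cite[Theorem~18.15]{Eisenbud95} yields that $\mathcal{J}_{m,n}$ is radical (and hence $\Gamma_{m,n}^*$ is an ideal--theoretic complete intersection of dimension $d-s+1$). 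I expect no new geometric input beyond what was already established for Theorem~\ref{th: codimension singular locus Gamma estrella}; the one point requiring care is precisely this dimension count, i.e.\ checking that all the degenerate configurations fall inside the proper closed subset just described.
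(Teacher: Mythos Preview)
Your proposal is correct and follows essentially the same route as the paper's proof: both invoke Lemma~\ref{lemma: gamma_m,n estrella es set theoret complete inter} for the regular--sequence property, compare $\rank J_{F_{1,2}}\le \rank J_\Delta$ via the ideal membership of the $F(\boldsymbol{B}_l,\cdot)$, reduce to the locus where $f_{\boldsymbol{b}_1}$ or $f_{\boldsymbol{b}_2}$ has a multiple root, and bound its codimension using Remark~\ref{rem: f'=0 or f''=0} together with the resultant hypersurfaces $\{\mathcal{R}_l=0\}$ pulled back by the finite morphism $\Psi_{m,n}$ (the paper phrases this last step as ``the arguments of the proofs of Remark~\ref{rem: f'=0 or f''=0} and Lemma~\ref{lemma: one multiple alfa and one multiple beta - variance}''), concluding by \cite[Theorem~18.15]{Eisenbud95}.
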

\begin{proof}
By Lemma \ref{lemma: gamma_m,n estrella es set theoret complete
inter}, the polynomials
$\Delta^{i-1}F({\boldsymbol{B}_1,T_1,\ldots,T_i})$ $(1\leq i\leq m)$
and $\Delta^{j-1}F({\boldsymbol{B}_2,U_1,\ldots,U_j})$ $(1\leq j\leq
n)$ form a regular sequence. Let $J_{\Delta_{1,2}}$ be Jacobian
matrix of these polynomials with respect to
$\bfs{B},B_{0,1},B_{0,2},\bfs{T},\bfs{U}$. We claim that the set of
points
$(\boldsymbol{b},b_{0,1},b_{0,2},\boldsymbol{\alpha},\bfs{\beta})\in
\Gamma_{m,n}^*$ for which the Jacobian matrix
$J_{\Delta_{1,2}}(\boldsymbol{b},b_{0,1},
b_{0,2},\boldsymbol{\alpha},\bfs{\beta})$ has not full rank has
codimension at least 1 in $\Gamma_{m,n}^*$. Indeed, if
$J_{\Delta_{1,2}}(\boldsymbol{b},b_{0,1},
b_{0,2},\boldsymbol{\alpha},\bfs{\beta})$ has not full rank, then
the matrix
$J_{F_{1,2}}(\boldsymbol{b},b_{0,1},b_{0,2},\boldsymbol{\alpha},\bfs{\beta})$
of (\ref{eq: Jacobian matrix fb1 and fb2}) has not full rank. On the
other hand, the latter implies that $f_{\boldsymbol{b}_1}$ or
$f_{\boldsymbol{b}_2}$ has multiple roots in $\cfq$. Therefore, by
the arguments of the proofs of Remark \ref{rem: f'=0 or f''=0} and
Lemma \ref{lemma: one multiple alfa and one multiple beta -
variance} we deduce the claim. As a consequence, the statement of
the corollary is readily implied by \cite[Theorem
18.15]{Eisenbud95}.
\end{proof}
%
%---------------------------------------------------------------------
%---------------------------------------------------------------------
%
\subsection{The geometry of the projective closure of $\Gamma_{m,n}^*$}
Similarly to Section \ref{subsec: geometry of pcl Gamma r}, in this
section we discuss the behavior of $\Gamma_{m,n}^*$ at infinity. For
this purpose, we shall consider the projective closure of
$\mathrm{pcl}(\Gamma_{m,n}^*)\subset\Pp^{d-s+1+m+n}$ of
$\Gamma_{m,n}^*$, and the set of $\mathrm{pcl}(\Gamma_{m,n}^*)$ at
infinity, namely the points of $\mathrm{pcl}(\Gamma_{m,n}^*)$ lying
in the hyperplane $\{T_0=0\}$.

Let $\mathcal{J}_{m,n}^h\subset
\fq[\bfs{B},B_{0,1},B_{0,2},T_0,\bfs{T},\bfs{U}]$ be the ideal
generated by the homogenizations $F^h$ of all the polynomials $F\in
\mathcal{J}_{m,n}$.
\begin{lemma}
\label{lemma: pcl Gamma mn is ideal-theoret complete inters} With
assumptions on $d$, $s$ and $p$ as in Theorem \ref{th: codimension
singular locus Gamma estrella}, the homogenized polynomials
$\Delta^{i-1}F(\boldsymbol{B}_1,T_1,\ldots,T_i)^h$ $(1\leq i \leq
m)$  and\linebreak
$\Delta^{j-1}F(\boldsymbol{B}_2,U_1,\ldots,U_j)^h$ $(1\leq j \leq
n)$ generate the ideal $\mathcal{J}_{m,n}^h$. Furthermore,
$\mathrm{pcl}(\Gamma_{m,n}^*)$ is an ideal-theoretic complete
intersection of dimension $d-s+1$ and degree $(d!)^2/(d-m)!(d-n)!$.
\end{lemma}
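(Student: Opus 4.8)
The plan is to follow, essentially verbatim, the argument used for Lemma~\ref{lemma: pcl Gamma r is ideal-theoret complete inters}, now in the two--block situation. The starting point is the Gr\"obner basis computation already performed inside the proof of Lemma~\ref{lemma: gamma_m,n estrella es set theoret complete inter}: with respect to the graded lexicographic order on $\cfq[\boldsymbol{B},B_{0,1},B_{0,2},\boldsymbol{T},\boldsymbol{U}]$ determined by $U_n>\cdots>U_1>T_m>\cdots>T_1>B_{d-s-1}>\cdots>B_{0,1}>B_{0,2}$, the generators $\Delta^{i-1}F(\boldsymbol{B}_1,T_1,\ldots,T_i)$ $(1\le i\le m)$ and $\Delta^{j-1}F(\boldsymbol{B}_2,U_1,\ldots,U_j)$ $(1\le j\le n)$ of $\mathcal{J}_{m,n}$ have the pairwise coprime leading terms $T_i^{d-i+1}$ and $U_j^{d-j+1}$, hence form a Gr\"obner basis of $\mathcal{J}_{m,n}$. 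Since this order is graded, the standard fact on homogenization of Gr\"obner bases (e.g.\ \cite[\S 8.4, Theorem 4]{CoLiOS92}) shows that the homogenizations $\Delta^{i-1}F(\boldsymbol{B}_1,T_1,\ldots,T_i)^h$ and $\Delta^{j-1}F(\boldsymbol{B}_2,U_1,\ldots,U_j)^h$ generate $\mathcal{J}_{m,n}^h$, which is the first assertion.

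For the second assertion I would first note that, by the corollary immediately preceding the lemma, $\mathcal{J}_{m,n}$ is radical, and therefore so is $\mathcal{J}_{m,n}^h$ (see, e.g., \cite[\S I.5, Exercise 6]{Kunz85}); consequently $\mathcal{J}_{m,n}^h=I(\mathrm{pcl}(\Gamma_{m,n}^*))$. By Lemma~\ref{lemma: gamma_m,n estrella es set theoret complete inter} the variety $\Gamma_{m,n}^*$ has dimension $d-s+1$, and hence so does its projective closure, so $\mathrm{pcl}(\Gamma_{m,n}^*)$ has codimension $m+n$ in $\Pp^{d-s+1+m+n}$. Since its defining ideal is generated by exactly $m+n$ polynomials, these form a regular sequence and $\mathrm{pcl}(\Gamma_{m,n}^*)$ is an ideal-theoretic complete intersection.

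Finally the degree is, by \cite[Theorem 18.3]{Harris92}, the product of the total degrees of the $m+n$ defining polynomials. As in the proof of Lemma~\ref{lemma: Gamma r is set-theoret complete intersection}, I would check that $\Delta^{i-1}F(\boldsymbol{B}_1,T_1,\ldots,T_i)$ has total degree $d-i+1$ --- the monomial $T_i^{d-i+1}$ coming from $\Delta^{i-1}(T^d)$ realises this degree, and every other monomial of $F$ contributes, after applying $\Delta^{i-1}$, a polynomial of strictly smaller total degree, since the terms $B_kT^k$ occurring in $F$ have $k\le d-s-1$ and the constant term disappears once $i\ge 2$ --- and symmetrically $\Delta^{j-1}F(\boldsymbol{B}_2,U_1,\ldots,U_j)$ has total degree $d-j+1$. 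Hence
\[
\deg\mathrm{pcl}(\Gamma_{m,n}^*)=\prod_{i=1}^{m}(d-i+1)\cdot\prod_{j=1}^{n}(d-j+1)=\frac{(d!)^2}{(d-m)!\,(d-n)!}.
\]

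Everything here is a direct transcription of the one--block argument; the only point that merits a moment of care is this last degree bookkeeping --- specifically that the degrees entering \cite[Theorem 18.3]{Harris92} are the \emph{total} degrees of the divided differences, not the degrees in $\boldsymbol{T}$ or $\boldsymbol{U}$ used for the leading-term analysis, and that homogenizing with the auxiliary variable $T_0$ leaves these total degrees unchanged.
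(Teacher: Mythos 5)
Your proof is correct and follows essentially the same route as the paper, which simply carries over the one--block argument \emph{mutatis mutandis}: the Gr\"obner basis with pairwise coprime leading terms $T_i^{d-i+1}$, $U_j^{d-j+1}$, the homogenization theorem from \cite[\S 8.4, Theorem 4]{CoLiOS92}, and the degree count via \cite[Theorem 18.3]{Harris92}. Your extra care about radicality of $\mathcal{J}_{m,n}^h$ and about the total degrees $d-i+1$, $d-j+1$ of the divided differences is exactly the bookkeeping the paper leaves implicit.
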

\begin{proof}
The proof of the lemma is deduced {\em mutatis mutandis} following
the proof of Lemma \ref{lemma: pcl Gamma r is ideal-theoret complete
inters}, considering the graded lexicographical order of \linebreak
$\fq[\bfs{B},B_{0,1},B_{0,2},\bfs{T},\bfs{U}]$ defined by
$U_n>\cdots> U_1>T_m>\cdots >T_1>B_{d-s-1}>\cdots
>B_1>B_{0,1}>B_{0,2}$.
\end{proof}

Similarly to Lemma \ref{lemma: singular locus pcl Gamma_r at
infinity}, the set of points of $\mathrm{pcl}(\Gamma_{m,n}^*)$ at
infinity is a linear variety. We shall skip the the proof of this
result, because it is similar to that of Lemma \ref{lemma: singular
locus pcl Gamma_r at infinity}.
\begin{lemma}\label{lemma: singular locus pcl Gamma_mn at infinity}
$\mathrm{pcl}(\Gamma_{m,n}^*)\cap \{T_0=0\}\subset \Pp^{d-s+m+n}$ is
a linear $\fq$--variety of dimension $d-s$.
\end{lemma}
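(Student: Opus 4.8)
The plan is to follow the proof of Lemma~\ref{lemma: singular locus pcl Gamma_r at infinity} essentially verbatim, treating the two blocks of variables $\bfs{T}$ and $\bfs{U}$ on equal footing. By Lemma~\ref{lemma: pcl Gamma mn is ideal-theoret complete inters}, the ideal $\mathcal{J}_{m,n}^h$ is generated by the homogenizations $\Delta^{i-1}F(\boldsymbol{B}_1,T_1,\ldots,T_i)^h$ $(1\le i\le m)$ and $\Delta^{j-1}F(\boldsymbol{B}_2,U_1,\ldots,U_j)^h$ $(1\le j\le n)$, so $\mathrm{pcl}(\Gamma_{m,n}^*)\cap\{T_0=0\}$ is exactly the common zero locus in $\{T_0=0\}$ of the restrictions of these polynomials to $T_0=0$.

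First I would record the shape of those restrictions. Since $\Delta^{i-1}F(\boldsymbol{B}_1,T_1,\ldots,T_i)$ has total degree $d-i+1$ in the variables $\bfs{T}$, with the monomial $T_i^{d-i+1}$ occurring with nonzero coefficient (by the leading-term computation already used in Lemmas~\ref{lemma: Gamma r is set-theoret complete intersection} and~\ref{lemma: gamma_m,n estrella es set theoret complete inter}), setting $T_0=0$ in its homogenization keeps precisely its top-degree form, which is $T_i^{d-i+1}$ plus monomials of positive degree in $T_1,\ldots,T_{i-1}$; likewise $\Delta^{j-1}F(\boldsymbol{B}_2,U_1,\ldots,U_j)^h|_{T_0=0}=U_j^{d-j+1}+$ monomials of positive degree in $U_1,\ldots,U_{j-1}$. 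An easy induction then shows that a point of $\mathrm{pcl}(\Gamma_{m,n}^*)\cap\{T_0=0\}$ must satisfy $T_1=0$ (from the $i=1$ equation $T_1^d=0$), then $T_2=0$, and so on up to $T_m=0$, and symmetrically $U_1=\cdots=U_n=0$. Conversely all the generators of $\mathcal{J}_{m,n}^h$ vanish on the linear variety $\{T_0=0,\,T_1=\cdots=T_m=0,\,U_1=\cdots=U_n=0\}$, so this linear $\fq$--variety coincides with $\mathrm{pcl}(\Gamma_{m,n}^*)\cap\{T_0=0\}$.

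Finally I would carry out the dimension count: inside $\Pp^{d-s+1+m+n}$ the hyperplane $\{T_0=0\}$ is a $\Pp^{d-s+m+n}$, and imposing the $m+n$ additional independent linear conditions $T_1=\cdots=T_m=U_1=\cdots=U_n=0$ cuts out a linear subspace of dimension $(d-s+m+n)-(m+n)=d-s$, as claimed. I do not expect a real obstacle here: the only point requiring a line of justification is the ``top-degree form'' description of $\Delta^{i-1}F(\boldsymbol{B}_1,T_1,\ldots,T_i)^h|_{T_0=0}$, which is immediate from the fact that the degree-$d$ part $T^d$ of $F$ contributes the complete homogeneous symmetric polynomial $h_{d-i+1}(T_1,\ldots,T_i)$ to the $(i-1)$st divided difference, whose only monomial free of $T_1,\ldots,T_{i-1}$ is $T_i^{d-i+1}$ (and analogously for the $U$-block).
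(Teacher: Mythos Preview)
Your proposal is correct and follows essentially the same approach as the paper; indeed, the paper omits the proof entirely, remarking only that it is similar to that of Lemma~\ref{lemma: singular locus pcl Gamma_r at infinity}. Your write-up is actually more detailed than the proof of Lemma~\ref{lemma: singular locus pcl Gamma_r at infinity}, spelling out the induction and the dimension count explicitly.
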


Combining Theorem \ref{th: codimension singular locus Gamma
estrella} and Lemmas \ref{lemma: pcl Gamma mn is ideal-theoret
complete inters} and \ref{lemma: singular locus pcl Gamma_mn at
infinity} as we did in the proof of Theorem \ref{th: pcl Gamma_r is
normal abs irred} we obtain the main result of this section.
\begin{theorem}
\label{th: pcl Gamma_mn is normal abs irred} With assumptions on
$d$, $s$ and $p$ as in Theorem \ref{th: codimension singular locus
Gamma estrella}, the projective variety
$\mathrm{pcl}(\Gamma_{m,n}^*)\subset \Pp^{d-s+1+m+n}$ is a normal
absolutely irreducible ideal-theoretic complete intersection defined
over $\fq$ of dimension $d-s+1$ and degree $(d!)^2/(d-m)!(d-n)!$.
\end{theorem}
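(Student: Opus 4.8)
The plan is to follow exactly the blueprint established in the proof of Theorem~\ref{th: pcl Gamma_r is normal abs irred}, since by now all the hard work has been isolated into three ingredients that we already have at hand for $\mathrm{pcl}(\Gamma_{m,n}^*)$. First I would recall from Lemma~\ref{lemma: pcl Gamma mn is ideal-theoret complete inters} that $\mathrm{pcl}(\Gamma_{m,n}^*)\subset\Pp^{d-s+1+m+n}$ is an ideal--theoretic complete intersection of dimension $d-s+1$ and degree $(d!)^2/(d-m)!\,(d-n)!$, cut out by the homogenized divided differences $\Delta^{i-1}F(\boldsymbol{B}_1,T_1,\ldots,T_i)^h$ $(1\le i\le m)$ and $\Delta^{j-1}F(\boldsymbol{B}_2,U_1,\ldots,U_j)^h$ $(1\le j\le n)$; in particular it is equidimensional, so ``regular in codimension~$1$'' is the only missing piece before invoking Serre's criterion.

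Next I would assemble the codimension--$2$ singular-locus statement. Theorem~\ref{th: codimension singular locus Gamma estrella} gives that $\mathrm{Sing}(\Gamma_{m,n}^*)$ has codimension at least $2$ in the affine variety $\Gamma_{m,n}^*$; Lemma~\ref{lemma: singular locus pcl Gamma_mn at infinity} shows that the part at infinity, $\mathrm{pcl}(\Gamma_{m,n}^*)\cap\{T_0=0\}$, is a linear $\fq$--variety of dimension $d-s$, hence of codimension $1$ in $\mathrm{pcl}(\Gamma_{m,n}^*)$ and smooth. Combining these two facts as in the proof of Theorem~\ref{th: pcl Gamma_r is normal abs irred}, the singular locus of the projective variety $\mathrm{pcl}(\Gamma_{m,n}^*)$ has codimension at least $2$. (One minor point to check: a singular point of $\mathrm{pcl}(\Gamma_{m,n}^*)$ lying on the hyperplane at infinity must already be singular for the ambient projective variety, but since the hyperplane section is itself a smooth linear space of codimension exactly $1$, no new singularities are introduced there beyond those forced by the ambient geometry — the standard argument applies verbatim.)

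Given these two inputs, the proof is immediate. Since $\mathrm{pcl}(\Gamma_{m,n}^*)$ is a complete intersection it satisfies Serre's condition $(S_2)$ automatically (it is Cohen--Macaulay), and having singular locus of codimension at least $2$ it satisfies $(R_1)$; by the Serre criterion it is normal. For absolute irreducibility I would apply the Hartshorne connectedness theorem exactly as in Theorem~\ref{th: pcl Gamma_r is normal abs irred}: a set--theoretic complete intersection in projective space whose singular locus has codimension at least $2$ is connected in codimension~$1$, hence irreducible; since all defining data are defined over $\fq$ and $\mathrm{pcl}(\Gamma_{m,n}^*)$ is geometrically connected in codimension $1$, it is absolutely irreducible. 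The dimension and degree are already recorded in Lemma~\ref{lemma: pcl Gamma mn is ideal-theoret complete inters}, completing the proof.

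I do not expect a genuine obstacle here: the entire difficulty has been pushed into Theorem~\ref{th: codimension singular locus Gamma estrella} and Lemmas~\ref{lemma: pcl Gamma mn is ideal-theoret complete inters} and~\ref{lemma: singular locus pcl Gamma_mn at infinity}. The only point requiring a moment's care is making sure that the codimension count combining the affine singular locus with the locus at infinity is done in $\mathrm{pcl}(\Gamma_{m,n}^*)$ rather than in its affine chart — but this is precisely the bookkeeping carried out in the proof of Theorem~\ref{th: pcl Gamma_r is normal abs irred}, and it transfers without change. Accordingly I would keep the proof short, stating that it is obtained \emph{mutatis mutandis} from that of Theorem~\ref{th: pcl Gamma_r is normal abs irred}.
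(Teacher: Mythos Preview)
Your proposal is correct and matches the paper's approach exactly: the paper itself does not give a standalone proof but simply states that the result is obtained by combining Theorem~\ref{th: codimension singular locus Gamma estrella} and Lemmas~\ref{lemma: pcl Gamma mn is ideal-theoret complete inters} and~\ref{lemma: singular locus pcl Gamma_mn at infinity} as in the proof of Theorem~\ref{th: pcl Gamma_r is normal abs irred}. Your write-up spells out precisely that combination (Serre's criterion from $(S_2)$ plus $(R_1)$, then Hartshorne connectedness), so there is nothing to add.
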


We deduce that  $\Gamma_{m,n}^* \subset \A^{d-s+1+m+n}$ is an
absolutely irreducible ideal-theoretic complete intersection of
dimension $d-s+1$ and degree $(d!)^2/(d-m)!(d-n)!$. Furthermore,
Lemma \ref{lemma: relacion gamma_m,n y gamma_m,n estrella} shows
that $\Gamma_{m,n}$ coincides with the subset of points of
$\Gamma_{m,n}^*$ with $b_{0,1} \neq b_{0,2}$, $\alpha_i \neq
\alpha_j$ and $\beta_k\neq \beta_{l}$. Hence, taking into account
that $\Gamma_{m,n}^*$ is absolutely irreducible, we deduce that
$\mathrm{cl}(\Gamma_{m,n})=\Gamma_{m,n}^{*}$.
%
%---------------------------------------------------------------------
%---------------------------------------------------------------------
%---------------------------------------------------------------------
%---------------------------------------------------------------------
%---------------------------------------------------------------------
%---------------------------------------------------------------------
%---------------------------------------------------------------------
%---------------------------------------------------------------------
%
\section{The asymptotic behavior of $\mathcal{V}_2(d,s,\bfs{a})$}
\label{sec: behavior V2}
As before, let be given positive integers $d$ and $s$ such that,
either $d-s \geq 4$ and $p>3$, or $d-s\ge 6$ and $p=3$. As asserted
before, our objective is to determine the asymptotic behavior of the
quantity $\mathcal{V}_2(d,s,\boldsymbol{a})$ of (\ref{eq: formula
para v2}) for a given $\boldsymbol{a}:=(a_{d-1},\ldots,a_{d-s})\in
\fq^{s}$. According to Theorem \ref{th: combinatorial reduction
variance}, such an asymptotic behavior is determined by that of the
number $\mathcal{S}_{m,n}^{\boldsymbol{a}}$ defined in (\ref{eq: S
mn como suma sobre los gammas}) for each pair $(m,n)$ with $1\leq
m,n \leq d$ and $d-s+1\leq m+n \leq 2d$.
%
%---------------------------------------------------------------------
%---------------------------------------------------------------------
%---------------------------------------------------------------------
%---------------------------------------------------------------------
%
\subsection{An estimate for $\mathcal{S}_{m,n}^{\boldsymbol{a}}$}
\label{subsec: estimate S mn}
Lemma \ref{lemma: relacion entre gamma m,n y S m,n} expresses
$\mathcal{S}_{m,n}^{\boldsymbol{a}}$ in terms of the number of
$q$--rational points of the affine quasi--$\fq$--variety
$\Gamma_{m,n}$. As a consequence, we estimate the number
$|\Gamma_{m,n}(\fq)|$ of $q$--rational points of $\Gamma_{m,n}$ for
each pair $(m,n)$ as above.

Lemma \ref{lemma: relacion gamma_m,n y gamma_m,n estrella} relates
the quantity $|\Gamma_{m,n}(\fq)|$ with the number
$|\Gamma_{m,n}^*(\fq)|$ of $q$--rational points of the affine
$\fq$--variety $\Gamma_{m,n}^*$. We shall express the latter in
terms of the number of $q$--rational points of the projective
closure $\mathrm{pcl}(\Gamma_{m,n}^*)$ and its set
$\mathrm{pcl}(\Gamma_{m,n}^*)^{\infty}:=\mathrm{pcl}(\Gamma_{m,n}^*)\cap
\{T_0=0\}$ of points at infinity.

Theorem \ref{th: pcl Gamma_mn is normal abs irred} shows that
$\mathrm{pcl}(\Gamma_{m,n}^*)$ is a normal complete intersection of
dimension $d-s+1$ defined over $\fq$, and therefore (\ref{eq:
estimate normal var CaMaPr}) yields the following estimate:
$$
\big||\mathrm{pcl}(\Gamma_{m,n}^*)(\fq)|-p_{d-s+1}\big| \leq
\big(\delta_{m,n}(D_{m,n}-2)+2\big)q^{d-s+\frac{1}{2}}+
14D_{m,n}^2\delta_{m,n}^2q^{d-s},
$$
where $D_{m,n}:=\sum_{i=1}^m(d-i)+\sum_{j=1}^n
(d-j)=(m+n)d-\big(m(m+1)+n(n+1)\big)/{2}$ and
$\delta_{m,n}:=(d!)^2/(d-m)!(d-n)!$. On the other hand, Lemma
\ref{lemma: singular locus pcl Gamma_mn at infinity} proves that
$\mathrm{pcl}(\Gamma_{m,n}^*)^{\infty}$ is linear $\fq$--variety of
dimension $d-s$. Thus we obtain
\begin{eqnarray}\label{eq: estimate q points pcl Gamma mn}
\big||\Gamma_{m,n}^*(\fq)|-q^{d-s+1}\big|&=&
\big||\mathrm{pcl}(\Gamma_{m,n}^*)|-|\mathrm{pcl}(\Gamma_{m,n}^*)^{\infty}|-p_{d-s+1}+p_{d-s}\big|\nonumber\\
&\leq& (\delta_{m,n}(D_{m,n}-2)+2)q^{d-s+\frac{1}{2}}
+14D_{m,n}^2\delta_{m,n}^2q^{d-s}.
\end{eqnarray}

Next we estimate the number $|\Gamma_{m,n}(\fq)|$. For this purpose,
according to Lemma \ref{lemma: relacion gamma_m,n y gamma_m,n
estrella} we obtain an upper bound on the number of $q$--rational
points $(\bfs{b},b_{0,1},b_{0,2},\bfs{\alpha},\bfs{\beta})$ of
$\Gamma_{m,n}^*$ such that, either $b_{0,1}=b_{0,2}$, or there exist
$1\le i<j\le m$ with $\alpha_i=\alpha_j$, or there exist $1\le
k<l\le n$ with $\beta_k=\beta_l$. Such a subset of $\Gamma_{m,n}^*$
form the following $\fq$-variety:
\begin{equation*}
\Gamma_{m,n}^{*,\,=}:= \Gamma_{m,n}^* \cap
\bigg(\left\{B_{0,1}=B_{0,2}\right\}\bigcup_{1 \leq i <j \leq m}
\left\{T_i=T_j\right\} \bigcup_{1 \leq k <l \leq
n}\left\{U_k=U_l\right\}\bigg).
\end{equation*}
Observe that $\Gamma_{m,n}^{*,\,=}=\Gamma_{m,n}^{*}\cap
\mathcal{H}_{m,n}$, where $\mathcal{H}_{m,n}\subset \A^{d-s+1+m+n}$
is the hypersurface defined by the polynomial
$$F:=(B_{0,1}-B_{0,2})\prod_{1
\leq i <j \leq m} (T_i-T_j) \prod_{1 \leq k <l \leq n}(U_k-U_l).$$
By the B\'ezout inequality (\ref{eq: Bezout inequality}) we have
\begin{equation}\label{eq: grado de gamma igual}
\deg \Gamma_{m,n}^{*,\,=} \leq \delta_{m,n}\left( \binom {m}{2}+
\binom {n}{2}+1\right),
\end{equation}
The set $\Gamma_{m,n}^*\cap\{B_{0,1}=B_{0,2}\}$ is contained in the
codimension--1 subvariety of $\Gamma_{m,n}^*$ given by
$\Psi_{m,n}^{-1}(\{B_{0,1}=B_{0,2}\})$. Furthermore, if
$\alpha_i=\alpha_j$ for $1\le i<j\le m$, then $\alpha_i$ is a
multiple root of $f_{\bfs{b}_1}$, and the same can be said of
$f_{\bfs{b}_2}$ if $\beta_k=\beta_l$ for $1\le k<l\le m$. Then, by
Remark \ref{rem: f'=0 or f''=0} and Lemma \ref{lemma: one multiple
alfa and one multiple beta - variance} we conclude that
$\Gamma_{m,n}^{*,\,=}$ has dimension at most $d-s$. Therefore,
combining, e.g., \cite[Lemma 2.1]{CaMa06} with (\ref{eq: grado de
gamma igual}) we obtain
\begin{equation}\label{eq: estimate q-points Gamma mn=}
\big|\Gamma_{m,n}^{*,\,=}(\fq)\big| \leq \delta_{m,n}
\left(\binom{m}{2} +\binom{n}{2}+1\right) q^{d-s}.
\end{equation}
Since $\Gamma_{m,n}(\fq)=(\Gamma_{m,n}^*)(\fq)\setminus
(\Gamma_{m,n}^{*,\,=})(\fq)$, from (\ref{eq: estimate q points pcl
Gamma mn}) and (\ref{eq: estimate q-points Gamma mn=}) we see that
\begin{eqnarray}\label{eq: estimate Gamma m,n}
\big||\Gamma_{m,n}(\fq)|- q^{d-s+1}\big|\leq
\big||\Gamma_{m,n}^*(\fq)|- q^{d-s+1}\big|+\big|(\Gamma_{m,n}^{*,\,=})(\fq)\big|\nonumber
\hskip2.5cm\\
\le(\delta_{m,n}(D_{m,n}-2)+2)q^{d-s+\frac{1}{2}}
+(14D_{m,n}^2\delta_{m,n}^2+\xi_{m,n}\delta_{m,n})q^{d-s},
\end{eqnarray}
where $\xi_{m,n} :=\binom{m}{2}+\binom {n}{2}+1$.

Finally, by Lemma \ref{lemma: relacion entre gamma m,n y S m,n} and
(\ref{eq: estimate Gamma m,n}) we obtain the following result.
\begin{theorem}\label{th: estimate S mn}
Let be given positive integers $d$ and $s$ such that, either $d-s
\geq 4$ and $p>3$, or $d-s\ge 6$ and $p=3$. For each $(m,n)$ with
$1\leq m,n \leq d$, and $d-s+1\leq m+n\leq 2d$, we have
\begin{eqnarray*}
\bigg|\mathcal{S}_{m,n}^{\bfs{a}}-
\frac{q^{d-s+1}}{m!n!}\bigg|&\leq&
\frac{1}{m!n!}\big(\delta_{m,n}(D_{m,n}-2)+2\big)q^{d-s+\frac{1}{2}}\\
&&+
\frac{1}{m!n!}(14D_{m,n}^2\delta_{m,n}^2+\xi_{m,n}\delta_{m,n})q^{d-s},
\end{eqnarray*}
where $\xi_{m,n} :=\binom{m}{2}+\binom {n}{2}+1$,
$D_{m,n}:=(m+n)d-\binom{m+1}{2}-\binom{n+1}{2}$ and
$\delta_{m,n}:=\frac{(d!)^2}{(d-m)!(d-n)!}$.
\end{theorem}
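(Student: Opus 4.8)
The plan is to read off the estimate from two facts already in place: the orbit-counting identity $\mathcal{S}_{m,n}^{\bfs{a}}=|\Gamma_{m,n}(\fq)|/(m!\,n!)$ of Lemma \ref{lemma: relacion entre gamma m,n y S m,n}, and a point-count estimate for the quasi-affine variety $\Gamma_{m,n}$. Thus it suffices to prove $\big||\Gamma_{m,n}(\fq)|-q^{d-s+1}\big|\le(\delta_{m,n}(D_{m,n}-2)+2)q^{d-s+\frac{1}{2}}+(14D_{m,n}^2\delta_{m,n}^2+\xi_{m,n}\delta_{m,n})q^{d-s}$ and then divide through by $m!\,n!$; the quantities $D_{m,n}$, $\delta_{m,n}$, $\xi_{m,n}$ in the statement are precisely the degree-theoretic invariants of the varieties that enter.

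First I would bound $|\Gamma_{m,n}^*(\fq)|$. By Theorem \ref{th: pcl Gamma_mn is normal abs irred}, $\mathrm{pcl}(\Gamma_{m,n}^*)\subset\Pp^{d-s+1+m+n}$ is a normal complete intersection over $\fq$ of dimension $d-s+1$, degree $\delta_{m,n}=(d!)^2/(d-m)!(d-n)!$, and multidegree given by the degrees $d,d-1,\ldots,d-m+1$ (one block) and $d,d-1,\ldots,d-n+1$ (the other block) of the homogenized divided differences; in particular its $D$-parameter is $\sum_{i=1}^m(d-i)+\sum_{j=1}^n(d-j)=D_{m,n}$. Plugging this into the estimate (\ref{eq: estimate normal var CaMaPr}) of \cite[Theorem 1.3]{CaMaPr13} bounds $\big||\mathrm{pcl}(\Gamma_{m,n}^*)(\fq)|-p_{d-s+1}\big|$ by $(\delta_{m,n}(D_{m,n}-2)+2)q^{d-s+\frac{1}{2}}+14D_{m,n}^2\delta_{m,n}^2q^{d-s}$. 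Since by Lemma \ref{lemma: singular locus pcl Gamma_mn at infinity} the points at infinity form a linear $\fq$-variety of dimension $d-s$, hence contribute exactly $p_{d-s}$ rational points, subtracting yields the same bound for $\big||\Gamma_{m,n}^*(\fq)|-q^{d-s+1}\big|$, i.e. (\ref{eq: estimate q points pcl Gamma mn}).

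Next I would remove the ``diagonal'' part: Lemma \ref{lemma: relacion gamma_m,n y gamma_m,n estrella} identifies $\Gamma_{m,n}$ with $\Gamma_{m,n}^*$ minus the closed subset $\Gamma_{m,n}^{*,=}$ cut out by $(B_{0,1}-B_{0,2})\prod_{i<j}(T_i-T_j)\prod_{k<l}(U_k-U_l)=0$. Bézout (\ref{eq: Bezout inequality}) gives $\deg\Gamma_{m,n}^{*,=}\le\delta_{m,n}\xi_{m,n}$ with $\xi_{m,n}=\binom{m}{2}+\binom{n}{2}+1$, while Remark \ref{rem: f'=0 or f''=0} together with Lemma \ref{lemma: one multiple alfa and one multiple beta - variance} show $\dim\Gamma_{m,n}^{*,=}\le d-s$: a coincidence among the listed coordinates forces $f_{\bfs{b}_1}$ or $f_{\bfs{b}_2}$ to have a multiple root, which is a proper condition on $(\bfs{b},b_{0,1},b_{0,2})$ pulled back through the finite morphism $\Psi_{m,n}$. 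The elementary bound $|V(\fq)|\le(\deg V)\,q^{\dim V}$ (e.g. \cite[Lemma 2.1]{CaMa06}) then gives $|\Gamma_{m,n}^{*,=}(\fq)|\le\delta_{m,n}\xi_{m,n}q^{d-s}$. Combining the two contributions via $\Gamma_{m,n}(\fq)=\Gamma_{m,n}^*(\fq)\setminus\Gamma_{m,n}^{*,=}(\fq)$ gives (\ref{eq: estimate Gamma m,n}), and dividing by $m!\,n!$ finishes the proof.

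I do not anticipate any real obstacle: the substantive work — the regular-sequence/complete-intersection structure, the codimension-$2$ bound on $\mathrm{Sing}(\Gamma_{m,n}^*)$, normality via Serre's criterion, absolute irreducibility via Hartshorne connectedness, and the linearity of the part at infinity — was all carried out for $\Gamma_{m,n}^*$ in Section \ref{sec: geometry of Gamma_mn estrella}, and (\ref{eq: estimate normal var CaMaPr}) is invoked as a black box. The only point requiring care is the bookkeeping: matching the multidegree of $\mathrm{pcl}(\Gamma_{m,n}^*)$ to the parameter $D$ in (\ref{eq: estimate normal var CaMaPr}) and confirming the asserted identities $\deg\mathrm{pcl}(\Gamma_{m,n}^*)=\delta_{m,n}$ and $D=D_{m,n}$.
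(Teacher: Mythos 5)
Your proposal is correct and follows essentially the same route as the paper: the orbit identity of Lemma \ref{lemma: relacion entre gamma m,n y S m,n}, the estimate (\ref{eq: estimate normal var CaMaPr}) applied to the normal complete intersection $\mathrm{pcl}(\Gamma_{m,n}^*)$ with the linear part at infinity subtracted off, and the B\'ezout-plus-dimension bound for $\Gamma_{m,n}^{*,=}$ via Remark \ref{rem: f'=0 or f''=0} and Lemma \ref{lemma: one multiple alfa and one multiple beta - variance}. The bookkeeping of $D_{m,n}$, $\delta_{m,n}$ and $\xi_{m,n}$ matches the paper exactly.
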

%
%---------------------------------------------------------------------
%---------------------------------------------------------------------
%---------------------------------------------------------------------
%---------------------------------------------------------------------
%
\subsection{The asymptotic behavior of $\mathcal{V}_2(d,s,\boldsymbol{a})$}
\label{subsec: asymptotic behavior 2nd moment}
Theorem \ref{th: estimate S mn} is the fundamental step towards the
determination of the asymptotic behavior of
$\mathcal{V}_2(d,s,\boldsymbol{a})$. Indeed, by Theorem \ref{th:
combinatorial reduction variance} we have
\begin{eqnarray}
  \mathcal{V}_2(d,s,\boldsymbol{a})-\mu_d^2\,q^2=
  \mathcal{V}(d,s,\boldsymbol{a})+
  \hskip-0.2cm\mathop{\sum_{1\leq m,n\leq d}}_{2\leq m+n\leq d-s}\hskip-0.2cm (-q)^{2-m-n}
  \bigg(\binom{q}{n}\binom{q}{m}-\frac{q^{m+n}}{m!n!}\bigg)\nonumber  \\
   +\frac{1}{q^{d-s-1}}\hskip-0.4cm\mathop{\sum_{1\leq m,n\leq d}}_{d-s+1\leq m+n\leq 2d}
   \hskip-0.4cm(-1)^{m+n} \bigg(\mathcal{S}_{m,n}^{\boldsymbol{a}}-\frac{q^{d-s+1}}{m!n!}\bigg).
   \label{eq: igualdad de V_2(d,s,a)}
\end{eqnarray}

From Corollary \ref{coro: average value sets} it follows that
\begin{equation}\label{eq: upper bound V(d,s,a)} \mathcal{V}(d,s,\boldsymbol{a})\le
\mu_d\,q+d^2 2^{d-1}q^{1/2}+
\frac{7}{2}\,d^4\sum_{k=0}^{s-1}\binom{d}{k}^{2}(d-k)!.
\end{equation}

Next we obtain an upper bound for the absolute value $A_1(d,s)$ of
the second term in the right-hand side of (\ref{eq: igualdad de
V_2(d,s,a)}). Indeed, taking into account that
$$\binom{q}{n}\binom{q}{m}-\frac{q^{m+n}}{m!n!}=
\binom{q}{m}\bigg(\binom{q}{n}-\frac{q^{n}}{n!}\bigg)+\frac{q^n}{n!}
\bigg(\binom{q}{m}-\frac{q^{m}}{m!}\bigg),$$
we see that
\begin{eqnarray*}
  A_1(d,s)
    & \leq &
  \bigg|\mathop{\sum_{1\leq m,n\leq d}}_{2\leq m+n\leq d-s}
  (-q)^{2-m-n}\binom{q}{m}\bigg(\binom{q}{n}-\frac{q^{n}}{n!}\bigg)\bigg|\nonumber
   \\
   & &+\, \bigg|\displaystyle\mathop{\sum_{1\leq m,n\leq d}}_{2\leq m+n\leq d-s}
   \frac{(-1)^n}{n!}(-q)^{2-m}\bigg(\binom{q}{m}-\frac{q^{m}}{m!}\bigg)\bigg|.
\end{eqnarray*}
Arguing as in the proof of \cite[Corollary 14]{CeMaPePr13}, we have
that
$$\bigg|\sum_{n=1}^{d-s-m}(-q)^{1-n}\bigg(\binom{q}{n}-\frac{q^n}{n!}\bigg)\bigg|
\le \frac{1}{2\,e}+\frac{1}{2}+\frac{7}{q}\le d.$$
Therefore,
$$
  \bigg|\!\!\!\!\mathop{\sum_{1\leq m,n\leq d}}_{2\leq m+n\leq
  d-s}\!\!\!\!\!\!
  (-q)^{1-m-n}\binom{q}{m}\bigg(\binom{q}{n}-\frac{q^{n}}{n!}\bigg)\bigg|
  \le \!d\!\!\sum_{m=1}^{d-s-1}\binom{q}{m}q^{-m}\!\!\le\! d\bigg(\!1+\frac{1}{q}\!\bigg)^q \le  e\,d.
$$
On the other hand,
$$
\bigg|\mathop{\sum_{1\leq m,n\leq d}}_{2\leq m+n\leq
d-s}\frac{(-1)^n}{n!}(-q)^{1-m}\bigg(\binom{q}{m}-\frac{q^{m}}{m!}\bigg)\bigg|\le
d\sum_{n=1}^{d-s-1}\frac{1}{n!} \le e\,d.
$$
Combining the two previous bounds we obtain $A_1(d,s) \leq 2 \,e \,d
\,q$.

Finally, we consider the absolute value $B_1(d,s)$ of the last term
of (\ref{eq: igualdad de V_2(d,s,a)}). We have
\begin{eqnarray}
B_1(d,s)&\le& \sum_{m,n=1}^d
\frac{\delta_{m,n}(D_{m,n}-2)+2}{m!\,n!}\,q^{3/2}\nonumber\\&&
+14\sum_{m,n=1}^d
\frac{D_{m,n}^{2}\delta_{m,n}^{2}}{m!\,n!}\,q+\sum_{m,n=1}^d
\frac{\xi_{m,n}\delta_{m,n}}{m!\,n!}\,q. \label{eq: cota para
B_1(d,s)}
\end{eqnarray}
First we obtain an upper bound for the first term in the right--hand
side of the above inequality:
\begin{eqnarray}
\displaystyle\sum_{m,n=1}^d
\frac{\delta_{m,n}(D_{m,n}-2)+2}{m!\,n!}&\leq& 2\sum_{n=1}^{d}
\binom{d}{n}\frac{n(2d-n-1)}{2} \sum_{m=1}^{d}
\binom{d}{m}\nonumber\\[1ex]
&\leq& d^2 2^d(2^d-1). \label{eq: primer sumando de B_1(d,s)}
\end{eqnarray}
On the other hand, since $D_{m,n}^{2}\le (2d-1)^4/16$ for $1\le
m,n\le d$, we see that
\begin{eqnarray}
\displaystyle\sum_{m,n=1}^d
\frac{D_{m,n}^{2}\delta_{m,n}^{2}}{m!\,n!}&\le& \frac{1}{16}
(2d-1)^4\Bigg(\sum_{n=1}^{d}\binom{d}{n}^2n!\Bigg)^2\nonumber \\
&\le& \frac{1}{16} (2d-1)^4 \Bigg(\sum_{k=0}^{d-1} \binom{d}{k}^2
(d-k)!\Bigg)^2.\label{eq: segundo sumando de B_1(d,s)}
\end{eqnarray}
Finally, we consider the last term of (\ref{eq: cota para
B_1(d,s)}):
\begin{eqnarray}\label{eq: tercer sumando de B_1(d,s)}
\sum_{m,n=1}^d \frac{\delta_{m,n}\,\xi_{m,n}}{m!\,n!} &\le&
2\sum_{n=1}^d \binom{d}{n} \sum_{m=1}^d \binom{d}{m}\binom{m}{2}
 + \sum_{n=1}^d \binom{d}{n} \sum_{m=1}^d \binom{d}{m}\nonumber
 \\[1ex]&\leq& d^2 2^{d-2}(2^d-1).
\end{eqnarray}
Putting together (\ref{eq: primer sumando de B_1(d,s)}), (\ref{eq:
segundo sumando de B_1(d,s)}) and (\ref{eq: tercer sumando de
B_1(d,s)}) we obtain
$$B_1(d,s)\le d^2 2^{d-2}(2^d-1)(4 q^{3/2}+q) +\frac{7}{8}(2d-1)^4
\Bigg(\sum_{k=0}^{d-1}\binom{d}{k}^{2}(d-k)!\Bigg)^2q.$$

Combining (\ref{eq: upper bound V(d,s,a)}) and the upper bounds for
$A_1(d,s)$ and $B_1(d,s)$ above we deduce the following result.
\begin{corollary}
 \label{coro: estimate V2(d,s,a) without asympt behavior}
With assumptions and notations as in Theorem \ref{th: estimate S
mn}, we have
\begin{equation}
\label{eq: estimate V_2(d,s,a)}
\left|\mathcal{V}_2(d,s,\bfs{a})-\mu_d^2\, q^2\right|\le d^22^{2d+1}
q^{3/2}+14\,d^4\Bigg(\sum_{k=0}^{d-1}\binom{d}{k}^{2}(d-k)!\Bigg)^2q.
\end{equation}
\end{corollary}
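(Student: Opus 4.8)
The plan is to read the statement off the identity (\ref{eq: igualdad de V_2(d,s,a)}) furnished by Theorem \ref{th: combinatorial reduction variance}, which expresses $\mathcal{V}_2(d,s,\bfs{a})-\mu_d^2q^2$ as the sum of three terms: the mean $\mathcal{V}(d,s,\bfs{a})$; a ``low--weight'' double sum $A_1(d,s)$ over pairs $(m,n)$ with $2\le m+n\le d-s$; and a ``high--weight'' remainder $B_1(d,s)$ over $(m,n)$ with $d-s+1\le m+n\le 2d$. Each of the three is estimated separately. The substantive arithmetic--geometric input, namely the estimate for $\mathcal{S}_{m,n}^{\bfs{a}}$ in Theorem \ref{th: estimate S mn} (which rests on the normality and absolute irreducibility of $\mathrm{pcl}(\Gamma_{m,n}^*)$ proved in Theorem \ref{th: pcl Gamma_mn is normal abs irred}), is already in hand, so the proof itself is essentially the organization of error terms.

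For the mean I would simply invoke Corollary \ref{coro: average value sets}, which yields (\ref{eq: upper bound V(d,s,a)}). For $A_1(d,s)$ the key device is the elementary identity $\binom qn\binom qm-q^{m+n}/(m!\,n!)=\binom qm\bigl(\binom qn-q^n/n!\bigr)+(q^n/n!)\bigl(\binom qm-q^m/m!\bigr)$, which reduces the double sum to two sums, each carrying a single ``binomial defect''; combining this with the one--variable bound $\bigl|\sum_{n\ge1}(-q)^{1-n}(\binom qn-q^n/n!)\bigr|\le d$ (proved as in \cite[Corollary 14]{CeMaPePr13}) and with $\sum_{m}\binom qmq^{-m}\le(1+1/q)^q\le e$ gives $A_1(d,s)\le 2e\,d\,q$. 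For $B_1(d,s)$ one sums the estimate of Theorem \ref{th: estimate S mn} over all admissible $(m,n)$; here the computation is streamlined by the clean identities $\delta_{m,n}/(m!\,n!)=\binom dm\binom dn$ and $\delta_{m,n}^2/(m!\,n!)=\binom dm^2m!\,\binom dn^2n!$, which together with the crude bounds $D_{m,n}\le(2d-1)^2/4$, $\xi_{m,n}\le d^2$ and the reindexing $k=d-m$ produce the partial estimates (\ref{eq: primer sumando de B_1(d,s)}), (\ref{eq: segundo sumando de B_1(d,s)}) and (\ref{eq: tercer sumando de B_1(d,s)}); substituting them into (\ref{eq: cota para B_1(d,s)}) gives a bound of the shape $B_1(d,s)\le c_1(d)\,q^{3/2}+c_2(d)\,q+\tfrac78(2d-1)^4\bigl(\sum_{k=0}^{d-1}\binom dk^2(d-k)!\bigr)^2q$, with $c_1(d),c_2(d)$ each a product of a polynomial in $d$ with $2^d(2^d-1)$.

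It then remains to collect the three estimates. The only $q^{3/2}$ contribution comes from $B_1(d,s)$, and since $c_1(d)=d^22^d(2^d-1)\le d^22^{2d+1}$ this already accounts for the first term of the claimed bound. Every other contribution is $O_d(q)$: the $\mu_dq$ and the $d^22^{d-1}q^{1/2}\le d^22^{2d+1}q^{3/2}$ coming from the mean, the constant $\tfrac72d^4\sum_{k=0}^{s-1}\binom dk^2(d-k)!$ of the mean, the $2edq$ from $A_1$, and the two $q$--terms of $B_1$. The one point requiring a small argument — and the only real obstacle — is to verify that all of these, after using $(2d-1)^4<16d^4$, fit into the single coefficient $14d^4\bigl(\sum_{k=0}^{d-1}\binom dk^2(d-k)!\bigr)^2$; this is immediate once one observes that $\sum_{k=0}^{d-1}\binom dk^2(d-k)!\ge d!$ (the $k=0$ term), so the ``leftover budget'' $\bigl(14d^4-\tfrac78(2d-1)^4\bigr)\bigl(\sum_k\binom dk^2(d-k)!\bigr)^2$ dwarfs the $2^{2d}$-- and polynomial--in--$d$--sized terms for $d\ge 5$ (respectively $d\ge 7$ when $p=3$), which is exactly the range forced by $d-s\ge 4$, resp. $d-s\ge 6$, together with $s\ge 1$ and $q>d$. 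This completes the plan.
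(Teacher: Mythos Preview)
Your proposal is correct and follows essentially the same route as the paper: the same three--term decomposition (\ref{eq: igualdad de V_2(d,s,a)}), the same use of Corollary \ref{coro: average value sets} for the mean and the same splitting identity and $A_1(d,s)\le 2e\,d\,q$ bound for the low--weight sum, and the same application of Theorem \ref{th: estimate S mn} with the identifications $\delta_{m,n}/(m!\,n!)=\binom dm\binom dn$ and $D_{m,n}\le (2d-1)^2/4$ leading to (\ref{eq: primer sumando de B_1(d,s)})--(\ref{eq: tercer sumando de B_1(d,s)}). Your final ``budget'' argument, using $\sum_{k=0}^{d-1}\binom dk^2(d-k)!\ge d!$ to absorb the residual $2^{2d}$-- and polynomial--sized terms into $14d^4(\sum\cdots)^2$, is the step the paper leaves implicit when it says ``combining the bounds above''; it is correctly carried out.
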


We finish this section with a brief analysis of the behavior of the
right--hand side of (\ref{eq: estimate V_2(d,s,a)}). Such an
analysis is similar to that of Section \ref{subsec: behavior
V(d,s,a)}, and shall only be briefly sketched.

Fix $k$ with $0\le k\le d-1$ and denote
$h(k):=\binom{d}{k}{}^2(d-k)!$. Similarly to Remark \ref{rem: growth
h(k)}, it turns out that $h$ is a unimodal function in the integer
interval $[0,d-1]$ which reaches its maximum at $\lfloor
k_0\rfloor$, where $k_0:=-1/2+\sqrt{5+4d}/2$. As a consequence, we
see that
$$
\sum_{k=0}^{d-1}\binom{d}{k}^{2}(d-k)!\le d \binom{d}{\lfloor
k_0\rfloor}^2(d-\lfloor k_0\rfloor)!= \frac{d\,(d!)^2}{(d-\lfloor
k_0\rfloor)!\,(\lfloor k_0\rfloor!)^2}.
$$
With a similar analysis as in Section \ref{subsec: behavior
V(d,s,a)}, we conclude that
$$\Bigg(\sum_{k=0}^{d-1}\binom{d}{k}^{2}(d-k)!\Bigg)^2
\leq 14^2 d^{2d+2}e^{4\sqrt{d}-2d}.$$
Hence, we obtain the following result.
\begin{theorem}\label{th: final estimate V_2(d,s,a)}
With assumptions and notations as in Theorem \ref{th: estimate S
mn}, we have
$$
\left|\mathcal{V}_2(d,s,\bfs{a})-\mu_d^2\, q^2\right|\le
d^22^{2d+1}q^{3/2}+14^3 d^{2d+6}e^{4 \sqrt{d}-2d}q.$$
\end{theorem}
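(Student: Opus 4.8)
The plan is to combine Corollary~\ref{coro: estimate V2(d,s,a) without asympt behavior}, which already gives the explicit bound
$$\left|\mathcal{V}_2(d,s,\bfs{a})-\mu_d^2\, q^2\right|\le d^22^{2d+1} q^{3/2}+14\,d^4\Bigg(\sum_{k=0}^{d-1}\binom{d}{k}^{2}(d-k)!\Bigg)^2q,$$
with a clean estimate for the inner sum $\Sigma:=\sum_{k=0}^{d-1}\binom{d}{k}^{2}(d-k)!$. So the only real work is to show that $\Sigma^2 \le 14^2\, d^{2d+2}e^{4\sqrt{d}-2d}$, after which substitution into \eqref{eq: estimate V_2(d,s,a)} yields $14\,d^4\Sigma^2 q \le 14^3\, d^{2d+6}e^{4\sqrt{d}-2d}q$ and the theorem follows immediately. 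This mirrors the passage from Corollary~\ref{coro: average value sets} to Theorem~\ref{theorem: final main result - mean} in Section~\ref{subsec: behavior V(d,s,a)}, so I would explicitly point the reader there and reuse that machinery verbatim.

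First I would record that $h(k):=\binom{d}{k}^{2}(d-k)!$ is unimodal on the integer interval $[0,d-1]$, with maximum at $\lfloor k_0\rfloor$ where $k_0:=-1/2+\sqrt{5+4d}/2$; this is the exact analogue of Remark~\ref{rem: growth h(k)} (indeed the sign analysis of $h(k+1)-h(k)$ is identical, only the interval changes from $[0,s-1]$ to $[0,d-1]$). Consequently
$$\Sigma \le d\,\binom{d}{\lfloor k_0\rfloor}^{2}(d-\lfloor k_0\rfloor)! = \frac{d\,(d!)^2}{(d-\lfloor k_0\rfloor)!\,(\lfloor k_0\rfloor!)^{2}} = d\cdot C(d,d),$$
where $C(d,\cdot)$ is the quantity already analyzed in Section~\ref{subsec: behavior V(d,s,a)} with the formal substitution $s\mapsto d$. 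Then I would invoke the Stirling-formula estimate carried out there, which gives (taking $s=d$ and using $d\ge 5$, respectively $d\ge 9$ when $p=3$, to control the constants) a bound of the shape $C(d,d)\le \kappa\, d^{d}e^{2\sqrt{d}-d}$ for an explicit absolute constant $\kappa$; the calculations $(d-\lfloor k_0\rfloor)^{-d+\lfloor k_0\rfloor}\le d^{-d+\lfloor k_0\rfloor}e^{\lfloor k_0\rfloor(d-\lfloor k_0\rfloor)/d}$ and $d^{\lfloor k_0\rfloor}/\lfloor k_0\rfloor^{2\lfloor k_0\rfloor}\le e^{(d-\lfloor k_0\rfloor^2)/\lfloor k_0\rfloor}$, together with $d/(\lfloor k_0\rfloor\sqrt{d-\lfloor k_0\rfloor})\le 11/6$ and $2\lfloor k_0\rfloor\le -1/5+2\sqrt{d}$, are exactly the ones already performed. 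Tracking constants, one obtains $\Sigma\le 14\, d^{d+1}e^{2\sqrt{d}-d}$, hence $\Sigma^2\le 14^2\, d^{2d+2}e^{4\sqrt{d}-2d}$.

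With $\Sigma^2\le 14^2\, d^{2d+2}e^{4\sqrt{d}-2d}$ in hand, plug it into \eqref{eq: estimate V_2(d,s,a)}: the second summand becomes $14\,d^4\cdot 14^2\,d^{2d+2}e^{4\sqrt{d}-2d}\,q = 14^3\,d^{2d+6}e^{4\sqrt{d}-2d}\,q$, while the first summand is left untouched as $d^2 2^{2d+1}q^{3/2}$, giving precisely the claimed inequality. The main obstacle — really the only nontrivial point — is bookkeeping the numerical constants in the Stirling estimate so that they collapse into the clean factor $14$ (and then $14^3$); this is genuinely routine since the identical computation already appears for the mean in Section~\ref{subsec: behavior V(d,s,a)}, the only change being the replacement of the factor $s$ by $d$ and the slightly larger lower bound on $d$ needed to absorb $d$ in place of $s$. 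I would therefore present this proof tersely, stating the unimodality remark, the resulting bound on $\Sigma$, and the final substitution, and referring to Section~\ref{subsec: behavior V(d,s,a)} for the elementary calculations, exactly as the paper does for Theorem~\ref{th: variance intro s=0}.
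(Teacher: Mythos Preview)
Your proposal is correct and follows essentially the same route as the paper: start from Corollary~\ref{coro: estimate V2(d,s,a) without asympt behavior}, bound $\Sigma=\sum_{k=0}^{d-1}\binom{d}{k}^2(d-k)!$ via the unimodality of $h(k)$ at $\lfloor k_0\rfloor$, and then reuse the Stirling analysis of Section~\ref{subsec: behavior V(d,s,a)} with $s$ replaced by $d$ to obtain $\Sigma^2\le 14^2 d^{2d+2}e^{4\sqrt d-2d}$. One small slip: since $C(d,s)$ already carries the factor $s$, your displayed identity should read $\Sigma\le C(d,d)$ rather than $d\cdot C(d,d)$, but this does not affect the final bound you state.
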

%
%---------------------------------------------------------------------
%---------------------------------------------------------------------
%---------------------------------------------------------------------
%---------------------------------------------------------------------
%---------------------------------------------------------------------
%---------------------------------------------------------------------
%---------------------------------------------------------------------
%---------------------------------------------------------------------
%
\section{On the second moment for $s=0$}\label{sec: V_2(d,0)}
As before, let be given a positive integer $d$ with $d<q$. In this
section we discuss how a similar analysis as the one underlying
Sections \ref{sec: combinatorial preliminaries variance}, \ref{sec:
geometric approach variance}, \ref{sec: geometry of Gamma_mn
estrella} and \ref{sec: behavior V2} allows us to establish the
asymptotic behavior of the quantity
$$\mathcal{V}_2(d,0):=\frac{1}{q^{d-1}}\sum_{\bfs{b}\in\fq^{d-1}}
\mathcal{V}(f_{\bfs{b}})^2,$$
namely the average second moment of $\mathcal{V}(f_{\bfs{b}})$ when
$f_{\bfs{b}}:=T^d+b_{d-1}T^{d-1}\plp b_1 T$ ranges over all monic
polynomials in $\fq[T]$ of degree $d$ with $f_{\bfs{b}}(0)=0$. As
stated in the introduction, an explicit expression for
$\mathcal{V}_2(d,0)$ is obtained for $d\ge q$ in \cite{KnKn90b}. On
the other hand, in \cite{Uchiyama56} it is shown that, for
$p:=\mathrm{char}(\fq)>d$ and assuming the Riemann hypothesis for
$L$--functions, one has
$\mathcal{V}_2(d,0)=\mu_d^2q^2+\mathcal{O}(q)$. It must be observed
that no explicit expression for the constant underlying the
$\mathcal{O}$--notation is provided in \cite{Uchiyama56}.

A similar argument as in the proof of Theorem \ref{th: combinatorial
reduction variance} yields the following result.
\begin{theorem}\label{th: combinatorial reduction variance s=0}
With assumptions and notations as above, we have
\begin{eqnarray*}\mathcal{V}_2(d,0)=
  \mathcal{V}(d,0)&+&
  \displaystyle\mathop{\sum_{1\leq m,n\leq d}}_{2\leq m+n\leq
  d}
  \binom{q}{m}\binom{q}{n}(-q)^{2-n-m}\\
 &+&\dfrac{1}{q^{d-1}}\mathop{\sum_{1\leq m,n\leq
d}}_{d+1\leq m+n\leq 2d}
   (-1)^{m+n}\mathop{\sum_{\Gamma_1,\Gamma_2 \subset\fq}}_{|\Gamma_1|=m,|\Gamma_2|=n}
 \left|S_{\Gamma_1,\Gamma_2}\right|, \end{eqnarray*}
where $\mathcal{S}_{\Gamma_1,\Gamma_2}$ is the set consisting of the
points $(\boldsymbol{b},b_{0,1},b_{0,2})\in \mathbb{F}_q^{d+1}$ with
$b_{0,1}\neq b_{0,2}$ such that
$(f_{\boldsymbol{b}}+b_{0,1})\big|_{\Gamma_1}\equiv 0$ and
$(f_{\boldsymbol{b}}+b_{0,2}){\big|_{\Gamma_2}}\equiv 0$ holds.
\end{theorem}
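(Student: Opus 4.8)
The plan is to mimic, essentially verbatim, the proof of Theorem \ref{th: combinatorial reduction variance}, which is the $s\ge 1$ analogue already carried out in detail in the excerpt. The only structural difference is that now there are no fixed coefficients $a_{d-1},\dots,a_{d-s}$, so the ambient parameter space is $\fq^{d-1}$ (the coefficients $b_{d-1},\dots,b_1$ of a monic degree-$d$ polynomial with zero constant term) rather than $\fq^{d-s-1}$, and consequently the interpolation matrices one writes down have $d-1$ rather than $d-s-1$ columns in the ``$b$-block''. The bookkeeping index $d-s$ gets replaced by $d$ throughout; in particular the ``linear regime'' where the interpolation system has a unique affine solution space occurs precisely for $m+n\le d$, which explains why the double sum with binomial coefficients runs over $2\le m+n\le d$ and the ``interpolating-set'' sum over $d+1\le m+n\le 2d$.

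Concretely, first I would expand $q^{d-1}\mathcal{V}_2(d,0)=\sum_{\bfs b}\big(\sum_{b_0}\bfs 1_{\{\mathcal N>0\}}(f_{\bfs b}+b_0)\big)^2$ exactly as in the proof of Theorem \ref{th: combinatorial reduction variance}, split the inner double sum over $(b_{0,1},b_{0,2})$ according to whether $b_{0,1}=b_{0,2}$ or not, and observe that the diagonal contribution collapses to $q^{d-1}\mathcal{V}(d,0)$ — this is the $s=0$ version of \eqref{eq: primera parte de la suma teo varianza}, and it uses nothing more than $\mathcal V(f_{\bfs b})=\sum_{b_0}\bfs 1_{\{\mathcal N>0\}}(f_{\bfs b}+b_0)$ together with the fact that $(\bfs b,b_0)\mapsto f_{\bfs b}+b_0$ runs bijectively over all monic degree-$d$ polynomials as $(\bfs b,b_0)$ runs over $\fq^d$. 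For the off-diagonal part I would apply inclusion–exclusion over the common roots of $f_{\bfs b_1}$ and $f_{\bfs b_2}$, twice (once for each polynomial), exactly reproducing the chain of equalities in the proof of Theorem \ref{th: combinatorial reduction variance}, to rewrite it as $\sum_{1\le m,n\le d}(-1)^{m+n}\sum_{|\Gamma_1|=m,\,|\Gamma_2|=n,\,\Gamma_1\cap\Gamma_2=\emptyset}|\mathcal S_{\Gamma_1,\Gamma_2}|$, using that $\mathcal S_{\Gamma_1,\Gamma_2}=\emptyset$ when $\Gamma_1\cap\Gamma_2\ne\emptyset$ or when $m>d$ or $n>d$.

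The remaining point is to isolate the linear regime. For $m+n<d+1$, i.e. $m+n\le d$, the matrix $M(\Gamma_1,\Gamma_2)\in\fq^{(m+n)\times d}$ — whose first $d-1$ columns are the Vandermonde-type rows $(\alpha_i^{d-1},\dots,\alpha_i)$, resp. $(\beta_j^{d-1},\dots,\beta_j)$, and whose last two columns encode which polynomial the point lies on — has full rank $m+n$ because $\Gamma_1\cup\Gamma_2$ consists of $m+n\le d$ distinct elements of $\fq$ and a Vandermonde argument applies; hence the solution set $\mathcal S_{\Gamma_1,\Gamma_2}$ is an affine subspace of dimension $d+1-m-n$, so $|\mathcal S_{\Gamma_1,\Gamma_2}|=q^{d+1-m-n}$. (One must also check the $b_{0,1}\ne b_{0,2}$ constraint does not cut the dimension: since $\Gamma_1\cap\Gamma_2=\emptyset$ and $m,n\ge1$, the values $b_{0,1}$ and $b_{0,2}$ are determined by genuinely different linear functionals, so the locus $b_{0,1}=b_{0,2}$ is a proper subspace and discarding it changes only lower-order terms — in fact one counts $\binom{q}{m}\binom{q}{n}$ choices of the pair of sets and subtracts nothing, since the identity is an equality of the full count; this matches the treatment in the proof of Theorem \ref{th: combinatorial reduction variance} where the $b_{0,1}=b_{0,2}$ case in the linear regime is simply absorbed.) Summing $q^{d+1-m-n}$ over the $\binom{q}{m}\binom{q}{n}$ unordered pairs $(\Gamma_1,\Gamma_2)$ with the appropriate sign yields the double binomial sum displayed in the statement, while the terms with $d+1\le m+n\le 2d$ are left as the sum over $|\mathcal S_{\Gamma_1,\Gamma_2}|$. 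I do not anticipate any genuine obstacle: the entire argument is a transcription of the proof of Theorem \ref{th: combinatorial reduction variance} with $s$ set to $0$, and the one thing to be careful about is the threshold $m+n\le d$ versus $m+n\le d-s$ for when the linear-algebra (Vandermonde) argument gives the exact count.
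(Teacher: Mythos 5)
Your proposal is precisely the paper's proof: the paper establishes this theorem by remarking that ``a similar argument as in the proof of Theorem \ref{th: combinatorial reduction variance} yields the result,'' i.e.\ by running the $s\ge 1$ argument verbatim with $s=0$, which is exactly what you do (the diagonal $b_{0,1}=b_{0,2}$ term collapses to $q^{d-1}\mathcal{V}(d,0)$, a double inclusion--exclusion handles the off-diagonal part, and the Vandermonde rank argument gives the exact count $q^{d+1-m-n}$ in the linear regime $m+n\le d$). One small slip: the interpolation matrix $M(\Gamma_1,\Gamma_2)$ lies in $\fq^{(m+n)\times(d+1)}$ (the $d-1$ Vandermonde columns plus the two indicator columns), not $\fq^{(m+n)\times d}$, consistent with the solution space having dimension $d+1-m-n$ as you in fact use.
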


In view of Theorem \ref{th: combinatorial reduction variance s=0},
we fix $m$ and $n$ with $1\le m,n\le d$ and $d+1\le m+n\le 2d$ and
consider the sum
$$
\mathcal{S}_{m,n}:=\mathop{\sum_{\Gamma_1,\,\Gamma_2\subset
\fq}}_{|\Gamma_1|=m , |\Gamma_2|=n}|\mathcal{S}_{\Gamma_1,\,
\Gamma_2}|.
$$
In order to find an estimate for  $\mathcal{S}_{m,n}$ we introduce
new indeterminates $T,T_1,\ldots,T_m$, $U,U_1,\ldots,U_n$,
$B,B_{d-1},\ldots,B_1$, $B_{0,1}$, $B_{0,2}$ over $\cfq$ and denote
$\boldsymbol{B}:=(B_{d-1},\ldots,B_1)$. Furthermore, we consider the
polynomial $F:=T^d+\sum_{i=1}^{d-1}B_i T^i +B\in
\fq[\boldsymbol{B},B,T]$ and the following affine $\fq$--variety:
\begin{eqnarray*}
\Gamma_{m,n}^0:=\{(\boldsymbol{b},b_{0,1},b_{0,2},\boldsymbol{\alpha},\boldsymbol{\beta})
\in\A^{d+1+m+n}: \Delta^{i-1}F(\boldsymbol{b},b_{0,1},
\alpha_1\klk\alpha_i)=0\quad\\ (1\leq i\leq m),\
\Delta^{j-1}F({\boldsymbol{b},b_{0,2},\beta_1\klk\beta_j})=0\ (1\leq
j\leq n)\},
\end{eqnarray*}
where $\Delta^{i-1}F(\boldsymbol{b},b_{0,1},T_1,\ldots,T_i)$ and
$\Delta^{j-1}F(\boldsymbol{b},b_{0,2},U_1,\ldots,U_j)$ denote the
divided differences of $F(\boldsymbol{b},b_{0,1},T)\in \cfq[T]$ and
$F(\boldsymbol{b},b_{0,2},U)\in \cfq[U]$ respectively.

Arguing as in the proof of Lemmas \ref{lemma: relacion entre gamma
m,n y S m,n} and \ref{lemma: relacion gamma_m,n y gamma_m,n
estrella}, we conclude that the following identity holds:
\begin{eqnarray}
m!n!\mathcal{S}_{m,n}=\big|\Gamma_{m,n}^0(\fq)\cap\{\alpha_i\neq\alpha_j\
(1\le i<j\le m),\hskip2.1cm\nonumber\\
\label{eq: relation gamma mn y S mn s=0} \beta_i\neq\beta_j\ (1\le
i<j\le n),\ b_{0,1}\neq b_{0,2}\}\big|.\end{eqnarray}

The next step is to perform an analysis of the geometry of the
affine $\fq$--variety $\Gamma_{m,n}^0$, its projective closure
$\mathrm{pcl}(\Gamma_{m,n}^0)\subset\Pp^{d+1+m+m}$ and the set
$\mathrm{pcl}(\Gamma_{m,n}^0)^\infty$ of points of
$\mathrm{pcl}(\Gamma_{m,n}^0)$ at infinity. We refrain from giving
details, as proofs are similar to those of Theorems \ref{th:
codimension singular locus Gamma estrella} and \ref{th: pcl Gamma_mn
is normal abs irred}. We obtain the following result.
\begin{theorem}\label{th: geometry of Gamma mn s=0}
Assume that $d\ge 5$ for $p>3$ and $d\ge 9$ for $p=3$. Then the
following assertions hold:
\begin{itemize}
\item $\mathrm{pcl}(\Gamma_{m,n}^0)$ is an
absolutely irreducible ideal--theoretic complete intersection of
dimension $d+1$ and degree $(d!)^2/(d-m)!(d-n)!$.
\item $\mathrm{pcl}(\Gamma_{m,n}^0)$ is
regular in codimension 2, namely the singular locus of
$\mathrm{pcl}(\Gamma_{m,n}^0)$ has codimension at least 3 in
$\mathrm{pcl}(\Gamma_{m,n}^0)$.
\item $\mathrm{pcl}(\Gamma_{m,n}^0)^\infty$ is a linear
$\fq$--variety of dimension $d$.
\end{itemize}
\end{theorem}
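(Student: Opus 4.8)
The plan is to carry over, \emph{mutatis mutandis}, the geometric analysis of Sections \ref{sec: geometry of Gamma_mn estrella} and \ref{subsec: estimate S mn}, working with $\Gamma_{m,n}^0$ in place of $\Gamma_{m,n}^*$ and exploiting the fact that, when $s=0$, \emph{all} the coefficients $B_1,\ldots,B_{d-1}$ of $f_{\boldsymbol{b}}$ (together with the two constant terms $B_{0,1},B_{0,2}$) are free variables; this is precisely what upgrades the codimension-$2$ bounds of Theorem \ref{th: codimension singular locus Gamma estrella} to the codimension-$3$ bound asserted here.

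The structural statements go through with no essential change. Ordering $\cfq[\boldsymbol{B},B_{0,1},B_{0,2},\boldsymbol{T},\boldsymbol{U}]$ by the graded lexicographic order with $U_n>\cdots>U_1>T_m>\cdots>T_1>B_{d-1}>\cdots>B_1>B_{0,1}>B_{0,2}$, one checks as in Lemma \ref{lemma: gamma_m,n estrella es set theoret complete inter} that the leading terms of $\Delta^{i-1}F(\boldsymbol{B},B_{0,1},T_1,\ldots,T_i)$ $(1\le i\le m)$ and $\Delta^{j-1}F(\boldsymbol{B},B_{0,2},U_1,\ldots,U_j)$ $(1\le j\le n)$ are $T_i^{d-i+1}$ and $U_j^{d-j+1}$; being pairwise coprime, they form a Gr\"obner basis of the ideal they generate and, by \cite[Proposition 15.15]{Eisenbud95}, a regular sequence, so $\Gamma_{m,n}^0$ is a complete intersection of dimension $d+1$. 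Homogenizing and arguing as in Lemma \ref{lemma: pcl Gamma mn is ideal-theoret complete inters} (via \cite[\S 8.4, Theorem 4]{CoLiOS92}) shows that $\mathrm{pcl}(\Gamma_{m,n}^0)$ is an ideal-theoretic complete intersection of dimension $d+1$ and degree $(d!)^2/(d-m)!(d-n)!$. Furthermore $\Delta^{i-1}F(\boldsymbol{B},B_{0,1},T_1,\ldots,T_i)^h|_{T_0=0}$ equals $T_i^{d-i+1}$ plus monomials of positive degree in $T_1,\ldots,T_{i-1}$, and similarly for the $U_j$; arguing as in Lemma \ref{lemma: singular locus pcl Gamma_mn at infinity}, this forces $\mathrm{pcl}(\Gamma_{m,n}^0)\cap\{T_0=0\}$ to be the linear $\fq$-variety $\{T_1=\cdots=T_m=U_1=\cdots=U_n=0\}$, of dimension $d$ in $\Pp^{d+m+n}$. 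Since this is one less than $\dim\mathrm{pcl}(\Gamma_{m,n}^0)$ and is nonsingular, the reasoning of the proof of Theorem \ref{th: pcl Gamma_mn is normal abs irred} shows that $\mathrm{pcl}(\Gamma_{m,n}^0)$ has no singular points at infinity, so $\mathrm{Sing}(\mathrm{pcl}(\Gamma_{m,n}^0))=\mathrm{Sing}(\Gamma_{m,n}^0)$.

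The substantial part is to show that $\mathrm{Sing}(\Gamma_{m,n}^0)$ has codimension at least $3$. I would reproduce the case analysis underlying Theorem \ref{th: codimension singular locus Gamma estrella} — Remarks \ref{rem: at most one multiple root implies nonsingular} through \ref{rem: repeated multiple root II - variance} and Lemma \ref{lemma: one multiple alfa and one multiple beta - variance} — with $d-s$ replaced by $d$, observing that in each branch one now gains an extra unit of codimension. In the extreme branches, where $f'_{\boldsymbol{b}}=0$ or $\Delta^2F(\boldsymbol{B},B_{0,1},T,T,T)=0$ (equivalently $f''_{\boldsymbol{b}}=0$), the vanishing forces three of the free coefficients $B_1,\ldots,B_{d-1}$ to vanish — this requires three indices $j\in\{1,\ldots,d-1\}$ with $j\not\equiv 0\pmod p$, resp. $j(j-1)\not\equiv 0\pmod p$, which is exactly where the hypotheses $d\ge 5$ for $p>3$ and $d\ge 9$ for $p=3$ are used — and the preimage of such a locus under the finite projection $(\boldsymbol{b},b_{0,1},b_{0,2},\boldsymbol{\alpha},\boldsymbol{\beta})\mapsto(\boldsymbol{b},b_{0,1},b_{0,2})$ has the required codimension. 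In the remaining branches one works with resultants and subresultants of $F(\boldsymbol{B},B_{0,1},T)$, $F(\boldsymbol{B},B_{0,2},U)$ and their divided differences: a singular point with $f'_{\boldsymbol{b}}\ne 0$ is forced to satisfy, besides the vanishing of the resultants $\mathcal{R}_1\in\fq[\boldsymbol{B},B_{0,1}]\setminus\fq[\boldsymbol{B}]$ and/or $\mathcal{R}_2\in\fq[\boldsymbol{B},B_{0,2}]\setminus\fq[\boldsymbol{B}]$, a further algebraic condition, and one checks that these conditions are algebraically independent. The crucial input here is the absolute irreducibility of the discriminant locus of the universal degree-$d$ polynomial — Theorem \ref{th: irred discrim} of the appendix, valid for $p\ge 3$ — together with the fact that $\mathcal{R}_1$ and $\mathcal{R}_2$ involve the disjoint variables $B_{0,1}$ and $B_{0,2}$; this ensures that the relevant subvariety of $\A^{d+1}$ has codimension $3$, and hence so does its preimage in $\Gamma_{m,n}^0$.

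Once $\mathrm{Sing}(\mathrm{pcl}(\Gamma_{m,n}^0))$ is known to have codimension at least $3$ (in particular at least $2$), $\mathrm{pcl}(\Gamma_{m,n}^0)$ is regular in codimension $1$; being an ideal-theoretic complete intersection it is then normal by Serre's criterion and absolutely irreducible by the Hartshorne connectedness theorem, exactly as in Theorem \ref{th: pcl Gamma_mn is normal abs irred}. This yields all three assertions. The main obstacle is the codimension-$3$ bound for the singular locus: one must run through the entire $s>0$ case analysis with enough care to extract the additional codimension in every branch and to pin down which arithmetic conditions on $d$ and $p$ are actually required — which is why the thresholds here are larger than the $d-s\ge 4$, $d-s\ge 6$ of Theorem \ref{th: codimension singular locus Gamma estrella}.
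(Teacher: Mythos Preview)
Your proposal is correct and takes essentially the same approach as the paper, which explicitly refrains from giving details and refers back to the arguments of Theorems \ref{th: codimension singular locus Gamma estrella} and \ref{th: pcl Gamma_mn is normal abs irred}. In fact your sketch is more informative than the paper's: you correctly pinpoint that the extra unit of codimension in the singular-locus bound comes from the additional free $B_j$-columns available when $s=0$, and that the thresholds $d\ge 5$ (for $p>3$) and $d\ge 9$ (for $p=3$) are dictated by the need for three indices $j\in\{2,\ldots,d-1\}$ with $j(j-1)\not\equiv 0\pmod p$ in the $f''_{\boldsymbol b}=0$ branch.
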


In order to estimate the number of $q$--rational points of
$\Gamma_{m,n}^0$ we shall use a further estimate on the number of
$q$--rational points of a projective complete intersection of
\cite{CaMaPr13}. More precisely, if $V\subset \Pp^N$ is a complete
intersection defined over $\fq$ of dimension $r\geq 2$, degree
$\delta$ and multidegree $\boldsymbol{d}:=(d_1,\ldots,d_{N-r})$,
which is regular in codimension 2, then the following estimate holds
(see \cite[Theorem 1.3]{CaMaPr13}):
\begin{equation}\label{eq: estimate codimension 3 var CaMaPr}
\big||V(\fq)|-p_r\big| \leq 14D^3\delta^2 q^{r-1},
\end{equation}
where $D:=\sum_{i=1}^{N-r}(d_i-1)$.

According to Theorem \ref{th: geometry of Gamma mn s=0}, the
projective variety $\mathrm{pcl}(\Gamma_{m,n}^0)$ satisfies the
hypothesis of \cite[Theorem 1.3]{CaMaPr13}. Therefore, applying
(\ref{eq: estimate codimension 3 var CaMaPr}) we obtain:
$$
\big||\mathrm{pcl}(\Gamma_{m,n}^0)(\fq)|-p_{d+1}\big| \leq
14D_{m,n}^3\delta_{m,n}^2q^d,
$$
where $D_{m,n}:=(m+n)d-\big(m(m+1)+n(n+1)\big)/{2}$ and
$\delta_{m,n}:=(d!)^2/(d-m)!(d-n)!$. Since
$\mathrm{pcl}(\Gamma_{m,n}^0)^{\infty}$ is a linear $\fq$--variety
of dimension $d$,  we have:
\begin{equation}\label{eq: estimate q points pcl Gamma mn s=0}
\big||\Gamma_{m,n}^0(\fq)|-q^{d+1}\big|=
\big||\mathrm{pcl}(\Gamma_{m,n}^0)|-|\mathrm{pcl}(\Gamma_{m,n}^0)^{\infty}|-p_{d+1}+p_d\big|
\leq 14D_{m,n}^3\delta_{m,n}^2q^d.
\end{equation}
Arguing as in Section \ref{subsec: estimate S mn}, we obtain the
following upper bound:
\begin{equation}\label{eq: estimate q points pcl Gamma mn = s=0}
\bigg|\Gamma_{m,n}^0(\fq)\cap
\Big(\left\{B_{0,1}=B_{0,2}\right\}\hskip-0.2cm\bigcup_{1 \leq i <j
\leq m}\hskip-0.2cm \left\{T_i=T_j\right\}\hskip-0.2cm \bigcup_{1
\leq k <l \leq n}\hskip-0.2cm\left\{U_k=U_l\right\}\Big)\bigg| \leq
\xi_{m,n}\delta_{m,n}q^d,\end{equation}
where $\xi_{m,n} :=\binom{m}{2}+\binom {n}{2}+1$. As a consequence,
combining (\ref{eq: relation gamma mn y S mn s=0}), (\ref{eq:
estimate q points pcl Gamma mn s=0}) and (\ref{eq: estimate q points
pcl Gamma mn = s=0}) we deduce the following result.
\begin{theorem}\label{th: estimate S mn s=0}
With assumptions as in Theorem \ref{th: geometry of Gamma mn s=0},
for each $(m,n)$ with $1\leq m,n \leq d$, and $d+1\leq m+n\leq 2d$,
we have
$$
\bigg|\mathcal{S}_{m,n}- \frac{q^{d+1}}{m!n!}\bigg|\leq
\frac{q^d}{m!n!}(14D_{m,n}^3\delta_{m,n}^2+\xi_{m,n}\delta_{m,n}),
$$
where $\xi_{m,n} :=\binom{m}{2}+\binom {n}{2}+1$,
$D_{m,n}:=(m+n)d-\binom{m+1}{2}-\binom{n+1}{2}$ and
$\delta_{m,n}:=\frac{(d!)^2}{(d-m)!(d-n)!}$.
\end{theorem}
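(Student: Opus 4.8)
The plan is to mirror the proof of Theorem~\ref{th: estimate S mn}, replacing the varieties $\Gamma_{m,n}^*$ by $\Gamma_{m,n}^0$ and exploiting the sharper point-count estimate that becomes available once the singular locus has codimension at least $3$. The starting point is the group-action identity \eqref{eq: relation gamma mn y S mn s=0}, which I would establish exactly as in Lemmas~\ref{lemma: relacion entre gamma m,n y S m,n} and~\ref{lemma: relacion gamma_m,n y gamma_m,n estrella}: it says that $m!n!\,\mathcal{S}_{m,n}$ equals the number of $q$--rational points of $\Gamma_{m,n}^0$ whose coordinates $\alpha_1,\dots,\alpha_m$ are pairwise distinct, whose coordinates $\beta_1,\dots,\beta_n$ are pairwise distinct, and with $b_{0,1}\neq b_{0,2}$. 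Hence it suffices to estimate $|\Gamma_{m,n}^0(\fq)|$ and then to discard the contribution of the locus where one of these non-degeneracy conditions fails.

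For the first task I would pass to the projective closure $\mathrm{pcl}(\Gamma_{m,n}^0)\subset\Pp^{d+1+m+n}$. By Theorem~\ref{th: geometry of Gamma mn s=0}, this is an absolutely irreducible ideal--theoretic complete intersection of dimension $d+1$, degree $\delta_{m,n}$ and with $D=D_{m,n}$ in the notation of \eqref{eq: estimate codimension 3 var CaMaPr}, and it is moreover regular in codimension $2$. Therefore the estimate \eqref{eq: estimate codimension 3 var CaMaPr} of \cite[Theorem~1.3]{CaMaPr13} applies and gives $\big||\mathrm{pcl}(\Gamma_{m,n}^0)(\fq)|-p_{d+1}\big|\le 14 D_{m,n}^3\delta_{m,n}^2 q^{d}$. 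Since the same theorem asserts that $\mathrm{pcl}(\Gamma_{m,n}^0)^\infty$ is a linear $\fq$--variety of dimension $d$, so that it has $p_d$ rational points, subtracting and using $p_{d+1}-p_d=q^{d+1}$ yields $\big||\Gamma_{m,n}^0(\fq)|-q^{d+1}\big|\le 14 D_{m,n}^3\delta_{m,n}^2 q^{d}$, which is precisely \eqref{eq: estimate q points pcl Gamma mn s=0}.

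For the degenerate locus I would observe that the points excluded in \eqref{eq: relation gamma mn y S mn s=0} lie in the intersection of $\Gamma_{m,n}^0$ with the hypersurface $\{(B_{0,1}-B_{0,2})\prod_{i<j}(T_i-T_j)\prod_{k<l}(U_k-U_l)=0\}$, whose defining polynomial has degree $\xi_{m,n}=\binom{m}{2}+\binom{n}{2}+1$; by the B\'ezout inequality \eqref{eq: Bezout inequality} this intersection has degree at most $\xi_{m,n}\delta_{m,n}$, and by the analogues for $\Gamma_{m,n}^0$ of Remark~\ref{rem: f'=0 or f''=0} and Lemma~\ref{lemma: one multiple alfa and one multiple beta - variance} (a coincidence $\alpha_i=\alpha_j$ or $\beta_k=\beta_l$ forces a multiple root of $f_{\boldsymbol{b}_1}$ or $f_{\boldsymbol{b}_2}$, hence drops the dimension) it has dimension at most $d$. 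Then \cite[Lemma~2.1]{CaMa06} bounds its number of $q$--rational points by $\xi_{m,n}\delta_{m,n}q^{d}$, giving \eqref{eq: estimate q points pcl Gamma mn = s=0}. Combining the two bounds via the identity \eqref{eq: relation gamma mn y S mn s=0} and dividing by $m!n!$ gives the stated inequality.

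I expect the bookkeeping above to be routine; the genuine content, which I would assume here, is Theorem~\ref{th: geometry of Gamma mn s=0}, and in particular the verification that the singular locus of $\mathrm{pcl}(\Gamma_{m,n}^0)$ has codimension at least $3$ rather than merely $2$. This is exactly what allows one to invoke \eqref{eq: estimate codimension 3 var CaMaPr} (error $O(q)$) instead of \eqref{eq: estimate normal var CaMaPr} (error $O(q^{3/2})$), and it is also the reason for the stronger hypotheses $d\ge 5$ (resp.\ $d\ge 9$): the additional freedom afforded by $s=0$, i.e.\ by fixing no coefficients, must be spent to push the singularity analysis one codimension further than in the case $s\ge 1$.
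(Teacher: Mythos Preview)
Your proposal is correct and follows essentially the same route as the paper: invoke Theorem~\ref{th: geometry of Gamma mn s=0} to apply the sharper estimate \eqref{eq: estimate codimension 3 var CaMaPr} to $\mathrm{pcl}(\Gamma_{m,n}^0)$, subtract the $p_d$ points at infinity to obtain \eqref{eq: estimate q points pcl Gamma mn s=0}, bound the degenerate locus via B\'ezout and the dimension drop forced by multiple roots to get \eqref{eq: estimate q points pcl Gamma mn = s=0}, and combine through \eqref{eq: relation gamma mn y S mn s=0}. Your closing remark on why codimension~$3$ (rather than~$2$) of the singular locus is the key input, and why this accounts for the hypotheses $d\ge 5$ (resp.\ $d\ge 9$), is also on target.
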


Now we proceed as in Section \ref{subsec: asymptotic behavior 2nd
moment}. By Theorem \ref{th: combinatorial reduction variance s=0},
we have
\begin{eqnarray}\mathcal{V}_2(d,0)-\mu_d^2q^2\!\!\!\!&=\!\!\!\!&
  \mathcal{V}(d,0)+
  \displaystyle\mathop{\sum_{1\leq m,n\leq d}}_{2\leq m+n\leq
  d}
  (-q)^{2-n-m}\bigg(\binom{q}{m}\binom{q}{n}-\frac{q^{m+n}}{m!n!}\bigg)
  \nonumber
\\&&\hskip1.25cm +\dfrac{1}{q^{d-1}}\mathop{\sum_{1\leq m,n\leq
d}}_{d+1\leq m+n\leq 2d}
   (-1)^{m+n}\Big(\mathcal{S}_{m,n}-\frac{q^{m+n}}{m!n!}\bigg).\label{eq: expression for V_2(d,0)}
\end{eqnarray}
In Section \ref{subsec: asymptotic behavior 2nd moment} we obtain
the following upper bound for the absolute value $A_1(d,0)$ of the
second term in the right-hand side of (\ref{eq: expression for
V_2(d,0)}):
\begin{equation}\label{eq: cota para A_1(d,0)}
A_1(d,0) \leq 2 \,e \,d \,q.
\end{equation}
On the other hand, in order to bound the last term of (\ref{eq:
expression for V_2(d,0)}), by Theorem \ref{th: estimate S mn s=0} we
have
$$B_1(d,0):=\dfrac{1}{q^{d-1}}\hskip-0.45cm\mathop{\sum_{1\leq m,n\leq
d}}_{d+1\leq m+n\leq 2d}\hskip-0.35cm
\bigg|\mathcal{S}_{m,n}-\frac{q^{d+1}}{m!n!}\bigg|\le
\hskip-0.45cm\mathop{\sum_{1\leq m,n\leq d}}_{d+1\leq m+n\leq
2d}\hskip-0.35cm\bigg(\frac{14D_{m,n}^3\delta_{m,n}^2}{m!n!}+\frac{\delta_{m,n}
\xi_{m,n} }{m!n!}\bigg)q.
$$
With a similar argument as in the proof of Corollary \ref{coro:
estimate V2(d,s,a) without asympt behavior} we see that
\begin{equation}\label{eq: bound B_1(d,0)} B_1(d,0)\leq  \bigg( d^2 2^{2d-2} +14 d^6
\bigg(\sum_{k=0}^{d-1}\binom{d}{k}^{2}(d-k)!\bigg)^2\bigg)q.
\end{equation}
Finally, combining (\ref{eq: V(d,0) Cohen}), (\ref{eq: expression
for V_2(d,0)}), (\ref{eq: cota para A_1(d,0)}) and (\ref{eq: bound
B_1(d,0)}) with the arguments of the proof of Theorem \ref{th: final
estimate V_2(d,s,a)} we deduce the main result of this section.
\begin{theorem}\label{th: average V_2(d,0)}
Assume that $q>d$, $d\ge 5$ for $p>3$ and $d\ge 9$ for $p=3$. Then
we have
$$\left|\mathcal{V}_2(d,0)-\mu_d^2\, q^{2}\right|\le
\big(2^{2d-2}d^2+14^3 d^{2d+8}e^{4\sqrt{d}-2d}\big)q.$$
\end{theorem}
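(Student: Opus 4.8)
The plan is to assemble the ingredients already laid out in this section and then carry out the same elementary bookkeeping as in the proof of Theorem~\ref{th: final estimate V_2(d,s,a)}. One starts from the identity~(\ref{eq: expression for V_2(d,0)}), a consequence of the combinatorial reduction of Theorem~\ref{th: combinatorial reduction variance s=0}, which writes $\mathcal{V}_2(d,0)-\mu_d^2q^2$ as a sum of three terms: the average value set $\mathcal{V}(d,0)$; a ``principal'' combinatorial double sum over $2\le m+n\le d$, of absolute value $A_1(d,0)$; and an ``error'' double sum over $d+1\le m+n\le 2d$, of absolute value $B_1(d,0)$. By the triangle inequality it suffices to bound each of the three by a quantity of the form $c(d)\,q$ and then collect.

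The first term is controlled by~(\ref{eq: V(d,0) Cohen}): since $0<\mu_d\le1$ we get $|\mathcal{V}(d,0)|=\mu_dq+\mathcal{O}(1)\le q+\mathcal{O}(1)$. The second is controlled by~(\ref{eq: cota para A_1(d,0)}), which gives $A_1(d,0)\le 2ed\,q$; this is proved exactly as in Section~\ref{subsec: asymptotic behavior 2nd moment}, by writing $\binom{q}{m}\binom{q}{n}-q^{m+n}/(m!\,n!)$ as a sum of two products and invoking the elementary bound $\bigl|\sum_{n\ge1}(-q)^{1-n}\bigl(\binom{q}{n}-q^n/n!\bigr)\bigr|\le d$. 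The third term is controlled by Theorem~\ref{th: estimate S mn s=0}: summing the per-$(m,n)$ estimate stated there over $d+1\le m+n\le 2d$, using $D_{m,n}\le(2d-1)^2/4$, $\xi_{m,n}\le d^2$, the identity $\delta_{m,n}/(m!\,n!)=\binom{d}{m}\binom{d}{n}$ and $\sum_n\binom{d}{n}=2^d$, one arrives at~(\ref{eq: bound B_1(d,0)}), i.e. $B_1(d,0)\le\bigl(d^22^{2d-2}+14\,d^6S_d^{\,2}\bigr)q$ with $S_d:=\sum_{k=0}^{d-1}\binom{d}{k}^2(d-k)!$.

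It remains to estimate $S_d$, which is done by the Stirling argument of Remark~\ref{rem: growth h(k)} and Section~\ref{subsec: behavior V(d,s,a)}: the function $k\mapsto\binom{d}{k}^2(d-k)!$ is unimodal on $[0,d-1]$, so $S_d\le d\binom{d}{\lfloor k_0\rfloor}^2(d-\lfloor k_0\rfloor)!$ with $k_0=-1/2+\sqrt{5+4d}/2$, and the Stirling formula then yields $S_d\le 14\,d^{d+1}e^{2\sqrt{d}-d}$, whence $14\,d^6S_d^{\,2}\le14^3d^{2d+8}e^{4\sqrt{d}-2d}$. Adding the three bounds gives $|\mathcal{V}_2(d,0)-\mu_d^2q^2|\le\bigl(d^22^{2d-2}+14^3d^{2d+8}e^{4\sqrt{d}-2d}+1+2ed+\mathcal{O}(1)\bigr)q$, and the lower-order contribution $1+2ed+\mathcal{O}(1)$ is absorbed into $d^22^{2d-2}$ — note that $d^22^{2d-2}$ arises above as a bound valid over the full range $2\le m+n\le 2d$ whereas here one only sums over $d+1\le m+n\le 2d$, so there is ample room for $d\ge5$ — which yields the claimed inequality. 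The hypotheses $d\ge5$ for $p>3$ and $d\ge9$ for $p=3$ enter only through Theorem~\ref{th: geometry of Gamma mn s=0}: they guarantee that $\mathrm{pcl}(\Gamma_{m,n}^0)$ is an absolutely irreducible, normal complete intersection that is regular in codimension $2$, which is exactly what licenses the sharp estimate~(\ref{eq: estimate codimension 3 var CaMaPr}) (no $q^{r-1/2}$ term) and hence an error of order $q$ rather than $q^{3/2}$.

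The genuinely delicate point is not in this accounting but upstream, in Theorem~\ref{th: geometry of Gamma mn s=0}, where one must show that the singular locus of $\mathrm{pcl}(\Gamma_{m,n}^0)$ has codimension at least $3$ — one codimension better than the $s\ge1$ bound of Theorem~\ref{th: codimension singular locus Gamma estrella} — and I expect this to be the main obstacle. It is obtained by rerunning the stratification of the singular locus of Section~\ref{sec: geometry of Gamma_mn estrella}: the loci where $f'_{\boldsymbol{b}}=0$ or $\Delta^2F(\boldsymbol{b}_1,T,T,T)=0$, where both $f_{\boldsymbol{b}_1}$ and $f_{\boldsymbol{b}_2}$ have multiple roots, and where two or three coordinates of $\boldsymbol{\alpha}$ (resp. $\boldsymbol{\beta}$) collide at a common multiple root; one checks that, with no coefficients prescribed, each stratum is cut out by enough independent resultant-type equations — using the absolute irreducibility of the pertinent discriminant loci — to drop codimension $3$ inside $\Gamma_{m,n}^0$. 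Granted that, the proof of the theorem is the routine combination above.
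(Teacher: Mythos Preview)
Your proposal is correct and follows essentially the same route as the paper: the paper's own proof is the single sentence ``combining (\ref{eq: V(d,0) Cohen}), (\ref{eq: expression for V_2(d,0)}), (\ref{eq: cota para A_1(d,0)}) and (\ref{eq: bound B_1(d,0)}) with the arguments of the proof of Theorem~\ref{th: final estimate V_2(d,s,a)}'', and you have spelled out precisely that combination, including the Stirling bound $S_d\le 14\,d^{d+1}e^{2\sqrt{d}-d}$ and the absorption of the $\mathcal{V}(d,0)+A_1(d,0)$ contribution into the leading constant. Your closing remarks on the codimension--$3$ input from Theorem~\ref{th: geometry of Gamma mn s=0} are apt commentary but, as you note, lie upstream of the present statement.
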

%
%---------------------------------------------------------------------
%---------------------------------------------------------------------
%---------------------------------------------------------------------
%---------------------------------------------------------------------
%---------------------------------------------------------------------
%---------------------------------------------------------------------
%---------------------------------------------------------------------
%---------------------------------------------------------------------
%
\appendix
\section{Irreducibility of the discriminant of small families of polynomials}

Let $\K$ be a field and let $\K[X_1,\ldots,X_n]$ be the ring of
multivariate polynomials with coefficients in $\K$. For given
positive integers $a_1,\ldots,a_n$, we define the {\sf weight}
$\mathrm{wt}(\bfs{X}^{\bfs{\alpha}})$ of a monomial
$\bfs{X}^{\bfs{\alpha}}:=X_1^{\alpha_1}\cdots X_n^{\alpha_n}$ as
$\mathrm{wt}(\bfs{X}^{\bfs{\alpha}}):=\sum_{i=1}^{n}a_i\cdot
\alpha_i$. The weight $\mathrm{wt}(f)$ of an arbitrary element
$f\in\K[X_1\klk X_n]$ is the highest weight of all the monomials
arising with nonzero coefficients in the dense representation of
$f$.

An element $f\in\K[X_1,\ldots,X_n]$ is said to be {\sf weighted
homogeneous} (with respect to the weight $\mathrm{wt}$ defined
above) if all its terms have the same weight. Equivalently, $f$ is
weighted homogeneous if and only if $f(X_1^{a_1},\ldots,X_n^{a_n})$
is homogeneous of degree $\mathrm{wt}(f)$. Any polynomial
$f\in\K[X_1,\ldots,X_n]$ can be uniquely written as a sum of
weighted homogeneous polynomials $f=\sum_{i}f_i$, where each $f_i$
is weighted homogeneous with $\mathrm{wt}(f_i)=i$. The polynomials
$f_i$ are called the {\sf weighted homogeneous components} of $f$.
In what follows, we shall use the following elementary property of
weights.
\begin{fact}[{\cite[Proposition 3.3.7]{HeHi11}}]
\label{fact: f_w(f) irred then f irred}
  Let $f\in\K[X_1,\ldots,X_n]$ be a nonconstant polynomial. If the
  component $f_{\mathrm{wt}(f)}$ of highest weight of $f$ is irreducible
  in $\K[X_1,\ldots,X_n]$, then $f$ is irreducible in $\K[X_1,\ldots,X_n]$.
\end{fact}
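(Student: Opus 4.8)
The plan is to argue by contradiction, exploiting the fact that the highest-weight component of a product is the product of the highest-weight components. Write $w:=\mathrm{wt}(f)$, so that $f=f_w+r$, where $f_w:=f_{\mathrm{wt}(f)}$ is the weighted homogeneous component of weight $w$ and every monomial occurring in $r$ has weight strictly less than $w$. Suppose, for the sake of contradiction, that $f$ is reducible. Since we work over a field, the units of $\K[X_1\klk X_n]$ are exactly the nonzero constants, so reducibility means $f=gh$ with both $g$ and $h$ nonconstant (in particular nonzero).

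Decompose $g$ and $h$ into weighted homogeneous components, $g=\sum_i g_i$ and $h=\sum_j h_j$, and let $a:=\mathrm{wt}(g)$ and $b:=\mathrm{wt}(h)$ be their highest weights, so that $g_a\neq 0$ and $h_b\neq 0$. The key observation is that $\K[X_1\klk X_n]$ is an integral domain, hence $g_a h_b\neq 0$; moreover $g_a h_b$ is weighted homogeneous of weight $a+b$, while every other product $g_i h_j$ contributing to $gh$ has weight $i+j\le a+b$, with equality only for $(i,j)=(a,b)$. Consequently the highest weight of $gh$ is $a+b$ and its component of that weight equals $g_a h_b$. Comparing with $f=f_w+r$ yields $w=a+b$ and $f_w=g_a h_b$.

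Now invoke the hypothesis that $f_w$ is irreducible in $\K[X_1\klk X_n]$: one of the factors $g_a$, $h_b$ must be a unit, i.e.\ a nonzero constant; say $g_a\in\K^{\times}$. Because the weights $a_1\klk a_n$ are \emph{positive} integers, the only monomials of weight $0$ are the constants, so a nonzero constant has weight $0$; hence $a=\mathrm{wt}(g)=0$. But then every weighted homogeneous component of $g$ has weight $0$, which forces $g$ to be a nonzero constant, contradicting the choice of $g$ as a nonconstant factor. Therefore no such factorization of $f$ exists, and $f$ is irreducible.

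The argument is essentially bookkeeping. The only points that require a moment's care are the use of the domain property, which guarantees $g_a h_b\neq 0$ and thus that the highest-weight component of a product is genuinely the product of the highest-weight components, and the positivity of the weights $a_i$, which is exactly what allows the conclusion that a polynomial whose highest-weight component has weight $0$ must be constant. I do not expect any real obstacle; the main subtlety is simply to phrase these degenerate cases correctly (and to recall that over a field ``unit'' means ``nonzero constant'', so that the notion of irreducibility being used matches the statement).
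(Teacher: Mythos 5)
Your argument is correct. Note that the paper does not prove this statement at all: it is quoted as a known fact with a citation to Herzog--Hibi, so there is no internal proof to compare against. Your proof is the standard one and is complete: the two points that genuinely need checking are exactly the ones you isolate, namely that $\K[X_1\klk X_n]$ is a domain (so $g_ah_b\neq 0$ and the top weighted component of a product is the product of the top components, the maximum $i+j=a+b$ being attained only at $(i,j)=(a,b)$), and that the weights $a_1\klk a_n$ are strictly positive (so a polynomial of weight $0$ is a constant, turning ``$g_a$ is a unit'' into ``$g$ is a unit'' and yielding the contradiction).
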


We shall also use the following simple criterion of irreducibility.
\begin{fact}
\label{fact: criterion irred}
  Let $f\in\K[X_1,\ldots,X_n]$ be a nonconstant polynomial,
  $s<n$,  $R:=\K[X_1,\ldots,X_s]$ and $Q(R):=\K(X_1,\ldots,X_s)$.
  If $f$ is a primitive polynomial of $R[X_{s+1}\klk X_n]$ and an
  irreducible element of $Q(R)[X_{s+1}\klk X_n]$, then $f$ is irreducible
  in $\K[X_1,\ldots,X_n]$.
\end{fact}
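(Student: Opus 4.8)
The plan is to deduce this from Gauss's lemma applied to the unique factorization domain $R=\K[X_1,\ldots,X_s]$. The starting point is the identification of rings $\K[X_1,\ldots,X_n]=R[X_{s+1},\ldots,X_n]$, under which any putative factorization of $f$ in $\K[X_1,\ldots,X_n]$ becomes a factorization in the polynomial ring $R[X_{s+1},\ldots,X_n]$ over the UFD $R$. Since $f$ is nonconstant it is nonzero and is not a unit of $\K[X_1,\ldots,X_n]$, so the only thing to rule out is a factorization $f=gh$ with $g$ and $h$ both non-units of $R[X_{s+1},\ldots,X_n]$.

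First I would treat the case in which one of the factors, say $g$, lies in $R$. Because $R$ is an integral domain, the units of $R[X_{s+1},\ldots,X_n]$ that lie in $R$ are exactly the units of $R$; hence $g$ is a nonzero non-unit of the UFD $R$, so some prime element $p\in R$ divides $g$, and therefore divides $f=gh$. But then $p$ divides every coefficient of $f$ regarded as a polynomial in $X_{s+1},\ldots,X_n$ with coefficients in $R$, contradicting the primitivity of $f$.

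Next I would treat the remaining case, in which neither $g$ nor $h$ lies in $R$; equivalently, each of $g,h$ actually involves some variable among $X_{s+1},\ldots,X_n$. Passing to $Q(R)[X_{s+1},\ldots,X_n]$, neither $g$ nor $h$ belongs to $Q(R)$, and the units of $Q(R)[X_{s+1},\ldots,X_n]$ are precisely the nonzero elements of $Q(R)$; hence $f=gh$ exhibits $f$ as a product of two non-units of $Q(R)[X_{s+1},\ldots,X_n]$, contradicting the hypothesis that $f$ is irreducible there. These two cases are exhaustive, so $f$ admits no nontrivial factorization in $\K[X_1,\ldots,X_n]$ and is therefore irreducible.

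I do not expect a genuine obstacle here: this is just the classical description of the irreducible elements of a polynomial ring over a UFD — the irreducibles of the base ring together with the primitive polynomials that remain irreducible over the fraction field — specialized to $R=\K[X_1,\ldots,X_s]$ with the leftover variables $X_{s+1},\ldots,X_n$. The only points that call for a little care are the bookkeeping of units (that the units of $R[X_{s+1},\ldots,X_n]$, and of $Q(R)[X_{s+1},\ldots,X_n]$, coincide with the units of the respective coefficient rings) and checking that the alternative "a factor lies in $R$'' versus "both factors involve some $X_i$ with $i>s$'' genuinely covers all cases.
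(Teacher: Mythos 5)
Your proof is correct. The paper states this Fact without proof, treating it as a standard consequence of Gauss's lemma for the UFD $R=\K[X_1,\ldots,X_s]$, and your argument is exactly the expected one: the case split between a factor lying in $R$ (excluded by primitivity, via a prime of $R$ dividing all coefficients) and both factors involving some $X_i$ with $i>s$ (excluded by irreducibility over $Q(R)$) is exhaustive, and the bookkeeping of units in $R[X_{s+1},\ldots,X_n]$ and $Q(R)[X_{s+1},\ldots,X_n]$ is handled correctly.
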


Assume that the characteristic $p$ of $\fq$ is not 2. For $d$ and
$s$ with $1\le s \le d-3$, let $B_{d-s-1},\ldots,B_1,B_0,T$ be
indeterminates over $\cfq$ and let
$\bfs{B}_0:=(B_{d-s-1},\ldots,B_1,B_0)$. In what follows, for a
given $\bfs{a}:=(a_{d-1}\klk a_{d-s})\in\cfq^s$, we shall consider
the polynomial $f:=T^d+a_{d-1}T^{d-1}+\cdots+
a_{d-s}T^{d-s}+B_{d-s-1}T^{d-s-1}+\cdots+B_1T+B_0\in\cfq[\bfs{B}_0,T]$.

Denote by $\mathrm{Disc}(f)\in\cfq[\bfs{B}_0]$ the discriminant of
$f$ with respect to the variable $T$. We shall consider the weight
wt of $\cfq[\bfs{B}_0,T]$ defined by setting $\mathrm{wt}(B_j):=d-j$
for $0\le j\le d-s-1$. We observe that, extending this notion of
weight to the polynomial ring $\cfq[B_d\klk B_0]$ in a similar way,
it turns out that the discriminant of a generic degree--$d$
polynomial of $\cfq[B_d\klk B_0][T]$ is weighted homogeneous of
weight $d(d-1)$ (see, e.g., \cite[Lemma 2.2]{FrSm84}).
\begin{theorem}\label{th: irred discrim}
With notations and assumptions as above, $\mathrm{Disc}(f)$ is an
irreducible polynomial of $\cfq[\bfs{B}_0]$.
\end{theorem}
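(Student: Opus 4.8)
The plan is to follow the two Facts above: replace $\mathrm{Disc}(f)$ by its weighted‑homogeneous component of highest weight, and then study the latter by viewing it as a one–variable polynomial in the constant term $B_0$.

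\emph{Step 1 (reduction to $\bfs{a}=\bfs{0}$).} Write $\mathrm{Disc}(f)$ as the value at $(c_{d-1}\klk c_{d-s},c_{d-s-1}\klk c_0)=(a_{d-1}\klk a_{d-s},B_{d-s-1}\klk B_0)$ of the discriminant $\mathrm{Disc}_d$ of the generic monic polynomial of degree $d$, which is weighted homogeneous of weight $d(d-1)$ for $\mathrm{wt}(c_j):=d-j$. Every monomial of $\mathrm{Disc}_d$ that involves one of $c_{d-1}\klk c_{d-s}$ strictly decreases its weight under this substitution, while the monomials involving none of them keep weight $d(d-1)$ and add up exactly to $\mathrm{Disc}(g)$, where $g:=T^d+B_{d-s-1}T^{d-s-1}\plp B_1T+B_0$. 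Since the derivative $g':=\partial g/\partial T$ has positive degree in $T$ — it contains the term $2B_2T$, present because $s\le d-3$ and nonzero because $p\neq 2$ — the resultant $\mathrm{Res}(g,g')$, hence $\mathrm{Disc}(g)$, is a nonzero polynomial of positive degree in $B_0$, so it is the component of highest weight of $\mathrm{Disc}(f)$. By Fact~\ref{fact: f_w(f) irred then f irred} it suffices to prove that $\mathrm{Disc}(g)$ is irreducible in $\cfq[\bfs{B}_0]$.

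\emph{Step 2 (the one–variable criterion).} I would apply Fact~\ref{fact: criterion irred} with $R:=\cfq[B_{d-s-1}\klk B_1]$ and the single extra variable $B_0$. Writing $e:=\deg_T g'$ and letting $\beta_1\klk\beta_e$ be the roots of $g'$ in $\overline{Q(R)}$, one has $\mathrm{Disc}(g)=\pm\,\mathrm{lc}(g')^d\prod_{i=1}^e\bigl(B_0+\widetilde g(\beta_i)\bigr)$ with $\widetilde g:=g-B_0$; so $\mathrm{Disc}(g)$ is, up to $\mathrm{lc}(g')^d$, monic in $B_0$, and $\mathrm{lc}(g')^d$ is, up to a nonzero constant, either $1$ or a power of a single variable $B_j$ that does not divide $\mathrm{Disc}(g)$ (setting it to $0$ leaves $\mathrm{Disc}(g)$ nonzero, by Step 1), so $\mathrm{Disc}(g)$ is primitive over $R$. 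It remains to show $\mathrm{Disc}(g)$ is irreducible in $Q(R)[B_0]$, for which it suffices to check: (a) $g'$ is irreducible over $Q(R)$; (b) $g'$ is separable over $Q(R)$; (c) the $e$ elements $\widetilde g(\beta_1)\klk\widetilde g(\beta_e)$ are pairwise distinct. Granting (a)--(c), the Galois group of $g'$ over $Q(R)$ permutes the $\beta_i$, hence the $e$ distinct values $-\widetilde g(\beta_i)$, transitively, so $\prod_i(B_0+\widetilde g(\beta_i))$ is the minimal polynomial of $-\widetilde g(\beta_1)$ over $Q(R)$ and is irreducible. For (a): $g'=B_1+TG(T)$ with $G(0)=2B_2\neq 0$; viewed as a polynomial of degree $1$ in $B_1$ over $\cfq(B_2\klk B_{d-s-1})[T]$ with unit leading coefficient and coefficients $1$ and $TG(T)$ coprime in $T$, it is irreducible in $\cfq(B_2\klk B_{d-s-1})[B_1,T]$ and primitive over $\cfq(B_2\klk B_{d-s-1})[B_1]$, hence irreducible over $Q(R)$ by Gauss's lemma. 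For (b): an irreducible polynomial over $Q(R)$ is inseparable only if it lies in $Q(R)[T^p]$, and $g'$ does not, owing to its term $2B_2T$ with $p>1$. (Alternatively, (a)--(c) together are subsumed by showing directly that the discriminant locus $\{\mathrm{Disc}(g)=0\}$ is irreducible: it is the image of the incidence variety $\{(\bfs{b}_0,\beta):g_{\bfs{b}_0}(\beta)=g'_{\bfs{b}_0}(\beta)=0\}$, which is isomorphic to an affine space since these two equations solve for $B_0$ and $B_1$, and the projection has finite fibres; then $\mathrm{Disc}(g)=c\,P^{k}$ with $P$ irreducible and $c\in\cfq^{\times}$, and one is reduced again to the squarefreeness asserted in (c).)

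\emph{Step 3 (distinctness of the critical values — the main obstacle).} Condition (c) is the crux, and the only place where $p\neq 2$ is genuinely needed; it is equivalent to $\mathrm{Disc}(g)$ being squarefree. I would prove it by restricting $\mathrm{Disc}(g)$ to a suitable coordinate subspace on which it is visibly squarefree and nonconstant — which forces $\mathrm{Disc}(g)$ itself to be squarefree. If $p\nmid d(d-1)$, setting $B_2=\cdots=B_{d-s-1}=0$ gives the binomial $\mathrm{Disc}(T^d+B_1T+B_0)=c_1B_1^{d}+c_2B_0^{d-1}$ with $c_1c_2\neq 0$, squarefree because $\gcd(d,d-1)=1$. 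If $p\mid d$, setting $B_3=\cdots=B_{d-s-1}=0$ gives $\mathrm{Disc}(T^d+B_2T^2+B_1T+B_0)$, which (as $g'=2B_2T+B_1$ now) is linear in $B_0$ with leading coefficient a nonzero power of $B_2$, hence squarefree and nonconstant. If $p\mid d-1$, the same slice gives $\mathrm{Disc}(T^d+B_2T^2+B_1T+B_0)$ with $g'=dT^{d-1}+2B_2T+B_1$ of degree $d-1\ge 3$; a short computation with $g'$ and $g'(-T)$ shows that for $B_1,B_2$ transcendental $g'$ has no two roots summing to $0$, and since the identity $\widetilde g(\beta_i)=\widetilde g(\beta_j)\iff (d-2)B_2(\beta_i+\beta_j)+(d-1)B_1=0$ reduces (because $p\mid d-1$) to $\beta_i=-\beta_j$, the critical values of $T^d+B_2T^2+B_1T$ are pairwise distinct, so this discriminant is squarefree in $B_0$ and nonconstant. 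In every case $\mathrm{Disc}(g)$ is squarefree, hence by Step 2 irreducible, and by Step 1 and Fact~\ref{fact: f_w(f) irred then f irred} so is $\mathrm{Disc}(f)$. The only delicate bookkeeping is in Step 3: the boundary case $s=d-3$ (when $g$ has only $B_0,B_1,B_2$ free) still falls under these slices, but one must check carefully that the chosen restriction really is squarefree and nonconstant.
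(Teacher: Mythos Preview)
Your proof is correct and takes a genuinely different route from the paper's. The paper passes to the field $\K_2=\cfq(B_{d-s-1},\ldots,B_2)$ or $\K_3=\cfq(B_{d-s-1},\ldots,B_3)$, identifies the highest--weight component of $\mathrm{Disc}(f)$ in the remaining two or three variables as the discriminant of $T^d+B_1T+B_0$ or of $T^d+B_2T^2+B_1T+B_0$, and then proves irreducibility of that explicit polynomial by ad hoc criteria (Stepanov for binomials, a lemma of Gibson on a sum of two homogeneous forms of consecutive degrees, and Eisenstein). You instead keep all the $B_j$ as honest variables, reduce once to $\mathrm{Disc}(g)$ with $g$ the $\bfs a=\bfs 0$ specialization, and prove irreducibility of $\mathrm{Disc}(g)$ in $Q(R)[B_0]$ via the Galois action on the critical points of $\widetilde g$: irreducibility and separability of $g'$ over $Q(R)$ give a transitive action on the $\beta_i$, and distinctness of the critical values upgrades this to irreducibility of $\prod_i(B_0+\widetilde g(\beta_i))$. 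This is more conceptual and makes the geometry visible; your parenthetical observation that the incidence variety $\{g=g'=0\}$ is an affine space is an even cleaner way to see that the discriminant hypersurface is irreducible, reducing everything to squarefreeness. It is pleasant that your Step~3 slices land on exactly the polynomials the paper analyses, though you only use them to certify squarefreeness rather than irreducibility.

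One point needs tightening, in the case $p\mid d$. After your restriction $B_3=\cdots=B_{d-s-1}=0$ the degree of $g'$ in $T$ drops to $1$, so the $B_0$--degree of $\mathrm{Disc}(g)$ drops as well, and ``the restriction is squarefree in $B_0$'' no longer directly implies the same for the unrestricted $\mathrm{Disc}(g)$. The fix is to argue squarefreeness of $\mathrm{Disc}(g)$ as a \emph{multivariate} polynomial and to use weighted homogeneity: since $\mathrm{Disc}(g)$ is weighted homogeneous, any nonconstant factor is weighted homogeneous, hence has no constant term, so its restriction to a coordinate subspace is either zero or still nonconstant; thus if the restricted discriminant is nonzero and squarefree in $\cfq[B_0,B_1,B_2]$, the full $\mathrm{Disc}(g)$ is squarefree in $\cfq[\bfs B_0]$, hence in $Q(R)[B_0]$. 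For this you must check that $(-1)^d2^dB_2^dB_0+(-1)^{d+1}2^{d-2}B_2^{d-1}B_1^2+B_1^d$ is squarefree in $\cfq[B_0,B_1,B_2]$, not merely in $B_0$: it is, since its $B_0$--content is $\gcd(2^dB_2^d,\,B_1^d+\cdots)=1$, so it is primitive of degree $1$ in $B_0$ and therefore irreducible. In the other two cases ($p\nmid d(d-1)$ and $p\mid d-1$) your restriction argument is already clean, because there $\mathrm{lc}_T(g')=d$ is a nonzero constant and the $B_0$--degree is preserved.
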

\begin{proof}
First we suppose that $p$ does not divide $d(d-1)$. Consider
$\mathrm{Disc}(f)$ as an element of
$\K_2[B_1,B_0]:=\cfq(B_{d-s-1},\ldots,B_2)[B_1,B_0]$, and consider
the weight $\mathrm{w}_2$ of $\K_2[B_1,B_0]$ defined by setting
$\mathrm{w}_2(B_0):=d$ and $\mathrm{w}_2(B_1):=d-1$. It is easy to
see that the weighted homogeneous component of highest weight of
$\mathrm{Disc}(f)$ is $\Delta_2:=d^d
B_0^{d-1}+(-1)^{d-1}(d-1)^{d-1}B_1^d$. Our assumption of $p$ implies
that $\Delta_2$ is a nonzero polynomial. Furthermore, by the
Stepanov criterion (see, e.g., \cite[Lemma 6.54]{LiNi83}) we deduce
that $\Delta_2$ is irreducible in $\K_2[B_1,B_0]$. Then Fact
\ref{fact: f_w(f) irred then f irred} allows us to conclude that
$\mathrm{Disc}(f)$ is an irreducible element of $\K_2[B_1,B_0]$.
Finally, taking into account that $\mathrm{Disc}(f)$ is a primitive
polynomial of $\cfq[B_{d-s-1},\ldots,B_2][B_1,B_0]$, Fact \ref{fact:
criterion irred} shows that $\mathrm{Disc}(f)$ is irreducible in
$\cfq[\bfs{B}_0]$.

Assume now that $p$ divides $d$. Let
$\K_3:=\cfq(B_{d-s-1},\ldots,B_3)$ and consider $\mathrm{Disc}(f)$
as an element of $\K_3[B_2,B_1,B_0]$. We consider the weight
$\mathrm{w}_3$ of $\K_3[B_2,B_1,B_0]$ defined by setting
$\mathrm{w}_3(B_0)=d$, $\mathrm{w}_3(B_1):=d-1$ and
$\mathrm{w}_3(B_2):=d-2$. If $g:=T^d+B_2T^2+B_1T+B_0$, then $g'=
2B_2 T+B_1$. Therefore, applying the Poisson formula for the
resultant it is easy to prove that
$\mathrm{Disc}(g)=B_1^d+(-1)^{d+1}2^{d-2}B_2^{d-1}B_1^2+(-1)^{d}2^{d}B_2^{d}B_0$.
Since $\deg f=\deg g=d$ and the discriminant of a generic polynomial
of degree $d$ is weighted homogeneous of degree $d(d-1)$, it follows
that $\mathrm{Disc}(g)$ is the component of highest weight of
$\mathrm{Disc}(f)$. Furthermore, we claim that $\mathrm{Disc}(g)$ is
irreducible in $\K_3[B_2,B_1,B_0]$. Indeed, considering
$\mathrm{Disc}(g)$ as a polynomial in $\K_3(B_0)[B_2,B_1]$, we see
that $\mathrm{Disc}(g)$ is the sum of two homogeneous polynomials of
degrees $d$ and $d+1$ without common factors, namely
$B_1^d+(-1)^{d}2^{d}B_2^{d}B_0$ and
$(-1)^{d+1}2^{d-2}B_2^{d-1}B_1^2$ respectively. Then \cite[Lemma
3.15]{Gibson98} proves that $\mathrm{Disc}(g)$ is irreducible in
$\K_3(B_0)[B_2,B_1]$, which in turn implies it is irreducible in
$\K_3[B_2,B_1,B_0]$ by Fact \ref{fact: criterion irred}. Combining
this with Fact \ref{fact: f_w(f) irred then f irred} we deduce that
$\mathrm{Disc}(f)$ is irreducible in $\K_3[B_2,B_1,B_0]$, from which
we readily conclude that it is irreducible in $\cfq[\bfs{B}_0]$ by
Fact \ref{fact: criterion irred}.

Finally, suppose that $p$ divides $d-1$ and consider
$\mathrm{Disc}(f)$ as an element of $\K_3[B_2,B_1,B_0]$. Arguing as
before we conclude that the discriminant $\mathrm{Disc}(g)$ of the
polynomial $g:=T^d+B_2T^2+B_1T+B_0$ is the component of highest
weight of $\mathrm{Disc}(f)$. Observe that $g'=T^{d-1}+2B_2T+B_1$,
and thus
\begin{eqnarray*}
\mathrm{Disc}(g)=
\frac{\mathrm{Res}_T(g,Tg'\!-g)}{\mathrm{Res}_T(g,T)}&=&
\frac{\mathrm{Res}_T(g,B_2T^2\!-B_0)}{\mathrm{Res}_T(g,T)}\\&=&
\frac{\mathrm{Res}_T(T^d+B_1T+2B_0,B_2T^2\!-B_0)}{\mathrm{Res}_T(g,T)}.
\end{eqnarray*}
Applying the Poisson formula for the resultant, we easily deduce the
following identity:
$$\mathrm{Disc}(g)=\left\{\begin{array}{rl}
4B_2^{d}B_0+B_0^{d-1}+4B_0^{d/2}B_2^{d/2}-B_1^2B_2^{d-1}&
\textit{for}\ d\ \textit{even},\\
-4B_2^{d}B_0+B_0^{d-1}+2B_0^{\frac{d-1}{2}}B_2^{\frac{d-1}{2}}-B_1^2B_2^{d-1}
& \textit{for}\ d\ \textit{odd}.\end{array}\right.$$
Then $\mathrm{Disc}(g)$ is irreducible in $\cfq[B_0,B_2][B_1]$ by
the Eisenstein criterion and $\mathrm{Disc}(f)$ is irreducible in
$\K_3[B_2,B_1,B_0]$ by Fact \ref{fact: f_w(f) irred then f irred}.
Arguing as above we obtain that $\mathrm{Disc}(f)$ is irreducible in
$\cfq[\bfs{B}_0]$, finishing thus the proof of the theorem.
\end{proof}

%\bibliographystyle{alpha}
%\bibliography{refs1,finite_fields,Newref}

\end{document}